\theoremstyle{plain}
\newtheorem{dummy}{anything}[section]
\newtheorem{thm}{Theorem}
\newtheorem{cor}{Corollary}
\newtheorem{lemma}[dummy]{Lemma}
\newtheorem{question}[dummy]{Question}
\newtheorem{proposition}{Proposition}[section]
\newtheorem{corollary}[dummy]{Corollary}
\newtheorem{conjecture}[dummy]{Conjecture}
\theoremstyle{definition}
\newtheorem{definition}[dummy]{Definition}
\newtheorem{remark}[dummy]{Remark}
\theoremstyle{remark}
\newcommand{\N}{\mathbb{N}}
\newcommand{\Z}{\mathbb{Z}}
\newcommand{\R}{\mathbb{R}}
\newcommand{\C}{\mathbb{C}}
\newcommand{\w}{\omega}
\def\R{\mathbb{R}}
\def\Z{\mathbb{Z}}
\def\C{\mathbb{C}}
\def\N{\mathbb{N}}
\def\HH{\mathrm{HH}}
\DeclareFontFamily{U}{mathx}{}
\DeclareFontShape{U}{mathx}{m}{n}{<-> mathx10}{}
\DeclareSymbolFont{mathx}{U}{mathx}{m}{n}
\DeclareMathAccent{\widecheck}{0}{mathx}{"71}
\newcommand{\eps}{\varepsilon}
\newcommand{\p}{\partial}
\begin{document}

\title[Floer theory of Anosov flows in dimension three]{Floer theory of Anosov flows in dimension three}

\author{Kai Cieliebak}

\address[K.\ Cieliebak]{Institut f\"ur Mathematik \\ Universit\"at Augsburg \\ Augsburg \\ Germany }

\email{\href{mailto:kai.cieliebak@math.uni-augsburg.de}{kai.cieliebak@math.uni-augsburg.de}}

\author{Oleg Lazarev}

\address[O.\ Lazarev]{Department of Mathematics \\ University of Massachusetts Boston \\ MA \\ USA }

\email{\href{mailto:oleg.lazarev@umb.edu}{oleg.lazarev@umb.edu}}

\author{Thomas Massoni}

\address[T.\ Massoni]{Department of Mathematics \\ Princeton University \\ NJ \\ USA}

\email{\href{mailto:tmassoni@princeton.edu}{tmassoni@princeton.edu}}

\author{Agustin Moreno}

\address[A.\ Moreno]{School of Mathematics \\ Institute for Advanced Study \\ NJ \\ USA / Universit\"at Heidelberg \\ Mathematisches Institut \\ Heidelberg \\ Germany}

\email{\href{mailto:agustin.moreno2191@gmail.com}{agustin.moreno2191@gmail.com}}

\date{\today}



\begin{abstract}
    A smooth Anosov flow on a closed oriented three manifold $M$ gives rise to a Liouville structure on the four manifold $[-1,1]\times M$ which is not Weinstein, by a construction of Mitsumatsu and Hozoori. We call it the associated \emph{Anosov Liouville domain}. It is well defined up to homotopy and only depends on the homotopy class of the original Anosov flow; its symplectic invariants are then invariants of the flow. We study the symplectic geometry of Anosov Liouville domains, via the wrapped Fukaya category, which we expect to be a powerful invariant of Anosov flows. The Lagrangian cylinders over the simple closed orbits span a natural $A_\infty$-subcategory, the \emph{orbit category} of the flow. We show that it does not satisfy Abouzaid's generation criterion; it is moreover ``very large'', in the sense that is not split-generated by any strict sub-family. This is in contrast with the Weinstein case, where critical points of a Morse function play the role of the orbits. For the domain corresponding to the suspension of a linear Anosov diffeomorphism on the torus, we show that there are no closed exact Lagrangians which are either orientable, projective planes or Klein bottles. By contrast, in the case of the geodesic flow on a hyperbolic surface of genus $g \geq 2$ (corresponding to the McDuff example), we construct an exact Lagrangian torus for each embedded closed geodesic, thus obtaining at least $3g-3$ tori which are not Hamiltonian isotopic to each other. For these two prototypical cases of Anosov flows, we explicitly compute the symplectic cohomology of the associated domains, as well as the wrapped Floer cohomology of the Lagrangian cylinders, and several pair-of-pants products. 
\end{abstract}

\maketitle

\tableofcontents

\section{Introduction}
The study of Lagrangian submanifolds is a central topic in symplectic geometry. These submanifolds (equipped with extra structure and adjectives) are usually packaged as the objects of an $A_\infty$-category, the \emph{Fukaya category}, whose morphisms consist of suitable Floer chain complexes. These are endowed with operations which are based on counts of punctured disks solving Floer's equation, and underlie the rich algebraic structure of the theory. This category is central to the far-reaching homological mirror symmetry conjecture of Kontsevich, a bridge between symplectic and algebraic geometry, which has its origins in string theory.

On the symplectic side, the notion of a \emph{Weinstein manifold}, introduced by Eliashberg and Gromov~\cite{EG91} based on a construction of Weinstein~\cite{W91}, gives a particularly rich class of examples of symplectic manifolds, all of which are modelled on the cylindrical end over a contact manifold at infinity. The corresponding theory is the symplectician's approach to Morse theory. The interplay between the geometry and the dynamics imposes strong topological restrictions on a Weinstein domain, i.e., its homotopy type is that of a CW-complex of at most half the dimension. In particular, except in dimension two, its boundary is always non-empty and connected. The (wrapped) Fukaya category of a Weinstein domain admits a finite collection of building blocks, e.g., it is generated by the unstable manifolds of the critical points of the chosen Morse--Lyapunov function \cite{CDGG, GPS2}. This is a fundamental structural result for the underlying symplectic geometry of these examples. 

On the other hand, there exist examples of exact symplectic manifolds with contact boundary (i.e., Liouville domains) which do not admit a Weinstein structure, and which are especially interesting from a dynamical perspective. Instances of such were constructed in \cite{McD91,Ge95,M95,MNW13}, all of the form $V=[-1,1]\times M$ for some closed manifold $M$, and in particular with disconnected contact-type boundary $$\partial V=(M_+, \xi_+=\ker \alpha_+)\sqcup (M_-, \xi_-=\ker \alpha_-),$$ with $M_\pm:=\{\pm 1\}\times M$. 

\begin{figure}
    \centering
    \includegraphics[width=0.7\linewidth]{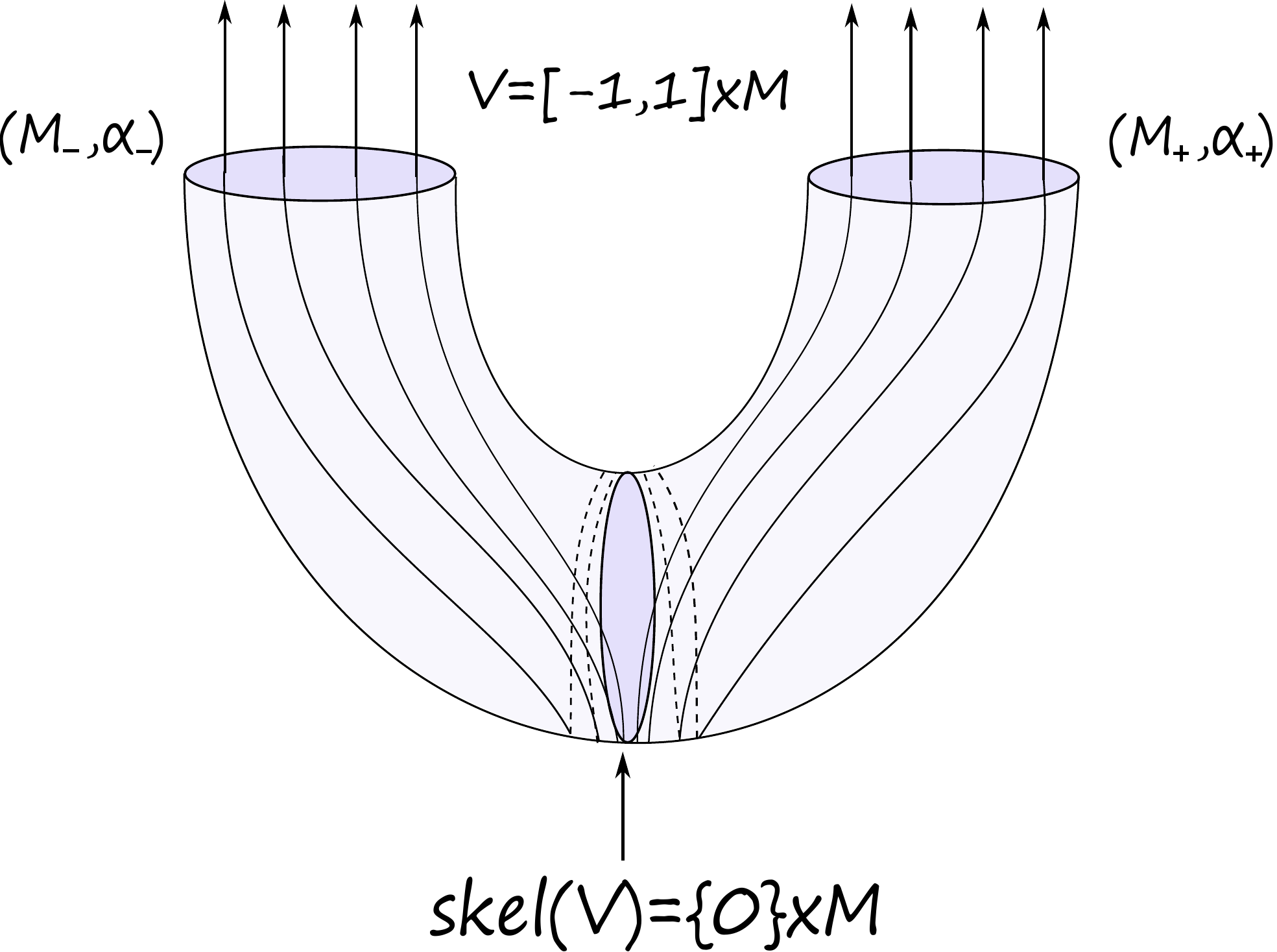}
    \caption{A Liouville domain of the form $V=[-1,1]\times M$. For the McDuff and torus bundle domains described below, $V$ retracts onto the skeleton $\mathrm{skel}(V)=M_0=\{0\}\times M$ under the negative Liouville flow. The Liouville flow is tangent to $M_0$ and is Anosov on $M_0$. For the more general constructions in~\cite{H22a, Mas22}, the skeleton is the graph of a $\mathcal{C}^1$ but not necessarily smooth function on $M_0$; in fact, the regularity of the skeleton is related to the regularity of the weak-stable foliation~\cite{H}.}
    \label{semifill}
\end{figure}

\subsection{Bridge to Anosov theory}

We focus our attention on $4$-dimensional examples. If a closed oriented $3$-manifold $M$ admits an smooth Anosov flow, then $[-1,1]\times M$ carries a Liouville structure with contact-type boundary by \cite[Theorem 1.1]{H22a}. This construction generalizes work of Eliashberg--Thurston \cite{ET98} and Mitsumatsu \cite[Theorem 3]{M95}, and builds a bridge between Anosov theory and \emph{bi-contact topology}. The direction of the flow spans the $1$-dimensional intersection $\xi_-\cap \xi_+$ of the contact structures at each end. Moreover, the Liouville structure obtained this way only depends on the underlying Anosov flow up to Liouville homotopy, and a $1$-parameter family of Anosov flows induces a $1$-parameter family of Liouville structures, see~\cite{Mas22}. As a result, all the symplectic invariants (e.g., symplectic cohomology, Rabinowitz Floer cohomology, wrapped Fukaya category) of these Liouville structures are \emph{invariants of the underlying Anosov flow}, only depending on its homotopy class in the space of Anosov flows. One may restrict the focus to the class of \emph{smooth} Anosov flows, as Anosov's structural stability implies that every $\mathcal{C}^1$ Anosov flow can be smoothened without changing its homotopy class in the space of Anosov flows, and the new flow is topologically equivalent to the original one. In this article, we initiate a systematic exploration of this bridge between symplectic geometry and Anosov theory, via the study of the wrapped Fukaya categories of the associated Liouville structures. The constructions mentioned above can be summarized as follows:

\vspace{0.4cm}

\begin{center}
\tikzstyle{block} = [rectangle, draw, fill=blue!15, 
    text width=7.7em, text centered, rounded corners, minimum height=4.5em]
\tikzstyle{line} = [draw, -latex']
    
\begin{tikzpicture}[node distance = 4cm, auto]
    \node [block] (A) {Smooth Anosov flow on $M$};
    \node [block, right of=A] (B) {Suitable pair of contact forms $(\alpha_-, \alpha_+)$ on $M$};
    \node [block, right of=B] (C) {Liouville form $\lambda = e^{-s} \alpha_- + e^s \alpha_+$ on $V=\R \times M$};
    \node [block, right of=C] (D) {Symplectic invariants of $(V,\lambda)$};
    \path [line] (A) -- (B);
    \path [line] (B) -- (C);
    \path [line] (C) -- (D);
    \path [line,dashed] (D) |- ([yshift=-1cm] A.south) -- (A);
\end{tikzpicture}
\end{center}

The first two arrows are constructed in~\cite{H22a} (and~\cite{Mas22} for this specific type of Liouville form). The present paper focuses on the last box of the flowchart. The dashed arrow represents the bridge to Anosov theory, i.e., the fact that the symplectic invariants are invariants of the flow. Below, we will explicitly compute relevant invariants for the two simplest examples of Anosov flows: suspensions of Anosov diffeomorphisms on the torus, and geodesic flows of hyperbolic surfaces. 

\subsection{The orbit category of Anosov flows}

The Liouville structures we consider on $V=\R \times M$ are of the form $\lambda=e^{-s} \alpha_- + e^s \alpha_+$, where $\alpha_-$ and $\alpha_+$ are two contact forms on $M$ with opposite orientations. Their underlying contact structures $\xi_-=\ker \alpha_-$ and $\xi_+=\ker \alpha_+$ are transverse and their intersection $\xi_- \cap \xi_+$ is spanned by an Anosov vector field (see Definition~\ref{def:alstructure}). We will call $(V, \lambda)$ obtained this way an \emph{Anosov Liouville manifold}, and the underlying domain $([-1, 1] \times M, \lambda)$ an \emph{Anosov Liouville domain}.\footnote{In this paper all Liouville manifolds will be of finite type, i.e., completions of Liouville domains. Therefore, we can freely go back and forth between Liouville domains and manifolds, with the boundary of the Liouville domain corresponding to the ideal boundary (at infinity) of the corresponding Liouville manifold. \label{footnote1}} 

We may find an abundance of simple closed orbits for the Anosov flow generated by this vector field, and any such orbit $\Lambda$ is Legendrian for both contact structures. Therefore,
$$
\mathcal{L}_\Lambda= \R \times \Lambda \subset V=\R \times M
$$
is a strictly exact Lagrangian (i.e., $\lambda\vert_{\mathcal{L}_\Lambda}\equiv 0$) with Legendrian boundary at infinity in $\partial_\infty V=\{\pm \infty\}\times M$. 

Next, we consider the full $A_\infty$-subcategory $\mathcal{W}_0(V)$ of the wrapped Fukaya category $\mathcal{W}(V)$ whose objects are the Lagrangians $\{\mathcal{L}_\Lambda\}$; this subcategory contains all information pertaining to the orbits of the Anosov flow. We call $\mathcal{W}_0(V)$ the \emph{orbit category} of the Anosov flow, as we may think of it as the set of simple closed orbits of the flow endowed with the structure of an $A_\infty$-category.\footnote{The objects in $\mathcal{W}(V)$ and $\mathcal{W}_0(V)$ are geometric, i.e., Lagrangians. Concretely, we do not consider the derived category obtained by passing to the split-closure plus the idempotent completion.}

In the case where the skeleton is $M_0 = \{0\}\times M$, the Lagrangian $\mathcal{L}_\Lambda$ is contained in the unstable manifold of the orbit $\Lambda \subset M_0$, i.e., $\mathcal{L}_\Lambda$ retracts to $\Lambda$ under the negative Liouville flow. This phenomenon generalizes to the case where the skeleton is the graph of a $\mathcal{C}^1$ function $f : M \rightarrow \R$ over $M_0$, with $\Lambda$ replaced by the graph of $f$ over $\Lambda$. Note that in the Weinstein case, a critical point of the Morse function can be viewed as an orbit of the Liouville flow, and the corresponding Lagrangian co-core is the whole unstable manifold. By analogy to the Weinstein case, it is therefore natural to ask whether the collection $\{\mathcal{L}_\Lambda\}$ of these Lagrangians recovers all of the wrapped Fukaya category of $V$, i.e., whether $\mathcal{W}(V)$ lies in the split-closure of $\mathcal{W}_0(V)$.

For this purpose, one would usually try to appeal to Abouzaid's generation criterion \cite{A10}, which ensures split-generation under the hypothesis that the \emph{open-closed map}\footnote{We use the grading conventions from~\cite{G19,GPS1}, which differ from~\cite{A10}.} 
$$\mathcal{OC}: \HH_{*-2}(\mathcal{W}(V))\rightarrow SH^*(V)$$ hits the unit when restricted to the orbit category $\mathcal{W}_0(V)$. Here, $SH^*(V)$ denotes the symplectic cohomology of $V$, and $\HH_{*}(\mathcal{W}(V))$, the Hochschild homology of $\mathcal{W}(V)$. There is always a splitting  $SH^*(V)=SH^*_c(V)\oplus SH^*_{nc}(V)$
as $\mathbb{Z}$-modules, where $SH^*_c(V)$ and $SH^*_{nc}(V)$ denote the summands of symplectic cohomology generated by the contractible and non-contractible Hamiltonian orbits, respectively. For a $4$-dimensional Anosov Liouville manifold $V=\R \times M$, the contact forms $\alpha_\pm$ at both components of the boundary at infinity can be chosen \emph{hypertight} (see \cite[Theorem 1.1]{H22a}), i.e., they admit no contractible Reeb orbits, which implies that $SH^*_c(V)\cong H^*(M)$. In particular, $SH^*_c(V)$ contains the unit, lying in the degree zero part $H^0(M)$. We shall prove the following, which is in contrast to the Weinstein case:

\begin{thm}[Open-closed map]\label{thm:unit} Let $V=\R \times M$ be a $4$-dimensional Anosov Liouville manifold. Let $\mathcal{W}_0(V)$ be the orbit category of the Anosov flow, generated by its simple closed orbits. Then $\mathcal{W}_0(V)$ does \textbf{not} satisfy Abouzaid's generation criterion, i.e., the restriction of the open-closed map to $\mathcal{W}_0(V)$, $$\mathcal{OC}_0 : \HH_{*-2}(\mathcal{W}_0(V)) \rightarrow SH^*(V),$$ does \textbf{not} hit the unit. 

More precisely, there is a splitting $\HH_*(\mathcal{W}_0(V)) = \HH_*^c \oplus \HH_*^{nc}$ such that $\mathcal{OC}_0$ splits as a sum of two maps
\begin{align*}
    \mathcal{OC}^{c}_0 : \HH_{*-2}^c &\rightarrow SH^*_c(V) \cong H^*(M), \\
    \mathcal{OC}^{nc}_0 : \HH_{*-2}^{nc} &\rightarrow SH^*_{nc}(V),
\end{align*}
and $$\mathrm{Im}(\mathcal{OC}^c_0) \subseteq H^2(M; \Z) \oplus H^3(M; \Z).$$
Moreover, there is an isomorphism $$\HH_*^c \cong \bigoplus_{\Lambda} W_*$$
where $W_*:=\HH_*(C^*(S^1))$, the Hochschild homology of the singular cochain dg-algebra of the circle, has infinite rank and is supported in degrees $0$ and $1$, and the sum runs over the simple closed orbits of the Anosov flow. Hence, the kernel of $\mathcal{OC}_0$ has infinite rank.
\end{thm}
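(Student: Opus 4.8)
The plan is to analyze the orbit category $\mathcal{W}_0(V)$ in terms of the individual Lagrangian cylinders $\mathcal{L}_\Lambda = \R \times \Lambda$ and their Floer-theoretic interactions, with the key structural input being that each $\mathcal{L}_\Lambda$ is diffeomorphic to $\R \times S^1$ (a cylinder over a circle), so its self-wrapped Floer cohomology and $A_\infty$-structure should model the cochain algebra $C^*(S^1)$, or rather its Rabinowitz/loop-space enhancement.

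First I would recall/establish that because the contact forms $\alpha_\pm$ can be chosen hypertight (by Hozoori's theorem), there are no contractible Reeb chords or orbits, and the only "short" wrapped generators for the self-Floer complex $CW^*(\mathcal{L}_\Lambda, \mathcal{L}_\Lambda)$ come from Reeb chords that wrap around $\Lambda$ itself. Since a simple closed orbit $\Lambda$ is a Legendrian circle for both $\xi_\pm$, one computes that $HW^*(\mathcal{L}_\Lambda, \mathcal{L}_\Lambda)$ is isomorphic to $H^*(\mathcal{L}_\Lambda) \otimes (\text{loop contributions})$, which should reduce to something quasi-isomorphic to the dg-algebra $C^*(S^1)$ — this is where the ring $W_* = \HH_*(C^*(S^1))$ enters. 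A clean way to see $HW^*(\mathcal{L}_\Lambda,\mathcal{L}_\Lambda)$ is via the cotangent-bundle/Viterbo-type picture: a neighborhood of $\mathcal{L}_\Lambda$ inside the relevant piece of $V$ looks like $T^*S^1$ near the zero section (exploiting that $\mathcal{L}_\Lambda$ sits in the unstable manifold of $\Lambda$ and retracts to $\Lambda \subset M_0$ under the negative Liouville flow), so $HW^*(\mathcal{L}_\Lambda, \mathcal{L}_\Lambda) \cong H_{-*}(\mathcal{L}S^1)$ as rings.

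Next I would show the orbit category \emph{splits} according to whether the relevant Reeb chords/orbits are contractible in $M$: distinct orbits $\Lambda \ne \Lambda'$ can be linked or unlinked, but the "contractible part" $\HH^c_*$ of the Hochschild homology is built only from chords/orbits that are null-homotopic in $M$, and by hypertightness these are localized near each individual $\mathcal{L}_\Lambda$ with no contractible chords running between distinct cylinders. This gives the direct-sum decomposition $\HH^c_* \cong \bigoplus_\Lambda W_*$, where each summand is the Hochschild homology of the sub-$A_\infty$-algebra $CW^*(\mathcal{L}_\Lambda, \mathcal{L}_\Lambda)$, which I claim is formal and equal to $C^*(S^1)$. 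Infinitude of the rank of $W_*$ follows from a direct computation: $\HH_*(C^*(S^1)) \cong H_*(\mathcal{L}S^1) \cong H_*(S^1 \times \Z) $ as graded groups (the free loop space of $S^1$ has one component for each winding number, each an $S^1$), hence it has infinite rank in degrees $0$ and $1$. For the symplectic cohomology side, I would use the hypertight assumption to get $SH^*_c(V) \cong H^*(M)$ and then compute the image of $\mathcal{OC}_0^c$ by a degree/grading argument: the orbits $\Lambda$ are $1$-dimensional, so the Lagrangian cylinders $\mathcal{L}_\Lambda$ contribute classes to $SH^*_c(V) \cong H^*(M)$ only in degrees where the moduli of half-disks with boundary on $\mathcal{L}_\Lambda$ can be nonempty — dimension counting forces the image into $H^{\geq 2}(M)$, which excludes the unit in $H^0(M)$. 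More precisely, $\mathcal{OC}_0^c$ factors through the map induced by the inclusion of the skeleton pieces $\bigsqcup \Lambda \hookrightarrow M$ on cohomology, shifted so that the image lands in $H^2(M) \oplus H^3(M)$.

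\textbf{The main obstacle} I expect is showing that there are \emph{no} contractible chords between distinct cylinders $\mathcal{L}_\Lambda, \mathcal{L}_{\Lambda'}$ contributing to the contractible part, i.e., proving that $\HH^c_*$ genuinely splits as a direct sum over $\Lambda$ with no "off-diagonal" mixing — this requires a careful homotopy-class bookkeeping of Reeb chords in the hypertight contact manifolds $(M, \xi_\pm)$, distinguishing chords that are contractible in $M$ (which must, by hypertightness and a neck-stretching/action argument, stay near a single orbit) from those that are not. The grading computation for $\mathrm{Im}(\mathcal{OC}^c_0) \subseteq H^2(M) \oplus H^3(M)$ should follow from the standard dimension formula for the open-closed moduli spaces once one fixes the Maslov/Conley–Zehnder indices of the cylinders, using that $\mathcal{L}_\Lambda$ has trivial Maslov class (being strictly exact with Legendrian boundary) and that the Reeb chords on $\Lambda$ have controlled index; the formality of $CW^*(\mathcal{L}_\Lambda,\mathcal{L}_\Lambda) \simeq C^*(S^1)$ is essentially automatic since $C^*(S^1)$ is intrinsically formal (its cohomology $H^*(S^1)$ is an exterior algebra on a degree-one generator, and there is no room for higher products by degree reasons), so that step should be routine once the underlying cohomology ring is identified.
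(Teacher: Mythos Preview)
Your proposal has a genuine gap: you misidentify both the definition of the splitting $\HH_*=\HH_*^c\oplus\HH_*^{nc}$ and the mechanism that makes it a splitting of complexes.

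In the paper, $CC_*^c$ is \emph{not} generated by ``chords null-homotopic in $M$'' as you suggest; it is generated by Hochschild words consisting \emph{only} of interior intersection points of $\mathcal{L}_\Lambda$ with a small Hamiltonian perturbation of itself (Morse generators of $C^*(S^1)$), while $CC_*^{nc}$ is generated by words containing \emph{at least one genuine Reeb chord}. With this definition, $CC_*^c$ is trivially a subcomplex by the maximum principle, but showing $CC_*^{nc}$ is a subcomplex requires the nontrivial property
\[
(C):\quad \text{if one input of } \mu^d(a_1,\dots,a_d)\text{ is a Reeb chord, then so is the output.}
\]
Hypertightness (absence of contractible closed Reeb orbits) does \emph{not} give $(C)$: nothing about contractible orbits rules out a Floer polygon with chord inputs and an intersection-point output. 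The paper obtains $(C)$ from the \emph{Topological Disk Lemma}, which uses that both Reeb vector fields $R_\pm$ are positively transverse to the \emph{taut} weak-stable foliation $\mathcal{F}^{ws}$. One projects a putative Floer disk to $M$, obtaining a disk whose boundary alternates between arcs tangent to the Anosov vector field and arcs positively transverse to $\mathcal{F}^{ws}$; a push-off along the weak-unstable foliation then produces a contractible loop everywhere transverse to $\mathcal{F}^{ws}$, contradicting tautness. The same lemma is what forces $\mathcal{OC}_0(\HH_*^{nc})\subset SH_{nc}^*$. None of this comes from your hypertightness/neck-stretching sketch.

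Two further confusions follow from the first. You assert both $HW^*(\mathcal{L}_\Lambda,\mathcal{L}_\Lambda)\cong H_{-*}(\mathcal{L}S^1)$ and $CW^*(\mathcal{L}_\Lambda,\mathcal{L}_\Lambda)\simeq C^*(S^1)$; these are incompatible (the former has infinite rank). What actually enters is only the \emph{low-energy} part of $CW^*(\mathcal{L}_\Lambda,\mathcal{L}_\Lambda)$, which Abouzaid identifies as an $A_\infty$-algebra with $C^*(\mathcal{L}_\Lambda)\simeq C^*(S^1)$; the direct-sum decomposition of $\HH_*^c$ then holds because the $\mathcal{L}_\Lambda$ are pairwise disjoint, not because of any contractibility argument for chords. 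Finally, the containment $\mathrm{Im}(\mathcal{OC}_0^c)\subset H^2(M)\oplus H^3(M)$ is read off from the fact that $W_*=\HH_*(C^*(S^1))$ is supported in degrees $0$ and $1$ (a direct algebraic computation with the periodic resolution of $\Z[x]/x^2$), combined with the degree shift in $\mathcal{OC}$; your ``dimension counting on moduli of half-disks'' is not how the paper proceeds.
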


In fact, more is true. The following theorem says that the subcategory $\mathcal{W}_0(V)$ is ``maximally non-finitely split-generated''. Loosely speaking, it is very large. 

\begin{thm}[Non-finite split-generation] \label{thm:nonsplit} Let $\mathcal{A}$ be a full $A_\infty$-subcategory of the orbit category $\mathcal{W}_0(V)$, generated by a collection $\mathcal{C}$ of simple closed orbits of the flow. If $L=\mathcal{L}_\Lambda$ is a Lagrangian cylinder corresponding to an orbit $\Lambda\notin \mathcal{C}$, then $L$ is \textbf{not} split-generated by $\mathcal{A}$.
In particular,
\begin{enumerate}
    \item Any two Lagrangians $\mathcal{L}_\Lambda,\mathcal{L}_{\Lambda'}$ with $\Lambda\neq \Lambda'$ are \textbf{not} quasi-isomorphic in $\mathcal{W}(V)$,
    \item $\mathcal{W}_0(V)$ is \textbf{not} split-generated by finitely many objects,
    \item $\mathcal{W}_0(V)$ is \textbf{not} homologically smooth.
\end{enumerate}
\end{thm}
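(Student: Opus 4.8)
The plan is to prove the displayed main assertion first and then deduce (1)--(3). The main assertion rests on the following standard split-generation principle: if an object $L$ of $\mathcal{W}(V)$ lies in the split-closure of a full $A_\infty$-subcategory $\mathcal{A}$, then for every object $K$ the complex $CW^*(K,L)$ is, up to passing to a direct summand, built from the complexes $CW^*(K,A)$ ($A\in\mathcal{A}$) by iterated mapping cones and shifts (resolve $L$ by a twisted complex over $\mathcal{A}$ and apply $CW^*(K,-)$); in particular, $HW^*(K,A)=0$ for all $A\in\mathcal{A}$ forces $HW^*(K,L)=0$. Hence, to show that $\mathcal{L}_\Lambda$ is not split-generated by $\mathcal{A}=\langle\mathcal{L}_{\Lambda'}:\Lambda'\in\mathcal{C}\rangle$, it suffices to exhibit a single test object $K$ with $HW^*(K,\mathcal{L}_{\Lambda'})=0$ for all $\Lambda'\in\mathcal{C}$ and $HW^*(K,\mathcal{L}_\Lambda)\neq0$. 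Since $\Lambda\notin\mathcal{C}$ gives $\Lambda\neq\Lambda'$ for every $\Lambda'\in\mathcal{C}$, the tautological choice $K=\mathcal{L}_\Lambda$ works, and the whole theorem is reduced to two facts about the cylinders: (i) $HW^*(\mathcal{L}_\Lambda,\mathcal{L}_\Lambda)\neq0$, and (ii) $HW^*(\mathcal{L}_\Lambda,\mathcal{L}_{\Lambda'})=0$ whenever $\Lambda\neq\Lambda'$.

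Statement (i) is the routine half: $\mathcal{L}_\Lambda$ is a strictly exact Lagrangian, hence a genuine object of $\mathcal{W}(V)$ with a unital self-endomorphism ring; in fact $HW^*(\mathcal{L}_\Lambda,\mathcal{L}_\Lambda)$ is the ring computed --- as the cohomology of the cochain algebra $C^*(S^1)$ --- in the course of Theorem~\ref{thm:unit}, so its unit is non-zero. Statement (ii) is the crux, and the place where the Anosov dynamics is indispensable. One works with the wrapped complex $CW^*(\mathcal{L}_\Lambda,\mathcal{L}_{\Lambda'})$ directly: since $\Lambda$ and $\Lambda'$ are disjoint simple closed orbits of the flow, the cylinders $\mathcal{L}_\Lambda=\R\times\Lambda$ and $\mathcal{L}_{\Lambda'}=\R\times\Lambda'$ are disjoint, so there are no interior generators and the complex is generated by the Reeb chords from the Legendrian ideal boundary of $\mathcal{L}_{\Lambda'}$ to that of $\mathcal{L}_\Lambda$ in the two boundary contact manifolds $(M_\pm,\xi_\pm)$, together with the iterates of these chords produced by wrapping. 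The aim is to show this complex is acyclic. The mechanism I would pursue is that, for a suitable representative of the bi-contact data $\alpha_\pm$ within the homotopy class of the flow (we are free to choose it, since the invariants depend only on that class) and for suitable auxiliary Floer data, there are no such Reeb chords at all: a Reeb chord joining the two distinct closed orbits $\Lambda$ and $\Lambda'$ should be obstructed by the way the Reeb flows of $\alpha_\pm$ interact with the weak-stable and weak-unstable foliations of the Anosov flow, since distinct closed orbits lie on distinct weak leaves. Carrying out this control of all Reeb chords between two given closed orbits, using the Anosov structure, is the main obstacle; everything else in the proof is formal.

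Granting the main assertion, the consequences follow quickly. For (1): a quasi-isomorphism $\mathcal{L}_\Lambda\simeq\mathcal{L}_{\Lambda'}$ with $\Lambda\neq\Lambda'$ would in particular exhibit $\mathcal{L}_\Lambda$ as split-generated by $\mathcal{C}=\{\Lambda'\}$, a contradiction. For (2): a smooth Anosov flow on a closed oriented $3$-manifold has infinitely many simple closed orbits, so any finite family of objects of $\mathcal{W}_0(V)$ corresponds to a finite set $\mathcal{C}$ of orbits; choosing any $\Lambda\notin\mathcal{C}$, the cylinder $\mathcal{L}_\Lambda$ is not split-generated by that family. For (3): if $\mathcal{W}_0(V)$ were homologically smooth, its diagonal bimodule would be perfect, hence a direct summand of a finite twisted complex of bimodules of the form $\mathcal{Y}^l_{A_i}\otimes\mathcal{Y}^r_{A_i}$ with $A_1,\dots,A_n\in\mathcal{W}_0(V)$; convolving with the Yoneda module of an arbitrary object $X$ then exhibits $X$ as split-generated by $\{A_1,\dots,A_n\}$, so $\mathcal{W}_0(V)$ would be split-generated by finitely many objects, contradicting (2). (Alternatively, (2) --- and with it (1) and (3) --- can be read off from the Morita-invariance of Hochschild homology together with the per-orbit decomposition $\HH_*^c\cong\bigoplus_\Lambda W_*$ of Theorem~\ref{thm:unit}: enlarging a finite $\mathcal{C}$ by an orbit $\Lambda\notin\mathcal{C}$ would change $\HH_*$ by an extra non-zero summand $W_*$.)
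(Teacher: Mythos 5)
Your main route breaks down at the crucial step. Claim~(ii)---that $HW^*(\mathcal{L}_\Lambda,\mathcal{L}_{\Lambda'})=0$ for $\Lambda\neq\Lambda'$---is simply false. The cylinders $\mathcal{L}_\Lambda$ and $\mathcal{L}_{\Lambda'}$ are indeed disjoint in the interior, so there are no interior generators, but the complex $CW^*(\mathcal{L}_\Lambda,\mathcal{L}_{\Lambda'})$ is generated by Reeb chords between the Legendrians $\Lambda$ and $\Lambda'$ in $(M_\pm,\xi_\pm)$, and there are typically \emph{infinitely many} such chords. The Reeb vector fields $R_\pm$ are positively transverse to $\mathcal{F}^{ws}$, not tangent to the Anosov flow, so the fact that $\Lambda$ and $\Lambda'$ lie on distinct weak leaves is no obstruction whatsoever: the Reeb flow crosses leaves freely and connects $\Lambda$ to $\Lambda'$ in abundance. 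Indeed, the paper's own computations in Section~\ref{sec:WFH} make this explicit: for the McDuff domain, $HW^*(\mathcal{L}_\gamma,\mathcal{L}_\beta)\cong H^{>0}_*(\mathcal{L}(\gamma,\beta))\oplus\bigoplus_{p\in\gamma\cap\beta}\Z[t]\cdot c_p$ (binormal geodesic chords and prequantization chords), and for the torus bundle domain it is a direct sum of $\Z$'s indexed by the cone of rational-slope linear paths $\Gamma^\pm_{p,q}$---both very far from zero. There is also no freedom in the homotopy class of the bi-contact data to make these chords disappear; their existence is a dynamical feature. So the proposed test-object argument collapses: $\mathcal{L}_\Lambda$ is not ``invisible'' to the $\mathcal{L}_{\Lambda'}$'s at the level of wrapped Floer cohomology, and this cohomological vanishing is simply not the mechanism behind the theorem.

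The ``alternative'' you sketch at the end is in fact the paper's actual argument, and it is the one that works. Let $\mathcal{A}'$ be the full subcategory on $\mathcal{C}\cup\{\Lambda\}$; then by the proof of Theorem~\ref{thm:unit} (which relies on the Topological Disk Lemma~\ref{lemma:disk} to show that the contractible and non-contractible parts of the Hochschild complex are separate subcomplexes), one has $\HH_*^c(\mathcal{A})=\bigoplus_{\Lambda'\in\mathcal{C}}W_*$, $\HH_*^c(\mathcal{A}')=\HH_*^c(\mathcal{A})\oplus W_*$, and---because $\mathcal{L}_\Lambda$ is disjoint from the other cylinders---the natural map $\iota_*:\HH_*(\mathcal{A})\to\HH_*(\mathcal{A}')$ restricts on the contractible part to the inclusion of the first factor, hence is not surjective. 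Since split-generation would force $\iota_*$ to be an isomorphism by Morita invariance of $\HH_*$, the conclusion follows for arbitrary $\mathcal{C}$ (not just finite), and (1)--(3) are then derived exactly as you state. You should promote that paragraph to the main argument, spell out why the inclusion on $\HH_*^c$ is injective but not surjective (this is where disjointness and Lemma~\ref{lemma:disk} enter), and delete the attempted reduction to vanishing of $HW^*$ between distinct cylinders.
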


Here, recall that an $A_\infty$-category is \emph{homologically smooth} if the diagonal bi-module is perfect, i.e., it is split-generated by tensor products of left and right Yoneda modules (see, e.g.,~\cite{G19}). We remark that, although failing to satisfy Abouzaid's criterion, the family $\{\mathcal{L}_\Lambda\}$ might still constitute a split-generating collection for $\mathcal{W}(V)$. If it does split-generate, the above results would imply that the whole category $\mathcal{W}(V)$ is not homologically smooth, and the total open-closed $\mathcal{OC}$ map fails to be an isomorphism. This would be in stark contrast with the Weinstein case, in which it is an isomorphism, a fact originally envisioned by Seidel in his ICM address \cite{Sei02} (see also \cite{Sei09}), and which follows from \cite{A10,G13,CDGG,GPS2}. In fact, we do not know how to construct other exact Lagrangians which are not expected to lie in the split-closure of $\mathcal{W}_0(V)$. 

\begin{remark} The following remarks are in order.
\begin{enumerate}
    \item The main geometric input in the proof of Theorems~\ref{thm:unit}, \ref{thm:nonsplit}, and \ref{thm:WFH} is the Topological Disk Lemma~\ref{lemma:disk}, which implies that for suitable Floer data, a punctured Floer disk with at least one nonconstant chord at its inputs must have a nonconstant chord at its output. This lemma has algebraic consequences beyond the ones stated here, cf.~the discussion in~\S\ref{sec:discussion}. The proof of this lemma relies crucially on the \textit{tautness} of the Anosov weak-stable foliation. 
    \item The collection $\{\mathcal{L}_\Lambda\}$ constitutes a fairly general class of Lagrangians in $V$. As  mentioned earlier, the $\mathcal{L}_\Lambda$'s are \textit{strictly} exact, namely the Liouville form vanishes on them, hence the Liouville vector field is everywhere tangent to them. In fact, in the cases where $\mbox{skel}(V)=\{0\}\times M$ is smooth (e.g., for the McDuff and torus bundle domains defined below), any connected, strictly exact Lagrangian in $V$ which is transverse to the skeleton must be one of the $\mathcal{L}_\Lambda$'s. Indeed, any such Lagrangian must intersect the skeleton along a closed orbit and must be invariant under the Liouville flow which is proper away from the skeleton. More generally, we expect that Proposition 1.37 of  \cite{GPS2} can be used to show that any exact Lagrangian which is strictly exact \textit{near} the skeleton (but not necessarily elsewhere) is actually generated by a finite sub-collection of $\{\mathcal{L}_\Lambda\}$; the proof of this proposition requires a further technical  condition called \textit{thinness}, although it is expected to be unnecessary.
    \item  More generally, one can ask whether an arbitrary exact Lagrangian, not necessarily strictly exact near the skeleton, is (split-)generated by the $\mathcal{L}_\Lambda$'s. Assuming the skeleton is smooth, there are no restrictions on the loop arising from a transverse intersection of a (local, non-strictly exact) Lagrangian with the skeleton, beyond the fact that the loop is \emph{non-characteristic}, i.e., nowhere tangent to the $1$-dimensional characteristic distribution. In particular, any homotopy class of loops is achieved as such a transverse intersection. If one could use the Anosov flow to flow an arbitrary non-characteristic loop in the skeleton to a periodic orbit, then one would have generation for an arbitrary (non-strictly) exact Lagrangian in $V$. Indeed, this is one approach to proving generation in the Weinstein case, where a generic point in the skeleton converges to a maximum of the Weinstein Morse function \cite{CDGG}. The difficulty here is dynamical: it is not true that arbitrary non-characteristic loops converge to closed orbits under the Anosov flow. 
\end{enumerate}
\end{remark}


\subsection{Closed Lagrangians} A further natural question concerns the existence of nontrivial \emph{closed} Lagrangians. We shall address this for some concrete examples, which we now describe. 

\medskip

\textbf{McDuff domains.} The first example of a Liouville domain of the form $(V=[-1,1]\times M,\lambda)$ is due to McDuff \cite{McD91}, where $M$ is the unit cotangent bundle of a hyperbolic surface $\Sigma$, and $V$ is a subdomain of the cotangent bundle of $\Sigma$ with symplectic form twisted with a magnetic field. It corresponds to an Anosov Liouville domain where the Anosov flow is (a rotated version of) the geodesic flow on $\Sigma$, a prototypical example of Anosov flow. The skeleton $M$ has $\mathrm{SL}(2,\mathbb{R})$-geometry. We will refer to these examples as the {\em McDuff domains}. The Lagrangians $\mathcal{L}_\Lambda$ described above correspond to the positive conormal bundle of a closed oriented geodesic $\gamma$; we denote the corresponding orbit $\Lambda$ by $\Lambda_\gamma$ (the unit conormal lift of $\gamma$), and the corresponding Lagrangian by $\mathcal{L}_\gamma$. In Section \ref{sec:closed_Lag_McDuff}, we will show the following:

\begin{thm}[McDuff domains: closed exact Lagrangians] In every McDuff domain, there exist $3g-3$ pairwise disjoint exact Lagrangian tori in distinct homotopy classes, where $g$ denotes the genus of $\Sigma$. 
\end{thm}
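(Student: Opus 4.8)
The plan is to construct, for each embedded closed geodesic $\gamma$ on $\Sigma$, a closed exact Lagrangian torus $T_\gamma$ inside the McDuff domain $V$, and then use Theorem~\ref{thm:nonsplit}-type arguments (or rather a Floer-theoretic count) to distinguish them. First, recall that $V$ is a subdomain of $T^*\Sigma$ with a magnetically twisted symplectic form, and its skeleton is $M = ST^*\Sigma$, the unit cotangent bundle, on which the Liouville flow restricts to (a rotated) geodesic flow. An embedded closed geodesic $\gamma \subset \Sigma$ has a unit conormal lift $\Lambda_\gamma \subset M$ which is a closed orbit of the Anosov flow; but $\Lambda_\gamma$ alone is only a circle. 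To produce a torus, I would take the union of the two conormal lifts $\Lambda_\gamma^+ \cup \Lambda_\gamma^-$ (corresponding to the two coorientations of $\gamma$) together with the annular piece of the fiber over $\gamma$ connecting them — concretely, the preimage of $\gamma$ under $V \hookrightarrow T^*\Sigma \to \Sigma$ intersected with $V$. Since $\gamma$ is an embedded circle and the fibers are intervals (in the Liouville-domain picture $[-1,1]$), this preimage is topologically $S^1 \times [-1,1]$ with its two boundary circles lying on $\partial V$; but those boundary circles can be capped off using the conormal Legendrians, or better: work directly with the closed surface obtained as $\{(q,p) : q \in \gamma,\ p \in T_q^*\Sigma,\ |p| \le \text{const}, \ p \perp T_q\gamma \text{ is not required}\}$ — I need to think about which 2-dimensional family of covectors over $\gamma$ is Lagrangian.

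The cleaner approach: the conormal bundle $N^*\gamma \subset T^*\Sigma$ of the embedded circle $\gamma$ is an exact Lagrangian cylinder $S^1 \times \R$ in the \emph{standard} cotangent symplectic form. In the twisted form $\omega_\sigma = \omega_{\mathrm{std}} + \pi^*\sigma$ (where $\sigma$ is the area form pulled back via the magnetic term), $N^*\gamma$ need no longer be Lagrangian, but there is a $1$-parameter family of perturbations — graphs of closed $1$-forms or of $df$ for $f$ on a neighborhood of $\gamma$ — making it Lagrangian; and because $\gamma$ is a geodesic (hence the magnetic term interacts compatibly with the conormal directions), one expects $N^*\gamma \cap V$ to already be Lagrangian, or isotopic through Lagrangians to such. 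Intersecting with the compact domain $V$ and observing that $N^*\gamma$ exits both ends $\{\pm 1\} \times M$ transversally along $\Lambda_\gamma^\pm$, I would instead \emph{double} $V$ or use the fact that the two ends are glued in $M$: actually in the McDuff picture the natural move is to realize $T_\gamma$ as the union of the conormal disk bundle over $\gamma$ of small radius, which in the twisted cotangent bundle and for $\gamma$ a geodesic is a genuine closed exact Lagrangian torus (the zero section over $\gamma$ union the two conormal rays, compactified) — this is the magnetic analogue of the fact that in $T^*\Sigma$ the conormal bundle of a circle, doubled across the zero section, is a torus. The exactness is immediate since $\lambda|_{N^*\gamma} = p\,dq$ vanishes on conormal directions and $\gamma$ has zero length contribution only if we are careful; more honestly, $\lambda|_{T_\gamma}$ is exact because $H^1(T^2;\R)$-periods must be computed and shown to vanish (they do, by the geodesic condition and Stokes).

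Once the $T_\gamma$ are built, the second half is to show there are $3g-3$ of them that are pairwise disjoint, in distinct homotopy classes, and not Hamiltonian isotopic. Disjointness: choose $3g-3$ pairwise disjoint embedded closed geodesics, which exist by taking a pants decomposition of $\Sigma$ (a hyperbolic surface of genus $g$ has a pants decomposition with exactly $3g-3$ disjoint simple closed geodesics); the corresponding conormal tori are then supported over disjoint subsets of $\Sigma$, hence disjoint in $V$. Distinct homotopy classes: $T_\gamma$ retracts (under the Liouville flow) onto $\Lambda_\gamma \subset M = ST^*\Sigma$, whose free homotopy class in $M$ (equivalently in $V$) is determined by the conjugacy class of $\gamma$ in $\pi_1(\Sigma)$ together with the fiber $S^1$ class, and distinct pants curves give distinct conjugacy classes. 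Not Hamiltonian isotopic: this is where I expect the main obstacle — the cleanest argument is that a Hamiltonian isotopy preserves the free homotopy class (already handled) but one wants more, e.g. distinct wrapped Floer cohomology or distinct symplectic-cohomology-module structure. I would invoke the machinery already set up: compute $HW^*(T_\gamma, \mathcal{L}_{\gamma'})$ or use the open-closed map / the Topological Disk Lemma~\ref{lemma:disk} to see that $T_\gamma$ "sees" only the orbit $\Lambda_\gamma$, so $T_\gamma$ and $T_{\gamma'}$ have non-isomorphic Floer-theoretic interactions with the orbit category unless $\gamma = \gamma'$. The hard part is making the Floer computation with a \emph{closed} Lagrangian rigorous (compactness/transversality with the non-Weinstein geometry), but since closed exact Lagrangians are displaceable-obstructed in the usual way, even the weaker statement — distinct homotopy classes, hence at least $3g-3$ Lagrangian tori no two of which are Lagrangian (a fortiori Hamiltonian) isotopic — already gives the theorem as stated.
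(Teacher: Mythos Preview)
There is a genuine gap in your construction of the torus $T_\gamma$. None of the candidates you propose actually yields a closed exact Lagrangian torus in $V$. The conormal bundle $N^*\gamma$ is a line bundle over $S^1$, hence a cylinder; intersected with $V$ (which omits the zero section) it becomes the disjoint union $\mathcal{L}_\gamma \sqcup \mathcal{L}_{\overline\gamma}$ of the two open conormal half-cylinders, not a torus. Any ``doubling across the zero section'' or ``conormal disk bundle'' construction fails for the same reason: $0_\Sigma \notin V$, so you cannot close up the cylinder inside $V$ this way. The full preimage of $\gamma$ under $V \to \Sigma$ is three-dimensional. The closest honest torus is the circle-bundle torus $S^*\Sigma|_\gamma$ sitting in a single $r$-level; this is indeed a Lagrangian torus in the prequantization end (the paper constructs it as a weakly exact torus in \S\ref{sec:Mcduff_domains}), but it is \emph{not} exact: the prequantization form $\alpha_{\mathrm{pre}}$ restricts to $d\theta$ along it, which has nonzero period on the fiber class. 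Your claim that ``exactness is immediate \ldots\ by the geodesic condition and Stokes'' is precisely where the argument breaks.

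The paper's construction (\S\ref{sec:closed_Lag_McDuff}) is genuinely different and more delicate. In Fermi coordinates near $\gamma$ the McDuff Liouville form becomes $\lambda_C = x\,dr + (\cosh(r)y + \sinh(r))\,dt + d\theta$, and the torus is the image of an embedding $h:\mathbb{T}^2 \hookrightarrow V$ given by $h(t_1,t_2) = \bigl(-\tanh^{-1}(g(t_2)),\,t_1,\,\beta(t_2)\bigr)$ for a Jordan curve $\beta = (f,g)$ in $\R^2 \setminus \{0\}$ winding once around the origin. Exactness is \emph{not} automatic: it holds if and only if $\beta$ satisfies the normalization $\int_D \frac{dx \wedge dy}{1-y^2} = 2\pi$, where $D$ is the domain bounded by $\beta$. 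Geometrically, this torus arises from two Hamiltonian-isotopic copies of $\mathcal{L}_\gamma$ by an $S^1$-equivariant Polterovich-type surgery, not from any conormal or fiber-restriction construction. Once this torus $\mathbb{T}_\gamma$ is in hand, your pants-decomposition argument for producing $3g-3$ disjoint tori in distinct homotopy classes is correct and is exactly what the paper does; no Floer theory is needed for the statement as given.
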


Indeed, we construct an embedded exact Lagrangian torus $\mathbb T_\gamma$ for each closed embedded geodesic $\gamma \subset \Sigma$. Taking $3g-3$ disjoint closed geodesics in a pair of pants decomposition of $\Sigma$, we obtain the above. The torus $\mathbb T_\gamma$ may be constructed from two Hamiltonian isotopic copies of $\mathcal{L}_\gamma$ by $S^1$-equivariant versions of Polterovich surgeries, although we provide a simpler and more explicit construction. The torus $\mathbb T_\gamma$ is expected to be split-generated by $\mathcal{L}_\gamma$, but this would require an algebraic implementation of such surgery in terms of iterated cones, which we will not pursue. It is also possible to associate \emph{non-exact} (but weakly exact) Lagrangian tori to each closed embedded geodesic by a much simpler construction; see Section \ref{sec:Mcduff_domains}.

\medskip

\textbf{Torus bundle domains.} The torus bundle domains were first described by Geiges~\cite{Ge95} and independently by Mitsumatsu~\cite{M95}. They correspond to the case of Sol geometry. As a smooth manifold, they are of the form $[-1,1] \times M$, where $M$ is a $\mathbb{T}^2$-bundle over $S^1$ whose monodromy is given by a hyperbolic matrix $A \in \mathrm{SL}(2,\Z)$. These are the Anosov Liouville domains with Anosov flow given by the suspension of $A$, viewed as an Anosov diffeomorphism of the torus. The Lagrangians $\mathcal{L}_\Lambda$ arise from periodic orbits $\mathcal{O}$ of $A$; we denote the corresponding orbit $\Lambda$ by $\Lambda_\mathcal{O}$, and the corresponding Lagrangian by $\mathcal{L}_\mathcal{O}$. The Reeb vector fields of $\alpha_\pm$ on $M_\pm$ give linear flows on each $\mathbb{T}^2$-fiber with slope varying from fiber to fiber.\footnote{Breen and Christian~\cite{BC21} recently proved that the \emph{stabilization} of this domain is Weinstein.}

\begin{thm}[Torus bundle domains: closed exact Lagrangians]\label{thm:closed}
In every torus bundle domain, there are no closed exact Lagrangian submanifolds which are either orientable, projective planes, or Klein bottles.
\end{thm}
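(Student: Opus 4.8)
The plan is to combine two ingredients. First, a topological/homological restriction: the torus bundle domain $V = [-1,1] \times M$ is homotopy equivalent to $M$, which is a $\mathbb{T}^2$-bundle over $S^1$ with hyperbolic monodromy $A \in \mathrm{SL}(2,\mathbb{Z})$. Such an $M$ has $H^*(M;\mathbb{Q})$ very small: since $A$ has no eigenvalue $1$, the Wang exact sequence gives $H^1(M;\mathbb{Q}) \cong H^2(M;\mathbb{Q}) \cong \mathbb{Q}$ (generated by the pullback of the base class and its Poincaré dual), and $H^*(M;\mathbb{Z})$ has a small torsion part coming from $\mathrm{coker}(A - I)$ on $H^1(\mathbb{T}^2;\mathbb{Z})$. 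A closed exact Lagrangian $L \subset V$ satisfies $[L] = 0 \in H_2(V;\mathbb{R})$ automatically (exactness kills the symplectic area against any cycle), but more importantly one wants to run a Floer-theoretic argument. I would first rule out the non-orientable cases: if $L = \mathbb{RP}^2$, it is non-orientable and $H_1(\mathbb{RP}^2) = \mathbb{Z}/2$, and one uses that $\mathbb{RP}^2$ is not spin and the standard fact (in dimension $4$, via the adjunction-type/Maslov argument or simply because $[\mathbb{RP}^2]^2$ would be odd) that there is no Lagrangian $\mathbb{RP}^2$ in an exact symplectic $4$-manifold whose $H_2$ carries no $2$-torsion pairing with the right parity — more cleanly, a Lagrangian $\mathbb{RP}^2$ forces a class in $H_2(V;\mathbb{Z}/2)$ with self-intersection $\equiv \chi(\mathbb{RP}^2) = 1 \pmod 2$, impossible since here the $\mathbb{Z}/2$-intersection form vanishes (as $H_2(V;\mathbb{Z}/2)$ is at most rank one spanned by a torus-type class with even self-intersection). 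For the Klein bottle $K$: $\chi(K) = 0$, so the parity obstruction is silent, and instead I would invoke Shevchishin's and Nemirovski's theorem that there is no Lagrangian Klein bottle in any exact (indeed any sub-critical-at-infinity) symplectic $4$-manifold — or, since $V$ is a Liouville domain, the more recent extensions of that non-existence result; this is the cleanest route. If one wants to stay self-contained, the Klein bottle case can alternatively be attacked through the orientation double cover, reducing to the orientable case applied to a Lagrangian torus double-covering $K$.

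The heart of the matter is then the orientable case: no closed orientable exact Lagrangian $L \subset V$. An orientable closed surface Lagrangian in a symplectic $4$-manifold has $\chi(L) = [L]^2$ (Lagrangian neighborhood $\Rightarrow$ normal bundle $\cong T^*L$), and exactness plus $b_2(V) \le 1$ with the intersection form vanishing on $H_2(V;\mathbb{R})$ (it is a boundary-nonempty Liouville domain, and $[L]$ is null-homologous over $\mathbb{R}$) forces $[L]^2 = 0$, hence $\chi(L) = 0$, so $L$ is a torus. So it remains to exclude an exact Lagrangian torus $T \subset V$. Here I would use the wrapped Fukaya category machinery assembled in the paper: an exact Lagrangian torus $T$ is an object of $\mathcal{W}(V)$ with $HW^*(T,T) \cong H^*(T^*T)$-module structure / at minimum $HF^*(T,T) \cong H^*(T;\mathbb{K})$ of total rank $4$ (since $T$ is exact and closed, self-Floer cohomology is ordinary cohomology, using that $T$ is displaceable would give $0$ — but a closed exact Lagrangian is never displaceable, which already is the contradiction if one can exhibit a displacing Hamiltonian isotopy). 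Concretely: the negative Liouville flow pushes everything toward the skeleton $M_0 = \{0\} \times M$; a closed Lagrangian $T$ is compact, so its image under the flow for large negative time is squeezed into an arbitrarily small neighborhood of the skeleton, yet it must remain a closed exact Lagrangian — this does not immediately displace it. The sharper tool is symplectic cohomology: for the torus bundle domain the paper computes $SH^*(V)$ explicitly, and if $SH^*(V) = 0$ then $V$ contains no closed exact Lagrangian at all (a closed exact Lagrangian forces the unit in $SH^*$ to be nonzero, via the closed–open map or Viterbo restriction to the cotangent bundle $T^*T$ where the unit survives). If instead $SH^*(V) \ne 0$, one uses the finer statement that the image of $H^*(V) \to SH^*(V)$ contains the unit iff there is no closed exact Lagrangian — and one checks from the hypertightness ($SH^*_c(V) \cong H^*(M)$) together with the explicit computation whether $1 \in \mathrm{Im}(H^0(V) \to SH^0(V))$ survives; the expectation is that it does, killing all closed exact Lagrangians.

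The main obstacle I anticipate is the orientable torus case, specifically deciding it via symplectic cohomology: one needs the explicit computation of $SH^*(V)$ for the torus bundle domain (promised later in the paper) to be definitive enough to conclude either $SH^*(V) = 0$ or that the unit dies in the map from $H^*(V)$ — and to make rigorous the implication "closed exact Lagrangian torus $\Rightarrow$ unit survives", which for a torus (as opposed to a sphere) requires the Viterbo functoriality/transfer map $SH^*(V) \to SH^*(T^*T)$ and the nonvanishing of the image of the unit there, i.e. that $SH^*(T^*T) \cong H_*(\mathcal{L}T) \ne 0$ receives the unit. Packaging this cleanly, and handling the grading/coefficient subtleties ($\mathbb{Z}$ vs $\mathbb{Z}/2$, and whether $T$ is spin — a Lagrangian torus in a symplectic $4$-manifold is automatically spin, so $\mathbb{Z}$-coefficients are fine), is where the real work lies; the non-orientable cases and the $\chi = [L]^2$ reduction are comparatively routine.
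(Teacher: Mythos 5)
Your route is genuinely different from the paper's, and it breaks down at the crucial point. The paper does not use symplectic cohomology or Viterbo restriction at all: it exploits the specific group $\pi_1(V)\cong\Z^2\rtimes_A\Z$, shows via a short algebraic lemma that for $L\in\{\mathbb{T}^2,\mathbb{RP}^2,K\}$ the image of $\pi_1(L)\to\pi_1(V)$ is either contained in $\Z^2\rtimes 0$ or cyclic, and then lifts $L$ to the corresponding cover of $V$. Those covers are identified explicitly (by exhibiting exact symplectomorphisms from the $1$-form $\lambda_0$) with $T^*(\R\times S^1)$ and with a subset of $T^*\mathbb{T}^2$ disjoint from the zero section, respectively, after which the Lalonde--Sikorav theorem (no closed exact Lagrangian in $T^*U$ for $U$ open) and Gromov's theorem (no exact Lagrangian in $T^*\mathbb{T}^2\setminus 0_{\mathbb{T}^2}$) finish the job in one stroke for all three topological types.

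Your preliminary reductions are fine as far as they go: the parity argument $[L]^2\equiv\chi(L)\pmod 2$ together with the vanishing of the intersection form on $V=[-1,1]\times M$ (everything can be displaced in the $[-1,1]$-direction) rules out $\mathbb{RP}^2$ and, over $\Z$, forces an orientable closed exact Lagrangian to be a torus; the paper states the same reduction at the start of its Section~4. But from that point on the proposal has two genuine gaps.

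First, the Klein bottle. The Shevchishin--Nemirovski non-existence results are stated for $\R^4$, $\mathbb{C}P^2$, and (in later work) uniruled symplectic $4$-manifolds; there is no blanket theorem forbidding Lagrangian Klein bottles in an arbitrary exact symplectic $4$-manifold, and invoking one here is not justified. Your fallback of passing to the orientation double cover does not rescue this either, because the double cover of $V$ is a different manifold and you would still need to rule out a torus there. The paper handles the Klein bottle with the same cover/lift argument as the other cases, using that the relation $aba=b$ in $\pi_1(K)$ combined with the hyperbolicity of $A$ (so $-1$ is not an eigenvalue of any $A^m$) forces the image of $\pi_1(K)$ in $\Z^2\rtimes_A\Z$ to be cyclic.

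Second and more seriously, the torus. Your intended contradiction is that the unit should die in $SH^*(V)$, but the paper's own computation (Corollary~\ref{cor:SH-torusbundle}, which follows from hypertightness) gives $SH^*_0(V)\cong H^*(M)$ as a direct summand, so the unit in $SH^0(V)$ is manifestly nonzero. Viterbo functoriality then produces a unital ring map $SH^*(V)\to SH^*(T^*\mathbb{T}^2)$, and both sides are nonzero, so there is no contradiction. There is also no theorem of the form ``$1\in\mathrm{Im}\bigl(H^*(V)\to SH^*(V)\bigr)$ iff there is no closed exact Lagrangian''; that statement is not true in general, and it is exactly the missing ingredient your argument would need. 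This is why the paper's proof goes through the explicit covers and the Gromov / Lalonde--Sikorav theorems rather than through $SH^*$ or the closed--open map: the torus bundle domain has too much symplectic cohomology for a vanishing-type argument to bite.
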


However, similarly to the McDuff domains, the torus bundle domains admit non-exact but weakly exact Lagrangian tori, namely the $\mathbb{T}^2$-fibers.

\medskip

 The McDuff domains (and finite quotients thereof corresponding to unit cotangent bundles of hyperbolic orbifold surfaces) and the torus bundle domains correspond to \emph{algebraic Anosov flows}. Those are the flows generated by Anosov vector fields which are invariant vector fields on manifold quotients of the Lie groups $\widetilde{\mathrm{SL}}\big(2,\R\big)$ (the universal cover of $\mathrm{SL}\big(2,\R\big)$) and $\mathrm{Sol}^3$ (the semi-direct product of $\R$ with $\R^2$ for the action $z\cdot (x,y) = (e^z x, e^{-z} y)$), respectively. Algebraic Anosov flows enjoy several important properties which are relevant in the construction of their associated Liouville structures: they are \emph{volume preserving}, and their stable and unstable foliations are \emph{smooth}. By classical theorems of Ghys~\cite{Gh92, Gh93}, those are essentially the only (volume preserving) Anosov flows with smooth Anosov splitting in dimension $3$.

\subsection{Symplectic cohomology and Rabinowitz Floer cohomology}

We are also interested in computing relevant symplectic invariants for Anosov Liouville domains, as they are homotopy invariants for the Anosov flow. We begin with their symplectic cohomology and Rabinowitz Floer cohomology, which can be studied in arbitrary dimension. The hypertightness of the contact boundary, which follows from the tautness of the weak-stable foliation of the Anosov flow for Anosov Liouville domains, plays a crucial role in the computation. Under a suitable assumption on the free homotopy classes of the closed Reeb orbits, we can entirely compute the symplectic cohomology \emph{as a ring}. We exploit the relations between symplectic cohomology and Rabinowitz Floer cohomology to drastically simplify the study of the ring structure on $SH^*$ and reduce it to the ring structure on the Rabinowitz Floer cohomology of the two contact boundary components. The latter are independent of the Liouville filling and can be computed independently of each other.\footnote{Note that in the case of Anosov Liouville domains, the skeleton is \emph{not} a stable Hamiltonian hypersurface. It is not possible to apply the usual neck-stretching argument to study holomorphic curves crossing it.}

\begin{thm}[Rabinowitz Floer cohomology and symplectic cohomology rings]\label{thm:RFH-SH} 
Let $V=[-1,1]\times M$ be a Liouville domain of dimension $2n$ such that each boundary component $M_\pm=\{\pm 1\}\times V$ is hypertight. Then its Rabinowitz Floer cohomology splits as a ring,
$$
   RFH^*(V)\cong RFH^*(M_-)\oplus RFH^*(M_+).
$$
Moreover, its symplectic cohomology splits as a $\Z$-module,
$$
   SH^*(V) = SH^*_-(V)\oplus SH^*_0(V) \oplus SH^*_+(V),
$$ 
where $SH^*_\pm(V)$ is generated by closed Reeb orbits on $M_\pm$ and $SH^*_0(V)\cong H^*(M)$ is generated by critical points on $V$.

Assume in addition that the free homotopy classes of positively or negatively parametrized closed Reeb orbits on $M_+$ are distinct from those on $M_-$. Then the summands $SH^*_\pm(V)$ and $SH_0(V)$, as well as $SH^*_{0-}(V)=SH^*_-(V)\oplus SH^*_0(V)$ and $SH^*_{0+}(V)=SH^*_0(V) \oplus SH^*_+(V)$, are subrings given by
\begin{itemize}
    \item $SH^*_0(V)\cong H^*(M)$ with the cup product;
    \item $SH^*_\pm(V)\cong RFH^*_{>0}(M_\pm)$;
    \item $SH^*_{0\pm}(V)\cong RFH^*_{\geq 0}(M_\pm)$. 
\end{itemize}
Here $RFH^*_{>0}(M_\pm)$ and $RFH^*_{\geq 0}(M_\pm)$ are the positive and nonnegative action parts of Rabinowitz Floer cohomology, respectively, with the product described in~\cite{CO18}. 
\end{thm}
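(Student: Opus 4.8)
The plan is to build everything on the action filtration of symplectic cohomology for a Hamiltonian adapted to the geometry $V = [-1,1]\times M$, exploiting that each boundary component is hypertight. First I would set up the standard Viterbo-type Hamiltonian that is $\approx 0$ on the bulk $[-1,1]\times M$ and linear (with positive slope) on the two cylindrical ends $[1,\infty)\times M_+$ and $(-\infty,-1]\times M_-$. Because there are \emph{no contractible Reeb orbits} on $M_\pm$, the only contractible Hamiltonian orbits are the constant ones, living in the bulk, so the contractible part of $SH^*$ reduces to $H^*(M)$ — this gives the summand $SH_0^*(V)\cong H^*(M)$. For the non-contractible orbits, neck-stretching is \emph{not} available at the skeleton (as the footnote warns), so instead I would argue directly: a non-constant Hamiltonian orbit in the linear region over $M_+$ (resp.\ $M_-$) is a Reeb orbit of $\alpha_+$ (resp.\ $\alpha_-$), and its free homotopy class in $V$ is the image of its class in $M_+$ (resp.\ $M_-$). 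The hypothesis that the positively/negatively parametrized Reeb classes on $M_+$ are disjoint from those on $M_-$ then shows that \emph{no Floer cylinder in $V$ can connect a generator over $M_+$ to one over $M_-$}: the two ends of such a cylinder would represent the same free homotopy class in $V$. This is the key point that splits the chain complex, hence $SH^*(V) = SH_-^*(V)\oplus SH_0^*(V)\oplus SH_+^*(V)$ as $\Z$-modules, and shows $SH_{0\pm}^*(V)$ are subcomplexes (the differential can only decrease action or stay within the $M_\pm$/bulk blocks).

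Next I would identify the summands with Rabinowitz Floer data of the boundaries. The comparison $SH_+^*(V)\cong RFH_{>0}^*(M_+)$ should follow from the standard long exact sequence relating $SH^*$, $H^*$, and $RFH^*$ (Cieliebak–Frauenfelder–Oancea): the positive-action part of $SH^*$ of a Liouville filling is computed by the positive Reeb orbits, and for a \emph{collar filling} $[1,\infty)\times M_+$ — which is what the $M_+$ end looks like after forgetting the rest — this is precisely $RFH_{>0}^*(M_+)$, a filling-independent quantity. Similarly $SH_{0+}^*(V) = SH_0^*(V)\oplus SH_+^*(V)$ matches the nonnegative-action part $RFH_{\geq 0}^*(M_+)$, with the $H^*(M)$-summand playing the role of the zero-action (critical point) part of $RFH^*$. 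The Rabinowitz splitting $RFH^*(V)\cong RFH^*(M_-)\oplus RFH^*(M_+)$ is then the "doubled" version of the same disjointness-of-homotopy-classes argument, now applied to Rabinowitz–Floer trajectories, which see \emph{both} positive and negative orbits on each component; here one also uses that the negatively parametrized orbits on $M_+$ are, by hypothesis, not in the same $V$-classes as anything on $M_-$.

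Finally, for the \emph{ring} statements I would transport the pair-of-pants product through these identifications. The subring structure of $SH_0^*(V)\cong (H^*(M),\cup)$ is the classical statement that the BV/pair-of-pants product on the low-energy part of $SH^*$ is the cup product (PSS). For $SH_{0\pm}^*(V)$ one checks that a pair-of-pants curve with two inputs in the $M_+\cup\text{bulk}$ block has its output there too — again an action/homotopy-class argument: the output orbit's free homotopy class is a product of the input classes, and if both inputs avoid the $M_-$-classes then so does the output. Concretely the product on $RFH_{\geq 0}^*(M_+)$ is the one constructed by Cieliebak–Oancea \cite{CO18}, and one must verify that the identification $SH_{0+}^*(V)\cong RFH_{\geq 0}^*(M_+)$ is compatible with products; this is essentially their comparison theorem applied to the collar filling. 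I expect the \textbf{main obstacle} to be precisely this last compatibility — keeping the product structures, the action filtrations, and the continuation maps aligned through the chain of isomorphisms, especially since one cannot neck-stretch at the skeleton and must instead control all Floer/PSS/pair-of-pants moduli directly via the free-homotopy-class bookkeeping on $V$ versus on $M_\pm$. A secondary technical point is verifying that the hypertightness is genuinely enough to kill \emph{all} unwanted curves (e.g.\ no bubbling of contractible pieces in the bulk can reconnect the two blocks), which should follow from the standard maximum principle on the cylindrical ends together with the exactness of $\lambda$.
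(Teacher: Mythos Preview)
Your approach differs substantially from the paper's, and it contains a genuine gap in the ring-structure argument.

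\textbf{The paper's route.} The paper does not work directly with pair-of-pants curves or homotopy-class bookkeeping. Instead it first proves the Rabinowitz splitting $RFH^*(V)\cong RFH^*(M_-)\oplus RFH^*(M_+)$ using only hypertightness: by Uebele's filling-independence result, $RFH^*(V)\cong RFH^*(\partial V)$ as a ring, and since $\partial V=M_-\sqcup M_+$ is disconnected this splits as a ring automatically. No assumption on free homotopy classes is needed here (note the theorem asserts this splitting \emph{before} ``Assume in addition\ldots''). The paper then computes the long exact sequence of the pair $(V,\partial V)$ via K\"unneth to see that the map $\iota:SH^*(V)\to RFH^*(V)$ is an \emph{injective ring homomorphism} and that $SH^*_0(V)\cong H^*(M)$ embeds diagonally. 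All the subring statements for $SH^*$ are then read off algebraically from the target ring $RFH^*(M_-)\oplus RFH^*(M_+)$: under the homotopy-class hypothesis $\iota$ sends $SH^*_\pm$ into $RFH^*(M_\pm)$, and since $\iota$ is a ring map into a product ring, mixed products vanish. The delicate vanishing of $\mu^{++}_0$ and $\mu^{--}_0$ is obtained by projecting $\iota\mu(a,b)$ to the ``wrong'' $RFH^*(M_\mp)$ summand and using the diagonal embedding of $SH^*_0$.

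\textbf{The gap in your argument.} Your key step for the ring structure is: ``the output orbit's free homotopy class is a product of the input classes, and if both inputs avoid the $M_-$-classes then so does the output.'' This does not follow from the hypothesis. The assumption only says that the free homotopy classes of (positively or negatively parametrized) Reeb orbits on $M_+$ are distinct from those on $M_-$; it says nothing about \emph{concatenations}. A priori, the concatenation of two $M_+$-orbit classes could be the class of an $M_-$-orbit, or could be trivial (so the output lands in $SH^*_0$). Your homotopy bookkeeping therefore does not rule out $\mu^{++}_-$, $\mu^{++}_0$, $\mu^{+-}_\bullet$, etc. The paper's detour through $RFH^*$ is precisely what circumvents this: injectivity of $\iota$ together with the ring splitting of the target forces these components to vanish, without ever needing to control concatenated homotopy classes.

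\textbf{A secondary point.} Your proof of the $RFH^*$ splitting (and also of the $\Z$-module splitting of $SH^*$) invokes the free-homotopy-class hypothesis, but the theorem asserts both of these under hypertightness alone. For $RFH^*$ the fix is the filling-independence argument above; for the $\Z$-module splitting of $SH^*$ the paper relies on action/maximum-principle lemmas (the closed analogues of \cite[Lemmas 2.2, 2.3]{CO18}) to exclude Floer cylinders going from one end into the other, rather than on homotopy classes.
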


\begin{remark}\label{rk:SH} Some remarks are in order.
\begin{enumerate}
    \item The assumption on hypertightness for $M_\pm$ holds for all examples of Liouville domains $V=[-1,1]\times M$ that we are aware of. We do not know if the assumption on free homotopy classes of Reeb orbits holds for any Anosov Liouville domain. In Lemma~\ref{lem:hypertight}, we verify that it holds for McDuff domains and torus bundle domains.
    \item We call symplectic/Rabinowitz Floer \emph{cohomology} what is called homology in~\cite{CFO10,CO18} and grade it by $n$ minus the Conley--Zehnder index, i.e., degree $*$ is replaced by $n-*$. 
    \item Here, we use the non $S^1$-equivariant version of symplectic cohomology. If $c_1(V)=0$ (which is the case in our examples, see Lemma \ref{lemma:symptriv}) it has an integer grading, and we fix such a choice.
    \item In some cases, similar arguments also impose strong restrictions on the higher structures (e.g., $A_\infty$, $L_\infty$-structure) on the symplectic cochain complex. However, these can be quite complicated in general, and involve curves having inputs/outputs from both $M_+$ and $M_-$.
    \item Under the assumption on the homotopy classes of Reeb orbits, Theorem~\ref{thm:RFH-SH} implies a rather curious algebraic description for $SH^*$: $A_0 := SH^*_0$ and $A_{\pm} := SH^*_{0 \pm}$ are sub-$\Z$-algebras of $A:= SH^*$, $I_\pm := SH^*_{\pm} \subset A_{\pm} \subset A$ are \emph{ideals} such that $I_- \cap I_+ =0$ and the quotients $A_{\pm} \slash I_\pm$ are isomorphic to $A_0$. Therefore, $SH^*$ is the fiber product of $\Z$-algebras
    $$\begin{tikzcd}
    A \arrow[r] \arrow[d]	    \arrow[dr, phantom, "\lrcorner", very near start] &		A_{-} \arrow[d]\\
    A_{+}	\arrow[r]               &	    A_0
    \end{tikzcd}$$ where the maps $A_{\pm} \rightarrow A_0$ are the quotient maps by $I_\pm$. This corresponds to ``gluing the affine schemes $\mathrm{Spec}(A_-)$ and $\mathrm{Spec}(A_+)$ along the subscheme $\mathrm{Spec}(A_0)$''.
\end{enumerate}
\end{remark}

For McDuff domains and torus bundle domains, which satisfy the assumptions of Theorem~\ref{thm:RFH-SH} by Lemma~\ref{lem:hypertight} below, the ring structures can be described more explicitly:

\begin{cor}[Symplectic cohomology rings of McDuff domains]\label{cor:SH-McDuff}
Let $V=[-1,1]\times M$ be a McDuff domain modeled on $M=S^*\Sigma$. Then the splitting in Theorem~\ref{thm:RFH-SH} becomes
$$
   SH^*(V) \cong tH^*(M)[t]\oplus H^*(M)\oplus H_{2-*}(\mathcal{L}^{nc}\Sigma),
$$ 
where $t$ is a variable of degree zero representing the $S^1$-fibre and $\mathcal{L}^{nc}\Sigma$ is the space of non-contractible loops on $\Sigma$. The subrings are the following:
\begin{itemize}
    \item $SH^*_{0-}(V)\cong H^*(M)[t]$ and $SH^*_-(V)\cong tH^*(M)[t]$ with product as polynomial rings;
    \item $SH^*_{0+}(V)\cong \widecheck{H}_{2-*}^{\geq 0}(\mathcal{L}\Sigma)$, the nonnegative action part of Rabinowitz loop homology with product described in~\cite{CHO};
    \item $SH^*_+(V)\cong H_{2-*}(\mathcal{L}^{nc}\Sigma)$ with the loop product. 
\end{itemize}
\end{cor}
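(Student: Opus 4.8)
The plan is to specialize the general splitting of Theorem~\ref{thm:RFH-SH} to $M = S^*\Sigma$ and to identify each summand and each product explicitly using known computations of Rabinowitz Floer cohomology of unit cotangent bundles. The summand $SH^*_0(V) \cong H^*(M)$ with cup product is already given by Theorem~\ref{thm:RFH-SH}, so there is nothing to do there. The work is concentrated in the two boundary components $M_\pm$, which are both diffeomorphic to $S^*\Sigma$ but carry contact forms inducing \emph{opposite} orientations; the Reeb flow of $\alpha_+$ is (a reparametrization of) the geodesic flow, while that of $\alpha_-$ is the geodesic flow run in the opposite direction, or equivalently the geodesic flow for the same metric after an orientation reversal. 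First I would verify that McDuff domains satisfy the hypotheses of Theorem~\ref{thm:RFH-SH}: hypertightness of $M_\pm$ follows since the geodesic flow on a hyperbolic surface has no contractible closed orbits, and the disjointness of free homotopy classes of positively/negatively parametrized Reeb orbits on $M_+$ versus $M_-$ is exactly the content of Lemma~\ref{lem:hypertight}. Granting this, $SH^*_{0\pm}(V) \cong RFH^*_{\geq 0}(M_\pm)$ and $SH^*_\pm(V) \cong RFH^*_{>0}(M_\pm)$ as rings.

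Next I would compute $RFH^*(M_+)$ where the Reeb flow is the geodesic flow. By the theorem of Cieliebak--Frauenfelder--Oancea (and the loop-space interpretation of Abbondandolo--Schwarz, Viterbo, Salamon--Weber), $RFH^*$ of $S^*\Sigma$ decomposes by free homotopy classes of loops, with the contractible part contributing an $H^*(M)[t]$-type tower (the zero-action part is $H^*(M)$, and positive powers of $t$, the degree-zero class represented by the fiber $S^1$, record the contractible closed geodesics organized by the Gysin/$S^1$ structure) and the non-contractible part contributing $H_{2-*}(\mathcal L^{nc}\Sigma)$, where the degree shift by $\dim \Sigma = 2$ and the passage to homology come from our cohomological grading convention (Remark~\ref{rk:SH}(2)) and from Poincaré duality on the free loop space. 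The ring structure on the nonnegative-action part $RFH^*_{\geq 0}$ is the Rabinowitz loop product of~\cite{CHO}, which restricts to the Chas--Sullivan loop product on the non-contractible sector $H_{2-*}(\mathcal L^{nc}\Sigma)$; this gives the stated identifications $SH^*_{0+}(V) \cong \widecheck H^{\geq 0}_{2-*}(\mathcal L\Sigma)$ and $SH^*_+(V) \cong H_{2-*}(\mathcal L^{nc}\Sigma)$ with the loop product. For $M_-$, the Reeb flow is the \emph{reversed} geodesic flow; since a hyperbolic metric is reversible, the set of closed geodesics is unchanged, but crucially the non-contractible orbits of $M_-$ lie in free homotopy classes disjoint from those of $M_+$ (again Lemma~\ref{lem:hypertight}), and for the purposes of the ring structure the relevant piece behaves like the polynomial tower: I would argue that $RFH^*_{\geq 0}(M_-) \cong H^*(M)[t]$ and $RFH^*_{>0}(M_-) \cong tH^*(M)[t]$ with the polynomial product, the variable $t$ of degree zero again being the fiber class. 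Assembling the three pieces via the fiber-product description of Remark~\ref{rk:SH}(5) yields the claimed module splitting $SH^*(V) \cong tH^*(M)[t] \oplus H^*(M) \oplus H_{2-*}(\mathcal L^{nc}\Sigma)$ together with all the subring identifications.

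I expect the main obstacle to be the precise matching of grading conventions and of the product structures across the three different packages of invariants being glued: symplectic cohomology with the convention of Remark~\ref{rk:SH}(2), Rabinowitz Floer cohomology as in~\cite{CFO10,CO18}, and Rabinowitz loop homology $\widecheck H_*(\mathcal L\Sigma)$ as in~\cite{CHO}. In particular one must check that the ring isomorphism $SH^*_{0+}(V) \cong RFH^*_{\geq 0}(M_+)$ from Theorem~\ref{thm:RFH-SH} is compatible with the loop-homology product on the image, that the unit of $SH^*_0(V) = H^0(M)$ corresponds to the constant-loop fundamental class under these identifications, and that the degree-zero fiber variable $t$ arising on the $M_-$ side (where the orientation is reversed) really is a polynomial generator rather than being subject to a relation — i.e.\ that no non-contractible orbit of $M_-$ accidentally shares a free homotopy class with a power of the fiber. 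A secondary technical point is to confirm that the asymmetry between the two boundary components — one contributing a polynomial ring, the other the full Rabinowitz loop homology — is genuine and stems only from the choice of which contact form one calls $\alpha_+$; swapping $\pm$ interchanges the two descriptions, consistently with the fiber-product picture being symmetric under exchanging $A_-$ and $A_+$.
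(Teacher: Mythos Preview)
Your proposal rests on a misidentification of the Reeb dynamics on $M_-$. You assert that ``the Reeb flow of $\alpha_-$ is the geodesic flow run in the opposite direction,'' but in the McDuff domain $\alpha_- = \alpha_{\mathrm{pre}}$ is the \emph{prequantization} contact form (see Section~\ref{sec:Mcduff_domains} and the proof of Lemma~\ref{lem:hypertight}): its Reeb vector field is the $S^1$-fibre rotation, and its closed orbits are precisely the iterated fibres, lying in the free homotopy classes $t^k$, $k\geq 1$. This is the source of the degree-zero polynomial variable $t$ and of the module identification $SH^*_-(V)\cong tH^*(M)[t]$. If $\alpha_-$ really carried the reversed geodesic flow, its closed orbits would be the same lifted geodesics as on $M_+$ with reversed orientation, directly contradicting the disjointness of free homotopy classes in Lemma~\ref{lem:hypertight} and producing a second copy of $H_{2-*}(\mathcal L^{nc}\Sigma)$ rather than a polynomial tower. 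Relatedly, your claim that the contractible sector of $RFH^*_{\geq 0}(M_+)$ contributes an ``$H^*(M)[t]$-type tower'' is also wrong: a hyperbolic surface has no contractible closed geodesics, so the contractible part of $RFH^*_{\geq 0}(M_+)$ is just the zero-action piece $H^*(M)$.

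The paper's argument for the $-$ side does not pass through Rabinowitz Floer cohomology of a cotangent bundle at all. Since the Reeb flow of $\alpha_{\mathrm{pre}}$ is periodic with all orbits of the same period, the Morse--Bott levels of the action functional are copies of $M$ fibred over $\Sigma$; one perturbs by a Morse function on $\Sigma$ and then on the $S^1$-fibres. Because $\pi_2(\Sigma)=0$, the $3$-punctured Floer spheres contributing to $\mu^{--}_-,\mu^{0-}_-,\mu^{-0}_-$ project to gradient Y-trees on $\Sigma$, whose counts give the cup product on $H^*(\Sigma)$; this yields the polynomial ring structure $H^*(M)[t]$ directly. Your treatment of the $+$ side is essentially correct once the spurious contractible tower is removed: filling-independence lets one compute $RFH^*(M_+)$ via $D^*\Sigma$, obtaining the Rabinowitz loop homology $\widecheck H_{2-*}(\mathcal L\Sigma)$ of~\cite{CHO}, whose positive-action part carries the Chas--Sullivan loop product.
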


\begin{cor}[Symplectic cohomology of torus bundle domains]\label{cor:SH-torusbundle}
Let $V=[-1,1]\times M$ be a torus bundle domain. Then the splitting in Theorem~\ref{thm:RFH-SH} becomes
$$
   SH^*(V)\cong \bigoplus_{\Gamma} H^*\big(S^1\big) \oplus H^{*}(M) \oplus \bigoplus_{\Gamma} H^*\big(S^1\big), 
$$
where $\Gamma$ is in bijection with $\mathbb Q\cap [0,1)$ and corresponds to the $\mathbb T^2$-fibers along which the $\alpha_\pm$-orbits have rational slope (i.e., the rational lines inside the cone of Figure \ref{fig:chords} below). 
\end{cor}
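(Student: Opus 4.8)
The plan is to feed the explicit Reeb dynamics of the torus bundle boundary into Theorem~\ref{thm:RFH-SH}. First I would invoke Lemma~\ref{lem:hypertight}, which says that a torus bundle domain $V=[-1,1]\times M$ satisfies both hypotheses of Theorem~\ref{thm:RFH-SH}: the two boundary components are hypertight, and the free homotopy classes of positively/negatively parametrized closed Reeb orbits on $M_+$ are disjoint from those on $M_-$. Theorem~\ref{thm:RFH-SH} then gives the $\Z$-module splitting $SH^*(V)=SH^*_-(V)\oplus SH^*_0(V)\oplus SH^*_+(V)$ with $SH^*_0(V)\cong H^*(M)$ --- this is the middle summand and costs nothing further --- and $SH^*_\pm(V)\cong RFH^*_{>0}(M_\pm)$, a group generated by the closed Reeb orbits of $\alpha_\pm$ on $M$. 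So it remains to identify these orbits and compute the corresponding Floer groups.

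Next I would analyze the Reeb flow of $\alpha_\pm$. For the contact forms produced in~\cite{H22a,Mas22}, the Reeb vector field of $\alpha_\pm$ is tangent to every $\mathbb{T}^2$-fiber of $M\to S^1$ and restricts there to a linear flow whose slope varies along the base according to the projectivization $\bar A\in\mathrm{PSL}(2,\R)$ of the hyperbolic monodromy. Since $A$ is hyperbolic, $\bar A$ has exactly two fixed points on $\mathbb{RP}^1$, both at irrational slope, so over one fundamental period of the base the slope function is strictly monotone, is never eventually constant, and attains each rational value exactly once. Hence the closed Reeb orbits of $\alpha_\pm$ lie precisely on the countably many ``rational fibers'', and this set is naturally in bijection with the set $\Gamma$ of rational lines inside the cone of Figure~\ref{fig:chords}, which in turn is in bijection with $\mathbb{Q}\cap[0,1)$.

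I would then carry out a Morse--Bott computation of $RFH^*_{>0}(M_\pm)$. Each rational fiber is a Morse--Bott family of closed Reeb orbits whose orbit space is a circle: nondegeneracy in the direction transverse to the fiber follows from the strict monotonicity of the slope (the transverse linearized return map has no eigenvalue $1$), while the fiber direction transverse to the orbit is the Morse--Bott direction. Perturbing $\alpha_\pm$ by a perfect Morse function on each such circle and running the Morse--Bott spectral sequence for the positive-action complex, each rational fiber contributes a copy of $H^*(S^1)$, placed in two consecutive degrees read off from the Robbin--Salamon index of the underlying orbit (computed by splitting the transverse linearized return map into its shear, i.e.\ parabolic, transverse-to-fiber part and the part coming from the Morse function). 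Distinct rational fibers cannot be joined by Floer cylinders because, by the reasoning used in Lemma~\ref{lem:hypertight}, their orbits represent primitive classes in $\Z^2\subset\pi_1(M)$ that are inequivalent under the monodromy action, hence lie in distinct free homotopy classes of $M$; so the complex splits over $\Gamma$ and $SH^*_\pm(V)\cong\bigoplus_\Gamma H^*(S^1)$. Performing this for both $\alpha_+$ and $\alpha_-$ (the relevant cone for $\alpha_-$ again being indexed by a set in bijection with $\mathbb{Q}\cap[0,1)$) and reassembling with the middle summand yields $SH^*(V)\cong\bigoplus_\Gamma H^*(S^1)\oplus H^*(M)\oplus\bigoplus_\Gamma H^*(S^1)$, as claimed; no product structure is asserted here, only the $\Z$-module splitting, so $RFH^*_{>0}$ with its product from~\cite{CO18} is not needed.

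The step I expect to be the main obstacle is the bookkeeping of multiple covers. On a fixed rational fiber the $k$-fold covers of the simple orbits form, for every $k\geq 1$, their own Morse--Bott circle, and these iterates lie in pairwise distinct free homotopy classes, so none of them is killed by a Floer differential; one must therefore verify that the Robbin--Salamon indices of all iterates of a given fiber coincide (as I expect, since the transverse return map is parabolic and does not accumulate rotation under iteration), so that the total contribution of that fiber is an infinite-rank module concentrated in two consecutive degrees, and then match the resulting bigraded object against $\bigoplus_\Gamma H^*(S^1)$. Keeping careful track of gradings and of the action filtration throughout the Morse--Bott limit $\eps\to 0$ is the delicate part; the identification of the Reeb dynamics and the reduction via Theorem~\ref{thm:RFH-SH} are comparatively routine.
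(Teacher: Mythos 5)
Your proposal is essentially the same as the paper's proof: invoke Lemma~\ref{lem:hypertight} to apply Theorem~\ref{thm:RFH-SH}, observe that $R_\pm$ is tangent to the $\mathbb{T}^2$-fibers and restricts to a linear flow with slope strictly monotone in $z$, so that closed orbits lie precisely on the rational fibers, which are indexed by $\Gamma\cong\mathbb{Q}\cap[0,1)$; note that distinct fibers carry orbits in distinct free homotopy classes, so the complex splits; and conclude by a Morse--Bott perturbation where each $S^1$-family of orbits contributes a copy of $H^*(S^1)$. (One small improvement you make over the paper's wording: you phrase the separation of fibers in terms of free homotopy classes rather than homology classes, which is what the Floer differential actually needs, since $H_1(\mathbb{T}^2)_A=\Z^2/(A-I)\Z^2$ is finite and so homology alone does not separate fibers.)

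The one place you correctly identify a real issue --- and the paper's proof glosses over it --- is the treatment of multiple covers. The paper's proof says ``for $z\in\Gamma$, the fiber over $z$ is a Morse--Bott $S^1$-family,'' but a single rational fiber in fact carries a Morse--Bott $S^1$-family for \emph{each} period $kT_1$, $k\geq 1$, and these lie in pairwise distinct free homotopy classes (since $(kp,kq)$ is never $A^n(p,q)$ for $k>1$), so none of them cancels. Thus the honest contribution of a single fiber is $\bigoplus_{k\geq 1}H^*(S^1)[\text{shift}(k)]$, not a single $H^*(S^1)$, and the correct indexing set for the summands of $SH^*_\pm(V)$ is the set of Morse--Bott $S^1$-families of orbits (equivalently, the lattice points inside the cone of Figure~\ref{fig:chords}), rather than the fibers themselves. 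Set-theoretically this is still in bijection with $\mathbb{Q}\cap[0,1)$, so the ungraded module statement survives; but as you observe, verifying that the degree shifts of all iterates of a given fiber coincide (which follows from the transverse return map being parabolic, so that the Robbin--Salamon index does not accumulate under iteration) is a genuine step that must be carried out if one wants the statement to hold as \emph{graded} modules. Neither the paper's proof nor your proposal completely closes this last bookkeeping step, and you are right to flag it as the delicate part.
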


\begin{remark}[Torus bundle domains: products]\label{rem:products_torus_bundle}
The product in $SH^*$ of two $\alpha_\pm$-orbits $\gamma_1,\gamma_2$ lying in two different $\mathbb{T}^2$-fibers with respective rational slopes $s_1,s_2$ (measured in a fundamental domain $\mathbb{T}^2 \times [0,\nu)$ of the torus bundle, see below for the construction) can only be a linear combination of orbits lying in  $\mathbb{T}^2$-fibers with specific slopes determined by $s_1$ and $s_2$. However, we do not know if there are any Floer solutions contributing to this product. 
\end{remark}

\subsection{Wrapped Floer cohomology}

The following is an open string version of Theorem~\ref{thm:RFH-SH}; unlike in the closed string case, which holds in more generality, the proof here requires the Topological Disk Lemma \ref{lemma:disk} and hence the result holds only for Anosov Liouville domains. For the purpose of exposition, the explicit computations for the McDuff domains and torus bundle domains are carried out in Section \ref{sec:WFH}.

\begin{thm}[Wrapped Floer cohomology]\label{thm:WFH} 
Let $V=[-1,1]\times M$ be a $4$-dimensional Anosov Liouville domain. Consider the $\Z$-algebra 
$$A=\bigoplus_{\Lambda,\Lambda'} HW^*(\mathcal{L}_\Lambda,\mathcal{L}_{\Lambda'}),
$$
where the sum runs over pairs of simple closed orbits of the Anosov flow.
\begin{enumerate}
    \item 
We have a natural splitting of $A$ as a $\Z$-module
$$
A=I_-\oplus A_0\oplus I_+,
$$
where 
$$A_0 := \bigoplus_{\Lambda} HW_0^*(\mathcal{L}_\Lambda,\mathcal{L}_{\Lambda}),
$$
$$
I_\pm:=\bigoplus_{\Lambda,\Lambda'} HW_\pm^*(\mathcal{L}_\Lambda,\mathcal{L}_{\Lambda'}),
$$
with $HW_0^*(\mathcal{L}_\Lambda,\mathcal{L}_{\Lambda}) \cong H^*\big(S^1;\Z\big)$ generated by intersection points of $\mathcal{L}_\Lambda$ with a perturbation of itself, and $HW_\pm^*(\mathcal{L}_\Lambda,\mathcal{L}_{\Lambda'})$ generated by chords from $\Lambda$ to $\Lambda'$ in $M_\pm$.
\item 
If we let
$$
A_{\pm}:=A_0\oplus I_\pm,
$$
then $A_0$ and $A_\pm$ are subrings of $A$, $I_\pm$ are ideals of $A$ such that $I_- \cap I_+ = 0$, and the product on $$A_0=\bigoplus_\Lambda H^*(\mathcal{L}_\Lambda; \Z)\cong \bigoplus_\Lambda H^*(S^1;\Z)$$ is the direct sum of the cup product on each $H^*(S^1;\Z)$.
\end{enumerate}
\end{thm}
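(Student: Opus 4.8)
\emph{Step 1: Floer data and generators.} The plan is to compute each $CW^*(\mathcal{L}_\Lambda,\mathcal{L}_{\Lambda'})$ from carefully adapted Floer data, read off the three families of generators, and then feed the Topological Disk Lemma~\ref{lemma:disk} into the counts of Floer strips and triangles. I would work on the completion $\widehat V$ and choose a Hamiltonian that is $C^2$-small and Morse near the skeleton and, on each cylindrical end over $M_+$ and over $M_-$, depends only on the corresponding Liouville coordinate and is linear of large slope; I perturb each $\mathcal{L}_\Lambda$ by a small exact isotopy making its self-intersections transverse, and take a cofinal sequence of such data for the direct limit. The time-one chords from $\mathcal{L}_\Lambda$ to $\mathcal{L}_{\Lambda'}$ then come in exactly three types: (a) \emph{interior} chords concentrated near $\{0\}\times\Lambda$, which occur only when $\Lambda=\Lambda'$ (distinct closed orbits of the flow are disjoint, so $\mathcal{L}_\Lambda\cap\mathcal{L}_{\Lambda'}=\varnothing$ for $\Lambda\neq\Lambda'$) and which, after a Morse perturbation on $\mathcal{L}_\Lambda\cong\R\times S^1$, are in bijection with critical points of a Morse function on $S^1$; (b) chords near $M_+$, in bijection with Reeb chords of $\alpha_+$ from $\Lambda$ to $\Lambda'$; (c) chords near $M_-$, likewise for $\alpha_-$. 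Write $C_0,C_+,C_-$ for the corresponding spans, so $CW^*(\mathcal{L}_\Lambda,\mathcal{L}_{\Lambda'})=C_0\oplus C_+\oplus C_-$ as a $\Z$-module, with $C_0=0$ unless $\Lambda=\Lambda'$.

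\emph{Step 2: the module splitting (part (1)).} I would show this decomposition descends to cohomology. A Floer strip is a punctured disk with one input and one output, so Lemma~\ref{lemma:disk} forbids a strip from a chord to an interior generator; an action-filtration argument for the chosen data rules out the reverse, so that $C_0$ is a subcomplex (resp.\ $C_+\oplus C_-$ a complementary one) whose cohomology is the Morse cohomology of $\mathcal{L}_\Lambda\simeq S^1$, namely $H^*(S^1;\Z)$. The remaining point — that no Floer strip connects a chord on $M_+$ to a chord on $M_-$, so that $C_+$ and $C_-$ are separately subcomplexes — is handled by a maximum-principle/no-escape argument on the two cylindrical ends, as in the closed-string case (Theorem~\ref{thm:RFH-SH}), supplemented by Lemma~\ref{lemma:disk} where the bare maximum principle fails because $\mathcal{L}_\Lambda$ joins the two ends through the skeleton. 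Taking cohomology and summing over $\Lambda$ (resp.\ over pairs) yields $A=A_0\oplus I_+\oplus I_-$ with $A_0\cong\bigoplus_\Lambda H^*(S^1;\Z)$ and $I_+\cap I_-=0$.

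\emph{Step 3: the ring structure (part (2)).} The product $\mu^2$ counts Floer triangles (two inputs, one output). If one input is a chord, Lemma~\ref{lemma:disk} forces the output to be a chord, hence to lie in $I_+\oplus I_-$; the same end-tracking argument as in Step 2 then pins it to the ideal of the same sign. This gives $\mu^2(I_\pm,A)\subseteq I_\pm$, $\mu^2(A,I_\pm)\subseteq I_\pm$ and $\mu^2(I_+,I_-)\subseteq I_+\cap I_-=0$, so the $I_\pm$ are two-sided ideals. For $A_0$: a triangle with both inputs interior has small energy and, for sufficiently small perturbation data, stays in a fixed neighborhood of the skeleton by a monotonicity estimate, so its output cannot be a chord at infinity; hence $\mu^2(A_0,A_0)\subseteq A_0$. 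Since $\mathcal{L}_\Lambda$ and $\mathcal{L}_{\Lambda'}$ are composable only for $\Lambda=\Lambda'$ there are no cross terms, and under the identification of $C_0$ with the Morse complex of $\mathcal{L}_\Lambda$ the induced product is the standard one, i.e.\ the cup product on $H^*(S^1;\Z)$. Finally $A_\pm=A_0\oplus I_\pm$ is a subring because it is closed: each of the four mixed terms lands in $A_0$ (a subring) or in $I_\pm$ (an ideal), hence in $A_\pm$.

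\emph{Main obstacle.} The delicate step is the separation of the $M_+$- and $M_-$-contributions — that $C_+$ and $C_-$ are separately closed under the differential and that products of chords do not mix the two ends — together with its interior counterpart, that no Floer strip or triangle runs off to infinity along the non-compact $\mathcal{L}_\Lambda$ so as to create or destroy interior generators. A maximum principle alone does not suffice here, precisely because the Lagrangian cylinders pass through the skeleton and link the two cylindrical ends; ruling out such Floer solutions is exactly the content of the Topological Disk Lemma~\ref{lemma:disk}, and hence of the tautness of the Anosov weak-stable foliation. The auxiliary ingredients — Morse--Bott transversality for the $S^1$-families of chords over the closed orbits, and the PSS-type identification of $C_0$ with the Morse complex of $\mathcal{L}_\Lambda$ — are routine.
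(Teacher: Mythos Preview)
Your treatment of Part~(1) is correct and essentially matches the paper: the triangular form of the wrapped differential follows from the open-string version of the maximum-principle argument in~\cite[Lemmas~2.2--2.3]{CO18} (no Floer strip can have its output in a cylindrical end without an input in that same end), and the remaining off-diagonal entries $\partial_W^{\pm,0}$ vanish by the Topological Disk Lemma.

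There is, however, a gap in Step~3. Your argument for the ideal property of $I_\pm$ hinges on the claim that an input in $M_+$ pins the output to $I_+$; combined with the symmetric statement this would give $\mu^2(I_+,I_-)\subseteq I_+\cap I_-=0$. But consider a Floer triangle with inputs $c_+\in M_+$, $c_-\in M_-$ and output $c$. Property~(C) forces $c$ to be a chord, yet nothing you have written determines its sign: the maximum principle allows the curve to enter \emph{both} cylindrical ends (each contains an asymptotic), and Lemma~\ref{lemma:disk} does not apply to the projected disk in $M$ because the output chord is traversed \emph{backward} along the boundary and is therefore \emph{negatively} transverse to $\mathcal{F}^{ws}$, violating the hypotheses of the lemma. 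The phrase ``the same end-tracking argument as in Step~2'' does not cover this case.

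The paper closes this gap by \emph{neck-stretching} along a contact-type slice just inside one of the boundary components. In the SFT limit, any triangle with asymptotics in both $M_+$ and $M_-$ breaks into a building one of whose components is a punctured disk with \emph{only positive} Reeb-chord asymptotics; projected to $M$, that component now does satisfy the hypotheses of Lemma~\ref{lemma:disk} and is therefore excluded. Once all mixed-end triangles are ruled out this way, the ideal structure and $\mu^2(I_+,I_-)=0$ follow immediately. The rest of your Step~3 (that $A_0$ is a subring carrying the cup product, and that $A_\pm$ is closed) is fine.
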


\begin{remark}
The $\Z$-algebra $A$ satisfies the same fiber product description as in Remark~\ref{rk:SH} (5) in terms of the $\Z$-algebras $A_\pm$, $A_0$ and the ideals $I_\pm$.
\end{remark}

\textbf{A word on gradings and coefficients.} Anosov Liouville manifolds have vanishing first Chern class since their symplectic tangent bundle is trivial (Lemma~\ref{lemma:symptriv}), so their wrapped Fukaya category comes with a natural $\Z$-grading. All of the Lagrangians that we consider are spin, and the Lagrangian cylinders have a canonical spin structure. Throughout the paper, we will use $\Z$ coefficients unless stated otherwise (we will need Novikov coefficients when dealing with weakly exact Lagrangians); our results remain valid over $\Z \slash 2 \Z$ or $\R$. 

\subsection*{Acknowledgments} 

O.~Lazarev was supported by NSF postdoctoral fellowship, award \#1705128, and by the Simons Foundation through grant \#385573, the Simons Collaboration on Homological Mirror Symmetry. 
T.~Massoni is grateful to his PhD advisor John Pardon for his constant support and encouragement; to Jonathan Zung for insightful conversations about Anosov flows; to Surena Hozoori for numerous valuable discussions about his work; to Patrick Massot for sharing a note on torus bundle domains. 
A.~Moreno is grateful to Vivek Shende, for insightful conversations during the author's fellowship at the Mittag-Leffler Institute in Djursholm, and in Uppsala, Sweden; to Alex Ritter, for productive discussions at Stanford University; to Rafael Potrie, for helpful inputs on Anosov theory. This author is supported by the National Science Foundation under Grant No.\ DMS-1926686.

The first and fourth authors thank the Institute for Advanced Study, Princeton, for its hospitality in the academic year 2021/22, during which most of this work was carried out.
The authors are also grateful to Georgios Dimitroglou Rizell for very helpful inputs and discussions, particularly about Section~\ref{sec:closed_Lag_McDuff}, and to Mohammed Abouzaid for his comments about the Lagrangian foliation by cotangent fibers in the McDuff example.

\section{Non-Weinstein Liouville domains}

In this section, we review the construction, described by Mitsumatsu~\cite{M95} and generalized by Hozoori~\cite{H22a}, of Liouville domains with disconnected contact-type boundary associated to an Anosov flow on a $3$-manifold. In particular, we will describe the McDuff domains and the torus bundle domains in more detail. In~\cite{Mas22}, the third author introduces the notion of \emph{Anosov Liouville structure}, a generalization of the previous constructions, and shows a correspondence (up to homotopy) between these structures and Anosov flows on $3$-manifolds. This gives rise to a well-defined class of \emph{Anosov Liouville manifolds} for which our main results apply. 

\subsection{Anosov Liouville manifolds}\label{sec:anosov_Liouville_manifolds} 

We will only sketch the main construction and refer to~\cite{H22a} and ~\cite{Mas22} for a more complete exposition. Consider $\phi_t:M\rightarrow M$ a smooth Anosov flow generated by a smooth non-singular vector field $X$ on a closed oriented $3$-manifold $M$. Let
\begin{align}
TM=\langle X\rangle \oplus E^s \oplus E^u \label{splitting}
\end{align}
be the Anosov splitting, i.e., it is a \emph{continuous}, $d\phi_t$-invariant splitting of $TM$ such that $\phi_t$ is exponentially contracting on $E^s$ (the stable subbundle) and exponentially expanding on $E^u$ (the unstable subbundle). More precisely, for some (any) Riemannian metric $g$ on $M$, there exist constants $C > 0$ and $\mu > 0$ such that for every $v \in E^s$ and $t \in \mathbb{R}$,
$$
\Vert d\phi_t(v)\Vert \leq C e^{- \mu t} \Vert v \Vert,
$$
and for every $v \in E^u$ and $t \in \mathbb{R}$,
$$
\Vert d\phi_t(v)\Vert \geq C e^{\mu t} \Vert v \Vert.
$$

The metric $g$ is said to be \emph{adapted} to $\phi_t$ if the constant $C$ above equals $1$. An Anosov flow on a closed manifold always admits a smooth adapted Riemannian metric, see~\cite[Proposition 5.1.5]{FH19}.

We will further assume that $E^s$ and $E^u$ are oriented, and that the orientation of $M$ coincides with the orientation of the splitting~\eqref{splitting}. This can always be achieved after passing to a suitable double cover of $M$. We will call such a flow an \emph{oriented} Anosov flow. Denote $$E^{ws}:=\langle X\rangle \oplus E^s, \qquad E^{wu}:=\langle X \rangle \oplus E^u$$ the weak-stable and weak-unstable subbundles, respectively. Even though $E^s$ and $E^u$ are only continuous in general, a classical result of Hasselblatt implies that for a smooth Anosov flow in dimension $3$, $E^{ws}$ and $E^{wu}$ are of class $\mathcal{C}^1$. They integrate to codimension one foliations $\mathcal{F}^{ws}$ and $\mathcal{F}^{wu}$ called the weak-stable and weak-unstable foliations, respectively, and they are both \emph{taut foliations}.\footnote{A codimension one foliation on a closed manifold is \emph{taut} if every leaf intersects a transverse circle, i.e., a closed loop transverse to the foliation. By Novikov's theorem, transverse circles to a taut foliation are non-contractible 
(see~\cite[Theorem 4.37]{C07}, noting that taut foliations have no Reeb components). As explained below, there is a Reeb vector field everywhere transverse to $\mathcal{F}^{ws}$. This vector field preserves a (contact) volume form, so the tautness of $\mathcal{F}^{ws}$ follows from~\cite[Theorem 4.29]{C07}. Similarly, $\mathcal{F}^{wu}$ is taut as it is the weak stable foliation of the reverse of the Anosov flow.}

For an adapted metric $g$, let $e_s$ and $e_u$ be two unit vector fields of class $\mathcal{C}^1$ spanning $E^s$ and $E^u$, respectively. There exist continuous functions $r_s, r_u : M \rightarrow \R$ such that
$$ [X, e_s] = -r_s e_s, \qquad [X, e_u] = -r_u e_u,$$
and since the metric is adapted, $r_s < 0 < r_u$. The functions $r_s$ and $r_u$ are called the \emph{expansion rates} of the flow in the stable and unstable direction, respectively (see~\cite[Section 3]{H22a}). These quantities depend on the choice of the metric.

By duality, there exist $1$-forms $\alpha_s$ and $\alpha_u$ of class $\mathcal{C}^1$ with $\ker \alpha_u=E^{ws}, \ker \alpha_s=E^{wu}$, $\alpha_s(e_s) = \alpha_u(e_u) = 1$, and satisfying
\begin{align*}
\mathcal{L}_X \alpha_s = r_s \, \alpha_s, \quad \mathcal{L}_X \alpha_u = r_u \, \alpha_u.
\end{align*}
One easily checks that
\begin{align} \label{eq:standard}
    \alpha_- := \alpha_u + \alpha_s, \quad \alpha_+ := \alpha_u - \alpha_s,
\end{align}
are $1$-forms of class $\mathcal{C}^1$ defining two contact structures $\xi_\pm=\ker \alpha_\pm$ intersecting transversely along $\langle X \rangle$. Moreover, the $1$-form $\lambda$ on $V= \R_s \times M$ given by
\begin{align*}
\lambda :=e^{-s} \alpha_- + e^s \alpha_+
\end{align*}
is a Liouville form (i.e., the \emph{continuous} $2$-form $\omega := d \lambda$ is non-degenerate), whose Liouville vector field has a positive $\partial_s$ component for $s> 0$, a negative $\partial_s$ component for $s < 0$, and is tangent to $\{0\} \times M$. Moreover, the Reeb vector fields $R_\pm$ of $\alpha_\pm$ satisfy
\begin{align*}
\alpha_u(R_+) = \alpha_u(R_-) = \frac{-r_s}{r_u - r_s}>0,
\end{align*}
hence they are both positively transverse to the weak-stable foliation $\mathcal{F}^{ws}$, so $\alpha_\pm$ are \emph{hypertight} by the tautness of $\mathcal{F}^{ws}$. We call this construction of a ($\mathcal{C}^1$) Anosov Liouville structure the \emph{standard construction}.

At this point, $\lambda$ is only of class $\mathcal{C}^1$. It can be smoothened to obtain a Liouville structure on the domain $[-A, A] \times M$, for some arbitrary $A > 0$. Being more careful, it is possible to smoothen $\alpha_\pm$ and obtain $\widetilde{\alpha}_\pm$ such their contact structures $\widetilde{\xi}_\pm=\ker \widetilde{\alpha}_\pm$ still intersect transversally along $\langle X \rangle$, and their  Reeb vector fields are still positively transverse to $\mathcal{F}^{ws}$, see~\cite{Mas22}.
The latter property will be crucial in the proofs of Theorem~\ref{thm:unit} and Theorem~\ref{thm:nonsplit}.

\begin{figure}
    \centering
    \includegraphics[width=0.6\linewidth]{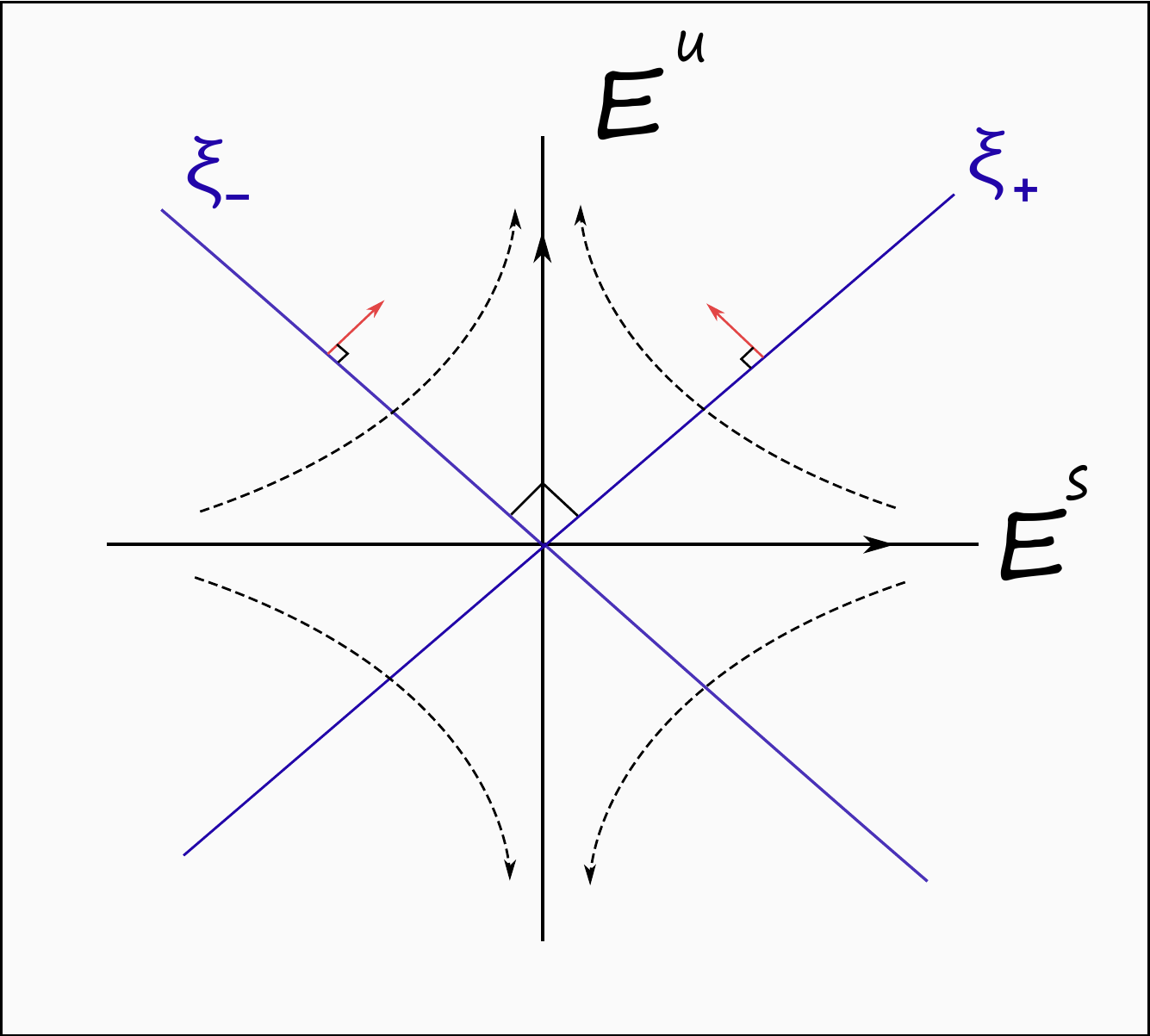}
    \caption{The Anosov splitting of $N=TM/\langle X \rangle$. The contact structures $\xi_\pm$, when flowed along $X$, approach $E^u$. The red arrows show the co-orientations.}
    \label{fig:Anosovsplitting}
\end{figure}

This construction fits in the more general framework of the following
\begin{definition} \label{def:alstructure}
An \emph{Anosov Liouville structure} on $\R \times M$ is a Liouville structure induced by a smooth Liouville form $\lambda$ of the form
\begin{align} \label{eq:exponentialanosov}
\lambda = e^{-s} \alpha_- + e^s \alpha_+
\end{align}
where $(\alpha_-, \alpha_+)$ is a pair of contact forms satisfying the following two conditions.
\begin{enumerate}
    \item The contact plane fields $\xi_\pm = \ker \alpha_\pm$ are transverse everywhere.
    \item The line distribution $\xi_- \cap \xi_+$ is spanned by an Anosov vector field.
\end{enumerate}
\end{definition}

The pair $(\xi_-,\xi_+)$ constitutes a \emph{bi-contact structure} on $M$, i.e., a
transverse pair of contact plane fields with opposite orientations, meaning that $\alpha_\pm \wedge d\alpha_\pm$ induce volume forms on $M$ with opposite orientations. See Figure \ref{fig:Anosovsplitting}.

We say that the Anosov Liouville structure \emph{supports} an Anosov flow $\phi_t$ if the vector field $X$ generating $\phi_t$ satisfies $X \in \xi_- \cap \xi_+$ (and the orientations are compatible). The following theorem is proved in~\cite{Mas22}.

\begin{thm}[\cite{Mas22}]
For any smooth oriented Anosov flow $\phi_t$ on a closed oriented $3$-manifold $M$, the space of Anosov Liouville structures on $V = \R \times M$ supporting $\phi_t$ is non-empty and contractible. Moreover, the map that sends an Anosov Liouville structure to the underlying Anosov flow (up to positive time reparametrization) is a Serre fibration with contractible fibers, hence a homotopy equivalence.
\end{thm}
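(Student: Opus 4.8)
The plan is to exhibit the space $\mathcal{AL}(\phi_t)$ of Anosov Liouville structures on $V=\R\times M$ supporting a fixed smooth oriented Anosov flow $\phi_t$ as a contractible space, and then to upgrade this to the fibration statement by letting $\phi_t$ vary. First I would set up the parameter space: by Definition~\ref{def:alstructure}, an Anosov Liouville structure supporting $\phi_t$ is determined by a pair $(\alpha_-,\alpha_+)$ of smooth contact forms whose kernels $\xi_\pm$ are everywhere transverse, with $\xi_-\cap\xi_+=\langle X\rangle$ (with compatible orientations), where $X$ generates $\phi_t$. So I would first parametrize such pairs. A clean way is to observe that $\xi_\pm$ are each a plane field containing $X$, hence correspond to a line field in the quotient bundle $N=TM/\langle X\rangle$; equivalently, via the $\mathcal{C}^1$ (weak-stable, weak-unstable) splitting attached to $\phi_t$, each $\xi_\pm$ is the graph of a section of $\mathrm{Hom}(E^u,E^s)$ (or its inverse) — the ``slope'' of the contact plane relative to the Anosov splitting (cf.\ Figure~\ref{fig:Anosovsplitting}). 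The transversality condition $\xi_-\pitchfork\xi_+$ and the contact conditions with the correct co-orientations then become open conditions on these slope functions plus a sign condition, cutting out a convex-fiber subbundle, and the space of such sections is convex (or at least contractible) once one checks the conditions are preserved under convex combinations.

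The key technical steps, in order, would be: (1) Show the space of pairs $(\alpha_-,\alpha_+)$ satisfying (1)--(2) of Definition~\ref{def:alstructure} with $\xi_-\cap\xi_+=\langle X\rangle$ deformation retracts onto a nice model — e.g.\ the ``standard'' pair $\alpha_-=\alpha_u+\alpha_s$, $\alpha_+=\alpha_u-\alpha_s$ built from the (smoothened) invariant forms of $\phi_t$ — by interpolating slopes and then rescaling contact forms (the space of contact forms for a fixed co-oriented contact structure being a convex cone of positive functions times a fixed form, hence contractible). Here one must be careful that the interpolation stays transverse and stays contact; this is where the explicit description via slopes relative to the Anosov splitting pays off, since near the standard pair the contact condition is an open neighborhood that one can check is star-shaped. (2) Note that the passage from $(\alpha_-,\alpha_+)$ to the Liouville form $\lambda=e^{-s}\alpha_-+e^s\alpha_+$ is a homeomorphism onto its image, and that the smoothing procedure from~\cite{Mas22} can be carried out continuously in families (a parametrized version of the smoothing), so the space of resulting smooth Liouville structures is homotopy equivalent to the space of admissible pairs, which by (1) is contractible. (3) For the fibration statement, consider the map $\Pi$ from the total space of all Anosov Liouville structures on $V$ (ranging over all supported flows) to the space of smooth oriented Anosov flows modulo positive reparametrization, sending a structure to the vector field spanning $\xi_-\cap\xi_+$; one shows $\Pi$ has the homotopy lifting property by lifting a path of Anosov flows to a path of slope-subbundles and invoking the parametrized version of (1)--(2), and one has just shown the fibers are contractible, so $\Pi$ is a weak homotopy equivalence, and a Serre fibration with contractible fibers is a homotopy equivalence (e.g.\ admits a section by obstruction theory and the section is a homotopy inverse).

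The main obstacle I expect is step (1) together with the regularity/smoothing bookkeeping in step (2): the invariant forms $\alpha_s,\alpha_u$ are only $\mathcal{C}^1$, so ``the space of admissible pairs'' must be taken in an appropriate (e.g.\ $\mathcal{C}^1$, or smooth-after-smoothing) topology, and one must ensure that the contractibility argument — interpolating slopes relative to the Anosov splitting, then adjusting conformal factors — genuinely preserves both the transversality of $\xi_-,\xi_+$ and the two-sided contact conditions (opposite orientations of $\alpha_\pm\wedge d\alpha_\pm$) along the whole homotopy, uniformly in any compact family of parameters. Getting the smoothing to be continuous (indeed, to produce a deformation retraction rather than just a bijection on homotopy groups) in families is the delicate point; this is precisely the content imported from~\cite{Mas22}, so in the present paper I would simply cite that construction and its parametrized version, and spend the proof verifying that the slope-interpolation plus contact-form-rescaling gives an explicit contraction of the fibers and a path-lifting function, from which the Serre fibration and homotopy equivalence statements follow formally.
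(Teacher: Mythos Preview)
This theorem is not proved in the paper at all: it is stated with attribution \cite{Mas22} and used as input, with no argument given. So there is no ``paper's own proof'' to compare against. Your proposal is therefore not a comparison target but an attempt to reconstruct the cited result.

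As a sketch, your outline is in the right spirit --- parametrize admissible pairs $(\alpha_-,\alpha_+)$ by their slopes in $N=TM/\langle X\rangle$ relative to the Anosov splitting, contract to the standard pair, and do this parametrically --- but the step you flag as ``the main obstacle'' is in fact the entire content of the theorem, and you do not resolve it. Specifically, the claim that the contact condition (with the correct signs of $\alpha_\pm\wedge d\alpha_\pm$) is preserved under a linear interpolation of slopes, or that the admissible region is star-shaped about the standard pair, is not obvious: the contact condition on a plane field $\xi\supset\langle X\rangle$ depends on how the slope \emph{varies along the flow}, not just on its pointwise value, so convexity in the slope variable does not automatically give convexity of the contact locus. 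Your proposal asserts this can be ``checked'' but gives no mechanism. Likewise, the paper's remark following the theorem indicates that the actual argument in \cite{Mas22} proceeds via a different object (\emph{Anosov Liouville pairs}, defined by requiring both $(\alpha_-,\alpha_+)$ and $(-\alpha_-,\alpha_+)$ to be Liouville pairs), which suggests the proof is not a direct slope-interpolation but uses that stronger symmetric condition to obtain the needed convexity/contractibility. In short: your plan is reasonable scaffolding, but the load-bearing step is precisely the one you defer, and the paper gives you nothing further to compare with since it simply imports the result.
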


It follows that any Anosov Liouville structure is Liouville homotopic to one such that the Reeb vector fields of the contact boundaries are both \emph{positively transverse to some taut foliation} (the weak-stable foliation of the underlying Anosov flow). Moreover, the above theorem implies that all the symplectic invariants (symplectic cohomology, wrapped Fukaya category, etc.) of an Anosov Liouville manifold/domain are in fact \emph{invariants of the underlying Anosov flow}.

\begin{remark}
(cf.~footnote~\ref{footnote1}).
In this paper, we work with Liouville domains and Liouville manifolds. Anosov Liouville manifolds are finite type Liouville manifolds, and hence are Liouville isomorphic to the completions of Liouville domains obtained by truncating them. Conversely, some of our examples are Liouville domains (like the McDuff and torus bundle domains), whose completions are Anosov Liouville manifolds. 

We use Anosov Liouville manifolds in Definition \ref{def:alstructure} because an Anosov vector field has a canonically associated Liouville manifold, up to exact symplectomorphism. Due to various choices (like the metric and the smoothing), there is \textit{not} a canonical Liouville domain one could associate to an Anosov flow (and thereby obtain quantitative symplectic invariants from it). 
\end{remark}

By the following lemma, the wrapped Fukaya category of an Anosov Liouville manifold has a natural $\Z$-grading.

\begin{lemma} \label{lemma:symptriv}
The symplectic tangent bundle $(TV, \omega)$ of an Anosov Liouville manifold is trivial. Therefore, $c_1(TV, \omega) = 0$.
\end{lemma}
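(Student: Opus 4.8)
The plan is to exhibit an explicit trivialization of $(TV,\omega)$ as a symplectic vector bundle, using the product structure $V=\R_s\times M$ together with the Anosov splitting of $TM$. Since $V$ is an open manifold that retracts onto $M$, it suffices to trivialize $TV|_M$ (or rather $TV|_{\{0\}\times M}$) as a symplectic bundle; the conclusion $c_1(TV,\omega)=0$ then follows because $c_1$ is natural and $TV$ is pulled back from $M$ under the retraction. So the real content is to produce a global symplectic frame of $TV$ along $M$.

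The key step is to identify a suitable frame. Over $\{0\}\times M$ we have the splitting $TV=\R\langle\partial_s\rangle\oplus TM$, and $TM=\langle X\rangle\oplus E^s\oplus E^u$ from \eqref{splitting}, with unit $\mathcal{C}^1$ sections $e_s,e_u$ and the generator $X$. I would take the four vector fields $\partial_s,\ X,\ e_s,\ e_u$ as a candidate global frame of $TV$ along $M$ — this is a genuine frame precisely because the Anosov splitting is a direct sum decomposition and $E^s,E^u$ were assumed oriented, so $e_s,e_u$ are globally defined. It remains to check this frame is (homotopic through frames to) a symplectic one for $\omega=d\lambda$ with $\lambda=e^{-s}\alpha_-+e^s\alpha_+$. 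Computing $\omega$ at $s=0$ from $\mathcal{L}_X\alpha_s=r_s\alpha_s$, $\mathcal{L}_X\alpha_u=r_u\alpha_u$ and $d\alpha_\pm=d\alpha_u\mp d\alpha_s$, one gets that $\omega$ pairs $\partial_s$ with the contact-type directions and pairs the $M$-directions among themselves in a way controlled by the expansion rates $r_s<0<r_u$ and the structure equations; the pairings $\omega(e_s,X),\omega(e_u,X),\omega(\partial_s,X)$ etc.\ are explicit (nonzero or zero) functions, nowhere degenerate since $\omega$ is a symplectic form. Applying the Gram--Schmidt/linear-algebra normalization pointwise (which is canonical once the nondegenerate pairing matrix is known, and varies continuously), one deforms $\{\partial_s,X,e_s,e_u\}$ to a symplectic frame, giving the trivialization of $(TV,\omega)|_M$.

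The main obstacle I anticipate is purely bookkeeping: verifying that the pairing matrix of $\omega$ in the frame $\{\partial_s,X,e_s,e_u\}$ is everywhere invertible and has the block structure needed to apply a canonical (hence continuous) symplectic normalization, using only the $\mathcal{C}^1$ regularity available and the structure equations $\mathcal{L}_X\alpha_{s,u}=r_{s,u}\alpha_{s,u}$, $\alpha_s(e_s)=\alpha_u(e_u)=1$. A cleaner route that avoids computing $\omega$ in detail: $\omega$ tames some almost complex structure $J$ on $V$ (the space of $\omega$-tame $J$ is nonempty and contractible), so $c_1(TV,\omega)=c_1(TV,J)$, and it suffices to trivialize $(TV,J)$ as a complex bundle. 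Since $TV|_M\cong \underline{\R}\oplus TM$ and $TM\cong \langle X\rangle\oplus E^s\oplus E^u$ is a sum of three oriented (hence trivial, being line bundles) real line bundles over the $3$-manifold $M$, $TV|_M$ is a trivial real bundle of rank $4$; a trivial real bundle carries a compatible complex structure with trivial determinant, so $c_1=0$. I would present whichever of these is shortest, most likely the second, since it sidesteps the explicit computation of $\omega$ entirely and only uses the orientability of the Anosov sub-bundles, which was already arranged in the setup.
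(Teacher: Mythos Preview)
Your preferred ``cleaner route'' has a genuine gap. Triviality of the underlying real bundle does \emph{not} imply $c_1=0$, nor symplectic triviality. Over a $3$-complex, complex rank-$2$ bundles are classified by $c_1\in H^2(\,\cdot\,;\Z)$, while oriented real rank-$4$ bundles are classified by $w_2\in H^2(\,\cdot\,;\Z/2)$; the forgetful map sends $c_1\mapsto c_1\bmod 2$. So knowing the real bundle is trivial only tells you $c_1$ is even. The sentence ``a trivial real bundle carries a compatible complex structure with trivial determinant, so $c_1=0$'' conflates \emph{some} complex structure on the trivial bundle with the one determined (up to homotopy) by $\omega$; these need not agree.

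Your first approach is closer, but the step ``apply symplectic Gram--Schmidt, which is canonical and continuous'' is also not valid as stated: the space of real frames modulo symplectic frames is $GL^+(4,\R)/Sp(4)\simeq SO(4)/U(2)\cong S^2$, which is not contractible, so an arbitrary global real frame cannot in general be deformed to a symplectic one. What \emph{does} work here is that your particular frame $\{\partial_s,X,e_s,e_u\}$ has enough structure at $s=0$ (namely $\omega(\partial_s,e_s)\neq 0$, $\omega(X,e_u)=2r_u\neq 0$, and $\omega(\partial_s,X)=\omega(\partial_s,e_u)=\omega(X,e_s)=0$) that a specific normalization goes through continuously. But you would need to actually carry out and display this computation; it is not automatic.

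The paper's proof sidesteps both issues with a short direct construction that does not use the Anosov splitting (or the non-smooth $e_s,e_u$) at all. It takes only the smooth non-vanishing $X\in\xi_-\cap\xi_+$ and any smooth $1$-form $\theta$ on $M$ with $\theta(X)=1$, defines $X_s,X_\theta$ by $\omega(\,\cdot\,,X_s)=ds$ and $\omega(\,\cdot\,,X_\theta)=\theta$ (adjusting $\theta$ so that $\theta(X_s)=0$), and checks in a line that $\{\partial_s,X_s,X,X_\theta\}$ is a global smooth symplectic frame on all of $V$, using only $\alpha_\pm(X)=0$. This is both simpler and more general than working through the $\mathcal{C}^1$ splitting at $s=0$ and extending by retraction.
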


\begin{proof} 
Let $X$ be a non-singular vector field generating the underlying Anosov flow and $\theta$ be a smooth $1$-form on $M$ satisfying $\theta(X) = 1$. We define two vector fields $X_s$ and $X_\theta$ on $V=\R_s\times M$ by
\begin{align*}
    \omega(\cdot,X_s) = ds, \qquad
    \omega(\cdot,X_\theta) = \theta.
\end{align*}
From $\omega=d\lambda=ds\wedge(e^s\alpha_+-e^{-s}\alpha_-) + e^{-s} d\alpha_- + e^s d\alpha_+$, we get
$$\big(e^s \alpha_+ - e^{-s} \alpha_-\big)(X_s) = \omega(\partial_s,X_s)= 1,$$ 
so replacing $\theta$ with $\theta - \theta(X_s) \left(e^s \alpha_+ - e^{-s} \alpha_- \right)$, we can assume that $\theta(X_s) = 0$. 
Then, one easily checks that $\{\partial_s, X_s, X, X_\theta \}$ is a symplectic trivialization of $(TV, \omega)$.
\end{proof}

As mentioned in the Introduction, an Anosov Liouville manifold $(V = \R \times M, \lambda)$ supporting an Anosov flow $\phi_t$ admits many interesting \emph{exact} Lagrangian submanifolds obtained from closed orbits of the flow in the following way. Let $\Lambda \subset M$ be a simple closed orbit of $\phi_t$ and let $\mathcal L_\Lambda := \R \times \Lambda \subset V$. Since $T\Lambda \subset \xi_- \cap \xi_+$, $\Lambda$ is \emph{Legendrian} for both $\xi_-$ and $\xi_+$, and $\lambda_{|\mathcal L_\Lambda} \equiv 0$, so $\mathcal L_\Lambda$ is a \emph{strictly exact} Lagrangian submanifold of $V$. In particular, the Liouville vector field $X_\lambda$ is tangent to $\mathcal L_\Lambda$, and $\mathcal L_\Lambda$ is cylindrical. The boundary at infinity of $\mathcal L_\Lambda$ is simply given by two copies of $\Lambda$, one in each component of the boundary at infinity of $V$. Since $\phi_t$ possesses infinitely many simple closed orbits, we obtain infinitely many objects in the wrapped Fukaya category of $(V, \lambda)$. These Lagrangians are obviously spin.

It is also worth noting that for an Anosov Liouville structure $\lambda$ obtained from the standard construction, the leaves of the weak-stable foliation $\mathcal{F}^{ws} \subset \{0\} \times M$ are also strictly exact Lagrangians, i.e., $\lambda$ vanishes on $E^{ws}$ along $M_0 = \{0\} \times M$. Indeed, $\lambda = 2 \alpha_u$ along $M_0$ by~\eqref{eq:standard}.
However, due to classical results in Anosov theory \cite{V74}, the Lagrangians leaves of $\mathcal{F}^{ws}$ are copies of either $\mathbb{R}^2$ or a cylinder (depending on whether they contain a closed orbit or not). In particular they are not closed submanifolds, or properly embedded with Legendrian boundary, and so do \emph{not} define elements in the wrapped Fukaya category. Besides, the regularity of the foliation $\mathcal{F}^{ws}$ is no better than $\mathcal{C}^1$ in general. Nevertheless, if it is smooth, it is possible to smoothen $\lambda$ in such a way that the leaves of $\mathcal{F}^{ws}$ are still strictly exact Lagrangians.

\begin{remark}
In~\cite{M95} and~\cite{H22a}, $\lambda$ is defined as 
\begin{align} \label{eq:linearanosov} 
(1-t) \alpha_- + (1+t) \alpha_+
\end{align} on $[-1, 1]_t \times M$, yielding a Liouville domain rather than a Liouville manifold. We warn the reader that there are (at least) two notions of \emph{Liouville pairs} in the literature, one using a linear interpolation between $\alpha_+$ and $\alpha_-$ as in~\eqref{eq:linearanosov}, and the other using the exponential interpolation as in~\eqref{eq:exponentialanosov}. Although closely related, these two notions are \emph{different}: there exist pairs of contact forms $(\alpha_-, \alpha_+)$ such that $e^{-s} \alpha_- + e^s \alpha_+$ is a Liouville form on $\R_s \times M$, but $(1-t) \alpha_- +(1+t) \alpha_+$ is \emph{not} a Liouville form on $[-1,1]_t \times M$, see~\cite{Mas22}. In the present paper, we stick to the exponential version.
\end{remark}

\begin{remark}
In~\cite{Mas22}, the third author, building upon the work of Hozoori~\cite{H22a}, defines a space of pairs of contact forms called \emph{Anosov Liouville pairs}. Those are pairs of contact forms $(\alpha_-, \alpha_+)$ such that both $(\alpha_-, \alpha_+)$ and $(-\alpha_-, \alpha_+)$ are Liouville pairs (for the exponential version of the definition~\eqref{eq:exponentialanosov}). The space of Anosov Liouville pairs is homotopy equivalent to the space of (smooth) Anosov flows on a three manifold. Anosov Liouville pairs give rise to Anosov Liouville structures, but the converse is not necessarily true. Besides, the definition of Anosov Liouville pairs makes no reference to Anosov flows, as opposed to Anosov Liouville structures. In that sense, Anosov Liouville pairs are purely contact topological objects.
\end{remark}

        \subsection{McDuff domains}\label{sec:Mcduff_domains}

We now review the construction due to McDuff \cite{McD91}, corresponding to manifolds with $\mathrm{SL}(2,\mathbb{R})$ geometry. Let $\Sigma$ be a closed orientable surface of genus $g\geq 2$. Fix a hyperbolic metric on $\Sigma$ and let $\sigma$ be the  area form associated to the metric, with total area $-\chi(\Sigma)$. Consider the twisted cotangent bundle $$(T^*\Sigma, \omega_\sigma=d\lambda_{\mathrm{can}}+\pi^*\sigma),$$ 
where $\pi:T^*\Sigma\rightarrow \Sigma$ is the projection and $\lambda_{\mathrm{can}}=p\,dq$ is the canonical Liouville form. 
For $0<a < 1 < b$, consider the subdomain $$(V,\omega)=(\{p \in T^*\Sigma:a\leq |p|=r \leq b\}, \omega_{\sigma}\vert_V)\subset (T^*\Sigma,\omega_\sigma).$$ Since $\pi^*\sigma$ is exact away from the zero section, $\omega$ is an exact symplectic form on $V$, and one checks that the associated Liouville vector field points outwards at each boundary component. Therefore, $(V,\omega)$ is a Liouville domain of the form $([a,b]\times S^*\Sigma,d\lambda_{\sigma})$, where $\lambda_{\sigma}=\lambda_{\mathrm{can}}+\lambda_{\mathrm{pre}}$, with $d\lambda_{\mathrm{pre}}=\pi^*\sigma$. The $1$-form $\lambda_{\mathrm{pre}}$ is a prequantization form, i.e., a connection for the principal $S^1$-bundle $\pi:S^*\Sigma\rightarrow \Sigma$. The canonical contact structure is $\xi_\mathrm{can}=\ker \alpha_\mathrm{can}$ on $S^*\Sigma\subset T^*M$, and $\xi_\mathrm{pre}=\ker \alpha_\mathrm{pre}$ is the $S^1$-invariant prequantization contact structure on $S^*\Sigma\to\Sigma$, where we denote $\alpha_\mathrm{can}=\lambda_{\mathrm{can}}\vert_{S^*\Sigma}$ the standard contact form on $S^*\Sigma$, and $\alpha_\mathrm{pre}=\lambda_{\mathrm{pre}}\vert_{S^*\Sigma}$. 

We may understand this construction in terms of \emph{ideal Liouville domains} \cite[Section 4.2]{MNW13}. 
Observe that $\lambda_\sigma=r\alpha_\mathrm{can} + \alpha_\mathrm{pre},$ where $r \in (0,+\infty)$. If we denote $$M_r:=\{p \in T^*\Sigma: |p|=r\},$$ we have
$$
\omega_\sigma\vert_{M_r}=rd\alpha_\mathrm{can} +d\alpha_\mathrm{pre}=r\left(d\alpha_\mathrm{can}+\frac{1}{r}d\alpha_\mathrm{pre}\right).
$$

It follows that the characteristic line field of $M_r$ is the kernel of $d\alpha_\mathrm{can} +\frac{1}{r}d\alpha_\mathrm{pre}$, whose integral curves correspond to \emph{magnetic geodesics} of geodesic curvature $k_g\equiv 1/r$. This interpolates between the vertical flow generated by the $S^1$-fiber $R_\mathrm{pre}$ (the Reeb vector field of $\alpha_\mathrm{pre}$), and the geodesic flow generated by $R_\mathrm{can}$ (the Reeb vector field of $\alpha_\mathrm{can}$), going through the horocycle flow at $r=1$ (which is not a Reeb flow). See Figure \ref{fig:charflow}.

\begin{figure}
    \centering
    \includegraphics[width=1\linewidth]{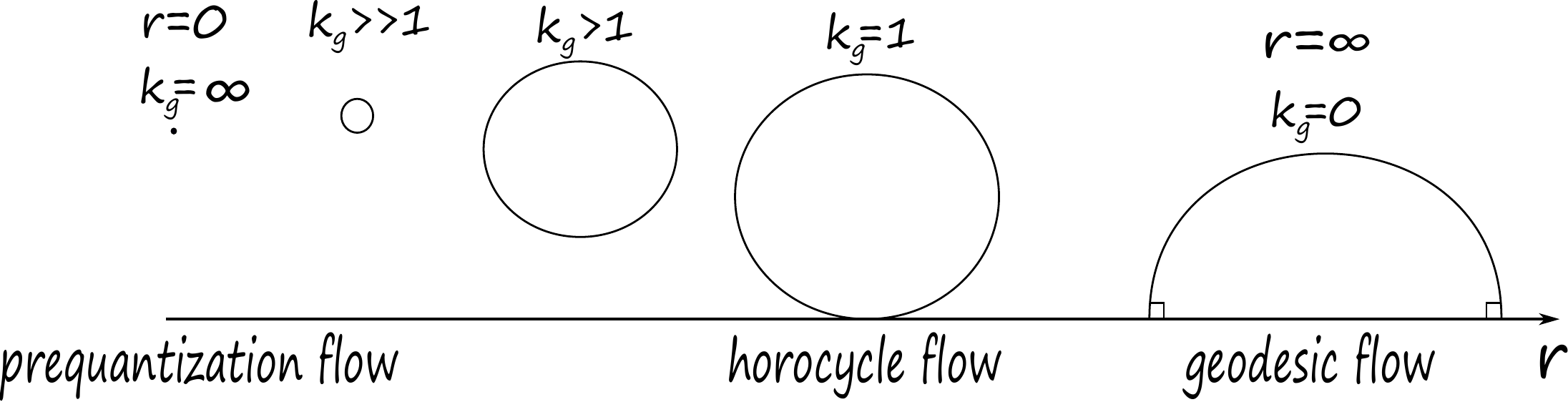}
    \caption{The characteristic flow at each $r$-slice $M_r$, viewed in the universal cover $\mathbb{H}$.}
    \label{fig:charflow}
\end{figure}

Identifying $S^*\Sigma$ with $\mathrm{PSL}(2,\R)\slash \pi_1(\Sigma)$, the contact forms $\alpha_\mathrm{can}$ and $\alpha_\mathrm{pre}$ can also be obtained as the quotient of two invariant contact forms on $\mathrm{PSL}(2,\R)$, see~\cite{Ge95} for more details. Alternatively, $\alpha_\mathrm{can}$ and $\alpha_\mathrm{pre}$ lift to contact forms $\widetilde{\alpha}_\mathrm{can}$ and $\widetilde{\alpha}_\mathrm{pre}$ on the unit cotangent bundle $S^*\mathbb{H}$ of the hyperbolic half-plane $\mathbb{H}$. The usual $\mathrm{PSL}(2,\R)$ action on $\mathbb{H}$ induces an action on $T\mathbb{H}$, which induces an action on $T^*\mathbb{H}$ using the musical isomorphism $T\mathbb{H} \cong T^*\mathbb{H}$ for the hyperbolic metric, and this action can be restricted to $S^*\mathbb{H} \subset T^*\mathbb{H}$. Here, we use the bundle metric on $T^*\mathbb{H}$ induced by the hyperbolic bundle metric on $T\mathbb{H}$ by the musical isomorphism. The forms $\widetilde{\alpha}_\mathrm{can}$ and $\widetilde{\alpha}_\mathrm{pre}$ are both invariant under this $\mathrm{PSL}(2,\R)$ action on $S^*\mathbb{H}$. Using the standard coordinates $(x,y) \cong x+iy =z$ on $\mathbb{H}$ and writing $(p_x, p_y) \cong p_x + ip_y = p \in \C$ for the dual coordinates, the bundle metric on $T^* \mathbb{H} \cong \mathbb{H} \times \C$ is simply $y^2 \vert dp\vert^2$. If $\varphi \in \R \slash 2 \pi \Z$ denotes the angular coordinate on the fibers for this metric, we have
\begin{align}
    \widetilde{\alpha}_\mathrm{can} &= \frac{1}{y}\big(\cos(\varphi) dx + \sin(\varphi) dy \big), \\
    \widetilde{\alpha}_\mathrm{pre} &= \frac{1}{y} dx + d\varphi. \label{eq:preinH}
\end{align}
Note that $d \widetilde{\alpha}_\mathrm{pre} = \frac{dx \wedge dy}{y^2}$ is the hyperbolic volume form on $\mathbb{H}$. Here we normalize the prequantization form so that it integrates to $2\pi$ along the fibers. It will be useful to write these forms using the \emph{Fermi coordinates} $(r, t)$ on $\mathbb{H}$ for the geodesic $\gamma : t \mapsto i e^t$ (see, e.g.,~\cite{Hu97}). Here, $r, t \in \R$ and in these coordinates, the hyperbolic metric becomes $dr^2 + \cosh(r)^2 dt^2 $. The usual $(x,y)$ coordinates are related to the Fermi coordinates by 
$$(x,y) = \left( \tanh(r) e^t, \frac{1}{\cosh(r)}e^t \right).$$ For a fixed $t_0 \in \R$, the points in $\mathbb{H}$ with Fermi coordinates $(r,t_0)$ for $r\in \R$ describe the geodesic in $\mathbb{H}$ orthogonal to $\gamma$ and passing through $\gamma(t_0)$. The Fermi coordinates also induce an angular coordinate $\theta \in \Z \slash 2\pi \Z$ on the fibers of $S^*\mathbb{H} \cong \R^2 \times S^1$,\footnote{The correspondence between $\varphi$ and $\theta$ is given by $\theta = \varphi + \arctan \sinh(r)$.} 
and the forms $\widetilde{\alpha}_\mathrm{can}$ and $\widetilde{\alpha}_\mathrm{pre}$ become
\begin{align}
    \widetilde{\alpha}_\mathrm{can} &= \cos(\theta) dr + \cosh(r) \sin(\theta) dt, \label{eqalphacanfermi}\\
    \widetilde{\alpha}_\mathrm{pre} &= \sinh(r) dt + d\theta. \label{eqalphaprefermi}
\end{align}
Recall that $\mathrm{PSL}(2,\R)$ is generated by the matrices
\begin{align*}
T_\tau := \begin{pmatrix} 1 & \tau \\ 0 & 1  \end{pmatrix}, \quad S:= \begin{pmatrix} 0 & 1 \\ -1 & 0  \end{pmatrix},
\end{align*}
where $\tau \in \R$. The action of $T_\tau$ on $S^*\mathbb{H}$ in the usual coordinates is given by 
$$ T_\tau \cdot (z, \varphi) = (z+\tau, \varphi),$$ 
and the action of $S$ in Fermi coordinates is given by $$ S \cdot (r,t,\theta) = (-r, -t, \theta + \pi).$$
It easily follows that the forms $\widetilde{\alpha}_\mathrm{can}$ and $\widetilde{\alpha}_\mathrm{pre}$ are invariant under the $\mathrm{PSL}(2,\R)$ action on $S^* \mathbb{H}$ and descend to $1$-forms on any manifold quotient of $S^* \mathbb{H}$, including $S^* \Sigma$. Moreover, writing $\alpha_+ = \alpha_\mathrm{can}$ and $\alpha_-= \alpha_\mathrm{pre}$, it follows from the above formulas that
\begin{align}
\alpha_+ \wedge d\alpha_+ &= - \alpha_- \wedge d\alpha_- > 0, \label{eq:geiges1}\\
\alpha_- \wedge d\alpha_+ &= \alpha_+ \wedge d\alpha_- = 0,  \label{eq:geiges2}
\end{align}
where the orientation of $S^*\Sigma$ comes from the canonical contact form $\alpha_\mathrm{can}$ (i.e., the fibers of $S^*\Sigma$ are \emph{negatively} oriented). In the terminology of~\cite{MNW13}, $(\alpha_-, \alpha_+)$ is a \emph{Geiges pair}. One can easily check that the Reeb vector fields $R_\pm$ satisfy $R_\pm \in \xi_{\mp}$. The (ideal) Liouville form $\lambda_\sigma = r \alpha_+ + \alpha_-$ does not quite match Definition~\ref{def:alstructure}, but it can be modified as follows. For $0 <a < 1 < b$, we consider the $1$-parameter family of $1$-forms $\{\lambda_t\}_{t \in [0,1]}$ on $[a,b] \times M$ defined by $$\lambda_t := r \alpha_+ + \left(\frac{t}{r} + 1-t \right)\alpha_-.$$ For $t=0$, it is simply $\lambda_\sigma$ and for $t=1$, it corresponds to $e^{-s} \alpha_- + e^s \alpha_+$ on $[\ln a, \ln b] \times M$ after the change of variable $r=e^s$. It easily follows from~\eqref{eq:geiges1} and ~\eqref{eq:geiges2} that $\{\lambda_t\}_t$ is a Liouville homotopy.
This deformation allows us to view the McDuff domain as $(V=[-1,1]\times M, \lambda=e^{-s} \alpha_- + e^s \alpha_+)$.

Alternatively, one can also write $$ \lambda_\sigma = r \alpha_+ + \alpha_- = \sqrt{r} \left( \sqrt{r} \alpha_+ + \frac{1}{\sqrt{r}} \alpha_-\right) = e^s \big( e^s \alpha_+ + e^{-s} \alpha_- \big) = e^s \lambda,$$ where we let $\sqrt{r} = e^s$. Note that the Liouville vector fields of $\lambda$ and $e^s \lambda$ are positively proportional to each other, and therefore share the same skeleton. Using the Geiges relations~\eqref{eq:geiges1} and~\eqref{eq:geiges2}, one can show that the Liouville vector field of $\lambda$ is of the form $$X_\lambda = \tanh(2s) \partial_s - \frac{1}{\cosh(2s)} X,$$ where $X$ is a suitable non-singular vector field on $M$ satisfying $X \in \xi_- \cap \xi_+$. In particular, $X$ is tangent to the underlying Anosov flow and the skeleton of $\lambda$ is exactly $\{0\} \times M$. Hence, the Liouville vector field $X_\lambda$ is tangent to the skeleton where it coincides with (the opposite of) the underlying Anosov flow. This implies that the skeleton of $\lambda_\sigma$ is exactly $S^*\Sigma \subset T^*\Sigma \setminus 0_\Sigma$.

More generally, one can take further manifold quotients of $S^*\mathbb{H}$ to obtain unit cotangent bundles of hyperbolic orbifold surfaces.

\medskip

\textbf{The underlying Anosov flow.} For the McDuff domains, the underlying Anosov flow generated by $\xi_-\cap \xi_+$ may be described as follows. Let $G\in \xi_-\subset TM$ be the vector field generating the geodesic flow, and let $X$ be the Anosov vector field spanning $\xi_-\cap \xi_+$. If we take a complex structure $J$ on $\Sigma$ which is compatible with the metric and the orientation, then we can lift it to $\xi_-$ so that $JG=X$, which is compatible with orientations. Alternatively, we can view $J$ as a diffeomorphism $J:M\rightarrow M$ whose differential satisfies $J_*G=X$, and so if $\Phi:\mathbb{R}\times M\rightarrow M$ denotes the geodesic flow, then the flow of $X$ is $\Psi_t=J\circ \Phi_t \circ J^{-1}$. Concretely, this means that the time-$t$ flow of $X$ starting at $(q,v)\in M$ is $(q,v)\mapsto \Psi_t(q,v)=J\Phi_t(q,J^{-1}v)$. Since the geodesic flow preserves angles, we see that an orbit of $X$ through $(q,v)\in M$ is the positive conormal lift of the geodesic $\gamma(t)=\pi(\Phi_t(q,J^{-1}v)) \in \Sigma$ that passes through $(q,J^{-1}v)$, where $\pi:M\rightarrow \Sigma$ is the bundle projection. 

\medskip 

\textbf{McDuff domains: exact Lagrangians.} The Lagrangians described in the Introduction have a precise description in this case, as the positive halves of the conormal bundles of oriented geodesics. Indeed, consider a closed orbit $\Lambda_\gamma \subset M$ of the Anosov flow, where $\gamma=\pi(\Lambda_\gamma)$ denotes its geodesic footprint in $\Sigma$. As we already noted, $\Lambda_\gamma$ is Legendrian for both contact structures $\xi_\pm$, hence 
$$
\mathcal{L}_\gamma=[-1,1]\times \Lambda_\gamma \subset V=[-1,1]\times M
$$
is an exact Lagrangian which has Legendrian boundary in $\partial V$. By construction, 
we have
$$\mathcal{L}_\gamma = \left\{ p \in \gamma^* T^*\Sigma :  p(\dot{\gamma})=0, \ \mathrm{det}\big(\dot{\gamma},  p^{\sharp}\big) > 0 \right\} \subset T^*\Sigma \setminus 0_\Sigma$$
the positive conormal bundle of the oriented geodesic $\gamma\subset \Sigma$, where $p^\sharp$ is the image of $p$ under the musical isomorphism $\sharp : T^*\Sigma \rightarrow T \Sigma$.

\medskip

\textbf{McDuff domains: weakly exact Lagrangian tori.} A Lagrangian $L \subset V$ is \emph{weakly exact} if $\omega \cdot \pi_2(V,L) =0$. While there are no weakly exact closed Lagrangians in the standard end of the McDuff domain (see Lemma~\ref{lemma:weakly}), we can find weakly exact Lagrangian tori in the prequantization end, viewed as the positive symplectization $([0,+\infty)\times S^*\Sigma, d(e^s \alpha_{\mathrm{pre}}))$ of the prequantization form. These tori then lie in the McDuff manifold away from the skeleton. Indeed, consider an embedded geodesic $\gamma\subset \Sigma$, and let 
$$
T_\gamma=S^*\Sigma\vert_\gamma \rightarrow \gamma
$$
viewed as an $S^1$-bundle over $\gamma$. To see that $T_\gamma$ is Lagrangian, we can take a collar neighbourhood $\mathcal{N}(\gamma)=\gamma \times [-\epsilon,\epsilon]\subset \Sigma$ of $\gamma$ with coordinates $(r,t)$, so that the prequantization form on $S^*\mathcal{N}(\gamma)=S^1\times \mathcal{N}(\gamma)$ looks like $\alpha_{\mathrm{pre}}=\sinh(r)dt +d\theta$. Then one readily sees that $d(e^s\alpha_{\mathrm{pre}})$ vanishes on $T_\gamma,$ which is parametrized by $(t,\theta)$ in these coordinates. Since $\gamma$ and the $S^1$-fiber are non-contractible in $V$, we see that $\pi_2(V,T_\gamma)=0$, which implies that $T_\gamma$ is weakly exact; note that $T_\gamma$ is not exact, as $\alpha_{\mathrm{pre}}$ is not exact along $T_\gamma$. However, in Section \ref{sec:closed_Lag_McDuff}, we will construct an exact Lagrangian torus in the isotopy class of $T_\gamma$.

        \subsection{Torus bundle domains}

We now describe the torus bundle domains, which correspond to closed quotients with Sol geometry.

On $\R^3$ with coordinates $(x,y,z)$, we consider the pair of $1$-forms
\begin{align} \label{form}
\alpha_\pm := \pm e^z dx + e^{-z}dy.
\end{align}
These form a Geiges pair (they satisfy~\eqref{eq:geiges1} and~\eqref{eq:geiges2}) whose Reeb vector fields are
\begin{align} \label{reeb}
R_\pm = \frac{1}{2}\left( \pm e^{-z} \partial_x + e^z \partial_y \right).
\end{align}
Moreover, $\alpha_\pm(R_\mp) = \alpha_\pm(\partial_z) = 0$ and so $\xi_\pm = \ker \alpha_\pm=\langle R_\mp, \partial_z\rangle$. On $\R^4  = \R_s \times \R^3$, we consider the $1$-form
\begin{align*}
\lambda_0 := &  e^{-s} \alpha_- + e^s \alpha_+ \\
= & 2\sinh(s) e^z dx + 2\cosh(s) e^{-z} dy.
\end{align*}
An elementary computation shows that $\w_0 := d\lambda_0$ is an exact symplectic form on $\R^4$. The Liouville vector field for $\lambda_0$ is
\begin{align} \label{liouv}
X_0 = \tanh(2s) \partial_s - \frac{1}{\cosh(2s)} \partial_z.
\end{align}

Let $A \in \mathrm{SL}(2,\Z)$ be a positive hyperbolic matrix (satisfying $\mathrm{tr}(A) > 2$). Then there exists $P \in \mathrm{SL}(2,\R)$ and $\nu \in \R \setminus \{0\}$ such that 
$$ PAP^{-1} = \begin{pmatrix}
e^{\nu} & 0\\
0 & e^{-\nu}
\end{pmatrix} =: D_\nu$$
and we have a commuting diagram
$$
\xymatrix{\R^2/\Z^2\times\R \ar[d]^{P\times\mathrm{id}} \ar[r]^\phi & \R^2/\Z^2\times\R \ar[d]^{P\times\mathrm{id}}\\
\R^2/P(\Z^2)\times\R \ar[r]^\psi & \R^2/P(\R^2)\times\R}
$$
with the maps $\phi(v,z)=(Av,z-\nu)$ and $\psi(v,z)=(D_\nu v,z-\nu)$. 
The $1$-forms $\alpha_\pm$ are invariant under translation by the lattice $P\big(\Z^2\big) \subset \R^2_{x,y}$ as well as under the map $\psi$, so they descend to the mapping torus $M=((\R^2/P(\Z^2)\times\R_z)/\psi$.
Hence, $\lambda_0$ descends to a Liouville  form $\lambda$ on $V =\R_s \times M$. The expression~\eqref{liouv} for the Liouville vector field is still valid in this quotient, so $\R \times M$ is a Liouville domain with disconnected end, and $[-1,1] \times M$ is a Liouville domain with disconnected boundary. The expression~\eqref{liouv} also implies that the skeleton of $(V, \lambda)$ is $\{0\} \times M$. The Liouville vector field is tangent to it, where it corresponds to (the opposite of) the underlying Anosov flow. Note that by~\eqref{reeb}, $R_\pm$ are tangent to the $\mathbb{T}^2$-fibers and restrict to linear flows on them.

By the diagram above, $M$ is diffeomorphic (via $P\times\mathrm{id}$) to the $\mathbb{T}^2$-bundle over $S^1$ with monodromy $A$ and the underlying Anosov flow is simply the suspension of the Anosov diffeomorphism of the torus induced by $A$. In this model, the Reeb vector fields take the form
\begin{align} \label{reeb2}
R_\pm(z) = \frac{1}{2}\left( \pm e^{-z} v_x + e^z v_y \right)
\end{align}
with the eigenvectors $v_x=P^{-1}\partial_x$ and $v_y=P^{-1}\partial_y$ of $A$ to the eigenvalues $e^{\pm\nu}$. 

\medskip

\textbf{Torus bundle domains: exact Lagrangians.} The Lagrangians $\mathcal{L}_\Lambda$ described in the Introduction also appear in the torus bundle domains, and admit a more explicit description. Indeed, the hyperbolic matrix $A \in \mathrm{SL}(2,\Z)$ naturally acts on $\mathbb{T}^2 = \R^2 \slash \Z^2$, and this action has a lot of periodic points. A finite orbit $\mathcal{O}$ for this action gives rise to a closed orbit $\Lambda=\Lambda_\mathcal{O}$ for the vector field $\partial_z$ on $M$. $\Lambda_\mathcal{O}$ is then a closed, connected Legendrian submanifold of $(M, \xi_\pm)$ and $$\mathcal L_\mathcal{O} = \mathcal{L}_\Lambda=[-1,1] \times \Lambda_\mathcal{O} \subset V$$ is an exact cylindrical Lagrangian submanifold of $(V, \lambda)$. 

\medskip

\textbf{Torus bundle domains: Lagrangian torus fibration.} As will be shown, $\left(V, \lambda \right)$ has no closed exact Lagrangian tori, yet it does have interesting non-exact but weakly exact Lagrangian tori. Namely, the torus fibration $M \rightarrow S^1$ gives rise to a torus fibration $V \rightarrow [-1,1] \times S^1$ whose fibers are Lagrangian. There is no clear analogue of these in the McDuff domains, as the corresponding plane distribution spanned by $\{R_-, R_+\}$ is contact. 

Let $\mathbb{T}_{s,z}$ denote the fiber over $(s,z) \in [-1,1] \times S^1$. The fibration homotopy exact sequence implies that $\pi_2(V, \mathbb{T}_{s,z})=0$, so these tori are weakly exact. Their Floer homology is well-defined \emph{over the Novikov ring $\Lambda_\mathbb{Z}$},\footnote{The Novikov ring $\Lambda_\mathbb{Z}$ is defined as the subring of $\Z \llbracket \R \rrbracket$ whose elements are of the form $\sum_i n_i t^{a_i}$, where $n_i \in \Z$, $a_i \in \R$, and $a_1 < a_2 < \dots$, $a_i \rightarrow +\infty$.} invariant under Hamiltonian isotopies, and straightforward to compute: $HF^*(\mathbb{T}_{s,z}) = H^*(\mathbb{T}^2 ; \Z) \otimes \Lambda_\mathbb{Z}$, and $HF^*(\mathbb{T}_{s,z}, \mathbb{T}_{s',z'})=0$ if $(s,z) \neq (s',z')$. Therefore, these Lagrangian tori are pairwise non-Hamiltonian isotopic. Moreover, the Floer homology of $\mathcal{L}_\mathcal{O}$ with any $\mathbb T_{s,z}$ is independent of $(s,z)$, given by  
$$
HF^*(\mathcal{L}_\mathcal{O},\mathbb T_{s,z})=\bigoplus_{p \in \mathcal{O}} \Lambda_\mathbb{Z}\cdot p,
$$
since $\mathcal{L}_\mathcal{O}\cap\mathbb T_{s,z}=\mathcal{O}$ and it is easy to show that there are no non-trivial Floer strips between these intersection points. Indeed, consider a Floer strip $u:D\to\R\times M$ joining $p$ to $p'$ with lower boundary $u_-$ on  $\mathcal{L}_\mathcal{O}$ and upper boundary $u_+$ on $\mathbb{T}_{s,z}$. Its composition $v$ with the projection $\R\times M\to M$ has lower boundary $v_-$ on $\Lambda_\mathcal{O}$ and upper boundary $v_+$ on $\mathbb{T}_{s,z}$.
Note that the projection $\pi:M\to S^1$ maps $\mathbb{T}_{s,z}$ to one point and restricts to a finite covering $\Lambda_\mathcal{O}\to S^1$. Thus, $\pi\circ v_-$ is a contractible loop in $S^1$, and therefore the endpoints of its lift $v_-$ agree, $p=p'$. It follows that $v_+$ is a contractible loop at $p$ in $\mathbb{T}_{s,z}$, so $u$ is topologically trivial, and therefore constant, and is not counted in the differential.

        \subsection{Homotopy classes of Reeb orbits}

The following lemma will be useful for computing the product structure on the symplectic cohomology of the McDuff and torus bundle domains.

\begin{lemma}\label{lem:hypertight}
Let $V=[-1,1]\times M$ be a McDuff domain or a torus bundle domain. Then the free homotopy classes of positively or negatively parametrized closed Reeb orbits on $M_+$ are distinct from those on $M_-$.
\end{lemma}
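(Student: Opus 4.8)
The statement is about two explicit families of contact manifolds, so the plan is to treat the McDuff domains and the torus bundle domains separately, in each case identifying the free homotopy classes of the closed Reeb orbits on $M_\pm$ with concrete topological data and checking that the two sets are disjoint.

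For the torus bundle domains, recall from~\eqref{reeb2} that on $M_\pm = \{\pm 1\}\times M$ the Reeb vector field $R_\pm(z) = \tfrac12(\pm e^{-z}v_x + e^z v_y)$ is tangent to the $\mathbb T^2$-fibers and linear on each. Hence a closed Reeb orbit on $M_+$ lies entirely inside a single $\mathbb T^2$-fiber $\{z=z_0\}$ and represents a class in $\pi_1(\mathbb T^2)=\Z v_x\oplus\Z v_y$; for $R_+$ the direction is $e^{-z_0}v_x + e^{z_0}v_y$, while for $R_-$ it is $-e^{-z_0}v_x + e^{z_0}v_y$. Since $v_x,v_y$ are the (irrational-slope, as $A$ is hyperbolic) eigendirections, an orbit is closed precisely when this direction is proportional to an integer vector $m v_x + n v_y$, and then the positively parametrized orbit on $M_+$ has the sign pattern $\mathrm{sgn}(m)=\mathrm{sgn}(n)$ (both coordinates move with the same sign), whereas on $M_-$ the positively parametrized closed orbit has $\mathrm{sgn}(m)=-\mathrm{sgn}(n)$. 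Because $v_x,v_y$ are linearly independent over $\R$, the only integer vectors that can ever occur are genuinely mixed, and one reads off that the set of classes realized by positively (or negatively) parametrized $R_+$-orbits is contained in the ``first/third quadrant'' sublattice while that realized by $R_-$-orbits is contained in the ``second/fourth quadrant'' sublattice; these intersect only in $0$, which is not a Reeb class. I would also note that these loops are non-contractible in $M$ (a nonzero fiber class cannot bound, by the fibration long exact sequence), so there is no collapsing of distinct fiber classes into the same free homotopy class of $M$. This handles the torus bundle case.

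For the McDuff domains, by~\eqref{eq:geiges1}--\eqref{eq:geiges2} we have $\alpha_+=\alpha_{\mathrm{can}}$ and $\alpha_-=\alpha_{\mathrm{pre}}$, so $M_+=(S^*\Sigma,\alpha_{\mathrm{can}})$ with Reeb flow the geodesic flow, and $M_-=(S^*\Sigma,\alpha_{\mathrm{pre}})$ with Reeb flow the $S^1$-fiber rotation $R_{\mathrm{pre}}$. The closed $R_-$-orbits are exactly the $S^1$-fibers of $\pi:S^*\Sigma\to\Sigma$ and their iterates, all freely homotopic (up to multiplicity) to the fiber class $f\in\pi_1(S^*\Sigma)$; reversing the parametrization gives the class $-f=f^{-1}$. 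The closed $R_+$-orbits are the lifts of closed geodesics, and a closed geodesic $\gamma$ on a hyperbolic surface is never null-homotopic and its conormal lift projects to $\gamma$, so the free homotopy class of such an orbit maps under $\pi_*$ to the nontrivial class $[\gamma]\in\pi_1(\Sigma)$. Since $\pi_*(f)=1$, no power of the fiber class is freely homotopic to any closed geodesic orbit. The one thing to rule out is a short closed geodesic being freely homotopic in $S^*\Sigma$ to $f^k$; this is excluded by applying $\pi_*$, which kills $f$ but sends the geodesic orbit to $[\gamma]^{\pm1}\neq 1$. Thus the positively/negatively parametrized $R_+$-classes (lying over nontrivial classes of $\pi_1(\Sigma)$) are disjoint from the positively/negatively parametrized $R_-$-classes (lying over the trivial class), completing the proof.

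\textbf{Main obstacle.} The genuinely delicate point is the torus bundle case: one must be careful that ``free homotopy class in $M$'' — not merely ``homotopy class within a fiber'' — is what separates $M_+$ from $M_-$, and that taking multiples/reversing orientation does not accidentally identify an $R_+$-class with an $R_-$-class. The cleanest way around this is the observation above that every fiber class is non-contractible and that the monodromy $A$, being hyperbolic with irrational eigendirections, forces the integer-vector directions realized on the two ends to lie in complementary open sublattices (quadrant-type cones) meeting only at the origin; there is no subtlety from the gluing map $\psi$ because closed Reeb orbits stay inside a single fiber and are never forced to pass through the monodromy. The McDuff case, by contrast, is essentially immediate once one pushes forward to $\pi_1(\Sigma)$.
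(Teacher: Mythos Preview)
Your approach is the same as the paper's: for McDuff domains you push forward along $\pi_*:\pi_1(S^*\Sigma)\to\pi_1(\Sigma)$ (fiber iterates map to the trivial class, geodesic lifts do not), and for torus bundle domains you use that the Reeb fields are linear on fibers and separate the closed orbits by the ``quadrant'' of their direction in the eigenbasis $(v_x,v_y)$. The McDuff argument is fine as written.

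In the torus bundle case there is a genuine slip. You assert that non-contractibility implies ``there is no collapsing of distinct fiber classes into the same free homotopy class of $M$,'' and later that ``there is no subtlety from the gluing map $\psi$.'' Both are false: for any $k\in\Z$, conjugating $(w,0)\in\Z^2\rtimes_A\Z$ by $(0,k)$ gives $(A^k w,0)$, so the distinct fiber classes $w$ and $A^k w$ \emph{are} freely homotopic in $M$. What actually makes your quadrant argument work is that $A$ is diagonal with positive eigenvalues in the $(v_x,v_y)$-basis, hence preserves each open quadrant; the quadrant of a nonzero fiber class is therefore an invariant of its conjugacy class in $\pi_1(M)$, and this is what separates the $R_+$-classes (quadrants I/III) from the $R_-$-classes (quadrants II/IV). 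Replace the non-contractibility sentence with this $A$-invariance step. A minor notational point: when you write ``integer vector $mv_x+nv_y$,'' the lattice is $\Z^2$, not $\Z v_x\oplus\Z v_y$, so $m,n$ are the real eigenbasis coordinates of a lattice vector (necessarily both nonzero since the eigendirections are irrational), not themselves integers.
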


\begin{proof}
Consider first a McDuff domain modeled on $S^*\Sigma$, where $\Sigma$ is a closed oriented surface of genus $g \geq 2$. Closed Reeb orbits in $M_+$ are lifts of closed geodesics on $\Sigma$, so their free homotopy classes project non-trivially onto $\Sigma$, in particular they are nontrivial. The free homotopy classes of positively or negatively parametrized closed Reeb orbits in $M_-$ are nonzero multiples of the fibre class, so they are also nontrivial and distinct from the classes of Reeb orbits in $M_+$. 

Consider next a torus bundle domain $M$ associated to the hyperbolic monodromy matrix $A$. The lifts of the Reeb vector fields to $\mathbb{T}^2 \times \R$ are $\widetilde{R}_\pm=\frac{1}{2} \big(\pm e^{-z}v_x+e^{z}v_y \big)$, which are tangent to the torus fibers spanned by $v_x,v_y$ (the chosen eigenvectors of $A$). Therefore, the closed $R_\pm$-orbits arise whenever $\widetilde{R}_\pm$ has rational slope. From this we see that the free homotopy classes of positively or negatively parametrized Reeb orbits in $M_+$ and $M_-$ are nontrivial and distinct.
\end{proof}

    \section{Open-closed map and non split-generation}

In this section, we prove Theorem \ref{thm:unit} and  Theorem~\ref{thm:nonsplit} from the Introduction. Both will be consequences of the following elementary topological lemma:

\begin{lemma}[Topological Disk Lemma] \label{lemma:disk}
Let $X$ be a $\mathcal{C}^1$ Anosov vector field on a closed $3$-manifold $M$ with oriented weak-stable foliation $\mathcal{F}^{ws}$. Then there exists no continuous map $u : \overline{\mathbb{D}} \rightarrow M$ satisfying the following conditions (see Figure \ref{fig:topological_disk}):
\begin{itemize}
    \item There exist $n \geq 1$ and $a_1 < b_1 < a_2 < \dots < a_n < b_n < a_{n+1}=a_1$ cyclically ordered points on $\partial{\overline{\mathbb{D}}}$ such that $u$ is $\mathcal{C}^1$ on $\partial \overline{\mathbb{D}} \setminus \{a_1, b_1, \dots, a_n, b_n\}$,
    \item $u$ is tangent to $X$ on $(a_i, b_i)$ for every $1 \leq i \leq n$, 
    \item $u$ is positively transverse to $\mathcal{F}^{ws}$ on $(b_i, a_{i+1})$ for every $1 \leq i \leq n$.
\end{itemize}
\end{lemma}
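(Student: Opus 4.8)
The plan is to argue by contradiction using the tautness of the weak-stable foliation $\mathcal{F}^{ws}$, which by Novikov's theorem forces every transverse loop to be non-contractible, together with a careful analysis of how the map $u$ interacts with the co-orientation of $\mathcal{F}^{ws}$. Suppose such a $u : \overline{\mathbb{D}} \to M$ exists. The key observation is that on each arc $(a_i, b_i)$ the restriction $u$ is tangent to $X$, hence tangent to $\mathcal{F}^{ws}$ (since $X \in E^{ws}$); and on each arc $(b_i, a_{i+1})$ it is positively transverse to $\mathcal{F}^{ws}$. So the boundary loop $u|_{\partial\overline{\mathbb{D}}}$ is a concatenation of leafwise arcs and positively-transverse arcs. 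My first step would be to pull back the foliation: either $u^* \mathcal{F}^{ws}$ extends to a singular foliation of $\overline{\mathbb{D}}$, or — more robustly — I would work with a $\mathcal{C}^1$ defining $1$-form. Locally $\mathcal{F}^{ws} = \ker \beta$ for a $\mathcal{C}^1$ $1$-form $\beta$ with a chosen co-orientation; globally one has $\beta = \widetilde{\alpha}_u$ (in the notation of the standard construction, $\ker\alpha_u = E^{ws}$), and crucially there is a Reeb vector field $R_+$ with $\alpha_u(R_+) > 0$ everywhere, so $\mathcal{F}^{ws}$ is co-orientable and taut.

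The heart of the argument is the following: consider the loop $\gamma = u|_{\partial \overline{\mathbb{D}}}$. On the tangent arcs $(a_i,b_i)$ we have $\alpha_u(\dot\gamma) = 0$, and on the transverse arcs $(b_i, a_{i+1})$ we have $\alpha_u(\dot\gamma) > 0$ (positive transversality, with the sign fixed by the co-orientation). Therefore, since $n \geq 1$, at least one transverse arc is present, and $\int_{\partial\overline{\mathbb{D}}} \gamma^* \alpha_u > 0$ — in particular $\gamma$ is \emph{not} null-homotopic in $M$, because a contractible loop could be filled by a disk on which $\mathcal{F}^{ws}$ (being taut, hence Reeb-component-free) would have to have a tangency/center forcing $\int \gamma^*\alpha_u$ to vanish — more precisely, because $\alpha_u$ is a positive contact form times a bump, one invokes Novikov: a loop positively transverse to a taut foliation is homotopically nontrivial. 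But $\gamma$ \emph{does} bound the disk $u$, so $\gamma$ \emph{is} null-homotopic in $M$. This is the contradiction. The main subtlety I expect to be the genuine obstacle is the low regularity: $\mathcal{F}^{ws}$ is only $\mathcal{C}^1$ and $u$ is merely continuous (only $\mathcal{C}^1$ on the boundary minus finitely many points), so ``$\int_{\partial\overline{\mathbb{D}}} u^*\alpha_u > 0$'' and the pairing of this with Stokes/Novikov must be set up with care — one cannot naively integrate $u^*d\alpha_u$ over $\overline{\mathbb{D}}$.

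To handle the regularity I would take the following route. Rather than integrating forms over the possibly-wild interior, I would use the purely topological formulation of tautness: by Novikov's theorem (as cited in the excerpt, \cite[Theorem 4.37]{C07}), any loop transverse to the taut foliation $\mathcal{F}^{ws}$ is non-contractible, and more generally a loop that is everywhere \emph{non-negatively} transverse (tangent on some arcs, positively transverse on others, with at least one genuinely transverse arc) can be $\mathcal{C}^0$-perturbed, rel nothing, to a loop that is everywhere positively transverse and freely homotopic to $\gamma$ — here one pushes the leafwise arcs slightly to the positive side using the co-orientation and a collar of the leaf, which is possible precisely because there are no Reeb components to trap the perturbation. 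Then Novikov applies to the perturbed loop, which is freely homotopic to $\gamma = u|_{\partial\overline{\mathbb{D}}}$, so $\gamma$ is not null-homotopic in $M$; yet $u$ exhibits a null-homotopy of $\gamma$, a contradiction. The only delicate point in this perturbation is checking that the small pushes at the $2n$ corners $a_i, b_i$ can be made consistently (orientation of the corner turns), which I would do by a local model computation in a foliated chart; I expect this to be routine once the co-orientation conventions (Figure~\ref{fig:Anosovsplitting}) are fixed.
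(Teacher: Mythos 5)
Your final argument --- pushing the tangent arcs to the positive side of $\mathcal{F}^{ws}$, smoothening at the corners, and invoking Novikov's theorem (tautness implies loops transverse to $\mathcal{F}^{ws}$ are non-contractible) to contradict the contractibility furnished by $u$ --- is exactly the proof the paper gives, which pushes along the weak-unstable foliation and smoothens. The initial detour through $\int_{\partial\overline{\mathbb D}}\gamma^*\alpha_u$ is unnecessary, and the parenthetical that the absence of Reeb components is what makes the push-off possible is misplaced (the push-off only needs the co-orientation and a foliated collar; tautness is used solely in the Novikov step), but the argument you settle on is correct and is the paper's.
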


\begin{figure}
    \centering
    \includegraphics[width=0.5\linewidth]{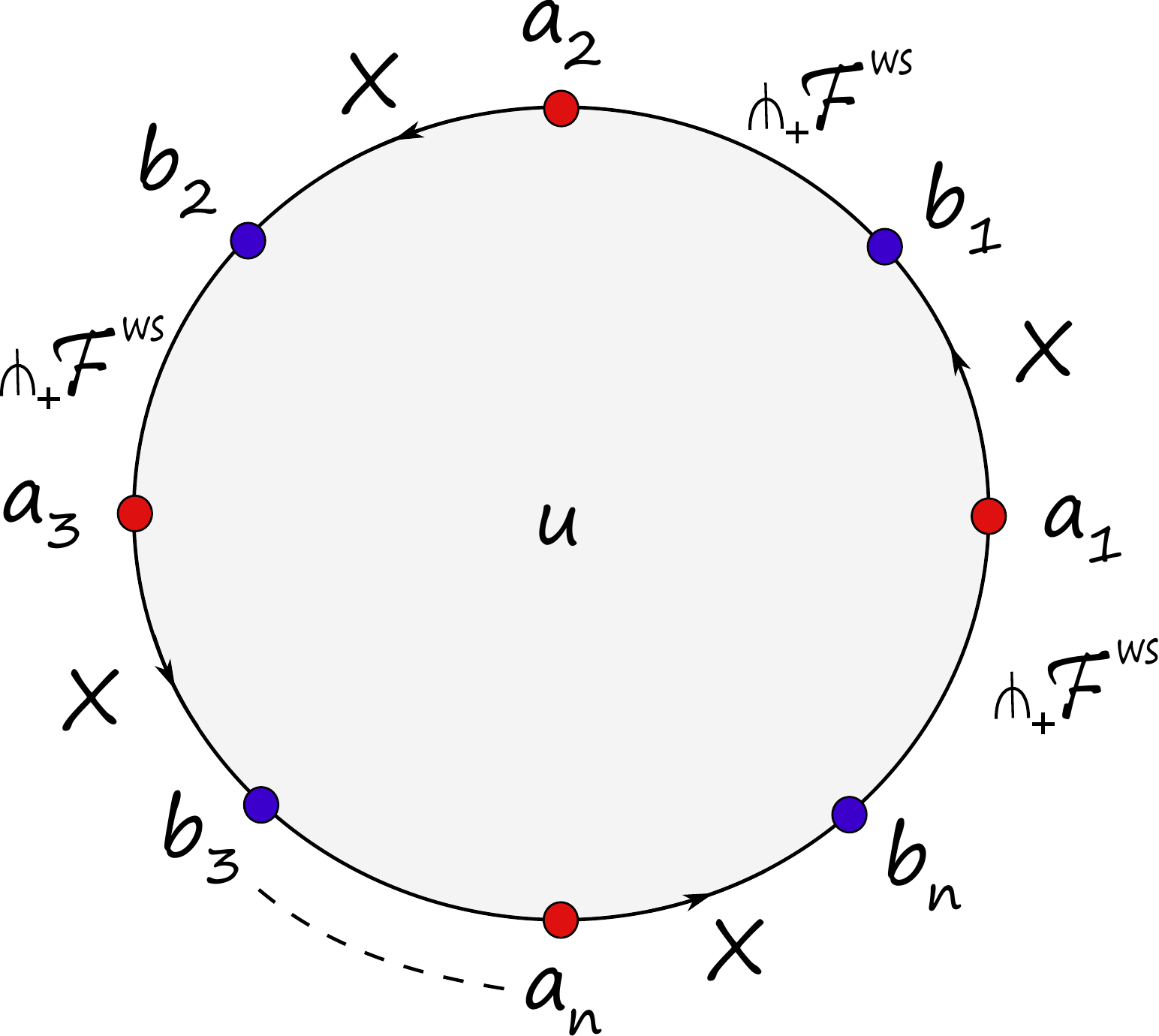}
    \caption{The tautness of the weak-stable foliation $\mathcal{F}^{ws}$ prevents the existence of disks as above.}
    \label{fig:topological_disk}
\end{figure}

\begin{proof}
We argue by contradiction. Let $u : \overline{\mathbb{D}} \rightarrow M$ be such a disk and let $\gamma := u_{| \partial \mathbb{D}}$ be its boundary loop. On each interval $(a_i, b_i)$ of $\partial \overline{\mathbb{D}}$, $\gamma$ is tangent to $X$ and can be modified so that it becomes positively transverse to $\mathcal{F}^{ws}$. Indeed, it suffices to push it along the weak-unstable foliation and smoothen it, see Figure \ref{fig:smoothing}. The new loop $\gamma'$ obtained this way is still contractible and positively transverse to $\mathcal{F}^{ws}$ \emph{everywhere}, which is impossible since $\mathcal{F}^{ws}$ is taut. 
\end{proof}

\begin{figure}
    \centering
    \includegraphics[width=0.8\linewidth]{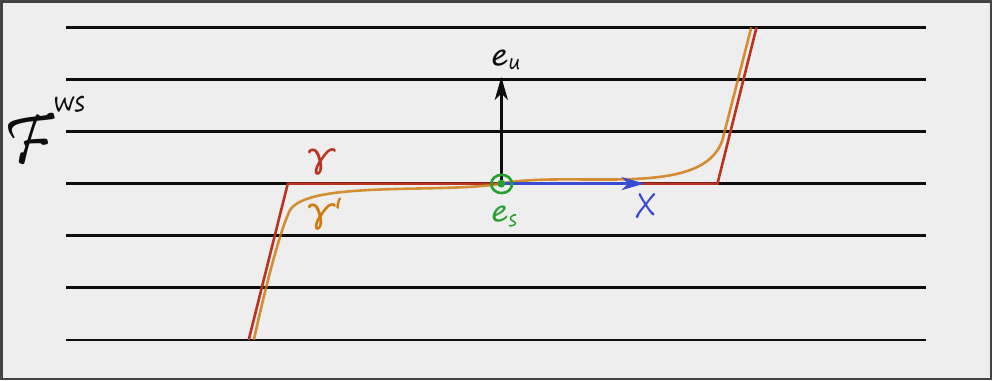}
    \caption{The smoothened pushoff $\gamma'$ of $\gamma$, transverse to $\mathcal{F}^{ws}$.}
    \label{fig:smoothing}
\end{figure}

\begin{proof}[Proof of Theorem \ref{thm:unit}]
The models that we use for the wrapped Fukaya category, the symplectic cohomology and the open-closed map are the ones described in~\cite{A10} (see also~\cite{G13}). As explained in Section \ref{sec:anosov_Liouville_manifolds}, we can assume that the Reeb vector field on $\partial_\infty V$ is positively transverse to $\mathcal{F}^{ws}$ on both components of $\partial_\infty V$, and this property will still hold after a small perturbation of the Liouville form on the boundary (to ensure non-degeneracy of the Reeb chords and orbits). The generators of $CW^*(\mathcal{L}_\Lambda, \mathcal{L}_{\Lambda'})$ are given by:
\begin{enumerate}
    \item Hamiltonian chords from (a small cylindrical Hamiltonian perturbation of) $\mathcal{L}_\Lambda$ to (a small cylindrical Hamiltonian perturbation of) $\mathcal{L}_{\Lambda'}$ for a Hamiltonian that is quadratic at infinity; these correspond to Reeb chords on the contact boundary, and
    \item if $\Lambda = \Lambda'$, two intersection points between $\mathcal{L}_\Lambda$ and a small Hamiltonian perturbation of $\mathcal{L}_{\Lambda'}$ corresponding to a small perfect Morse function on the cylinder.
\end{enumerate}
We can further assume that all the generators of type (2) lie in a fixed compact region of $V$, and all the generators of type (1) lie away from this region.

By definition, the underlying chain complex of $\HH_*(\mathcal{W}_0)$, denoted by $CC_* = CC_*(\mathcal{W}_0)$, is generated by elements of the form $a_1 \otimes a_2 \otimes \dots \otimes a_d$ where $a_i \in CW^*(\mathcal{L}_{\Lambda_i}, \mathcal{L}_{\Lambda_{i+1}})$.\footnote{Some authors use the reverse ordering on the string of generators in order to simplify some sign computations. It won't be relevant here and we opt for the easiest ordering.} We define $CC_*^c$ as the subgroup of $CC_*$ generated by strings only involving intersection points as in (2) above, and $CC_*^{nc}$ as the subgroup of $CC_*$ generated by strings involving at least one chord as in (1) above. Recall that the differential of $CC_*$ on a generator $a_1 \otimes \dots \otimes a_d$ has the form: 
\begin{align} \label{eqdCC}
\begin{multlined}
    d_{CC}(a_1 \otimes \dots \otimes a_d) = 
    \sum_{i+j \leq d} \pm \, a_i \otimes \dots \otimes a_{i+j} \otimes \mu^{d-j-1}(a_{i+j+1} , \dots , a_d , a_1 , \dots , a_{i-1}) \\
    + \sum_{i+j \leq d} \pm \, a_1 \otimes \dots \otimes a_{i-1} \otimes \mu^{j+1}(a_i, \dots, a_{i+j}) \otimes a_{i+j+1} \otimes \dots \otimes a_d,
\end{multlined}
\end{align}
where the $\mu^k$'s denote the $A_\infty$ products on $\mathcal{W}$. We decompose the proof into four steps.

\medskip
\textit{Step 1.} $CC_*^c$ and $CC_*^{nc}$ are subcomplexes of $CC_*$.

Let $a_1 \otimes \dots \otimes a_d$ be a generator of $CC_*^c$. By definition, all of the $a_i$'s are intersection points between small perturbations of a single $\mathcal{L}_\Lambda$. A maximum principle argument implies that the $A_\infty$ product $\mu^{d}(a_1, \dots, a_d)$ is a linear combination of intersection points only (no chords are involved). This shows that $CC_*^c$ is a subcomplex of $CC_*$. Let's now assume that $a_1 \otimes \dots \otimes a_d$ is a generator of $CC_*^{nc}$, so that one of the $a_i's$ is a chord. For simplicity, let's assume that $a_1$ is a chord. Then all of the terms in the right hand side of~\eqref{eqdCC} involve either $a_1$ or an $A_\infty$ product of $a_1$ with other generators. Hence, it is enough to establish the following property: 
\begin{center}
    \emph{(C) The $A_\infty$ product of generators $a_1, \dots, a_d$, one of which is a chord, is a linear combination of chords (no self-intersections are involved).}
\end{center}
Assume by contradiction that there exist generators $a_1, \dots, a_d$, one of which is a chord, such that the product $\mu^d(a_1, \dots, a_d) \in CW^*(\mathcal{L}_{\Lambda_1}, \mathcal{L}_{\Lambda_{d+1}})$ is not a linear combination of chords only (in particular, $\mathcal{L}_{\Lambda_1} = \mathcal{L}_{\Lambda_{d+1}}$). By definition of the $A_\infty$ products, this implies that there exists a Floer disk with positive boundary punctures asymptotic to $a_1, \dots, a_d$, and a negative boundary puncture asymptotic to a self-intersection point $b$. We can undo the perturbations of the Lagrangians $\mathcal{L}_{\Lambda_i}$ and reverse the small Hamiltonian isotopies, making the self-intersections disappear, and turn the asymptotics into boundary components. After projecting the new disk to $M$, we obtain a topological disk whose boundary is alternatively tangent to the Anosov flow (coming from the Lagrangian boundary condition) or to a Reeb orbit (coming from the asymptotics). Since the Reeb flow is positively transverse to $\mathcal{F}^{ws}$, this disk satisfies the assumptions of Lemma~\ref{lemma:disk}, which is impossible. This proves Property (C) and finishes this step.

\medskip
\textit{Step 2.} The image of the restriction of $\mathcal{OC}_0$ to $\HH_*^{nc}$ is contained in $SH^{*+2}_{nc}$.

Assume by contradiction that there exist generators $a_1, \dots, a_d$, one of which is a chord, such that $\mathcal{OC}_0(a_1 \otimes \dots \otimes a_d)$ is not in $SC^{*+2}_{nc}$ (the subcomplex of $SC^{*+2}$ generated by non-contractible Hamiltonian orbits). By definition of the open-closed map, this implies that there exists a Floer disk with boundary punctures asymptotic to $a_1, \dots, a_d$ and an interior puncture asymptotic to a contractible closed Hamiltonian orbit $x$. We cap this puncture and perform the same modifications as in Step 1 to obtain a topological disk satisfying the assumptions of Lemma~\ref{lemma:disk}, which again is impossible.

\medskip
\textit{Step 3.} The image of the restriction of $\mathcal{OC}_0$  to $\HH_*^c$ is contained in $SH^{*+2}_c = H^{*+2}(M)$.

Since both components of the contact boundary of $V$ are hypertight and remain hypertight after a small perturbation (since the Reeb vector fields on both components are positively transverse to $\mathcal{F}^{ws}$), for a careful choice of Floer data for the definition of $SH^*$, the subcomplex $SC^*_c$ of $SC^*$ coincides with the Morse chain complex of a Morse function on $M$, and all the generators of $SC^*_{nc}$ are contained in the cylindrical ends of $V$. Recall that the generators of $CC_*^c$ involve Lagrangian intersection points contained in a compact subset of $V$ away from the cylindrical ends, so this step is a consequence of~\cite[Lemmas 2.2 and 2.3]{CO18}.

\medskip
\textit{Step 4.} The image of $\mathcal{OC}_0^c$ is contained in $H^2(M; \Z) \oplus H^3(M;\Z)$.

Since the Lagrangians $\mathcal{L}_\Lambda$ are disjoint,  $\HH^c_*$ decomposes as
\begin{align} \label{HHsplitting}
    \HH^c_* = \bigoplus_{\Lambda} \HH_*(C^*(\mathcal{L}_\Lambda)) \cong \bigoplus_{\Lambda} \HH_*(C^*(S^1)),
\end{align}
where $C^*(\mathcal{L}_\Lambda)$ is the dg-algebra of singular cochains on $\mathcal{L}_\Lambda$ and the direct sums run over the simple closed orbits of the Anosov flow. Here, we are using the $A_\infty$ equivalence between the low energy part of the Floer $A_\infty$-algebra $CW^*(\mathcal{L}_\Lambda, \mathcal{L}_\Lambda)$ with $C^*(\mathcal{L}_\Lambda)$ due to Abouzaid~\cite[Theorem 1]{A11b} in the case of closed Lagrangians, which can easily be extended to our setting by the maximum principle. Finally, $W_* := \HH_*(C^*(S^1))$ is supported in degrees $0$ and $1$,\footnote{There is an isomorphism of dg-algebras $C^*(S^1) \cong \Z[x] \slash x^2 =: R$ where $\mathrm{deg}(x) =1$ and $R$ has no differential. Hence, its Hochschild homology is the Hochschild homology of its underlying graded algebra. To compute it, we can use a graded version of the periodic resolution of $R$ as a $R^e$-module, where $R^e := R \otimes_\Z R$ (see~\cite[Exercise 9.1.4]{W97}). Setting $u := x \otimes 1 - 1 \otimes x \in R^e$ and $v := x \otimes 1 + 1 \otimes x \in R^e$, this resolution is $$ \dots \overset{v}{\longrightarrow} R^e \overset{u}{\longrightarrow} R^e \overset{v}{\longrightarrow} R^e \overset{u}{\longrightarrow} R^e \overset{\mu}{\longrightarrow} R \longrightarrow 0,$$ where $\mu : R^e \rightarrow R$ denotes the product. Here, we use \emph{cohomological} grading conventions. After tensoring the resolution with $R$ over $R^e$, we obtain the periodic complex
$$ \dots \overset{2x}{\longrightarrow} R \overset{0}{\longrightarrow} R \overset{2x}{\longrightarrow} R \overset{0}{\longrightarrow} R.$$
This complex computes $\HH_*(R,R)$ and is supported in degrees $0$ and $1$ (recall that we are computing the Hochschild homology of a \emph{graded} algebra). Besides, $\HH_*(R,R)$ has infinite rank.} and Step $4$ follows.
\end{proof}

\begin{remark}
With more work, we expect that it is possible to show that $$\mathrm{Im}(\mathcal{OC}^c_0) = \Gamma \oplus H^3(M; \Z),$$
where $\Gamma \subseteq H^2(M;\Z)$ is generated by the Poincar\'e duals of the closed orbits of the Anosov flow.\footnote{By a theorem of Fried~\cite{F82}, $\Gamma = H^2(M;\Z)$ when the Anosov flow is transitive (in particular, when it is volume preserving).} Indeed, for every simple closed orbit $\Lambda$ of the flow, the composition $$H^*(\Lambda;\Z) = H^*(\mathcal{L}_\Lambda;\Z) \rightarrow \HH_*(C^*(\mathcal{L}_\Lambda)) \overset{\mathcal{OC}_0}{\longrightarrow} SH^{*+2}_c = H^{*+2}(M;\Z)$$
is the shriek map $H^*(\Lambda;\Z) \overset{i^!}{\rightarrow} H^{*+2}(M;\Z)$ induced by the inclusion $\Lambda \subset M$ and its image is exactly $\langle \mathrm{PD}([\Lambda]) \rangle \oplus H^3(M;\Z)$. Therefore, $\Gamma \oplus H^3(M;\Z)$ is contained in the image of $\mathcal{OC}_0$. Moreover, $\mathcal{OC}_0 : \HH_*(C^*(\mathcal{L}_\Lambda)) \rightarrow  H^{*+2}(M;\Z)$ should factor as $$\HH_*(C^*(\mathcal{L}_\Lambda)) \rightarrow H^*(\mathcal{L}_\Lambda ; \Z) \overset{i^!}{\rightarrow} H^{*+2}(M;\Z),$$
where the first map corresponds to a map $\HH_*(C^*(S^1)) \rightarrow H^*(S^1; \Z)$ obtained by composing the ``Jones map'' $\HH_*(C^*(S^1))  \rightarrow H^*(LS^1, \Z)$, where $L S^1$ denotes the free loop space of $S^1$, with the ``restriction to constant loops'' $H^*(L S^1 ; \Z) \rightarrow H^*(S^1 ; \Z)$. Hence, the image of $\mathcal{OC}_0$ should be contained in $\Gamma \oplus H^3(M;\Z)$. To establish such a factorization, one could degenerate the holomorphic annuli involved in the definition of $\mathcal{OC}_0$ to some Morse flow-trees by sending the Hamiltonian data to $0$, and obtain a purely Morse-theoretic model for $\mathcal{OC}_0$. In this model, the desired factorization should be more manifest.
\end{remark}

\begin{remark}[Non-contractible component]\label{rem:non_contractible}
It is possible to obtain information on the non-contractible summand $\mathcal{OC}_0^{nc}: \HH^{nc}_{*-2}\rightarrow SH^*_{nc}(V)$ of the open-closed map $\mathcal{OC}_0$, by using the fact that $\mathcal{OC}_0$ is a module map for suitable actions of $SH^*(V)$, coinciding with the pair of pants product on the target (see \cite[Section 8]{RS17}). Namely, consider the splitting
$$
SH^*(V)=\bigoplus_{\beta \in \pi^{\mathrm{free}}_1(V)}SH^*_\beta(V)=SH^*_c(V)\oplus SH^*_{nc}(V) 
$$
where $SH^*_\beta(V)$ is generated by Hamiltonian orbits in the free homotopy class $\beta$, and $SH^*_{nc}(V)=\bigoplus_{\beta \neq 0}SH^*_\beta(V)$. The pair of pants product satisfies $$SH^*(V)_{nc}\cdot SH^*(V)_{c} \subseteq SH^*(V)_{nc}.$$ It follows that 
\begin{equation}\label{eq:image}
SH_{nc}^*(V)\cdot \mbox{Im}(\mathcal{OC}_0^c)\subseteq \mbox{Im}(\mathcal{OC}_0^{nc}). 
\end{equation}
Moreover, using Lemma \ref{lemma:disk}, one can explicitly check that the action of $SH^*(V)$ on $\HH_{*-2}$ is compatible with the splittings, i.e., 
\begin{align*}
SH_c^*(V)\cdot \HH_{*-2}^c&\subseteq \HH_{*}^{c}, \\
SH_c^*(V)\cdot \HH_{*-2}^{nc}&\subseteq \HH_{*}^{nc}, \\
SH_{nc}^*(V)\cdot \HH_{*-2}^{c}&\subseteq \HH_{*}^{nc}.
\end{align*}
Then, we have
\begin{equation}\label{eq:kernel} 
SH^*_{nc}(V)\cdot \ker \mathcal{OC}_0^{c} \subseteq \ker \mathcal{OC}_0^{nc}.
\end{equation}
Note that $SH^*_{nc}(V)=SH_-^*(V)\oplus SH_+^*(V)$, $SH^*_c(V)=H^*(M)$, by hypertightness of the contact forms on $M_\pm$. Under the further assumption that the free homotopy classes of orbits in $M_+$ differ from those in $M_-$, by Theorem~\ref{thm:RFH-SH} we have 
$$
SH^*_{\pm,\beta}(V)\cdot \mbox{Im}(\mathcal{OC}_0^c)\subseteq \mbox{Im}(\mathcal{OC}_0^{nc}) \cap SH_{\pm,\beta}^*(V), 
$$
where $SH^*_{\pm,\beta}(V)$ is generated by orbits in $M_\pm$ in the non-trivial free homotopy class $\beta\neq 0$. Note that there are no invertible elements in $SH_{nc}^*(V)$, as $SH_\pm^*(V)$ are subrings by Theorem~\ref{thm:RFH-SH}, so that we may not take inverses in the inclusions (\ref{eq:image}) and (\ref{eq:kernel}). If the assumption on homotopy classes is dropped, there could \emph{a priori} be invertible elements in $SH^*_{nc}(V)$. If this is the case, these are necessarily not in the image of $\mathcal{OC}_0^{nc},$ as this is easily seen to contradict Theorem \ref{thm:unit}.
\end{remark}

\begin{proof}[Proof of Theorem~\ref{thm:nonsplit}] Let $\mathcal{A}$ be a full subcategory of $\mathcal{W}_0(V)$ corresponding to a collection $\mathcal{C}$ of simple closed orbits of the Anosov flow, and $L = \mathcal{L}_\Lambda$, for a simple closed orbit $\Lambda \notin \mathcal{C}$. Let $\mathcal{A}'$ be the full subcategory of $\mathcal{W}_0(V)$ generated by $\mathcal{A}$ and $L$, and $\mathcal{C}' := \mathcal{C} \cup \{ \Lambda\}$. It follows from the proof of Theorem~\ref{thm:unit} that $\HH_*(\mathcal{A})$ splits as $$\HH_*(\mathcal{A}) = \HH_*^{c}(\mathcal{A}) \oplus \HH_*^{nc}(\mathcal{A}),$$ and furthermore $$\HH_*^c(\mathcal{A}) = \bigoplus_{\Lambda \in \mathcal{C}} W_*,$$ and similarly for $\HH_*(\mathcal{A}')$. Moreover, the natural map $\iota_* : \HH_*(\mathcal{A}) \rightarrow \HH_*(\mathcal{A}')$ induced by the inclusion $\mathcal{A} \subset \mathcal{A}'$ splits as a sum of a map $\HH_*^{nc}(\mathcal{A}) \rightarrow \HH_*^{nc}(\mathcal{A}')$ and the map $\HH_*^c(\mathcal{A}) \rightarrow \HH_*^c(\mathcal{A}) \oplus W_* = \HH_*^c(\mathcal{A}')$ given by the inclusion of the first factor 
due to the fact that $\mathcal{L}_\Lambda$ is disjoint from the other Lagrangians in $\mathcal{A}$. Hence, $\iota_*$ is not an isomorphism since $W_*$ is non-zero. If $L$ were split-generated by $\mathcal{A}$, then $\mathcal{A}'$ would be split-generated by $\mathcal{A}$ and $\iota_*$ would be an isomorphism (see for instance~\cite[Lemma 5.2.]{GPS1}).

(1) follows immediately since for $\mathcal{A} := \{\mathcal{L}_\Lambda\}$, $\mathcal{L}_{\Lambda'} \notin \mathcal{A}$ is not split-generated by $\mathcal{L}_\Lambda$ and is a fortiori not isomorphic to $\mathcal{L}_\Lambda$. For (2), if $\mathcal{A}$ is a finite collection of objects of $\mathcal{W}_0$, there exists an object $\mathcal{L}_\Lambda \in \mathcal{W}_0 \setminus \mathcal{A}$, thus $\mathcal{L}_\Lambda$ is not split-generated by $\mathcal{A}$. Finally, a homologically smooth $A_\infty$-category whose morphism chain complexes are \emph{cofibrant} is necessarily finitely split-generated\footnote{See~\cite[Section 3.1.1]{GPS1} for the definition and relevance of the cofibrancy assumption; it is used in~\cite[Lemma A.2]{GPS2} to show the $A_\infty$ version of the Yoneda lemma, which implies that perfect modules are compact objects in the module category, see~\cite[Section A.3]{GPS3}. Then, the argument of~\cite[Lemma 3.14]{R08}, which is stated for field coefficients, can easily be adapted to show finite split-generation for $\Z$ coefficients.} and (3) follows. 
\end{proof}

\section{Closed Lagrangian submanifolds} 

We now investigate the existence of closed (weakly) exact Lagrangians in the McDuff and torus bundle domains. Note that any closed oriented Lagrangian $L$ in a four dimensional Liouville domain of the form $[-1,1] \times M$ is necessarily a $2$-torus. Indeed, $$0 = [L] \cdot [L] = - \chi(L),$$ where the first equality follows from the (smooth) displaceability of $L$ and the second equality follows from the Weinstein neighborhood theorem.

\subsection{Closed Lagrangians in the McDuff domains}\label{sec:closed_Lag_McDuff}

In this section, we show that the McDuff domains or manifolds contain \emph{exact} embedded Lagrangian tori, one for each embedded closed geodesic. The idea behind the construction is due to Georgios Dimitroglou Rizell.

Let $\gamma$ be an oriented simple closed geodesic in $\Sigma$ of length $\ell$ and $S^1_\ell = \R \slash \ell \Z$.

\begin{lemma}
For $\delta>0$ sufficiently small, there exists a neighborhood $U_\delta$ of $T^*\Sigma_{|\gamma} \setminus 0_\Sigma$ diffeomorphic to $V_\delta:=T^*C_\delta \setminus 0_{C_\delta} \cong C_\delta \times \left(\R^2 \setminus \{0\}\right)$, where $C_\delta=(-\delta, \delta)\times S^1_\ell$, with coordinates $(r, t, x, y)$, in which the McDuff Liouville form becomes
\begin{align} \label{lambdageod}
\lambda_C = x dr + \big(\cosh(r) y + \sinh(r)\big) dt + d\theta,
\end{align}
where $\theta$ denotes the angular coordinate in the $(x,y)$-plane.
\end{lemma}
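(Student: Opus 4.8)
The plan is to produce the diffeomorphism explicitly from the $\mathrm{PSL}(2,\R)$-invariant model of $S^*\mathbb{H}$ and the Fermi coordinates already set up in the excerpt, and then just read off what the Liouville form $\lambda_\sigma = r\alpha_{\mathrm{can}} + \alpha_{\mathrm{pre}}$ (or rather the homotoped form $e^{-s}\alpha_- + e^s\alpha_+$) becomes. First I would lift the simple closed geodesic $\gamma\subset\Sigma$ to the geodesic $t\mapsto ie^t$ in $\mathbb{H}$, which is stabilized by the cyclic subgroup of $\mathrm{PSL}(2,\R)$ generated by the hyperbolic element $g_\gamma$ with translation length $\ell$. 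The Fermi coordinates $(r,t)$ on $\mathbb{H}$ for this geodesic, together with the induced angular coordinate on the cotangent fibers, give coordinates on a neighborhood of $T^*\mathbb{H}|_{\{r=0\}}\setminus 0$; quotienting by $\langle g_\gamma\rangle$ (which acts by $t\mapsto t+\ell$ on the Fermi coordinate, fixing $r$ and the fiber angle) turns this into a neighborhood $U_\delta$ of $T^*\Sigma|_\gamma\setminus 0_\Sigma$ diffeomorphic to $T^*C_\delta\setminus 0_{C_\delta}$ with $C_\delta = (-\delta,\delta)\times S^1_\ell$.

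Next I would compute $\lambda_C$ in these coordinates. Here the key simplification is that in Fermi coordinates the forms $\widetilde\alpha_{\mathrm{can}}$ and $\widetilde\alpha_{\mathrm{pre}}$ already have the clean shapes \eqref{eqalphacanfermi} and \eqref{eqalphaprefermi}, i.e. $\widetilde\alpha_{\mathrm{can}} = \cos(\theta)dr + \cosh(r)\sin(\theta)dt$ and $\widetilde\alpha_{\mathrm{pre}} = \sinh(r)dt + d\theta$. I would pass from the angular coordinate $\theta$ on the unit cotangent fiber to linear fiber coordinates $(x,y)$ on the full (punctured) cotangent plane by the standard polar-to-Cartesian substitution adapted to the bundle metric — essentially $x = \rho\cos\theta$, $y=\rho\sin\theta$ for the radial coordinate $\rho=|p|$ — chosen so that $\rho\,\widetilde\alpha_{\mathrm{can}}$ becomes $x\,dr + \cosh(r)\,y\,dt$, while the prequantization part contributes $\sinh(r)\,dt + d\theta$, and $d\theta$ is the angular form of the $(x,y)$-plane. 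Adding these (after the Liouville homotopy from the excerpt that replaces $\lambda_\sigma$ by the exponential form, or equivalently working with $\lambda = r\alpha_+ + \alpha_-$ and rescaling) yields
\begin{align*}
\lambda_C = x\,dr + \big(\cosh(r)\,y + \sinh(r)\big)dt + d\theta,
\end{align*}
which is exactly \eqref{lambdageod}. I would double-check the coefficient bookkeeping: the $x\,dr$ and $\cosh(r)\,y\,dt$ terms come from the canonical/geodesic part scaled linearly in the fiber radius, the $\sinh(r)\,dt$ term is the first summand of $\widetilde\alpha_{\mathrm{pre}}$, and $d\theta$ is the connection-form part of $\widetilde\alpha_{\mathrm{pre}}$ (normalized to integrate to $2\pi$ over fibers, consistent with the excerpt's normalization).

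The main obstacle I anticipate is purely bookkeeping rather than conceptual: getting the precise change of fiber variables right so that the two contributions assemble into \eqref{lambdageod} with the stated coefficients, and making sure the $\delta$ is taken small enough that the Fermi coordinate chart is an honest diffeomorphism onto a tubular neighborhood of $\gamma$ (this uses that $\gamma$ is \emph{embedded}, so that for $\delta$ smaller than half the injectivity-radius-type constant along $\gamma$ the map $C_\delta\to\Sigma$, $(r,t)\mapsto$ the point at Fermi distance $r$ from $\gamma(t)$, is an embedding). I would also note that \eqref{lambdageod} should be checked to be a genuine Liouville (ideal Liouville) form on $V_\delta$ — i.e. $d\lambda_C$ is symplectic away from the zero section — which follows since it is the restriction of the McDuff form under a diffeomorphism, but can also be verified directly by a one-line computation of $d\lambda_C = dx\wedge dr + \cosh(r)\,dy\wedge dt + (\sinh(r)y+\cosh(r))dr\wedge dt + \text{(area form of }(x,y))$ and observing nondegeneracy on $\{(x,y)\neq 0\}$.
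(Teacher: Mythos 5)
Your proposal is correct and follows essentially the same route as the paper: use Fermi coordinates on a tubular neighborhood of $\gamma$, rescale the fiber coordinate so the bundle metric becomes Euclidean (the paper does this directly via $(\rho,\tau)\mapsto(\rho,\tau/\cosh(r))$ in the canonical dual coordinates, whereas you go via the radial coordinate $\rho'=|p|$ and the angle $\theta$ — but these are the same change of variables), then read off $\lambda_{\mathrm{can}}=x\,dr+\cosh(r)y\,dt$ and add $\alpha_{\mathrm{pre}}=\sinh(r)dt+d\theta$. One small slip in your closing sanity check: $d(d\theta)=0$ since $d\theta$ is a closed $1$-form on $\R^2\setminus\{0\}$, so the term ``(area form of $(x,y)$)'' should not appear in $d\lambda_C$; nondegeneracy still holds because $\omega\wedge\omega = 2\cosh(r)\,dx\wedge dr\wedge dy\wedge dt\neq 0$.
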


\begin{proof}
For $\delta>0$ sufficiently small, there exists a neighborhood of $\gamma$ in $\Sigma$ diffeomorphic to $C_\delta=(-\delta, \delta) \times S^1_\ell$ with Fermi coordinates $(r,t)$ in which the hyperbolic metric writes $dr^2 + \cosh(r)^2 dt^2$, see Section~\ref{sec:Mcduff_domains}. If $(\rho, \tau)$ denotes the dual coordinates of $(r,t)$, the canonical Liouville form on $T^*C_\delta$ is $\lambda_\mathrm{can} = \rho dr+ \tau dt$. Changing the fiber coordinates to $(x,y):= \big(\rho, \frac{\tau}{\cosh(r)}\big)$, the bundle metric on $T^*C_\delta$ becomes the standard Euclidean metric $dx^2 + dy^2$ and by~\eqref{eqalphaprefermi}, the preqantization contact form writes $\alpha_\mathrm{pre} = \sinh(r)dt + d\theta$, where $\theta$ is the angular coordinate in the $(x,y)$-plane. Recall that the McDuff Liouville form is defined as $\lambda_\mathrm{can} + \alpha_\mathrm{pre}$, so~\eqref{lambdageod} follows.
\end{proof}

We now construct exact Lagrangian tori in $(V_\delta, \lambda_C)$ using the above coordinates. Let $\epsilon :=\tanh(\delta) < 1$ and let $\beta = (f,g) : S^1 \rightarrow \R^2 \setminus \{0\}$ be a smooth Jordan curve satisfying the following assumptions:
\begin{enumerate}
    \item $\beta$ winds around $0$ once positively,
    \item $\beta$ is contained in the strip $\R \times (-\epsilon, \epsilon)$,
    \item The domain $D$ bounded by $\beta$ in $\R^2$ satisfies $$ \int_D \frac{dx \wedge dy}{1-y^2}\, = 2 \pi.$$
\end{enumerate}
See Figure \ref{fig:beta}.
\begin{figure}
    \centering
    \includegraphics[width=0.5\linewidth]{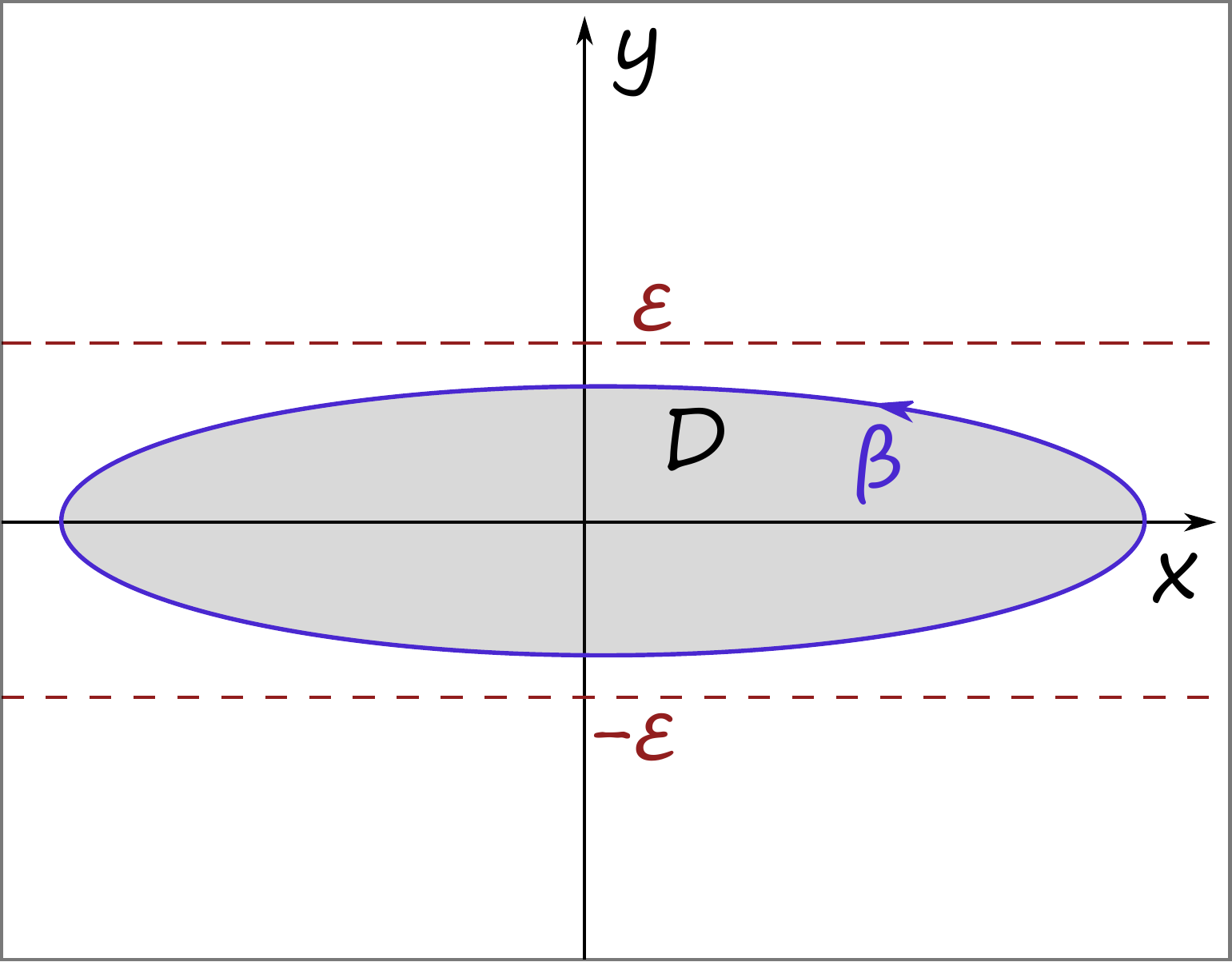}
    \caption{The Jordan curve $\beta$.}
    \label{fig:beta}
\end{figure}
We define an embedding $h : \mathbb{T}^2 \cong S^1_\ell \times S^1 \hookrightarrow V_\delta$ by $$h(t_1, t_2) := \left( -\tanh^{-1}(g(t_2)), t_1, \beta(t_2)\right).$$ Note that since $|g(t_2)| < \epsilon < 1$, $h$ is well-defined.

\begin{lemma}
The embedding $h$ is an exact Lagrangian embedding.
\end{lemma}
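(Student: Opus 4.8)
The plan is to pull the Liouville form $\lambda_C$ of~\eqref{lambdageod} back along $h$ and extract from $h^{*}\lambda_C$ both closedness (the Lagrangian condition) and vanishing of periods (exactness). Along the image of $h$ we have $r=-\tanh^{-1}(g(t_2))$, $t=t_1$ and $(x,y)=\beta(t_2)$. The first and decisive observation is the cancellation built into the construction: $\tanh(r)=-g(t_2)$ forces $\cosh(r)=(1-g^{2})^{-1/2}$ and $\sinh(r)=-g\,(1-g^{2})^{-1/2}$, so that $\cosh(r)\,y+\sinh(r)\equiv 0$. Hence the $dt$-term of $\lambda_C$ pulls back to zero, and with $h^{*}dr=-\frac{g'(t_2)}{1-g(t_2)^{2}}\,dt_2$ one gets
\begin{equation*}
h^{*}\lambda_C=-\frac{f(t_2)\,g'(t_2)}{1-g(t_2)^{2}}\,dt_2+\beta^{*}d\theta .
\end{equation*}

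Since the right-hand side only involves $dt_2$ with coefficients independent of $t_1$, it is pulled back from the second circle factor; in particular $d(h^{*}\lambda_C)=0$, so $h^{*}\omega_C=0$ and $h$ is Lagrangian. It then remains to show that the two periods of the closed $1$-form $h^{*}\lambda_C$ on $\mathbb{T}^{2}\cong S^{1}_{\ell}\times S^{1}$ vanish. The period over $S^{1}_{\ell}\times\{\mathrm{pt}\}$ is zero because there is no $dt_1$-term. For the period over $\{\mathrm{pt}\}\times S^{1}$ I would integrate over $\beta$: the $\beta^{*}d\theta$ piece contributes $2\pi$ because $\beta$ winds once positively around $0$, while Green's theorem (with $\beta=\partial D$ positively oriented) gives
\begin{equation*}
\int_{S^{1}}-\frac{f g'}{1-g^{2}}\,dt_2=-\oint_{\beta}\frac{x\,dy}{1-y^{2}}=-\int_{D}\frac{dx\wedge dy}{1-y^{2}}=-2\pi ,
\end{equation*}
using condition~(3) on $\beta$. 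The two contributions cancel, so every period of $h^{*}\lambda_C$ vanishes, hence $h^{*}\lambda_C=dG$ for some $G\colon\mathbb{T}^{2}\to\R$, exhibiting $h$ as an exact Lagrangian embedding. That $h$ is an embedding is immediate from the explicit formula: the $r$- and $(x,y)$-coordinates recover $t_2$ (as $\beta$ is a Jordan curve with nonvanishing speed) and $t$ recovers $t_1$.

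I do not expect a real obstacle here: the computation is essentially forced once the model~\eqref{lambdageod} is available. The only point deserving care is that $\theta$ is not globally defined on $\R^{2}\setminus\{0\}$, only $d\theta$ is; this is precisely why the $d\theta$-term contributes the winding number $2\pi$ to the period rather than $0$, and condition~(3) is imposed exactly so as to cancel it. I would also double-check the orientation conventions (positive winding of $\beta$ versus the induced boundary orientation of $D$) to confirm the two $2\pi$'s appear with opposite sign, but this is routine bookkeeping.
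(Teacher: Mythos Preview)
Your proof is correct and follows essentially the same approach as the paper: compute $h^{*}\lambda_C$, observe it is pulled back from the second $S^{1}$-factor (hence closed), and use condition~(3) together with the winding number of $\beta$ to kill the remaining period via Stokes' theorem. Your write-up is in fact more explicit than the paper's about the cancellation $\cosh(r)\,y+\sinh(r)=0$ along the image of $h$, which the paper subsumes into ``a straightforward computation''.
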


\begin{proof}
A straightforward computation\footnote{Recall that $\frac{d}{dt} \tanh^{-1}(t) = \frac{1}{1-t^2}$.} shows 
\begin{align*}
h^* \lambda_C = - (\beta\circ \pi_2)^* \mu + (\beta \circ \pi_2)^* d\theta,
\end{align*}
where $\mu := \frac{x}{1-y^2}\, dy$ and $\pi_2 : S^1_\ell \times S^1 \rightarrow S^1$ is the projection onto the second factor. Note that the $1$-form $h^* \lambda_C$ is closed, and it is exact if and only if its integral along $\{0\} \times S^1$ is $0$. By Stokes' theorem, the latter is equivalent to $$- \int_D d\mu + \int_\beta d\theta = -\int_D \frac{dx\wedge dy}{1-y^2} + 2\pi =0,$$ which is exactly condition (3) above.
\end{proof}

\begin{remark}
This construction yields an exact Lagrangian torus $\mathbb{T}_\gamma$ for every simple closed geodesic $\gamma$ on $\Sigma$ which is well defined up to exact Lagrangian isotopy. Indeed, any choice of curve $\beta$ satisfying the conditions (1), (2) and (3) above yields an exact Lagrangian torus and an isotopy of $\beta$ through curves satisfying these conditions yields a exact Lagrangian isotopy. Moreover, $\Sigma$ admits $3g-3$ simple closed geodesics that are pairwise disjoint, and the construction can be applied to each of those in such a way that the $3g-3$ exact Lagrangian tori that we obtain are pairwise disjoint. This construction can also be performed along an \emph{immersed} geodesic on $\Sigma$ to obtain an \emph{immersed} exact Lagrangian torus.
\end{remark}

\begin{figure}
    \centering
    \includegraphics[width=0.6\linewidth]{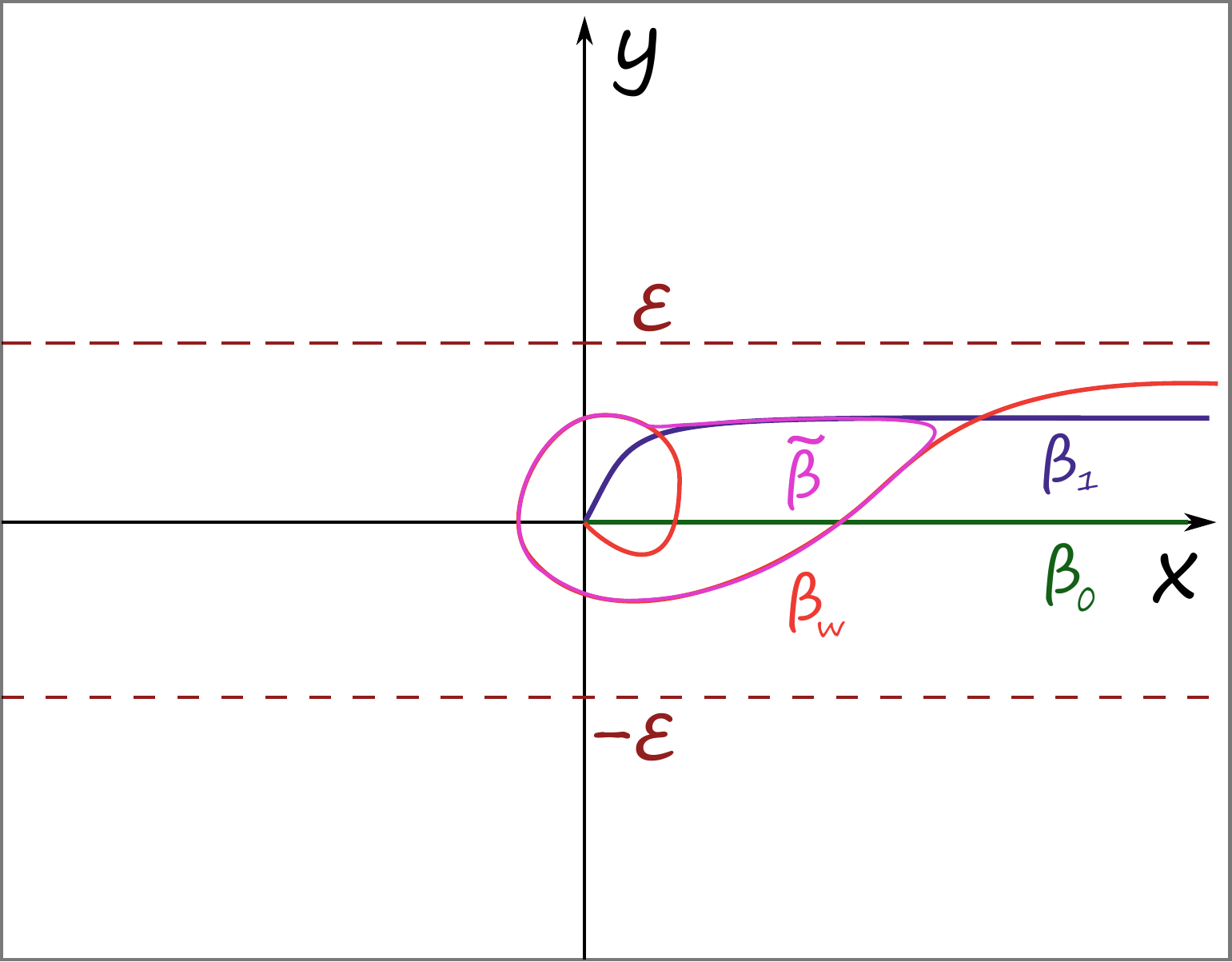}
    \caption{The curves $\beta_0,\beta_1, \beta_w,\widetilde \beta$.}
    \label{fig:more_curves}
\end{figure}

The exact Lagrangian torus $\mathbb{T}_\gamma$ can also be obtained in the following way. Let $\mathcal L_\gamma$ be the positive conormal lift of $\gamma$ in $T^* \Sigma \setminus 0_\Sigma$, which we know is an exact cylindrical Lagrangian. In the above coordinates, it is simply $$\mathcal L_\gamma = \big\{(0, t, x, 0 ) : t \in S^1_\ell, x \in \R_{>0} \big\}.$$ Note that for a choice of $\beta$ in the construction of $\mathbb{T}_\gamma$ as in Figure \ref{fig:beta}, $\mathbb{T}_\gamma$ and $\mathcal L_\gamma$ intersect cleanly along a circle. For any smooth simple curve $\beta = (f,g): \R_{> 0} \rightarrow \R^2 \setminus \{0\}$ contained in the strip $\R \times (-\epsilon, \epsilon)$, the subset $$ \mathcal L_{\gamma; \beta} := \left\{ \left(-\tanh^{-1}(g(s)), t, \beta(s) \right) : s \in \R_{>0}, \, t \in S^1_\ell \right\}$$ is an exact Lagrangian submanifold of $(V_\delta, \lambda_C)$. For $\beta_0(s) = (s,0)$, we have $\mathcal L_\gamma =\mathcal  L_{\gamma, \beta_0}$ as in Figure \ref{fig:more_curves}. Under suitable assumptions on the behavior of $\beta$ for $s$ near $0$ and $s$ near $+\infty$, $\mathcal L_{\gamma;\beta}$ is also cylindrical at infinity. To be more precise, let $X$ be the vector field on $\R^2 \setminus \{0\}$ defined by $$ X(x,y) = x \big( 1-x^2 -2y^2\big) \partial_x + y (1-y^2) \, \partial_y.$$

\begin{figure}
    \centering
    \includegraphics[width=1\linewidth]{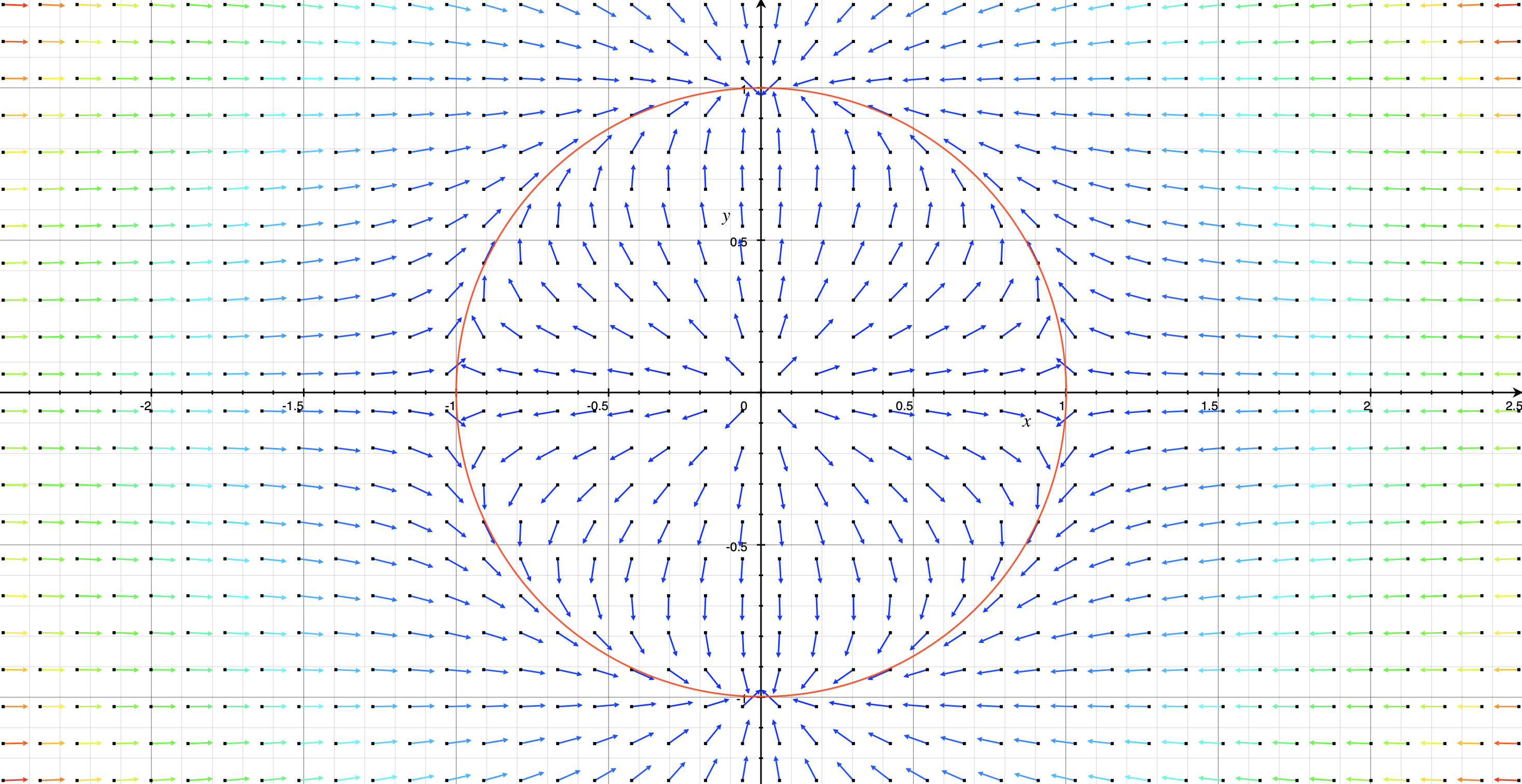}
    \caption{The vector field $X$. The locus where $\mathcal{L}_{\gamma;\beta}$ intersects the skeleton corresponds to the unit circle.}
    \label{fig:vectorfield}
\end{figure}

Near $0$, $X$ essentially behaves like the radial vector field $x\partial_x + y \partial_y$, and for $x \rightarrow +\infty$, $X$ essentially behaves like a horizontal vector field (colinear to $\partial_x$), see Figure~\ref{fig:vectorfield}. We further assume that
\begin{enumerate}
    \item[(4a)] $\underset{s \rightarrow 0}{\lim} \;\beta(s) = 0$ and $\beta$ is tangent to $X$ for $s$ near $0$,
    \item[(4b)] $\underset{s \rightarrow +\infty}{\lim}  f(s) = +\infty$ and $\beta$ is tangent to $X$ for $s$ sufficiently large.
\end{enumerate}
Then $\mathcal L_{\gamma ; \beta}$ is cylindrical at infinity. Indeed, it is enough to check that $\lambda_C$ restricted to $\mathcal L_{\gamma ; \beta}$ is zero for $s$ near $0$ and $s$ near $+\infty$. On $\mathcal L_{\gamma ; \beta}$, $\lambda_C$ restricts to $ \rho dr + d \theta$ and it vanishes if and only if the following equation is satisfied: $$ -\frac{f g'}{1-g^2} + \frac{fg'-gf'}{f^2 + g^2} = 0. $$
The latter is equivalent to $$g'f \left(1-f^2 - 2g^2\right) - f' g (1-g^2) = 0$$ and it is satisfied by the assumptions (4a) and (4b), i.e., we get $g'f'-f'g'=0$.

For a curve $\beta_1$ as in Figure \ref{fig:more_curves}, $\mathcal L_{\gamma ; \beta_1}$ is Hamiltonian isotopic to $\mathcal L_\gamma$ and is disjoint from it. For $\beta_w$ also as in Figure \ref{fig:more_curves}, $\mathcal L_{\gamma ; \beta_w}$ is a positive wrapping of $\mathcal L_\gamma$ and intersects $\mathcal L_{\gamma ; \beta_1}$ cleanly along two circles. These intersections can be resolved to obtain $\mathcal L_{\gamma ; \widetilde{\beta}}$ for some closed curve $\widetilde{\beta}$. For an appropriate choice of wrapping, $\widetilde{\beta}$ satisfies the assumptions (1), (2) and (3) above. To summarize, we obtained $\mathbb{T}_\gamma$ from $\mathcal L_\gamma$ and a slight push-off of $\mathcal L_\gamma$, by wrapping $\mathcal L_\gamma$ passed its push-off and resolving the two $S^1$-families of intersections. Therefore, it is natural to expect the following

\begin{conjecture}\label{conj:Tgamma}
As an object of $\mathcal{W}(V)$, $\mathbb{T}_\gamma$ is (split-)generated by $\mathcal L_\gamma$.
\end{conjecture}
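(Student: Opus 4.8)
The plan is to upgrade the explicit geometric construction of $\mathbb{T}_\gamma$ given above into an algebraic statement in $\mathcal{W}(V)$, by recognizing the two Lagrangian surgeries involved as (iterated) mapping cones. Recall that $\mathbb{T}_\gamma = \mathcal{L}_{\gamma;\widetilde\beta}$ is obtained by resolving the two clean $S^1$-families of intersection points between $\mathcal{L}_{\gamma;\beta_w}$, a positive wrapping of $\mathcal{L}_\gamma$, and $\mathcal{L}_{\gamma;\beta_1}$, a small pushoff of $\mathcal{L}_\gamma$. Positive wrapping is an isomorphism in $\mathcal{W}(V)$ and the pushoff is a compactly supported Hamiltonian isotopy, so both $\mathcal{L}_{\gamma;\beta_w}$ and $\mathcal{L}_{\gamma;\beta_1}$ are quasi-isomorphic to $\mathcal{L}_\gamma$ as objects of $\mathcal{W}(V)$. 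Hence it suffices to prove that the surgered object $\mathcal{L}_{\gamma;\widetilde\beta}$ lies in the triangulated closure of $\{\mathcal{L}_{\gamma;\beta_w},\mathcal{L}_{\gamma;\beta_1}\}$, and therefore a fortiori in the split-closure of $\{\mathcal{L}_\gamma\}$.

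The key algebraic input is the surgery–cone correspondence for exact Lagrangians: the Lagrangian obtained by Polterovich surgery of $L_0$ and $L_1$ at a transverse intersection point $p$ of appropriate degree is quasi-isomorphic, in the wrapped Fukaya category, to the mapping cone of the morphism $p \in CW^*(L_0,L_1)$ (up to shift), the wrapped/exact avatar of the Biran--Cornea surgery exact triangle, which in this setting follows from the Lagrangian cobordism formalism together with the invariance and acceleration properties of $\mathcal{W}(V)$ (see, e.g., \cite{GPS2}). In our situation the intersections are \emph{clean along circles} rather than transverse, so the first approach is to reduce to the transverse case: pick a perfect Morse function on each surgery circle $C\cong S^1$ (one minimum, one maximum), perturb $\mathcal{L}_{\gamma;\beta_w}$ inside a Weinstein neighborhood of $C$ so that $C$ breaks into the two corresponding transverse intersection points, and then perform the two elementary Polterovich surgeries in sequence with matched handle widths. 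One checks that the resulting exact Lagrangian is Hamiltonian isotopic to $\mathbb{T}_\gamma$ — this is where the choice of $\widetilde\beta$ and the $S^1$-equivariance of the construction enter — and concludes that $\mathbb{T}_\gamma$ is an iterated cone over morphisms between objects quasi-isomorphic to $\mathcal{L}_\gamma$ (each clean circle contributing, morally, a cone on a morphism out of $\mathcal{L}_\gamma\otimes H^*(S^1;\Z)$), hence is split-generated by $\mathcal{L}_\gamma$.

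The main obstacle is precisely this $S^1$-equivariance. The two clean circles must be resolved compatibly, and the iterated-cone argument requires the successive transverse surgeries to be carried out in disjoint neighborhoods and to be compatible with a single global wrapping, so that no surgery destroys the clean structure needed by the next one; verifying that the combined operation reproduces $\mathbb{T}_\gamma$ up to Hamiltonian isotopy, rather than some other resolution, is the delicate point. A cleaner but more technically demanding alternative is to establish a \emph{clean-intersection cone formula} directly in the wrapped setting — in the spirit of the Morse--Bott and family-Floer refinements of the surgery triangle, cf.~the fibered Dehn-twist cone sequences of Mak--Wu or the cobordism trace of a clean surgery à la Biran--Cornea — identifying $\mathbb{T}_\gamma$ with the cone of an explicit morphism $\mathcal{L}_\gamma \otimes_{\Z} H^{*}(S^1;\Z) \longrightarrow \mathcal{L}_\gamma$ in $\mathcal{W}(V)$ built from the low-energy count through $C$. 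Since $\mathcal{L}_{\gamma;\beta_w}$ and $\mathcal{L}_{\gamma;\beta_1}$ are both cylinders over circles, the normal data of $C$, hence the local system twisting the cone, is trivial, so the whole construction stays within the exact category and no Novikov coefficients are needed; one then reads off $HW^*(\mathcal{L}_\gamma,\mathbb{T}_\gamma)$ from the resulting twisted complex together with Theorem~\ref{thm:WFH}.
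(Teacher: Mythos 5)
The statement you set out to prove is explicitly labeled a \emph{conjecture} in the paper; the authors do not prove it. Immediately after the statement they write that addressing it ``would require an $S^1$-equivariant version of the fact that a Polterovich surgery induces a cone in the Fukaya category,'' and they leave the matter there. Your proposal correctly identifies exactly this missing ingredient, but it does not supply it, so what you have written is a strategy outline rather than a proof.

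Concretely, there are two genuine gaps that your write-up acknowledges but does not close. First, the clean-intersection surgery--cone correspondence in the wrapped, exact Liouville setting is not an off-the-shelf result: the Biran--Cornea cobordism formalism and the Mak--Wu fibered Dehn twist sequences you cite are formulated in settings (compact Fukaya categories, monotone or exact compact Lagrangians, fibered twists) that do not immediately transfer to a non-Weinstein Liouville manifold with non-compact objects and a quadratic Hamiltonian; one would need to set up and prove the relevant cone formula for a clean $S^1$-surgery between cylindrical exact Lagrangians in $\mathcal{W}(V)$, together with the identification of the connecting morphism in $CW^*(\mathcal{L}_{\gamma;\beta_1},\mathcal{L}_{\gamma;\beta_w})$. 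Second, your Morse-perturbation route reduces the clean circles to pairs of transverse points and proposes to run two ordinary Polterovich surgeries, but it leaves unverified the central geometric claim that the result of the two transverse surgeries, carried out in disjoint Weinstein neighborhoods with matched handle widths and compatible with the wrapping, is Hamiltonian isotopic to $\mathbb{T}_\gamma = \mathcal{L}_{\gamma;\widetilde\beta}$ rather than to some other resolution. You flag this yourself as ``the delicate point,'' and indeed it is: without it the iterated-cone argument produces \emph{some} object in the triangulated closure of $\{\mathcal{L}_\gamma\}$, but not demonstrably $\mathbb{T}_\gamma$. Until one of these two routes is actually carried through, the statement remains, as in the paper, a conjecture.
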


In order to address the conjecture, we would need to implement an $S^1$-equivariant version of the fact that a Polterovich surgery induces a cone in the Fukaya category. Note that $\mathbb{T}_\gamma$ can be obtained from $\mathcal{L}_{\overline{\gamma}}$ by a similar procedure, where $\overline{\gamma}$ denotes $\gamma$ with the opposite orientation. This suggests that there might exist non trivial relations between cones over $\mathcal{L}_\gamma$ and cones over $\mathcal{L}_{\overline{\gamma}}$ in the wrapped Fukaya category.

\begin{remark}
This construction remains valid for a closed curve $\beta$ satisfying (1), (2) but not (3). The corresponding Lagrangian torus is incompressible, hence weakly exact, but not exact, and it is disjoint from the skeleton and contained in the prequantization end if $\beta$ is contained in the open unit disk.\footnote{If $\mathbb{D}$ denotes the open unit disk in the plane, $$\int_\mathbb{D} \frac{dx\wedge dy}{1-y^2} = 2\pi,$$ so $\mathcal{L}_{\gamma;\beta}$ is \emph{never} exact when $\beta$ is contained in $\mathbb{D}$.} We expect these tori to be Lagrangian (and perhaps Hamiltonian) isotopic to the ones from Section~\ref{sec:Mcduff_domains},\footnote{One could compare these tori by finding an explicit exact symplectomorphism between the symplectization of the prequantization contact form and the McDuff Liouville form in the prequantization end using Moser's trick. Unfortunately, the computations seem quite tedious.} and to be split-generated by $\mathcal L_\gamma$ in the wrapped Fukaya category of $V$ with Novikov coefficients.
\end{remark}

All of the (weakly) exact Lagrangian tori coming from the previous construction  either intersect the skeleton or are entirely contained in the component of the complement of the skeleton corresponding to the prequantization bundle end. In fact, the component of the complement of the skeleton corresponding to the cotangent bundle end cannot contain weakly exact Lagrangian tori:

\begin{lemma} \label{lemma:weakly}
If $\Sigma$ is a closed orientable surface of genus $g \geq 2$, then $\left(T^* \Sigma \setminus 0_\Sigma, \lambda_{\mathrm{can}} \right)$ does not contain any weakly exact Lagrangian tori.
\end{lemma}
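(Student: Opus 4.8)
The statement is that $(T^*\Sigma\setminus 0_\Sigma,\lambda_{\mathrm{can}})$ contains no weakly exact Lagrangian torus when $g(\Sigma)\ge 2$. The plan is to argue by contradiction: suppose $L\subset T^*\Sigma\setminus 0_\Sigma$ is a weakly exact Lagrangian torus. First I would observe that $T^*\Sigma\setminus 0_\Sigma$ sits inside $T^*\Sigma$, which is a Weinstein (indeed cotangent) manifold whose symplectic topology of exact/weakly exact Lagrangians is completely understood: by work of Abouzaid, Kragh, and others, every closed exact Lagrangian in $T^*\Sigma$ is (Floer-theoretically, and in fact up to homotopy equivalence) the zero section, hence in particular its projection to $\Sigma$ has degree $\pm 1$ and it is \emph{not} displaceable. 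More elementarily, one can run a Floer-homology argument directly: the torus $L$ is displaceable in $T^*\Sigma\setminus 0_\Sigma$ (push it off to infinity along the Liouville flow, or displace it by a compactly supported Hamiltonian since $H^1(\Sigma)$-arguments give enough room), so its self-Floer homology $HF^*(L,L)$ over the Novikov ring $\Lambda_{\mathbb Z}$ must vanish.

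The key step is then to contradict this vanishing. Since $L$ is weakly exact, $HF^*(L,L)$ is well-defined over $\Lambda_{\mathbb Z}$ and, by Oh's spectral sequence (or the fact that for weakly exact Lagrangians there is no holomorphic disk bubbling, so the Floer differential is a deformation of the Morse differential), there is a spectral sequence from $H^*(L;\Lambda_{\mathbb Z})=H^*(T^2;\Lambda_{\mathbb Z})$ converging to $HF^*(L,L)$. For this to vanish, the spectral sequence must have a nonzero differential; but the first possibly-nonzero differential $d_2$ (or more generally $d_k$) is given by counting holomorphic disks, and on $H^*(T^2)$ with its one-dimensional top class there is a degree/parity obstruction: any differential on $H^*(T^2)=\Lambda\oplus\Lambda^2\oplus\Lambda$ in degrees $0,1,1,2$ must kill the degree-$0$ generator, which is impossible since it pairs with the PSS unit. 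Concretely, the unit $e\in HF^0(L,L)$ is the image of $1\in H^0(L)$, and it survives because $e$ is a unit in the (nonzero if $L$ bounds no disks of Maslov $\le 0$) ring $HF^*(L,L)$; weak exactness forces the minimal Maslov number to be $\ge 2$ (no disks at all in the exact-near-infinity region, and those with Maslov $0$ would violate weak exactness), so $HF^*(L,L)\cong H^*(L;\Lambda_{\mathbb Z})\ne 0$. This contradicts displaceability.

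The main obstacle — and the step requiring care — is establishing \textbf{displaceability} of $L$ inside $T^*\Sigma\setminus 0_\Sigma$, because a priori $L$ could be ``knotted'' near the zero section in a way that prevents it from being pushed off by a Hamiltonian supported in $T^*\Sigma\setminus 0_\Sigma$. The clean way around this is to work in the Liouville completion: $T^*\Sigma\setminus 0_\Sigma$ completes to (a piece of) $T^*\Sigma$ with the zero section removed, and weak exactness plus $L$ being a torus (so $\pi_2(T^*\Sigma\setminus 0_\Sigma,L)$ maps to $\pi_2$ of a $K(\pi,1)$) lets one conclude that $L$ is also weakly exact \emph{inside all of} $T^*\Sigma$, where displaceability is automatic for a closed Lagrangian torus by the Arnol'd–Weinstein/Gromov argument (any closed Lagrangian in $T^*\Sigma$ whose self-intersection class vanishes can be displaced, or alternatively: $\chi(L)=0=[L]\cdot[L]$ in $T^*\Sigma$, combined with Gromov's theorem that there is no exact embedded torus with $HF\ne 0$ in a subcritical or Weinstein setting... ) — more robustly, one invokes that a torus cannot be the zero section of $T^*\Sigma$ for homological reasons, hence by the nearby Lagrangian results it cannot be exact, and weak exactness is then excluded by the spectral-sequence/unit argument above applied in $T^*\Sigma$ itself. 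I would present the cotangent-bundle reduction first, then the Floer-homological contradiction, flagging the displacement-in-$T^*\Sigma$ point as the one place where the geometry of the embedding must be controlled.
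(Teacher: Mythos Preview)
Your argument has a genuine gap at the displaceability step, and none of the proposed fixes work. The Liouville flow on $T^*\Sigma\setminus 0_\Sigma$ is conformally symplectic, not Hamiltonian, so pushing $L$ to infinity along it does not give $HF^*(L,L)=0$. Vanishing self-intersection number $[L]\cdot[L]=0$ is necessary for smooth displaceability but nowhere near sufficient for Hamiltonian displaceability (think of the Clifford torus in $\mathbb{CP}^2$). The nearby Lagrangian theorems of Abouzaid--Kragh concern \emph{exact} Lagrangians, not weakly exact ones, so they do not exclude a weakly exact torus in $T^*\Sigma$; invoking them to then feed back into a displaceability/spectral-sequence argument is circular. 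Finally, even granting that $L$ remains weakly exact in $T^*\Sigma$, there is no general mechanism to Hamiltonianly displace a closed Lagrangian torus in $T^*\Sigma$ for $g\ge 2$.

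The paper's proof avoids this entirely by a covering trick. One first analyses the image of $\pi_1(T)\cong\Z^2$ in $\pi_1(S^*\Sigma)$: since any two-generator subgroup of $\pi_1(\Sigma)$ is free or cyclic, this image lies in a subgroup $\langle\widetilde\gamma,f\rangle$ generated by a lift of a loop and the fibre class. Hence $T$ lifts to a weakly exact torus in $T^*C\setminus 0_C$ with $C=\R\times S^1$, and closing up the $\R$-factor by a sufficiently large translation produces a weakly exact torus $T'$ in $T^*\mathbb{T}^2\setminus 0_{\mathbb{T}^2}$ which \emph{is} displaceable in $T^*\mathbb{T}^2$ (it sits over a short arc in one circle factor, so a cotangent translation displaces it). Even then the paper does not run the $HF=H^*(T^2)$ versus $HF=0$ argument, because $T'$ need not be weakly exact in all of $T^*\mathbb{T}^2$: disks may cross the zero section. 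Instead one applies Chekanov's theorem to obtain $J$-holomorphic disks of bounded energy, observes they must intersect the zero section by weak exactness in the complement, and neck-stretches along $S^*\mathbb{T}^2$ to produce a holomorphic plane asymptotic to a closed Reeb orbit --- impossible since such orbits project to noncontractible curves in $\mathbb{T}^2$. The covering reduction and the Chekanov/neck-stretching endgame are the two ideas your proposal is missing.
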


\begin{proof}
We only sketch the main arguments and leave the details to the reader. Assume by contradiction that $T \subset \left(T^* \Sigma \setminus 0_\Sigma, \lambda_{\mathrm{can}} \right)$ is a weakly exact Lagrangian torus. The image of the morphism $i_T : \pi_1(T) \cong \Z^2 \rightarrow \pi_1(S^*\Sigma)$ induced by the inclusion $T \subset T^*\Sigma \setminus 0_\Sigma \cong \R \times S^*\Sigma$ is contained in a subgroup $H_\gamma = \langle \widetilde{\gamma}, f \rangle$ generated by the class of the lift of a loop $\gamma \subset \Sigma$ and the class $f$  a fiber. Indeed, there is a short exact sequence $$  1 \longrightarrow \Z \overset{i_*}{\longrightarrow} \pi_1(S^*\Sigma) \overset{\pi_*}{\longrightarrow} \pi_1(\Sigma) \longrightarrow 1$$ induced by the homotopy long exact sequence for the fiber bundle $S^1 \overset{i}{\hookrightarrow} S^*\Sigma \overset{\pi}{\rightarrow} \Sigma.$ Recall that a subgroup of $\pi_1(\Sigma)$ generated by at most two elements is either trivial, infinite cyclic, or free. Therefore, there exists a loop $\gamma$ on $\Sigma$ such that the image of the composition $\pi_* \circ i_T$ is contained in $\langle \gamma \rangle$ (where $\gamma$ is arbitrary if the image is trivial). The short exact sequence readily implies that the image of $i_T$ is contained in $H_\gamma$. 

It follows from the preceding discussion that $T$ lifts to a weakly exact Lagrangian torus in $(T^*C \setminus 0_C, \lambda_\mathrm{can})$, where $C = \R \times S^1$. Here, $\{0\} \times S^1$ projects onto the curve $\gamma$. Closing up the cylinder $C$ by quotienting the $\R$ direction by a sufficiently large translation, we obtain a \emph{displaceable} weakly exact Lagrangian torus $T'$ in $(T^* \mathbb{T}^2 \setminus 0_{\mathbb{T}^2}, \lambda_\mathrm{can})$. This implies that $T' \subset T^* \mathbb{T}^2$ is diplaceable, and by a theorem of Chekanov~\cite{C98} (see also~\cite{Oh97}), there exists $A > 0$ such that $T'$ bounds a nontrivial $J$-holomorphic disk with energy less than $A$ for any tame and convex almost complex structure $J$. Since $T'$ is weakly exact in $T^*\mathbb{T}^2 \setminus 0_{\mathbb{T}^2}$, these disks necessarily intersect the zero section. Performing a neck-stretching near the zero section, we obtain a sequence of holomorphic disks converging to a holomorphic building of genus $0$ such that one of its components is a holomorphic plane in $T^* \mathbb{T}^2$ asymptotic to a closed Reeb orbit and intersecting the zero section. This is impossible since the Reeb orbits in $S^* \mathbb{T}^2$ project to non-contractible curves on $\mathbb{T}^2$. 
\end{proof}

\begin{figure}
\centering
\begin{subfigure}{.5\textwidth}
  \centering
  \includegraphics[width=0.95\linewidth]{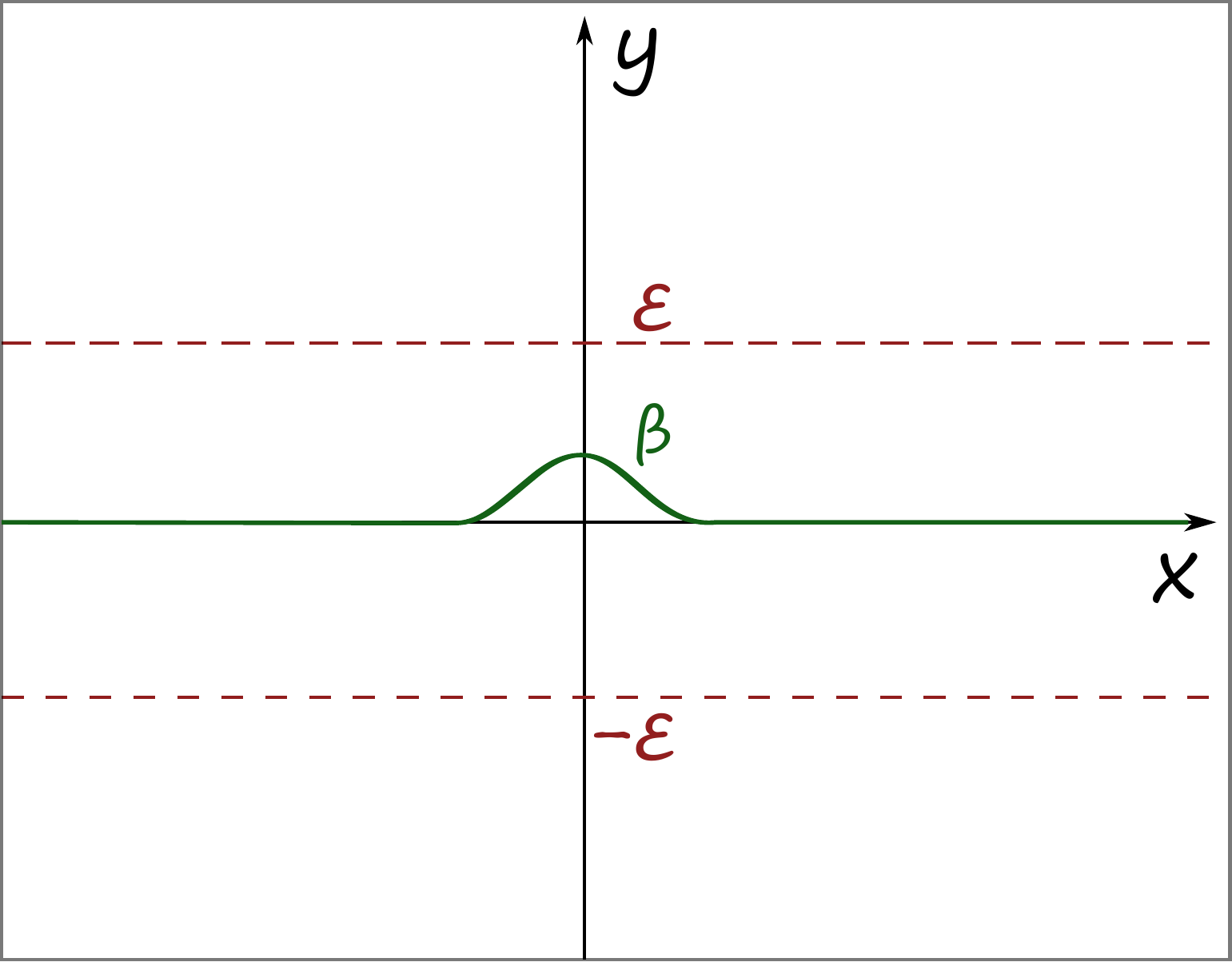}
  \caption{The curve $\beta$ for the U-shaped Lagrangian.}
  \label{fig:U_shaped}
\end{subfigure}%
\begin{subfigure}{.5\textwidth}
  \centering
  \includegraphics[width=0.95\linewidth]{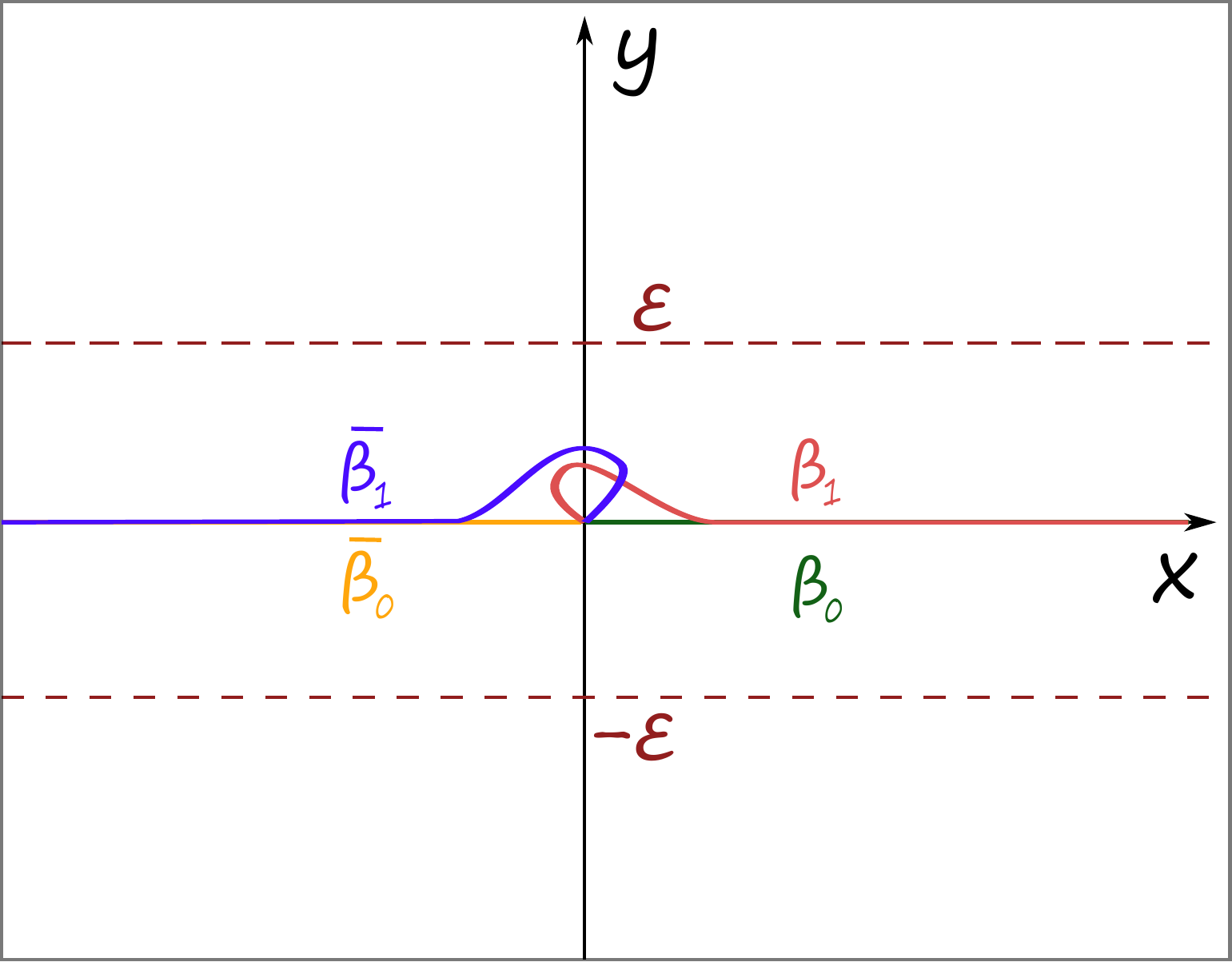}
    \caption{The curves $\beta_0,\beta_1$ and their opposite  $\overline{\beta}_0,\overline{\beta}_1$.}
    \label{fig:U_shaped_wrapped}
\end{subfigure}
\caption{The relevant curves for the U-shaped Lagrangian.}
\end{figure}

We end this section with one last construction. We now take $\beta : \mathbb{R} \rightarrow \R^2 \setminus \{0\}$ to be as in Figure \ref{fig:U_shaped}, with $\beta(s) = (s, 0)$ for $\vert s \vert \geq 1$. The corresponding Lagrangian cylinder $\mathcal L_{\gamma ; \beta}$ is exact and intersects the skeleton of $V$. It has two cylindrical ends in the cotangent bundle end of $V$ which coincide with $\mathcal L_\gamma \sqcup \mathcal L_{\overline{\gamma}}$, where $\overline{\gamma}$ denotes $\gamma$ with the opposite orientation. We call it a \emph{U-shaped Lagrangian} associated with $\gamma$. It can also be obtained from $\mathcal L_\gamma$ and $\mathcal L_{\overline{\gamma}}$ by wrapping them in opposite ways in the prequantization end in order to create an clean intersection along a circle, and then resolving this intersection. See Figure \ref{fig:U_shaped_wrapped} for the corresponding picture from the point of view of the $\beta$ curves, where the curve $\overline{\beta}_0(s)=(s,0)$ for $s\in \mathbb{R}_{<0}$ gives $\mathcal{L}_{\overline{\gamma}}=\mathcal{L}_{\overline{\gamma};\overline{\beta}_0}$, and $\beta_1,\overline{\beta}_1$ are the wrappings of $\beta_0$ and $\overline{\beta}_0$ respectively. As an object of $\mathcal{W}(V)$, we expect the U-shaped Lagrangian to be split-generated by $\mathcal L_\gamma$ and $\mathcal L_{\overline{\gamma}}$.

        \subsection{Closed Lagrangians in the torus bundle domains}

We now assume that $M$ is a $\mathbb{T}^2$-bundle over $S^1$ with monodromy given by a positive hyperbolic matrix $A \in \mathrm{SL}(2,\Z)$, $(V=\R \times M, \lambda)$ is a torus bundle manifold, and we show Theorem~\ref{thm:closed} from the Introduction.

It follows from the homotopy long exact sequence that $\pi_1(V) = \pi_1(M) \cong \Z^2 \rtimes_A \Z$ is the semi-direct product of $\Z^2$ with $\Z$ where $\Z$ acts on $\Z^2$ by $A$, and this group is torsion-free as it sits in the short exact sequence $$ 0 \rightarrow \Z^2 \rightarrow \Z^2 \rtimes_A \Z \rightarrow \Z \rightarrow 0.$$ Recall that if $K$ denotes the Klein bottle, $\pi_1(K) =\langle a, b \, | \, aba = b \rangle$.

\begin{lemma} \label{lemmalg} Let $G := \Z^2 \rtimes_A \Z$ and $H = \Z^2$, $\Z/2\Z$ or $\langle a, b \, | \, aba = b \rangle$. If $\phi : H \rightarrow G$ is a group morphism then either $\phi(H) \subseteq \Z^2 \rtimes 0$ or $\phi(H)$ is a cyclic subgroup of $G$.
\end{lemma}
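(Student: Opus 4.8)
The plan is to analyze the morphism $\phi : H \to G = \Z^2 \rtimes_A \Z$ by composing with the projection $p : G \to \Z$ (with kernel $\Z^2 \rtimes 0$) and splitting into two cases according to whether $p \circ \phi$ is trivial or not. If $p \circ \phi$ is trivial, then by definition $\phi(H) \subseteq \ker p = \Z^2 \rtimes 0$, which is the first alternative, so there is nothing more to do in that case.

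So assume $p \circ \phi : H \to \Z$ is nontrivial; I claim $\phi(H)$ is cyclic. The key algebraic input is that the eigenvalues $e^{\pm\nu}$ of $A$ are irrational (indeed $A$ is hyperbolic with $\operatorname{tr}A > 2$), so $A$ has no nonzero fixed vector; more is true: for any $k \neq 0$, the matrix $A^k - I$ is invertible over $\Q$, i.e., $A^k$ has no nonzero rational fixed vector either. I would first treat $H = \Z/2\Z$: any finite subgroup of the torsion-free group $G$ is trivial, hence cyclic, so this case is immediate (alternatively it falls under the previous paragraph). Next, for $H = \Z^2$ or $H = \pi_1(K) = \langle a,b \mid aba = b\rangle$, the strategy is to show that the image of any element on which $p$ vanishes is forced to be trivial once some other generator maps to an element with nonzero $p$-image. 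Concretely: suppose $h \in H$ has $\phi(h) = (w, k)$ with $k \neq 0$, and $h' \in \ker(p\circ\phi)$, so $\phi(h') = (v, 0)$ with $v \in \Z^2$. If $h$ and $h'$ commute in $H$ (which holds when $H = \Z^2$), then in $G$ we get $(w,k)(v,0) = (v,0)(w,k)$, which unwinds to $w + A^k v = v + w$, i.e. $(A^k - I)v = 0$, forcing $v = 0$ by invertibility of $A^k - I$. Hence $\phi$ kills the entire kernel of $p\circ\phi$, and $\phi(H) \cong \operatorname{im}(p\circ\phi) \subseteq \Z$ is cyclic.

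For $H = \pi_1(K) = \langle a, b \mid aba = b \rangle$ the same idea applies but requires bookkeeping with the Klein bottle relation. Note $b$ is central-up-to-conjugation: the relation gives $a b a^{-1} = b^{-1}$, so $b$ is normal and $\langle b \rangle$ is the unique maximal normal abelian subgroup with $\pi_1(K)/\langle b\rangle \cong \Z$ generated by the image of $a$. The composition $p \circ \phi$ sends $b \mapsto 0$ necessarily, because $p\circ\phi(b)$ must be fixed under the conjugation-by-$a$ action, which on $\Z$ is trivial, yet $p\circ\phi(b) = -p\circ\phi(b)$ forces it to be $0$; wait — more carefully, applying $p\circ\phi$ to $aba^{-1} = b^{-1}$ gives $p\phi(b) = -p\phi(b)$ in $\Z$, hence $p\phi(b) = 0$. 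Thus $\phi(b) = (v,0)$ for some $v \in \Z^2$. Writing $\phi(a) = (w,k)$: if $k \neq 0$, the relation $aba^{-1} = b^{-1}$ becomes $A^k v = -v$, i.e. $(A^k + I)v = 0$; since $A^k$ has eigenvalues $e^{\pm k\nu} > 0$, $A^k + I$ is invertible, so $v = 0$, $\phi(b) = 0$, and $\phi(H) = \langle \phi(a)\rangle$ is cyclic. If $k = 0$ then $p\circ\phi$ is trivial (as $a,b$ generate $H$) and we are in the first alternative. The main obstacle is organizing the Klein bottle case cleanly — ensuring one correctly identifies which generator maps nontrivially under $p \circ \phi$ and applying the relation to pin down the image of $b$ — but it reduces entirely to the elementary fact that $A^k \pm I$ is invertible over $\Q$ for $k \neq 0$, which is immediate from hyperbolicity of $A$.
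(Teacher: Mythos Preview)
Your proof is correct and follows essentially the same strategy as the paper, resting on the invertibility of $A^k \pm I$ over $\Q$ for $k \neq 0$. One slip to fix: from the paper's relation $aba = b$ you cannot deduce $aba^{-1} = b^{-1}$; rather, $aba = b$ yields $bab^{-1} = a^{-1}$ (equivalently, applying $p\circ\phi$ to $aba=b$ gives $2\,p\phi(a)=0$, so it is $p\phi(a)$ that vanishes, not $p\phi(b)$). Your computation then goes through verbatim with the roles of $a$ and $b$ interchanged, and indeed the paper concludes $\phi(a)=0$ and $\phi(H)=\langle\phi(b)\rangle$. Your handling of the $\Z^2$ case is slightly more streamlined than the paper's: the paper first reduces to the injective case and then splits on the rank of $\phi(H)\cap(\Z^2\rtimes 0)$, whereas you show directly that $\phi$ kills $\ker(p\circ\phi)$ whenever $p\circ\phi$ is nontrivial.
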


\begin{proof} 
The case $H=\Z/2\Z$ is trivial since $G$ is torsion-free.

For the case $H= \langle a, b \, | \, aba = b \rangle$, let $\phi(a) := (v, n)$ and $\phi(b) := (w, m)$. The relation $aba=b$ imposes $n=0$ and $v + A^mv = 0$. Since $-1$ is not an eigenvalue of $A^m$ by the hyperbolicity of $A$, we get $v=0$ so $\phi(a)=0$. Therefore, $\phi(H)$ is the cyclic subgroup of $G$ generated by $\phi(b)$.

Now let's assume that $H = \Z^2$. It suffices to prove that if $\phi: H \hookrightarrow G$ is injective then its image lies in $G_0:=\Z^2 \rtimes 0$. We identify $H$ with a subgroup of $G$ isomorphic to $\Z^2$. Denoting $H_0 := H \cap G_0$, there is an injection $H \slash H_0 \hookrightarrow G \slash G_0 \cong \Z$ which implies that $H_0 \neq 0$. There are two cases left:
\begin{itemize}
\item Case 1: $H_0 \cong \Z$. Then $H$ is generated by two elements $(v, 0)$ and $(w,m)$ where $v \neq 0$ and $m \neq 0$. Since these elements commute, we get: $v + w = w +~A^m v$, so $A^mv = v$ and $1$ is an eigenvalue of $A^m$, but this contradicts the hyperbolicity of $A$.
\item Case 2: $H_0 \cong \Z^2$. Then $[H:H_0]$ is finite and must equal $1$ because of the injection $H \slash H_0 \hookrightarrow \Z$, so $H = H\cap H_0 \subseteq \Z^2 \rtimes 0$ as desired.
\end{itemize}
This concludes the proof.
\end{proof}

We now explicitly describe the cover $V'$ of $V$ associated with the cyclic subgroup generated by an element of the form $(v, 1) \in \Z^2 \rtimes_A \Z$, and the cover $V''$ of $V$ associated with the normal subgroup $\Z^2 \rtimes 0 \subset \Z^2 \rtimes_A \Z$. To this end, we construct $V'$ and $V''$ as quotients of $(\mathbb{R}^4, \lambda_0)$ and we show that those are exact symplectomorphic to subsets of suitable cotangent bundles.

Recall that $A \in \mathrm{SL}(2,\Z)$ is a hyperbolic matrix and $P \in \mathrm{SL}(2,\R)$ and $\nu \in \R \setminus \{0\}$ are such that 
\[ PAP^{-1} = \begin{pmatrix}
e^\nu & 0\\
0 & e^{-\nu}
\end{pmatrix} =: D_\nu.\]

For $v_0 \in \R^2$, let 
\[\begin{array}{crcl}
\phi_{\nu, v_0} :& \R^2 \times \R & \longrightarrow & \R^2 \times \R \\
	& (v, z) & \longmapsto & \left( D_\nu v+ v_0, z -\nu \right),
\end{array}\]
where $v = (x,y)$.
The quotient $\R^3 \slash \phi_{\nu, v_0}$ is an affine plane bundle over $S^1$ whose monodromy is given by $D_\nu + v_0$. The contact forms $\alpha_\pm$ are preserved by $\phi_{\nu, v_0}$ so they descend to contact forms on $\R^3 \slash \phi_{\nu, v_0}$, and $\lambda_0$ descends to a Liouville form $\lambda'$ on $V' :=\R \times \R^3 \slash \phi_{\nu, v_0}$. 

\begin{lemma} \label{symp1} $\left(V', \lambda'\right)$ is exact symplectomorphic to $\left( T^*(\R \times S^1), \lambda_{\mathrm{can}} \right)$.
\end{lemma}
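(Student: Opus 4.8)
The plan is to produce an explicit exact symplectomorphism by writing $V'$ in coordinates adapted to the monodromy $D_\nu + v_0$, and then recognizing the resulting Liouville manifold as a cotangent bundle. First I would observe that translating the fiber coordinate $v$ by a suitable $z$-dependent affine shift removes the constant term $v_0$: since $-1$ is not an eigenvalue of $D_\nu$ (hyperbolicity), for each $z$ there is a unique $w_0(z)$ with $D_\nu w_0(z-\nu) + v_0 = w_0(z)$ consistent with the monodromy, so after the change of variables $v \mapsto v - w_0(z)$ the gluing map becomes the linear $\phi_{\nu,0}(v,z) = (D_\nu v, z-\nu)$. (One must check this change of variables is a diffeomorphism of $\R^4 = \R_s \times \R^3$ and track how $\lambda_0 = 2\sinh(s) e^z dx + 2\cosh(s) e^{-z} dy$ transforms; it picks up exact terms plus terms that still descend to the quotient, so we may reduce to the case $v_0 = 0$ up to Liouville homotopy, or better, up to exact symplectomorphism.) So it suffices to treat $V' = \R_s \times (\R^2 \times \R)/\phi_{\nu,0}$.

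Next I would introduce coordinates in which the symplectic form becomes standard. Recall from the torus bundle construction that $\omega_0 = d\lambda_0$ with $\lambda_0 = e^{-s}\alpha_- + e^s \alpha_+$ and $\alpha_\pm = \pm e^z dx + e^{-z} dy$. The idea is to find functions $(Q_1, Q_2, P_1, P_2)$ on $\R^4$ such that $\lambda_0 = P_1 dQ_1 + P_2 dQ_2 + d(\text{something})$, where $Q_1$ is a coordinate descending to $S^1 = \R_z/\nu\Z$ (so $Q_1 = z$, or rather $z$ mod $\nu$, up to rescaling to get period $2\pi$ or $1$), $Q_2$ is the coordinate on the circle fiber direction that survives in the quotient, and $(P_1, P_2)$ are the conjugate momenta ranging over all of $\R^2$. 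Concretely, writing $\lambda_0 = 2\sinh(s) e^z dx + 2\cosh(s) e^{-z} dy$, one wants to change the base coordinates to $(z, \text{one of } x,y \text{ or a combination})$ and read off the rest as cotangent fiber coordinates; the hyperbolic scaling $e^{\pm z}$ is exactly what makes the $\R^2$-fiber in $V'$ map diffeomorphically to the $\R^2$ cotangent fiber of $T^*(\R\times S^1)$. I expect the cleanest route is to guess the symplectomorphism $\Psi : T^*(\R\times S^1) \to V'$ directly — e.g. with $\R\times S^1$ having coordinates $(Q_1, Q_2)$ and dual coordinates $(P_1,P_2)$, set $z$ to be an affine function of $Q_1$, and $(x, y, s)$ to be explicit functions of $(Q_1, Q_2, P_1, P_2)$ — and then verify $\Psi^* \lambda_0 - \lambda_{\mathrm{can}}$ is exact and $\Psi$ is an equivariant diffeomorphism intertwining the deck transformation $\phi_{\nu,0}$ with the deck transformation $(Q_1,Q_2,P_1,P_2)\mapsto (Q_1 + 1, Q_2, P_1, P_2)$ generating the $\Z$-action on $T^*(\R\times S^1)$ that unrolls the $S^1$ in the base.

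The main obstacle, and the step I would spend the most care on, is verifying that the candidate map $\Psi$ is a genuine \emph{global diffeomorphism} $T^*(\R\times S^1) \xrightarrow{\ \sim\ } V'$ — i.e. that the $\R^2$ of cotangent fibers is swept out exactly once and the $s\in\R$ Liouville coordinate is accounted for. Because $V'$ is $4$-dimensional but the cotangent bundle $T^*(\R\times S^1)$ is also $4$-dimensional, one of the two ``linear'' directions in $V'$ (the $s$-direction and one of the torus-fiber directions) has to be absorbed into the cotangent fibers while the other becomes the base $\R$; disentangling which combination of $(s, x, y)$ plays which role, and checking surjectivity and injectivity of the resulting formula (which will involve $\sinh, \cosh$ and exponentials and hence needs a monotonicity/degree argument), is the real content. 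Once $\Psi$ is known to be a diffeomorphism, equivariance under $\phi_{\nu,0}$ is a direct computation, and exactness of $\Psi^*\lambda_0 - \lambda_{\mathrm{can}}$ follows by exhibiting the primitive explicitly (or by noting both are Liouville forms on a cotangent bundle inducing the same symplectic form, so their difference is closed, and then checking its periods vanish on the two generators of $H_1(T^*(\R\times S^1)) \cong \Z$, namely the $S^1$ in the base and — trivially — none from the contractible $\R$ factor). This also pins down the Liouville homotopy class, completing the proof.
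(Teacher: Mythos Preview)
Your outline has the right shape but stops short of a proof: you never actually produce the symplectomorphism, and you flag the global diffeomorphism check as ``the real content'' still to be done. The missing idea is a sharper trivialization than the one you propose. Rather than merely shifting $v$ by a constant to kill $v_0$ and reducing to monodromy $D_\nu$ (incidentally, the relevant condition is that $1$, not $-1$, fails to be an eigenvalue of $D_\nu$: you need $I-D_\nu$ invertible to solve $(I-D_\nu)w_0=v_0$), the paper uses $\tilde\theta(v,z)=(D_z^{-1}v+w_0,z)$ with $w_0=(I-D_\nu)^{-1}v_0$. This single map intertwines the plain translation $(v,z)\mapsto(v,z-\nu)$ with $\phi_{\nu,v_0}$, so the quotient is just $\R^2_{x,y}\times S^1_z$ with no residual linear monodromy; and the factors $e^{\pm z}$ in $D_z^{-1}$ exactly cancel those in $\lambda_0$, giving
\[
\theta^*\lambda' \;=\; d\bigl(\sinh(s)x+\cosh(s)y\bigr)\;-\;(\cosh(s)x+\sinh(s)y)\,ds\;-\;(\sinh(s)x-\cosh(s)y)\,dz.
\]
This formula already answers your question about which coordinates are base and which are fiber: the base is $(s,z)\in\R\times S^1$, and the cotangent fiber coordinates are the two $s$-dependent linear combinations of $(x,y)$ sitting in front of $ds$ and $dz$. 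There is no need to absorb $s$ into the fiber or to single out one of $x,y$ as a base direction. The change $(x,y)\mapsto(a,b)$ via $\begin{pmatrix}-\sinh s & \cosh s\\ -\cosh s & -\sinh s\end{pmatrix}$ has determinant $\cosh(2s)>0$ for all $s$, so the global diffeomorphism verification you were worried about reduces to this one-line determinant computation, and $\lambda'$ becomes $\lambda_{\mathrm{can}}$ plus an exact term on the nose.
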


\begin{proof} We first trivialize the affine plane bundle $\R^3 \slash \phi_{\nu, v_0} \rightarrow S^1 = \R \slash \nu \Z$. Let
\[\begin{array}{crcl}
\tilde{\theta} :& \R^2 \times \R & \longrightarrow & \R^2 \times \R \\
	& (v, z) & \longmapsto & \left( D_z^{-1} v+ w_0, z \right),
\end{array}\]
where $D_z := \begin{pmatrix}
e^z & 0\\
0 & e^{-z}
\end{pmatrix}$ and $w_0 := \left(I - D_\nu \right)^{-1} v_0$. This specific choice of $w_0$ ensures that $\tilde{\theta}(v, z-\nu) = \phi_{\nu, v_0} \circ \tilde{\theta}(v, z)$ so $\tilde{\theta}$ induces a diffeomorphism $\overline{\theta} : \R^2 \times S^1 \rightarrow \R^3 \slash \phi_{\nu, v_0}$ and a diffeomorphism $\theta : \R \times \R^2 \times S^1 \rightarrow V'$. We then have
\begin{align*}
\theta^*\lambda' &= \sinh(s)e^z \theta^*dx + \cosh(s) e^{-z} \theta^*dy\\
&= \sinh(s) dx + \cosh(s) dy -\left(\sinh(s) x - \cosh(s) y\right) dz \\
&= \begin{multlined}[t] d\left( \sinh(s)x + \cosh(s)y\right) \\- \left(\cosh(s) x + \sinh(s) y\right) ds - \left(\sinh(s) x - \cosh(s) y\right) dz.
\end{multlined}
\end{align*}
Let $f : (s,x,y,z) \mapsto \sinh(s)x + \cosh(s)y$ and consider the change of coordinates given by
\[\begin{array}{crcl}
\psi :& \R \times \R^2 \times S^1 & \longrightarrow&\R \times \R^2 \times S^1\\
	& (s,x,y,z) & \longmapsto & \left( s, \begin{pmatrix}
-\sinh(s) & \cosh(s)\\
-\cosh(s) & -\sinh(s)
\end{pmatrix} \cdot \begin{pmatrix}
x\\
y 
\end{pmatrix}, z \right) = (s,a,b,z).
\end{array}\]
In these new coordinates, we get
\[\left(\theta \circ \psi^{-1}\right)^*\lambda' = -d\big( f\circ \psi^{-1}\big) + a dz + b ds,\]
so after identifying $\R \times \R^2 \times S^1$ with $T^*(\R \times S^1)$, $\theta \circ \psi^{-1} : T^*(\R \times S^1) \rightarrow V'$ gives the desired exact symplectomorphism.
\end{proof}

The contact forms $\alpha_\pm$ and therefore the Liouville form $\lambda_0$ are translation invariant along the $\R^2_{x,y}$ direction. Quotienting by the lattice $P(\Z^2) \subset \R^2_{x,y}$ yields the exact symplectic manifold $(V'', \lambda'') = \left( \R \times (\R^2 \slash P(\Z^2)) \times \R, \lambda'' \right).$

\begin{lemma} \label{symp2} $\left(V'', \lambda''\right)$ is exact symplectomorphic to a subset of $\left(T^* \mathbb{T}^2, \lambda_{\mathrm{can}} \right)$ disjoint from the zero section.
\end{lemma}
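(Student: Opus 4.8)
The plan is to mimic the proof of Lemma~\ref{symp1}, now quotienting the $\R^2_{x,y}$ fibers by the lattice $P(\Z^2)$ rather than suspending. First I would recall that $\lambda_0 = \sinh(s)e^z\,dx + \cosh(s)e^{-z}\,dy$ (after absorbing constants, or keeping the factor $2$ which only rescales $\lambda$), and that this form is invariant under translations of $(x,y)$ by $P(\Z^2)$, so it descends to $\lambda''$ on $V'' = \R_s\times(\R^2/P(\Z^2))\times\R_z$. The key computation, exactly as in Lemma~\ref{symp1}, is that
\begin{align*}
\lambda'' &= \sinh(s)e^z\,dx + \cosh(s)e^{-z}\,dy\\
&= d\big(\sinh(s)x + \cosh(s)y\big) - \big(\cosh(s)x + \sinh(s)y\big)ds - \big(\sinh(s)x - \cosh(s)y\big)dz,
\end{align*}
where now $x,y$ are only defined modulo $P(\Z^2)$, so the primitive $\sinh(s)x+\cosh(s)y$ is multivalued and $dx,dy$ are honest $1$-forms on the torus. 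Applying the same fiberwise linear change of coordinates
$$(a,b) = \begin{pmatrix} -\sinh(s) & \cosh(s)\\ -\cosh(s) & -\sinh(s)\end{pmatrix}\begin{pmatrix} x\\ y\end{pmatrix},$$
which is invertible for every fixed $s$ (its determinant is $\sinh^2(s)+\cosh^2(s)=\cosh(2s)>0$), we get $\lambda'' = -df'' + a\,dz + b\,ds$ up to an exact term, where $f''$ is the (multivalued) function $\sinh(s)x+\cosh(s)y$. Here $z$ already lives in $S^1 = \R/\nu\Z$.

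Next I would identify the image. The coordinates $(s,a,b,z)$ naturally realize $V''$ as a subset of $T^*\mathbb{T}^2$: take the base torus $\mathbb{T}^2$ with coordinates $(z, \text{angle})$ and cotangent fibers with coordinates $(a, b)$, i.e. the symplectic form $da\wedge dz + db\wedge d(\text{base angle})$ with $s$ playing the role of the second base angle's conjugate variable — wait, more carefully: after the shift by $-df''$, the form is $a\,dz + b\,ds$, which is the canonical Liouville form on $T^*N$ where $N$ has coordinates $(z,s)$ and $(a,b)$ are the dual fiber coordinates. But $s$ ranges over all of $\R$, not $S^1$; the subtlety is that $z\in S^1$ but $(x,y)$ lives on a torus, so $(a,b)$ is constrained: as $(x,y)$ ranges over $\R^2/P(\Z^2)$ and $s$ is fixed, $(a,b)$ ranges over a translate of the lattice-quotient, i.e. over $(\R^2/P'(\Z^2))$ for the transformed lattice — no, $(a,b)$ ranges over all of $\R^2$ locally but the *identifications* are by a lattice. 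The right statement is that $V''$ is exact symplectomorphic to $T^*\mathbb{T}^2$ restricted to the locus where the fiber coordinate (conjugate to the original torus directions, i.e. essentially $(x,y)$, hence the "position" in the $\mathbb{T}^2=\R^2/P(\Z^2)$ direction being the *base*) is nonzero. Concretely: the base $\mathbb{T}^2$ should be $\R^2/P(\Z^2)$ with "momentum" coordinates and $(z,s)$... I will instead argue that $(x,y)\neq(0,0)$ in $\R^2/P(\Z^2)$ corresponds precisely to being disjoint from the zero section, since $f''$ and the change of variables are built from $(x,y)$ and the fiber momenta $(a,b)$ vanish iff $(x,y)=(0,0)$. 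The cleanest route: note $\partial_z$-component and $\partial_s$-component of $\lambda''$ vanish simultaneously iff $(x,y)=0$, matching the zero section of whichever cotangent bundle presentation we fix; since $(x,y)$ ranges over $\R^2/P(\Z^2)$ minus the origin (the origin being excluded exactly when we want disjointness from $0_{\mathbb{T}^2}$), the open subset $\{(x,y)\neq 0\}$ maps to $T^*\mathbb{T}^2\setminus 0_{\mathbb{T}^2}$. One must also check that $V''$ maps onto the *full* complement of the zero section (or at least a deformation retract of it, which suffices for the neck-stretching argument in Lemma~\ref{lemma:weakly}); this should follow because for fixed $(x,y)\neq 0$ the pair $(s,z)$ sweeps out $\R\times S^1$, covering the fiber-disk-bundle complement.

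The main obstacle is pinning down this cotangent-bundle identification precisely — in particular determining which torus is the base and verifying that the image is exactly $T^*\mathbb{T}^2$ minus the zero section (as opposed to some smaller tube complement), and handling the bookkeeping of the lattice $P(\Z^2)$ under the $s$-dependent linear change of coordinates $\psi$ (since $\psi$ mixes $x$ and $y$, it transports $P(\Z^2)$ to an $s$-dependent lattice, so the "$(a,b)$ torus" changes with $s$; one resolves this by viewing $(a,b)$ as genuine cotangent fiber coordinates over a *fixed* base $\mathbb{T}^2 = \R^2/P(\Z^2)$ with coordinates built from $(x,y)$, not as torus coordinates themselves). Once the bookkeeping is set up correctly, the verification that $\big(\theta\circ\psi^{-1}\big)^*\lambda_{\mathrm{can}} = \lambda''$ up to an exact form is the same one-line computation as in Lemma~\ref{symp1}, and the disjointness from the zero section is immediate from $(x,y)\neq 0$.
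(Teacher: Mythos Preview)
Your approach has a genuine gap, and it stems from importing the Lemma~\ref{symp1} argument wholesale without noticing that the roles of ``base'' and ``fiber'' must be swapped.

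First, a factual slip: in $V'' = \R_s \times (\R^2/P(\Z^2))_{x,y} \times \R_z$ the coordinate $z$ ranges over $\R$, not $S^1$; only the $(x,y)$ directions have been quotiented. So the only torus in sight is the $(x,y)$-torus, and any identification with $T^*\mathbb{T}^2$ must use $(x,y)$ as the \emph{base} coordinates.

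This is exactly why your integration-by-parts trick breaks: you write $\lambda'' = df'' + (\cdots)ds + (\cdots)dz$ with $f''$ a function linear in $x,y$. Since $x,y$ are only defined modulo the lattice, $f''$ is multivalued, and worse, terms like $x\,ds$ appearing in $df''$ are not well-defined $1$-forms on $V''$. So the identity you wrote is not an equation of $1$-forms on $V''$, and the subsequent change of variables $(x,y)\mapsto(a,b)$ produces ``fiber'' coordinates that live on an $s$-dependent torus rather than on $\R^2$---exactly the bookkeeping tangle you ran into. Your proposed fix (``the zero section corresponds to $(x,y)=0$'') is also off: $(x,y)$ are base coordinates, so $(x,y)=0$ is a single cotangent fiber, not the zero section.

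The paper's proof is much shorter because it avoids all of this. One simply observes that
\[
\lambda'' \;=\; (\sinh(s)\,e^{z})\,dx + (\cosh(s)\,e^{-z})\,dy \;=\; a\,dx + b\,dy,
\]
which is already the canonical Liouville form on $T^*\mathbb{T}^2$ with base $\mathbb{T}^2_{x,y}$ (after conjugating by $P$ to pass to the standard lattice) and fiber coordinates $(a,b)$. The map $(s,z)\mapsto(a,b)=(\sinh(s)e^{z},\cosh(s)e^{-z})$ is a diffeomorphism from $\R^2$ onto $\R\times\R_{>0}$, and since the second component is strictly positive the image misses the zero section. No exact correction is needed, and the lemma follows in two lines.
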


\begin{proof} First of all, $P$ induces a diffeomorphism $\R^2 \slash \Z^2 \overset{\sim}{\rightarrow} \R^2 \slash P(\Z^2)$. We compute
\begin{align*}
P^* \lambda'' &= \sinh(s) e^{-z} P^*dx + \cosh(s) e^z P^*dy\\
&= P^{T}  \begin{pmatrix}
\sinh(s) e^{-z}\\
\cosh(s) e^z 
\end{pmatrix}  \begin{pmatrix}
dx\\
dy 
\end{pmatrix}.
\end{align*}
Let's consider the diffeomorphism
$$\begin{array}{crcl}
\psi_0 :& \R^2 & \longrightarrow & \R \times \R_{>0} \\
	& (s, z) & \longmapsto & \big(\sinh(s) e^{-z}, \cosh(s) e^z \big),
\end{array}$$
which induces a diffeomorphism $\psi := P^{T}\psi_0 : \R^2 \overset{\sim}{\rightarrow} P^{T} \left( \R \times \R_{>0}\right) \subset \R^2_{a,b}$. We then have
\[ \big(\psi^{-1} \big)^* \lambda'' = a dx + b dy,\]
so after identifying $\R_a \times \R^2 \slash \Z^2_{x,y} \times \R_b$ with $T^*\mathbb{T}^2$, $\psi$ induces an exact symplectomorphism $\left(V'', \lambda''\right) \rightarrow \left( T^*\mathbb{T}^2\setminus Z, \lambda_\mathrm{can} \right)$, where $Z \subset T^*\mathbb{T}$ is an closed subset containing the zero section.
\end{proof}

Since $A$ preserves the lattice $\Z^2$, $D_\nu$ preserves the lattice $P(\Z^2)$. Then, $\phi_{\nu, v_0}$ descends to a diffeomorphism $\overline{\phi}_{\nu, v_0}$ of $\R^2 \slash P(\Z^2) \times \R$, or alternatively $P(\Z^2)$ induces a well-defined action on the fibers of  $\R^3 \slash \phi_{\nu, v_0} \rightarrow S^1$. After quotienting $V'$ by the diffeomorphism induced by $\overline{\phi}_{\nu, v_0}$, or equivalently quotienting $V''$ by the action induced by $P(\Z^2)$, we obtain the torus bundle manifold $(V=\R \times M,\lambda)$.

\begin{proof}[Proof of Theorem \ref{thm:closed}] Let's assume by contradiction that there exists an embedded  exact Lagrangian $L \hookrightarrow (V, \lambda)$ in the torus bundle domain that is either a torus, a projective plane or a Klein bottle. By Lemma~\ref{lemmalg}, we can distinguish two cases:
	\begin{itemize}
	\item Case 1: the image of $\pi_1(L) \rightarrow \pi_1(V) = \Z^2 \rtimes_A \Z$ is contained in $\Z^2 \rtimes 0$. Then $L$ lifts to an embedded exact Lagrangian surface $L \hookrightarrow (V'', \lambda'')$, where $V''$ is the cover of $V$ corresponding to $\Z^2 \rtimes 0$. By Lemma~\ref{symp2}, we obtain an exact Lagrangian surface in $\left(T^* \mathbb{T}^2, \lambda_{\mathrm{can}} \right)$ disjoint from the zero section, but this is impossible by a theorem of Gromov~\cite{Gr85}.
	\item Case 2: the image of $\pi_1(L) \rightarrow \pi_1(V)$ is cyclic and is not contained in $\Z^2 \rtimes 0$. Then, it is generated by an element of the form $(v, n)$ with $n \neq 0$. The cover of $V$ corresponding to the subgroup $\langle(v, n)\rangle \subset \pi_1(V)$ is essentially of the form $\left(V', \lambda'\right)$ but with $A$ replaced by $A^n$. By Lemma~\ref{symp1}, we get a closed exact Lagrangian surface $L \hookrightarrow \left( T^*(\R \times S^1), \lambda_{\mathrm{can}} \right)$, but this is impossible by a theorem of Lalonde and Sikorav~\cite{LS91} asserting that if $U$ is an open manifold then $\left(T^*U, \lambda_\mathrm{can}\right)$ has no closed exact Lagrangian submanifold.
	\end{itemize}
	This finishes the proof.
\end{proof}

\section{Rabinowitz Floer cohomology and symplectic cohomology} 

In this section we prove Theorem~\ref{thm:RFH-SH} from the Introduction and its corollaries.

Let $V$ be a Liouville domain of dimension $2n$. We denote by $SH^*(V)$ its symplectic cohomology and by $RFH^*(V)$ its Rabinowitz Floer cohomology~\cite{CFO10,CO18} (cf. the remarks after Theorem~\ref{thm:RFH-SH} on gradings). Recall from \cite{CFO10} that there is a commutative diagram of exact sequences relating symplectic cohomology, Rabinowitz Floer cohomology, and symplectic homology $SH^*(V,\partial V)\cong SH_{2n-*}(V)$, of the form\footnote{Notice the directions of the vertical arrows.}
$$
    \begin{tikzcd}
        \dots\arrow[r] & SH^{*}(V,\partial V)\arrow[r,"\eps"] \arrow[d]&
        SH^{*}(V) \arrow[r,"\iota"] & RFH^{*}(V) \arrow[r,"\pi"]& SH^{*+1}(V,\partial V)\arrow[r]\arrow[d]&\dots\\
        \dots\arrow[r]&H^{*}(V,\partial V) \arrow[r,"\eps_0"]& H^{*}(V) \arrow[u]\arrow[r,"\iota_0"]& H^{*}(\partial V) \arrow[u,"\delta"]\arrow[r,"\pi_0"] & H^{*+1}(V,\partial V) \arrow[r] &\dots 
    \end{tikzcd}
$$

Suppose now that $\partial V$ is hypertight. Then $RFH^*(V)\cong RFH^*(\partial V)$ as a ring, where the latter is defined on the symplectization of $\partial V$; in particular, it does not depend on the filling $V$ (\cite{U19}). If in addition $\partial V$ has two components, it follows that the ring structure in Rabinowitz Floer cohomology splits nicely into two pieces.

\begin{proposition}[Splitting of Rabinowitz Floer cohomology]\label{prop:RFH}
Let $V=[-1,1]\times M$ be a $2n$-dimensional Liouville domain such that each boundary component $M_\pm=\{\pm 1\}\times V$ is hypertight. Then we have a ring isomorphism 
$$
RFH^*(V)\cong RFH^*(M_-)\oplus RFH^*(M_+).
$$
In particular, the unit in $RFH^*(V)$ is of the form $1=(1_-,1_+)$. Moreover, the commutative diagram above becomes
$$
    \begin{tikzcd}
        0 \arrow[r] &
        SH^{*}(V) \arrow[r,"\iota"] & RFH^*(M_-)\oplus RFH^*(M_+) \arrow[r,"\pi"]& SH^{*+1}(V,\partial V)\arrow[r]\arrow[d]& 0\\
        0 \arrow[r]& H^{*}(M) \arrow[u]\arrow[r,"\iota_0"]& H^{*}(M)\oplus H^*(M) \arrow[u,"\delta"]\arrow[r,"\pi_0"] & H^{*}(M) \arrow[r] &0
    \end{tikzcd}
$$
where $\iota_0$ is the diagonal embedding, and $\delta=\delta_-\oplus \delta_+$ is the direct sum of the corresponding maps $\delta_\pm: H^{*}(M_\pm) \rightarrow RFH^*(M_\pm)$.
\end{proposition}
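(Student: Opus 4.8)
The plan is to assemble the statement from two external inputs---the ring structure and filling-independence of Rabinowitz Floer cohomology for hypertight boundary (\cite{U19}) and the Cieliebak--Frauenfelder--Oancea exact sequence recalled above---together with one elementary topological observation.

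\textbf{Step 1 (ring isomorphism and unit).} Since each $M_\pm$ is hypertight, there is a ring isomorphism $RFH^*(V)\cong RFH^*(\partial V)$, the right-hand side computed on the symplectization $\R\times\partial V$ and independent of the filling. As contact manifolds $\partial V=M_-\sqcup M_+$, so $\R\times\partial V=(\R\times M_-)\sqcup(\R\times M_+)$. Every Floer trajectory, and every pair-of-pants curve defining the product, has connected domain and hence lies in a single component of the symplectization; therefore the Rabinowitz chain complex, its differential, and its product all split as direct sums over the two components, giving a ring isomorphism $RFH^*(\partial V)\cong RFH^*(M_-)\oplus RFH^*(M_+)$. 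Composing yields the asserted ring isomorphism. The ring $RFH^*(V)$ is unital, its unit being the image under the unital ring map $\iota\colon SH^*(V)\to RFH^*(V)$ of the unit of $SH^*(V)$; a ring isomorphism carries units to units, and the unit of a direct sum of unital rings is the pair of units, so $1=(1_-,1_+)$ with $1_\pm$ the unit of $RFH^*(M_\pm)$.

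\textbf{Step 2 (the exact sequence and identifications).} I work with the commutative diagram of exact sequences recalled above, which is valid for any Liouville domain. The topological input is that both inclusions $M_\pm=\{\pm1\}\times M\hookrightarrow V=[-1,1]\times M$ are deformation retracts; hence, under the identifications $H^*(V)\cong H^*(M)$ and $H^*(\partial V)\cong H^*(M)\oplus H^*(M)$, the restriction map $\iota_0$ is the diagonal embedding, in particular injective in every degree. By exactness of the bottom row this forces $\eps_0=0$, and then commutativity of the left-hand square (which expresses $\eps$ as the composite $SH^*(V,\partial V)\to H^*(V,\partial V)\xrightarrow{\eps_0}H^*(V)\to SH^*(V)$) gives $\eps=0$. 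Consequently both long exact sequences break into short exact sequences
\[
0\to SH^*(V)\xrightarrow{\iota}RFH^*(V)\xrightarrow{\pi}SH^{*+1}(V,\partial V)\to0,
\qquad
0\to H^*(V)\xrightarrow{\iota_0}H^*(\partial V)\xrightarrow{\pi_0}H^{*+1}(V,\partial V)\to0.
\]
To put the bottom-right corner in the stated form I use Lefschetz duality $H^{*+1}(V,\partial V)\cong H_{2n-1-*}(V)\cong H_{2n-1-*}(M)$ followed by Poincaré duality on the closed oriented $(2n-1)$-manifold $M$, which gives $H_{2n-1-*}(M)\cong H^*(M)$. Inserting the isomorphism of Step~1 into the top row and these identifications into the bottom row produces the displayed diagram.

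\textbf{Step 3 (compatibility of $\delta$, and the main obstacle).} Finally, the vertical map $\delta\colon H^*(\partial V)\to RFH^*(V)$ is induced near the boundary by the inclusion of the action-zero (constant orbit) part of the Rabinowitz complex, which is the Morse complex of $\partial V$; since $\partial V$ is disconnected this inclusion, and hence $\delta$, splits over the two components, each summand being the analogous map $\delta_\pm\colon H^*(M_\pm)\to RFH^*(M_\pm)$ on the symplectization of $M_\pm$. The argument is thus essentially a bookkeeping assembly of cited results, and I expect the only genuinely delicate point to be the vanishing $\eps=0$: it must be extracted from the precise directions and sign conventions of the CFO diagram (notably that the vertical arrow under $SH^*(V,\partial V)$ points down to $H^*(V,\partial V)$ while the one under $SH^*(V)$ points up from $H^*(V)$), so one should verify carefully that the relevant square indeed exhibits $\eps$ as a composite factoring through ordinary cohomology. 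The remaining checks---the duality identifications and the fact that every map respects $\partial V=M_-\sqcup M_+$---are routine.
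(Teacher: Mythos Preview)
Your proposal is correct and follows essentially the same route as the paper: invoke filling-independence from \cite{U19} for the ring splitting, observe that $\iota_0$ is the diagonal so $\eps_0=0$, and use the factorization of $\eps$ through $\eps_0$ (which you correctly extract from the mixed directions of the vertical arrows in the CFO square) to break the long exact sequences into short ones. The only cosmetic difference is that the paper identifies $H^{*+1}(V,\partial V)\cong H^*(M)$ via the K\"unneth formula for the pair $(I,\partial I)$ tensored with $H^*(M)$, whereas you use Lefschetz--Poincar\'e duality; both give the same result, though the K\"unneth argument has the slight advantage of computing the map $\pi_0$ directly.
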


\begin{proof}
The splitting $RFH^*(V)\cong RFH^*(M_-)\oplus RFH^*(M_+)$ follows from the preceding discussion. Next, let us compute the lower row in the commutative diagram for $V=I\times M$ with $I=[-1,1]$. The long exact sequence of the pair $(I,\p I)$ in singular cohomology is
$$
    \begin{tikzcd}
        0 \arrow[r] &
        H^{0}(I) \arrow[r,"\iota_0"] & H^0(\p I) \arrow[r,"\pi"]& H^{1}(I,\partial I)\arrow[r]\arrow[d,equal]& 0\\
        0 \arrow[r]& \Z \arrow[u,equal]\arrow[r]& \Z \oplus \Z \arrow[u,equal]\arrow[r] & \Z \arrow[r] &0
    \end{tikzcd}
$$
where the map $\Z\to\Z\oplus\Z$ is the diagonal embedding. By naturality of the K\"unneth formula~\cite[Ch.~5 Sec.~6 Thm.~1]{Sp66}, the long exact sequence of the pair $(V,\p V)$ is obtained by tensoring this one with $H^*(M)$,
$$
    \begin{tikzcd}
        0 \arrow[r] &
        H^{*}(V) \arrow[r,"\iota_0"] & H^*(\p V) \arrow[r,"\pi"]& H^{*+1}(V,\partial V)\arrow[r]\arrow[d,equal]& 0\\
        0 \arrow[r]& H^*(M) \arrow[u,equal]\arrow[r]& H^*(M) \oplus H^*(M) \arrow[u,equal]\arrow[r] & H^*(M) \arrow[r] &0 
    \end{tikzcd}
$$ 
where $\iota_0$ is the diagonal embedding. Since the map $\eps$ factors through $\eps_0=0$, we obtain $\eps=0$ and thus the commutative diagram in the proposition.
\end{proof}

Proposition~\ref{prop:RFH} provides an efficient tool for computing the product structure on symplectic cohomology in terms of that on Rabinowitz Floer cohomology: 

\begin{corollary}[Splitting of symplectic cohomology]\label{cor:SH-product}
Let $V=[-1,1]\times M$ be a $2n$-dimensional Liouville domain such that each boundary component $M_\pm=\{\pm 1\}\times V$ is hypertight, and the free homotopy classes of positively or negatively parametrized closed Reeb orbits on $M_+$ are distinct from those on $M_-$. With respect to the splitting as a $\Z$-module
$$
   SH^*(V) \cong SH^*_-(V)\oplus SH^*_0(V) \oplus SH^*_+(V),
$$ 
we denote the components of the product $\mu$ on $SH^*(V)$ by $\mu^{ab}_c := \pi_c \circ \mu \vert_{SH^*_a(V)\otimes SH^*_b(V)}$, where $a,b,c\in \{+,-,0\}$ and $\pi_c: SH^*(V)\rightarrow SH^*_c(V)$ is the natural projection.\footnote{Here, $c \in \{+, -, 0\}$ does not mean ``contractible''.} The only possibly nontrivial components of the product are the following:
\begin{itemize}
    \item $\mu^{00}_0$, the cup product on $SH^*_0(V)\cong H^*(M)$;
    \item $\mu^{--}_-,\mu^{-0}_-,\mu^{0-}_-$, determined by the product on $RFH^*(M_-)$;
     \item $\mu^{++}_+,\mu^{+0}_+,\mu^{0+}_+$, determined by the product on $RFH^*(M_+)$.
\end{itemize}
\end{corollary}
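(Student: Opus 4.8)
The plan is to leverage three structures on $SH^*(V)$ at once: the action filtration, the decomposition of $SH^*(V)$ by free homotopy classes of loops in $V$ (with respect to which the pair-of-pants product is ``additive'', the product of two classes being represented by a concatenation of based loops), and the injective ring homomorphism $\iota\colon SH^*(V)\to RFH^*(M_-)\oplus RFH^*(M_+)$ from Proposition~\ref{prop:RFH}, whose target is a direct sum of rings.

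First I would record the chain-level picture. Fix a Floer datum adapted to $V=[-1,1]\times M$ --- $\mathcal{C}^2$-small Morse in the interior, linear of large slope near each end --- with hypertight contact forms $\alpha_\pm$ on $M_\pm$. Then the generators of $SC^*(V)$ fall into three families, each stable under all three structures above: interior critical points, in the trivial free homotopy class and action near $0$, spanning $SH^*_0(V)\cong H^*(V)\cong H^*(M)$ (hypertightness is what guarantees there are no contractible Reeb generators to mix in); closed $R_+$-orbits on $M_+$ traversed positively, in free homotopy classes forming a set $\mathcal B_+\subset\pi_1^{\mathrm{free}}(V)\setminus\{0\}$, of positive action, spanning $SH^*_+(V)$; and likewise closed positively traversed $R_-$-orbits, with classes $\mathcal B_-$, spanning $SH^*_-(V)$. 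The chain map underlying $\iota$ (following \cite{CFO10}) sends each interior critical point to the corresponding pair of action-$0$ constant generators of $RFC^*(M_-)\oplus RFC^*(M_+)$, and each positive $R_\pm$-orbit to the matching generator in the $RFC^*(M_\pm)$ summand and to $0$ in the other. Comparing action-graded and homotopy-graded pieces of the exact sequence of Proposition~\ref{prop:RFH} then yields, with $\iota_\pm:=\mathrm{pr}_\pm\circ\iota\colon SH^*(V)\to RFH^*(M_\pm)$: (i) $\iota_\pm$ restricts to a module isomorphism $SH^*_\pm(V)\xrightarrow{\ \sim\ }RFH^*_{>0}(M_\pm)$ (the positive-action part of the sequence, using that $SH^*(V,\partial V)$ contributes nothing in positive action); and (ii) since $M_\pm$ is hypertight, the contractible part of $RFH^*(M_\pm)$ is its action-$0$ part and is $\cong H^*(M_\pm)$ via the structure map $\delta_\pm$, so $\iota_\pm$ restricts to a module isomorphism $SH^*_{0\pm}(V)\xrightarrow{\ \sim\ }RFH^*_{\ge0}(M_\pm)$. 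In particular $\iota_\pm$ vanishes on $SH^*_\mp(V)$ and is injective on $SH^*_{0\pm}(V)$, so $\ker\iota_\pm=SH^*_\mp(V)$.

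With this in hand I would kill the off-diagonal products. For products with an $SH^*_0(V)$-input: the pair-of-pants product carries $SH^*_\beta(V)\otimes SH^*_{\beta'}(V)$ into the sum of the $SH^*_{\beta''}(V)$ over classes $\beta''$ represented by concatenations of $\beta$ and $\beta'$, so an $SH^*_0$-input forces the output into the class-set of the other input; since $\mathcal B_+$, $\mathcal B_-$ and $\{0\}$ are pairwise disjoint --- this is exactly where the hypothesis on the free homotopy classes of Reeb orbits is used --- we get $\mu^{00}_\pm=\mu^{0\pm}_0=\mu^{0\pm}_\mp=\mu^{\pm0}_0=\mu^{\pm0}_\mp=0$. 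For the purely $\pm$ products I use that $\iota$ is an injective ring map into a direct sum ring: $\iota(SH^*_+(V))\subseteq 0\oplus RFH^*(M_+)$ and $\iota(SH^*_-(V))\subseteq RFH^*(M_-)\oplus 0$ are ideals whose product vanishes, so $SH^*_+(V)\cdot SH^*_-(V)=0$ and $\mu^{+-}_c=\mu^{-+}_c=0$ for all $c$; and $\iota_\mp$ annihilates $SH^*_\pm(V)\cdot SH^*_\pm(V)$, hence $SH^*_\pm(V)\cdot SH^*_\pm(V)\subseteq\ker\iota_\mp=SH^*_\pm(V)$, giving $\mu^{\pm\pm}_0=\mu^{\pm\pm}_\mp=0$. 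Thus $I_\pm:=SH^*_\pm(V)$ are two-sided ideals with $I_-\cap I_+=0$; $\mu^{00}_\pm=0$ shows $SH^*_0(V)$ is closed under the product; and therefore $SH^*_{0\pm}(V)=SH^*_0(V)\oplus I_\pm$ are subrings.

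It then remains to identify the surviving components. Since $I_\mp=\ker\iota_\pm$ is an ideal, $\iota_\pm$ induces a ring isomorphism $SH^*_{0\pm}(V)=SH^*(V)/I_\mp\xrightarrow{\ \sim\ }\iota_\pm(SH^*(V))=RFH^*_{\ge0}(M_\pm)$, exhibiting $RFH^*_{\ge0}(M_\pm)$ as a subring of $RFH^*(M_\pm)$ and transporting $\mu^{\pm\pm}_\pm,\mu^{\pm0}_\pm,\mu^{0\pm}_\pm$ to the corresponding $RFH^*(M_\pm)$-products (in particular $SH^*_\pm(V)\cong RFH^*_{>0}(M_\pm)$ as rings, with the product of \cite{CO18}); and $\mu^{00}_0$ is carried to the product on the action-$0$ part $RFH^*_0(M_\pm)$, which is the cup product on $H^*(M)$, equivalently by the standard Morse/PSS model for the action-$0$ part of $SH^*$. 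The step I expect to be the main obstacle is part (ii) of the module identification --- that $\iota_\pm$ does not collapse the action-$0$ summand, equivalently that $\delta_\pm\colon H^*(M_\pm)\to RFH^*(M_\pm)$ is injective: pinning down the constant-orbit Morse--Bott contribution to $RFH^*(M_\pm)$ as exactly $H^*(M_\pm)$ requires a careful treatment of the Floer data near action $0$ following \cite{CFO10,CO18,U19}, with hypertightness being precisely what removes the contractible Reeb orbits that would otherwise interfere. A secondary, routine point is the careful formulation of the free homotopy class additivity of the pair-of-pants product (free homotopy classes not forming a group, one argues with based concatenations).
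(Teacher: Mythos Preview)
Your proposal is correct and follows essentially the same route as the paper: both arguments rest on the injective ring map $\iota\colon SH^*(V)\hookrightarrow RFH^*(M_-)\oplus RFH^*(M_+)$ from Proposition~\ref{prop:RFH} together with the free homotopy class grading to force the vanishing of the off-diagonal components, and both ultimately use injectivity of $\delta_\pm$ (which you correctly flag as the key technical point, and which the paper uses without comment in deducing $c_0=0$ from $\delta_-c_0=0$). One small exposition point: your statement that the hypothesis on free homotopy classes is used ``exactly'' for the $0$-input products understates things---it is also what guarantees $\iota_\pm|_{SH^*_\mp}=0$ (equivalently $\ker\iota_\pm=SH^*_\mp$), since a priori $RFH^*(M_\mp)$ contains negatively parametrized Reeb orbits whose classes could coincide with those in $\mathcal B_\pm$; your chain-level assertion that the map sends $R_\pm$-orbits to zero in the other summand is plausible but not obviously justified without either this hypothesis or a careful no-escape argument.
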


\begin{proof}
The hypothesis on the free homotopy classes implies that the injective ring map $\iota$ in Proposition~\ref{prop:RFH} sends $SH_-^*(V)$ to $RFH^*(M_-)$ and $SH_+^*(V)$ to $RFH^*(M_+)$. 
Due to the splitting $RFH^*(V)\cong RFH^*(M_-)\oplus RFH^*(M_+)$ as a ring, this implies that $\mu$ cannot have components involving both $-$ and $+$. 
Since all orbits on $M_\pm$ are non-contractible, $\mu$ also cannot have components involving exactly two $0$'s. This leaves only the possibilities listed in the corollary as well as $\mu^{--}_0$ and $\mu^{++}_0$.

To see that the latter two vanish, recall from Proposition~\ref{prop:RFH} that $\iota$ sends $SH^*_0(V)$ to both summands via the composition
$$
    \begin{tikzcd}
        SH^*_0(V)\cong H^{*}(M) \arrow[r,"\iota_0"]& H^{*}(M)\oplus H^*(M) \arrow[r,"\delta"] & RFH^*(M_-)\oplus RFH^*(M_+),
    \end{tikzcd}
$$
where $\iota_0$ is the diagonal map and $\delta=\delta_-\oplus\delta_+$. Consider $a,b\in SH^*_+(V)$ and write $\mu(a,b)=c_0+c_+\in SH^*_0(V)\oplus SH^*_+(V)$. Then $\iota\mu(a,b)=\delta_-c_0+d_+$ for some $d_+\in RFH^*(M_+)$. On the other hand, $\iota\mu(a,b)=\mu(\iota a,\iota b)\in RFH^*(M_+)$, which implies $\delta_-c_0=0$ and therefore $c_0=\pi_0 c_0 = 0$. This shows $\mu^{++}_0=0$, and $\mu^{--}_0=0$ follows analogously.

By Morse cohomology, $\mu^{00}_0$ equals the cup product on $SH^*_0(V)\cong H^*(M)$. Since $\iota$ is an injective ring map, the remaining components are determined by $RFH^*(M_\pm)$ as stated. 
\end{proof}

Proposition~\ref{prop:RFH} and Corollary~\ref{cor:SH-product} together yield Theorem~\ref{thm:RFH-SH} from the Introduction. As a consequence, we now prove Corollaries~\ref{cor:SH-McDuff} and~\ref{cor:SH-torusbundle}. 

\begin{proof}[Proof of Corollary~\ref{cor:SH-McDuff}]
By Lemma~\ref{lem:hypertight}, a McDuff domain $V=[-1,1]\times M$ with $M=S^*\Sigma$ satisfies the hypotheses of Corollary~\ref{cor:SH-product}. So it only remains to compute the summands and the components of the product with respect to the $\Z$-module splitting
$$
   SH^*(V) \cong SH^*_-(V)\oplus SH^*_0(V) \oplus SH^*_+(V).
$$ 
To compute the components $\mu^{--}_{-},\mu^{-0}_-,\mu^{0-}_-$, we perturb the Hamiltonian by a Morse function on the base $\Sigma$. Since $\pi_2(\Sigma)=0$, the $3$-punctured Floer spheres contributing to these components project onto gradient Y-trees in $\Sigma$ connecting critical points, whose counts give the cup product on $H^*(\Sigma)$. A further perturbation by Morse functions on the $S^1$-fibres over critical points yields the desired form. 

Since $RFH^*(S^*\Sigma, \xi_\mathrm{can})$ is independent of the filling, we can compute it using its filling by the unit disk cotangent bundle $D^*\Sigma$, so it equals the Rabinowitz loop cohomology $\widecheck{H}^*(\mathcal{L}\Sigma)$ defined in~\cite{CHO}. As shown there, its product is an amalgamation of the loop product on the positive action part, the cup product on the zero action part, and the Goresky--Hingston product on the negative action part. 
\end{proof}

\begin{proof}[Proof of Corollary~\ref{cor:SH-torusbundle}]
Recall that the Reeb vector fields $R_\pm= \frac{1}{2} \big(\pm e^{-z} v_x+e^{z} v_y \big)$ are tangent to the torus fibers, spanned by $v_x,v_y$ (two eigenvectors of $A$), where $z \in [0,\nu)$ (here, $e^{\pm\nu}$ are the eigenvalues of the hyperbolic monodromy matrix). Therefore the closed $R_\pm$-orbits arise whenever $R_\pm$ has rational slope, foliating the fibers corresponding to a subset $\Gamma$ of $[0,\nu)$ in bijection with $\mathbb{Q}\cap [0,1)$. Moreover, different fibers correspond to orbits in different homology classes, and so there is no Floer cylinder between them. Since for $z\in \Gamma$, the fiber over $z$ is a Morse-Bott $S^1$-family, after introducing a Morse function on $S^1$ that breaks the symmetry, we see that $SH^*_\pm(V)=\bigoplus_\Gamma H^*(S^1)$. This concludes the proof. 
\end{proof}

\section{Wrapped Floer cohomology of Anosov Liouville domains}\label{sec:WFH}

    \subsection{Computation of wrapped Floer cohomology}

In this section, we prove the first part of Theorem~\ref{thm:WFH} from the Introduction. We apply this to compute the wrapped Floer cohomology groups of the Lagrangians $\mathcal{L}_\Lambda$ in any Anosov Liouville domain, and in particular for the McDuff domains and the torus bundle domains. The products are studied below.

\begin{proof}[Proof of Theorem~\ref{thm:WFH}(1): splitting as $\Z$-modules]  Similarly as in Theorem~\ref{thm:RFH-SH}, for any Liouville domain of the form $V=[-1,1]\times M$, and any two exact Lagrangians $L_1, L_2 \subset V$ with Legendrian boundary, the wrapped Floer cochain complex splits as a $\Z$-module as
$$CW^*(L_1,L_2)=CW^*_-(L_1,L_2) \oplus CW^*_0(L_1,L_2) \oplus CW^*_+(L_1,L_2).$$ Here, the first summand is generated by (Hamiltonian chords corresponding to) $\alpha_-$-chords, the second one by intersection points of $L_1$ and $L_2$ lying in the interior of $V$, and the third one by (Hamiltonian chords corresponding to) $\alpha_+$-chords. 
By an open version of~\cite[Lemma 2.2, Lemma 2.3]{CO18}, there cannot be any Floer strips to an $\alpha_\pm$-chord coming from an $\alpha_\mp$-chord or an intersection point in the interior of $V$.
This means that the wrapped differential is of the form
$$
\partial_W=\left(\begin{array}{ccc}
  \partial_W^-   &  0 & 0 \\
  \partial_W^{-,0}   &  \partial_W^{0} & \partial_W^{+,0}\\
  0  &  0 & \partial_W^{+}
\end{array}\right).
$$
In the case of a $4$-dimensional Anosov Liouville domain and $L_1,L_2$ of the form $\mathcal{L}_\Lambda,\mathcal{L}_{\Lambda'}$, for $\Lambda\neq \Lambda'$, 
recall that these Lagrangians are actually disjoint. Thus $CW^*_0(\mathcal{L}_\Lambda,\mathcal{L}_{\Lambda'})=0$, and therefore
$$HW^*(\mathcal{L}_\Lambda,\mathcal{L}_{\Lambda'})=HW_-^*(\mathcal{L}_\Lambda,\mathcal{L}_{\Lambda'})\oplus HW_+^*(\mathcal{L}_\Lambda,\mathcal{L}_{\Lambda'})$$ if $\Lambda\neq \Lambda'$. If $\Lambda=\Lambda'$, then $\partial_W^0$ can be identified with the Morse differential for $\mathcal{L}_\Lambda$, and Lemma~\ref{lemma:disk} implies $\partial_W^{+,0}=\partial_W^{-,0}=0$ (cf. property $(C)$ in the proof of Theorem~\ref{thm:unit}).
\end{proof}

\begin{figure}
    \centering
    \includegraphics{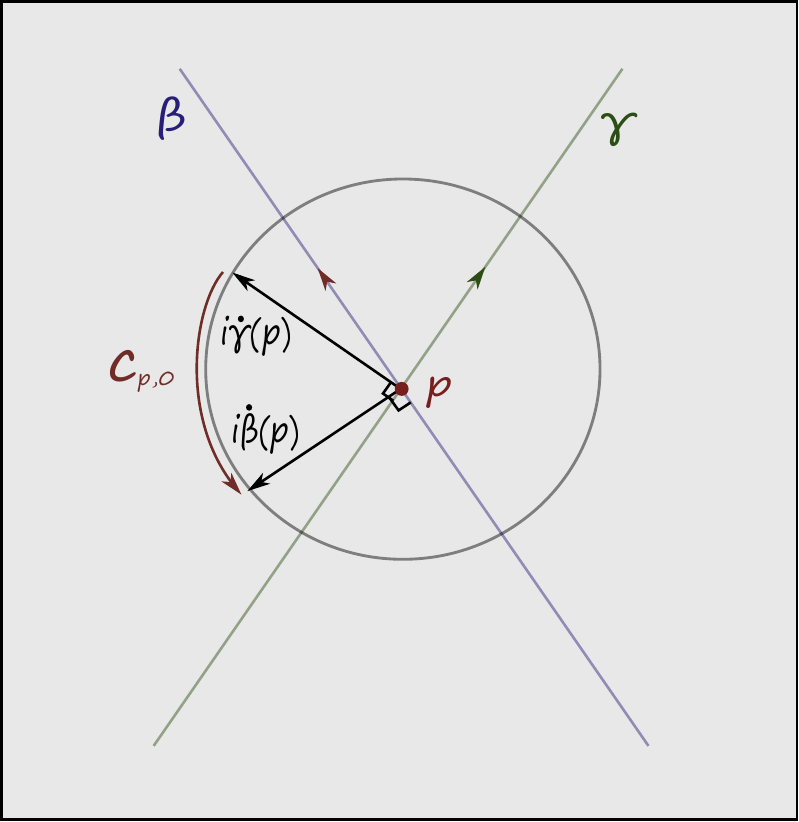}
    \caption{
    The chord $c_{p,0}$ from $\Lambda_\gamma$ to $\Lambda_\beta$, for $p\in \gamma \cap \beta$, joining $i\dot \gamma(p)$ to $i\dot \beta(p)$ along the $S^1$-fiber.}
    \label{fig:chordcp0}
\end{figure}

As particular cases of Theorem~\ref{thm:WFH}(1), we have the following computations as $\Z$-modules.

\begin{proposition}[McDuff domains: wrapped Floer cohomology] 
Consider a McDuff domain $V=[-1,1]\times S^*\Sigma$. Let $\Lambda_\gamma, \Lambda_\beta\subset M=S^*\Sigma$ be the positive conormal lifts of oriented closed geodesics $\gamma, \beta \subset \Sigma$, and $\mathcal{L}_\gamma,\mathcal{L}_\beta$ the corresponding Lagrangians. We denote by $\overline \beta$ the geodesic $\beta$ with opposite orientation.
    
    \smallskip
    
    \begin{enumerate}
    
    \item If $\gamma\neq \beta,\overline{\beta}$, then as a $\Z$-module we have
$$
HW^*(\mathcal{L}_\gamma,\mathcal{L}_\beta)\cong H^{>0}_*(\mathcal{L}(\gamma,\beta))\oplus \bigoplus_{p \in \gamma \cap \beta} \Z[t] \cdot c_p,
$$
where $\mathcal{L}(\gamma,\beta)$ is the space of free paths in $\Sigma$ 
from $\gamma$ to $\beta$, $H^{>0}_*(\mathcal{L}(\gamma,\beta))$ is the homology of $\mathcal{L}(\gamma,\beta)$ relative to the constant paths,  and $t^k c_p =: c_{p,k}$ is a generator corresponding to an intersection point $p \in \gamma \cap \beta$ obtained by the concatenation of paths $c_{p,k}=t^k*c_p$, where $t$ is the $S^1$-fiber and $c_p=c_{p,0}$ is the shortest positively oriented path on the $S^1$-fiber going from $\Lambda_\gamma$ to $\Lambda_\beta$ (see Figure \ref{fig:chordcp0}).

\begin{figure}
    \centering
    \includegraphics{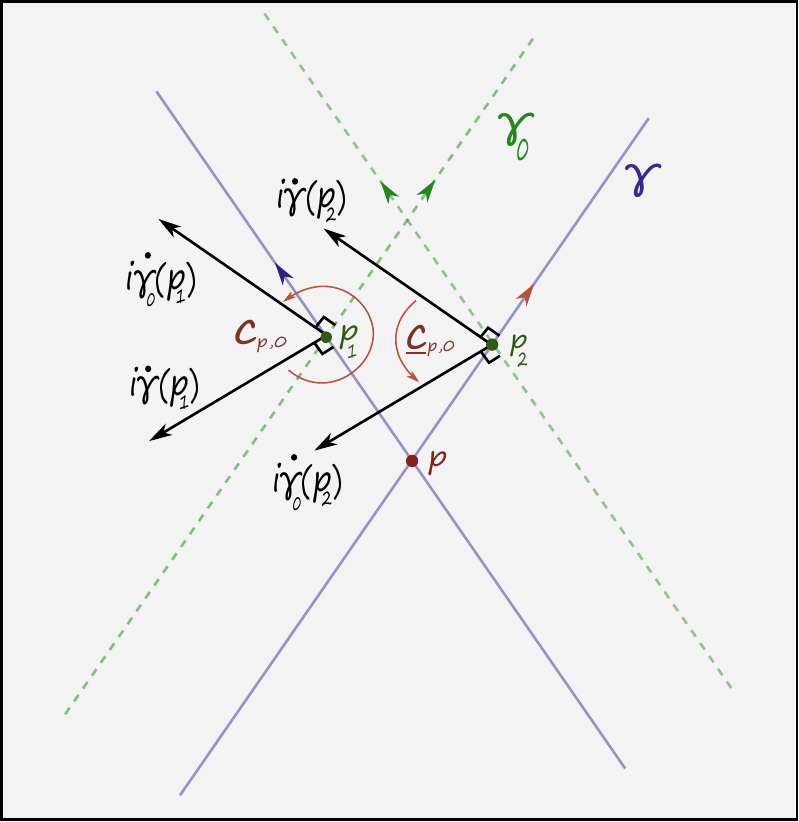}
    \caption{If $\gamma=\beta$ and $p$ is a self-intersection of $\gamma$, it resolves into two intersections $p_1$ and $p_2$ between $\gamma$ and a small push-off $\gamma_0$, respectively giving the chord $c_{p,0}$ and its complementary $\underline c_{p,0}$, both going from the conormal lifts $\Lambda_\gamma$ to $\Lambda_{\gamma_0}$. }
    \label{fig:pushoff}
\end{figure}

\begin{figure}
    \centering
    \includegraphics{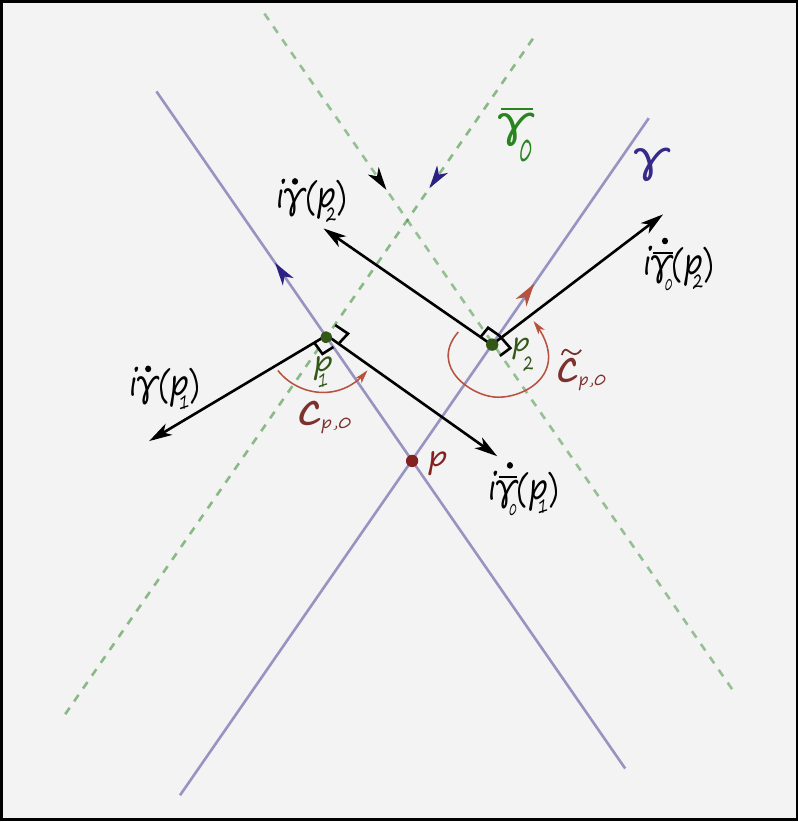}
    \caption{If $\beta=\overline{\gamma}$ and $p$ is a self-intersection of $\gamma$, the inverse $\overline{\gamma}_0$ of a small push-off $\gamma_0$ intersects $\gamma$ in $p_1,p_2$, and produces a chord $c_{p,0}$ from $i\dot{\gamma}(p_1)$ to $i\dot{\overline\gamma}_0(p_1)=-i\dot{\gamma}_0(p_1)$, and a chord $\widetilde{c}_{p,0}$ from $i\dot{\gamma}(p_2)$ to $i\dot{\overline\gamma}_0(p_2)=-i\dot{\gamma}_0(p_2)$.}
    \label{fig:pushoffbar}
\end{figure}

\smallskip

\item If $\gamma=\beta$, then
$$
HW^*(\mathcal{L}_\gamma,\mathcal{L}_\gamma)\cong H^{>0}_*(\mathcal{L}(\gamma,\gamma))\oplus H^*\big(S^1;\Z\big) \oplus \bigoplus_{p \in \Gamma_\gamma} \Z(t) \cdot c_p,$$
where $\Gamma_\gamma$ denotes the set of self-intersections of $\gamma$, and $t^k c_p =: c_{p,k}$ is defined via the concatenation $c_{p,k}=t^k*c_p$ for $k \in \Z$ (see Figure \ref{fig:pushoff}). 

\smallskip

\item If $\gamma=\overline{\beta}$, then
$$
HW^*(\mathcal{L}_\gamma,\mathcal{L}_{\overline{\gamma}})\cong H^{>0}_*(\mathcal{L}(\gamma,\gamma)) \oplus \bigoplus_{p \in \Gamma_\gamma} \left( \Z[t]\cdot c_p \oplus \Z[t] \cdot \widetilde{c}_p\right),$$
where $t^k \widetilde c_p = \widetilde c_{p,k}=t^k*\widetilde c_p$, and if $c_p$ joins positively $v \in S^*_p\Sigma$ to $w \in S^*_p\Sigma$, then $\widetilde c_p$ joins positively $-w$ to $-v$ (see Figure \ref{fig:pushoffbar}).
\end{enumerate}
\end{proposition}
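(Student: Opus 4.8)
The plan is to apply Theorem~\ref{thm:WFH}(1) to reduce the computation to an explicit enumeration of the three types of generators and then identify each piece with a known topological quantity. By Theorem~\ref{thm:WFH}(1), for $\gamma \neq \beta, \overline{\beta}$ the Lagrangians $\mathcal{L}_\gamma$ and $\mathcal{L}_\beta$ are disjoint, so $CW^*_0 = 0$ and $HW^* = HW^*_- \oplus HW^*_+$; for $\gamma = \beta$ we additionally get the Morse-theoretic summand $H^*(S^1;\Z)$ from $CW^*_0$, with the off-diagonal pieces of the differential vanishing by Lemma~\ref{lemma:disk} (property $(C)$); and for $\gamma = \overline{\beta}$ the Lagrangians are again disjoint so $CW^*_0 = 0$. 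So the content is to compute $HW^*_\pm(\mathcal{L}_\gamma, \mathcal{L}_\beta)$ separately. First I would treat the $\alpha_+$-part: the $\alpha_+ = \alpha_{\mathrm{can}}$-chords from $\Lambda_\gamma$ to $\Lambda_\beta$ are Reeb chords of the canonical contact form on $S^*\Sigma$ with endpoints on the unit conormals of $\gamma$ and $\beta$, i.e.\ geodesic chords in $\Sigma$ orthogonally leaving $\gamma$ and orthogonally arriving at $\beta$. Here I would invoke the standard identification (in the spirit of Abbondandolo--Schwarz / Abouzaid, adapted to conormals) between the wrapped Floer complex $CW^*_+(\mathcal{L}_\gamma, \mathcal{L}_\beta)$ and the Morse complex of the energy functional on the path space $\mathcal{L}(\gamma,\beta)$ of free paths in $\Sigma$ from $\gamma$ to $\beta$, computing $H^{>0}_*(\mathcal{L}(\gamma,\beta))$ — the homology relative to the constant paths — where the relative part accounts for the shift away from the intersection-point generators.

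Next I would treat the $\alpha_-$-part. Since $\alpha_- = \alpha_{\mathrm{pre}}$ is the prequantization contact form, its Reeb flow is the $S^1$-fibre rotation, so the $\alpha_-$-chords from $\Lambda_\gamma$ to $\Lambda_\beta$ are exactly the fibrewise arcs: at each point $p$ where $\gamma$ and $\beta$ meet in $\Sigma$, one gets, for each $k \geq 0$, the chord $c_{p,k} = t^k * c_p$ obtained by concatenating the shortest positive arc $c_p = c_{p,0}$ on the fibre $S^*_p\Sigma$ joining $i\dot\gamma(p)$ to $i\dot\beta(p)$ with $k$ full loops $t$ around the fibre (Figure~\ref{fig:chordcp0}). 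A degenerate-Morse-Bott computation along these $S^1$-fibres, together with Lemma~\ref{lemma:disk} ruling out differentials connecting $\alpha_-$-generators to $\alpha_+$-generators or intersection points, shows that the $\alpha_-$-part has no differential and is the free $\Z$-module $\bigoplus_{p \in \gamma \cap \beta} \Z[t]\cdot c_p$. For case (2), where $\gamma = \beta$, the geometric subtlety is that self-intersections $p$ of $\gamma$ must be handled by taking a small pushoff $\gamma_0$ of $\gamma$, resolving $p$ into two transverse intersections $p_1, p_2$ of $\gamma$ with $\gamma_0$ (Figure~\ref{fig:pushoff}): one, $c_{p,0}$, is the short fibre arc, and the complementary arc $\underline{c}_{p,0}$ together with iterates $t^k * c_p$, $k \in \Z$, fills out a copy of $\Z(t)$ (Laurent, because both the short arc and its complement appear). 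The same careful accounting with orientations, using $\overline{\gamma}_0$ instead of $\gamma_0$ and tracking that a chord ending at $i\dot\gamma_0$ corresponds under orientation reversal to $-i\dot\gamma_0$, produces the two families $c_p, \widetilde c_p$ and the $\bigoplus_{p}(\Z[t]\cdot c_p \oplus \Z[t]\cdot\widetilde c_p)$ in case (3) (Figure~\ref{fig:pushoffbar}).

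The main obstacle is establishing the identification of $CW^*_+(\mathcal{L}_\gamma, \mathcal{L}_\beta)$ with the Morse complex of the energy functional on $\mathcal{L}(\gamma,\beta)$ with the correct relative grading and the correct handling of the ``short'' generators. In the closed-string case this is the Abbondandolo--Schwarz isomorphism $SH^*(T^*\Sigma) \cong H_*(\mathcal{L}\Sigma)$, and the open-string conormal version is standard, but here we are working not in $T^*\Sigma$ but in its unit-cotangent-bundle-times-interval model with the magnetically twisted/Geiges form, so one must be slightly careful that the only change relative to the standard cotangent bundle is the prequantization summand, which is precisely what produces the separate $\alpha_-$-part and does not interfere with the geodesic (i.e.\ $\alpha_+$) dynamics — this is exactly what the block-triangular form of $\partial_W$ from Theorem~\ref{thm:WFH}(1) guarantees. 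A secondary but purely bookkeeping obstacle is getting the $\Z[t]$ versus $\Z(t)$ distinction and the signs right in the self-intersection cases; this is where Figures~\ref{fig:pushoff} and~\ref{fig:pushoffbar} do the real work, and I would present the argument by choosing an explicit generic pushoff and reading off the fibre arcs directly rather than invoking any abstract surgery formula.
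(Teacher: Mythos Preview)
Your proposal is essentially the same strategy as the paper's proof: split via Theorem~\ref{thm:WFH}(1), identify $HW^*_+$ with path-space homology via an Abbondandolo--Schwarz type argument for conormals, enumerate the $\alpha_-$-chords as fibre arcs over intersection points, and handle cases (2) and (3) by the pushoff pictures. One point is sharper in the paper than in your sketch: to show the $\alpha_-$-differential vanishes, the paper does not invoke a Morse--Bott argument but simply observes that a Floer strip between two $\alpha_-$-chords projects to $\Sigma$ as a bigon bounded by the geodesics $\gamma$ and $\beta$, and no such bigon exists because geodesics minimize length in their free homotopy class. Your phrase ``degenerate-Morse--Bott computation'' does not by itself rule out strips connecting chords sitting over \emph{distinct} intersection points $p\neq q$; the bigon argument is what closes that gap.
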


\begin{proof}
Assume first that $\gamma\neq\beta,\overline{\beta}$.
We note that $\alpha_+$-Reeb chords $c_+$ between $\Lambda_\gamma$ and $\Lambda_\beta$ project to $\Sigma$ as \emph{binormal geodesics paths} (or \emph{binormal chords}) $c_+^\perp:[0,1]\rightarrow \Sigma$ between $\gamma$ and $\beta$, i.e., 
\begin{gather*}
\nabla_{\partial_t c_+^\perp}\partial_tc_+^\perp=0,\cr
c_+^\perp(0)\in \gamma, \qquad c_+^\perp(1)\in \beta,\cr
\partial_tc_+^\perp(0) \perp \gamma, \qquad \partial_tc_+^\perp(1) \perp \beta.
\end{gather*}
There is a unique such binormal chord in each free homotopy class of paths between $\gamma$ and $\beta$, i.e., in each connected component of $\mathcal{L}(\gamma,\beta)$. They are precisely the critical points of the length functional 
$$
L:\mathcal{P}(\gamma,\beta)\rightarrow \mathbb{R},\qquad
L(x)=\int_{[0,1]}|\dot{x}(t)|dt.
$$
Note that since $\gamma$ and $\beta$ cannot possibly intersect tangentially, every $c_+^\perp \in \mbox{crit}(L)$ has positive length, i.e., $L\big(c_+^\perp\big)>0$. We may then adapt the analysis of \cite{AS}, by considering suitable moduli spaces of Floer strips with mixed boundary conditions (see Figure \ref{fig:mixedmoduli}), and conclude that
$$
HW^*_+(\mathcal{L}_\gamma,\mathcal{L}_\beta)\cong H_*^{>0}(\mathcal{L}(\gamma,\beta)).
$$

\begin{figure}
    \centering
    \includegraphics[width=0.5\linewidth]{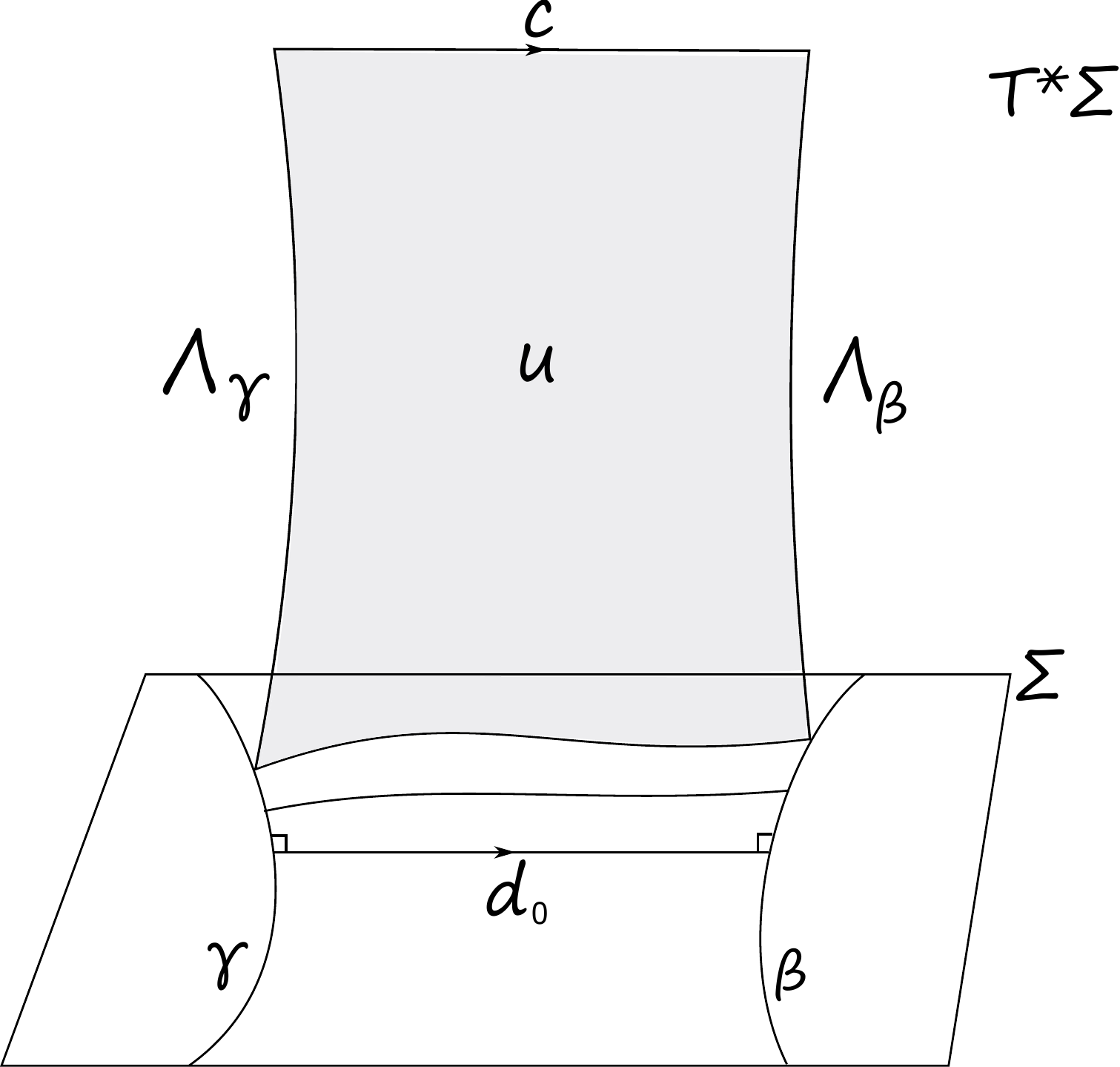}
    \caption{For $c$ a $\alpha_+$-chord between $\Lambda_\gamma$ and $\Lambda_\beta$, $d_0 \in \mbox{crit}(L)$ a binormal path between $\gamma$ and $\beta$, and the Hamiltonian $H(q,p)=|p|^2/2$, the counts of $H$-Floer solutions $u: [0,+\infty)\times [0,1]\rightarrow T^*\Sigma$ with Lagrangian boundary in $\mathcal{L}_\gamma$, $\mathcal{L}_\beta$ and $\Sigma$, which approach $c$ at infinity, and whose intersection with $\Sigma$ lies in the unstable manifold of $d_0$ (with respect to the gradient flow of $L$), gives a chain map inducing an isomorphism $HW^*_+(\mathcal{L}_\gamma,\mathcal{L}_\beta)\cong H_*^{>0}(\mathcal{L}(\gamma,\beta))$.}
    \label{fig:mixedmoduli}
\end{figure}

Similarly, $\alpha_-$-Reeb chords $c_-$ between $\Lambda_\gamma$ and $\Lambda_\beta$ project to $\Sigma$ as intersection points $p\in \gamma \cap \beta$. Since $\alpha_-$-orbits are all closed and correspond to fibers of the prequantization bundle, to each $p \in \gamma \cap \beta$ there corresponds a family of Reeb chords $c_{p,k}, k \in \N \cup\{0\}$, one for each iteration along the $S^1$-fiber, so that $c_{p,k}=t^k*c_p$ where $t$ denotes the fiber and $c_p$ the shortest positively oriented path along the fiber going from $\Lambda_\gamma$ to $\Lambda_\beta$. Therefore,
$$
CW^*_+(\mathcal{L}_\gamma,\mathcal{L}_\beta) = \bigoplus_{p \in \gamma \cap \beta} \bigoplus_{k=0}^\infty \Z\cdot c_{p,k} = \bigoplus_{p \in \gamma \cap \beta} \Z[t] \cdot c_p.$$

A Floer strip between two such chords projects to $\Sigma$ as a bigon between $\gamma$ and $\beta$, which does not exist since geodesics minimize the length in their free homotopy class. This implies
$$
HW^*_+(\mathcal{L}_\gamma,\mathcal{L}_\beta)\cong CW^*_+(\mathcal{L}_\gamma,\mathcal{L}_\beta),
$$
and the proposition follows for the case $\gamma\neq \beta,\overline{\beta}$. The remaining cases follow by perturbation of the self-intersections of $\gamma$, taking orientations into consideration, as depicted in Figures \ref{fig:pushoff} and \ref{fig:pushoffbar}.
\end{proof}

 \begin{proposition}[Torus bundle domains: wrapped Floer cohomology]\label{prop:torus_bundles_wfh} Consider a torus bundle domain $V=[-1,1]\times M$. Let $\mathcal{O},\mathcal{O}'\subset \mathbb{T}^2$ be two periodic orbits of the hyperbolic monodromy matrix $A$, and $\mathcal{L}_\mathcal{O},\mathcal{L}_{\mathcal{O}'}$ the corresponding Lagrangians. Then there is no differential in the wrapped Floer cochain complex between $\mathcal{L}_\mathcal{O},\mathcal{L}_{\mathcal{O}'}$ (save the Morse differential if $\mathcal{O}=\mathcal{O}'$), and we have the following.

\smallskip

\begin{figure}
    \centering
    \includegraphics[width=0.7 \linewidth]{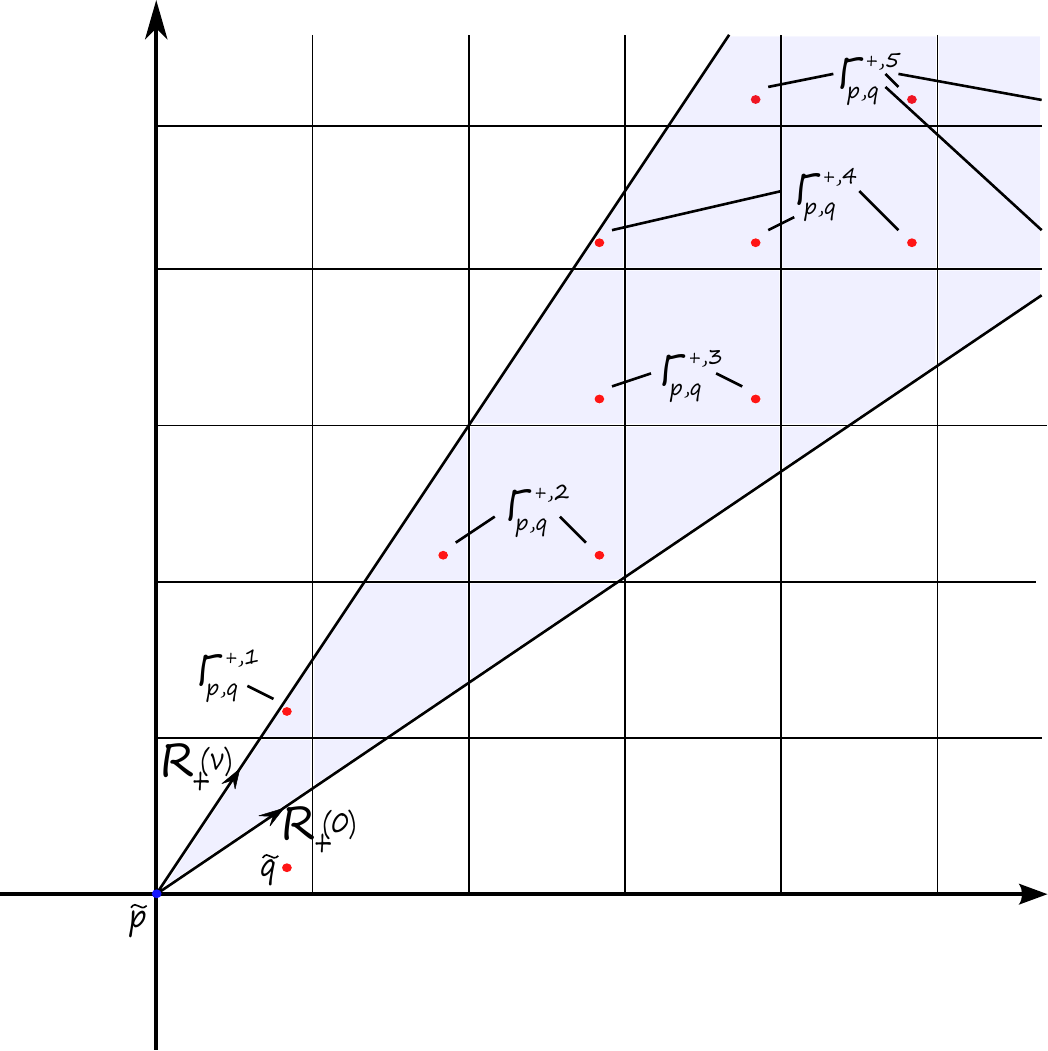}
    \caption{
    The linear path between $\tilde p$ and a translate of $\tilde q$ lying inside the cone $\mathbb{R}^{\geq 0}\langle v,w\rangle$, where $v=R_+(0),w=R_+(\nu)$, gives a $\alpha_+$-chord between $p$ and $q$, lying on a $\mathbb{T}^2$-fiber which depends on the slope of the path. In the figure, we depict the sets $\Gamma^{+,k}_{p,q}=\Gamma^{+,\leq k}_{p,q}\backslash \Gamma^{+,\leq k-1}_{p,q}$, where $k$ is the box-length of the corresponding chords. There is a similar description for the ``minus'' end.}
    \label{fig:chords}
\end{figure}

\begin{enumerate}
    \item If $\mathcal{O}\neq \mathcal{O}'$, then
$$
HW^*(\mathcal{L}_\mathcal{O},\mathcal{L}_{\mathcal{O}'})=\bigoplus_{p \in \mathcal{O},q\in \mathcal{O}'}\left(\bigoplus_{z^+\in \Gamma_{p,q}^+}\Z\cdot z^+ \oplus \bigoplus_{z^-\in \Gamma_{p,q}^-}\Z\cdot z^-\right), 
$$
where $\Gamma^\pm_{p,q}$ is the set of straight lines in $\mathbb{T}^2=\mathbb R^2/\mathbb{Z}^2$ from $p$ to $q$ with directions in the positive cone spanned by the Reeb vector fields $R_\pm(0)$ and $R_\pm(\nu)$ from~\eqref{reeb2}.

\smallskip

\item If $\mathcal{O}=\mathcal{O}'$, then
$$
HW^*(\mathcal{L}_\mathcal{O},\mathcal{L}_{\mathcal{O}})=H^*\big(S^1;\Z\big)\oplus\bigoplus_{p,q\in \mathcal{O}}\left(\bigoplus_{z^+\in \Gamma_{p,q}^+}\Z\cdot z^+ \oplus \bigoplus_{z^-\in \Gamma_{p,q}^-}\Z\cdot z^-\right).
$$
\end{enumerate}
\end{proposition}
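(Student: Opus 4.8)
The plan is to derive everything from Theorem~\ref{thm:WFH}(1) together with (a) an explicit description of the $\alpha_\pm$-Reeb chords and (b) the vanishing of the two ``chord'' differentials. By Theorem~\ref{thm:WFH}(1), for $L_1,L_2\in\{\mathcal{L}_\mathcal{O},\mathcal{L}_{\mathcal{O}'}\}$ the cochain complex splits as $CW^*=CW^*_-\oplus CW^*_0\oplus CW^*_+$ with lower-triangular differential; the off-diagonal pieces $\partial_W^{\pm,0}$ vanish by the Topological Disk Lemma~\ref{lemma:disk} (exactly as in property (C) of the proof of Theorem~\ref{thm:unit}), and $\partial_W^0$ is the Morse differential of a Morse function on the cylinder $\mathcal{L}_\mathcal{O}$, which we take perfect, so it vanishes and contributes the summand $H^*(S^1;\Z)$ when $\mathcal{O}=\mathcal{O}'$; when $\mathcal{O}\neq\mathcal{O}'$ the Lagrangians $\mathcal{L}_\mathcal{O},\mathcal{L}_{\mathcal{O}'}$ are disjoint, so $CW^*_0=0$. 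Hence it remains to (a) identify $CW^*_\pm(\mathcal{L}_\mathcal{O},\mathcal{L}_{\mathcal{O}'})$ with $\bigoplus_{p\in\mathcal{O},q\in\mathcal{O}'}\bigoplus_{z^\pm\in\Gamma^\pm_{p,q}}\Z\cdot z^\pm$, and (b) show $\partial_W^\pm=0$.

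For (a): in the model $M=(\R^2/P(\Z^2)\times\R_z)/\psi$, the orbit $\Lambda_\mathcal{O}$ is the closed $\partial_z$-orbit through a point of $\mathcal{O}$, and it meets every $\mathbb{T}^2$-fibre transversely in exactly the set $\mathcal{O}$ (and likewise $\Lambda_{\mathcal{O}'}$ in $\mathcal{O}'$). Since by~\eqref{reeb2} the Reeb fields $R_\pm$ are tangent to the fibres and restrict there to the linear flows of direction $R_\pm(z)$, an $\alpha_+$-Reeb chord from $\Lambda_\mathcal{O}$ to $\Lambda_{\mathcal{O}'}$ is a straight segment inside a single fibre, from some $p\in\mathcal{O}$ to some $q\in\mathcal{O}'$, in the direction $R_+(z_0)$ for the unique $z_0$ making this possible; such a chord is nondegenerate, because $\Lambda_\mathcal{O}$ and $\Lambda_{\mathcal{O}'}$ are everywhere transverse to the fibres, so the chord admits no nonconstant deformation between them. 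As $z_0$ ranges over $[0,\nu)$ (the $D_\nu$-twisting between consecutive fibres over $\R$ being absorbed into the choice of lattice translate), the admissible directions sweep out exactly the open cone spanned by $R_+(0)$ and $R_+(\nu)$, and each element of $\Gamma^+_{p,q}$ — a straight line from $p$ to $q$ whose direction lies in that cone — determines a unique $z_0$ and hence a unique chord; conversely every chord arises this way, with the box-length recording how many $[0,\nu)$-slabs it traverses. This identifies the generators of $CW^*_+$ as claimed, and the argument for $CW^*_-$ is identical with $R_+$ replaced by $R_-$.

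For (b): I would pass to the cover $V''$ of Lemma~\ref{symp2}, which is exact symplectomorphic to a subset of $T^*\mathbb{T}^2$ disjoint from the zero section. Under this identification $\mathcal{L}_\mathcal{O}$ lifts to a disjoint union of pieces of the cotangent fibres $T^*_{p_j}\mathbb{T}^2$ over the finitely many points $p_j$ of a lift of $\mathcal{O}$, and similarly $\mathcal{L}_{\mathcal{O}'}$ to pieces of fibres over a lift of $\mathcal{O}'$. A Floer strip contributing to $\partial_W^\pm$ has simply connected domain and hence lifts to a Floer strip in $\widehat{V''}$ between these cotangent-fibre pieces; projecting to $\mathbb{T}^2$ it would yield a geodesic bigon of the flat torus between two of the points $p_j,q_k$, which does not exist, since straight segments minimise length in their homotopy class on the flat torus (just as geodesics do in the McDuff computation). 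Equivalently, the wrapped Floer complex of cotangent fibres of the flat torus computes $H_{-*}$ of a path space of $\mathbb{T}^2$, which is concentrated in degree $0$ because $\mathbb{T}^2$ is aspherical with $\pi_1=\Z^2$, so its differential vanishes. Thus $\partial_W^\pm=0$, which together with the first paragraph gives $HW^*(\mathcal{L}_\mathcal{O},\mathcal{L}_{\mathcal{O}'})$ in the stated form in both cases $\mathcal{O}=\mathcal{O}'$ and $\mathcal{O}\neq\mathcal{O}'$.

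The main obstacle is this last step: $\mathcal{L}_\mathcal{O}$ lifts only to \emph{pieces} of cotangent fibres sitting inside $T^*\mathbb{T}^2\setminus Z$ rather than to full fibres in $T^*\mathbb{T}^2$, so the cotangent-fibre computation cannot be quoted verbatim. Making the argument rigorous requires a maximum-principle argument confining the Floer strips relevant to $\partial_W^+$ (resp.\ $\partial_W^-$) to the $\alpha_+$- (resp.\ $\alpha_-$-) end of $\widehat V$, where the identification with $T^*\mathbb{T}^2\setminus Z$ is valid and the cotangent-fibre pieces are cylindrical and hence behave like honest fibres; one also has to nail down the grading conventions and verify the cone description of $\Gamma^\pm_{p,q}$ precisely, keeping track of the conjugation by $P$ and of the $D_\nu$-twisting relating the fibres over $z$ and $z-\nu$.
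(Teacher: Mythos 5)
Your identification of the generators via linear paths of slope $R_\pm(z)$ for $z\in[0,\nu)$ in a fundamental domain matches the paper's, and your use of Theorem~\ref{thm:WFH}(1) plus the Topological Disk Lemma~\ref{lemma:disk} to kill $\partial_W^{\pm,0}$ is the right framework. The one substantive step — showing $\partial_W^\pm=0$ — is where your route diverges from the paper's. You pass to the intermediate cover $V''$ of Lemma~\ref{symp2}, identify it with a subset of $T^*\mathbb{T}^2$ disjoint from the zero section, and argue via the absence of geodesic bigons on the flat torus; you then correctly flag the obstacle that $\mathcal{L}_\mathcal{O}$ lifts only to \emph{pieces} of cotangent fibres sitting inside $T^*\mathbb{T}^2\setminus Z$, so the standard cotangent-fibre computation cannot be quoted verbatim, and you sketch but do not carry out the confinement argument needed to repair this.

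The paper's argument avoids this complication entirely. It does not use $V''$ or any identification with a cotangent bundle. Instead: given a Floer strip $u$ between $\alpha_+$-chords $c_1,c_2$, project $u$ to $M$, lift (using simple connectivity of the domain) to the cover $\R^2\times\R$ of $M$, and project to the $\R^2$-factor. The Lagrangian boundary conditions, being $\R\times\Lambda$ with $\Lambda$ a $\partial_z$-orbit, project to the single points $\tilde p$ and $\tilde q$, and the two asymptotic chords project to straight segments $\widetilde c_1,\widetilde c_2$ in $\R^2$ from $\tilde p$ to $\tilde q$. Two straight segments in the plane with the same endpoints coincide, so $\widetilde c_1=\widetilde c_2$, hence $c_1,c_2$ have the same slope, lie in the same $\mathbb{T}^2$-fibre, and are therefore equal; the strip is trivial and not counted. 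This is the same underlying observation as your "no geodesic bigons on the flat torus," but phrased so that it runs entirely inside $M$ and its universal-type cover, never encountering the truncated cotangent fibres. You would do better to drop the $T^*\mathbb{T}^2$ detour and argue directly as the paper does: the cover $V''$ is the right tool for Theorem~\ref{thm:closed} about closed Lagrangians (where Gromov's theorem is invoked), but it introduces an unnecessary and unresolved difficulty here.
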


\begin{remark} \label{rem:torus}
If $\tilde p,\tilde q \in \mathbb{R}^2$ are lifts of $p,q \in \mathbb{T}^2=\mathbb R^2/\mathbb{Z}^2$, then $\Gamma^\pm_{p,q}$ is in canonical one-to-one correspondence with the set of $\mathbb{Z}^2$-translates $\tilde q + (m,n)$ of $\tilde q$, with $m,n\in \mathbb{Z}$, which lie inside the positive cone based at $\tilde p$ and spanned by the Reeb vector fields $R_\pm(0)$ and $R_\pm(\nu)$ from~\eqref{reeb2}; see Figure \ref{fig:chords}. 

Note that $\Gamma^\pm_{p,q}=\bigcup_{k=0}^\infty \Gamma^{\pm,\leq k}_{p,q}$, where $\Gamma^{\pm,\leq k}_{p,q}$ is the set of translates $\tilde q + (m,n)$ with $\vert (m,n)\vert = \max\{\vert m \vert,\vert n \vert\}\leq k$. This filtration is equivalent (up to a factor of $\sqrt{2}$ and a bounded error) to the action filtration and $\#\Gamma^{\pm,\leq k}_{p,q}$ grows quadratically with $k$. 
\end{remark}

\begin{proof}
Put $\Lambda=\Lambda_\mathcal{O}$, $\Lambda'=\Lambda_{\mathcal{O}'}$ for two orbits $\mathcal{O},\mathcal{O}'$ of the hyperbolic monodromy matrix. We will first show that the differential $\partial_W$ vanishes identically if $\mathcal{O}\neq \mathcal{O}'$. 
As the differential has no mixed terms between the two boundary components of $V$, it suffices to check $\partial_W^+=0$ (the argument for $\partial_W^-$ is analogous).
Given a Floer strip $u$ between two $\alpha_+$-chords $c_1,c_2$, we may first project it on $M$. Moreover, since it is simply connected, we may lift it to the cover $\mathbb{R}^2\times \mathbb{R}$ of $M$, and then further project it to the $\mathbb{R}^2$-factor. This gives a loop $a=\widetilde c_1*\widetilde c_2^{-1}$ which is obtained by concatenating two paths $\widetilde{c}_1$ and the opposite of $\widetilde{c}_2$, lifting $c_1$ and $c_2$, respectively, each of which is linear and follows the direction of the Reeb field $R_+$. Then, the only possibility is that $\widetilde c_1=\widetilde c_2$ This implies that $c_1,c_2$ have the same slope, hence lie in the same $\mathbb{T}^2$-fiber, and therefore $c_1=c_2$. 
But then the strip is the trivial strip, which is not counted in the differential. 

It remains to describe the generators of $HW^*(\mathcal{L}_\mathcal{O},\mathcal{L}_{\mathcal{O}'})$. For this, note that each Reeb chord for $R_\pm$ from $\mathcal{O}$ to $\mathcal{O}'$ has a unique representative in $\mathbb{T}^2\times [0,\nu)$ by a linear path from some $p\in\mathcal{O}$ to some $q\in\mathcal{O}'$ in direction $R_\pm(z)$ from~\eqref{reeb2}, for some $z\in[0,\nu)$. 
This shows that the nonconstant generators are indexed by the filtered sets $\Gamma_{p,q}^{\pm}$ as described in the statement of the Theorem, for $p \in \mathcal{O}$ and $q\in \mathcal{O}'$. In the case $\mathcal{O}=\mathcal{O}'$ we have in addition the constant intersection points, which after a Morse perturbation give the $H^*\big(S^1;\Z\big)$ summand. This concludes the proof.
\end{proof}

\subsection{The product structure}

In this section, we study the product $\mathfrak{m}=\mathfrak{m}_2$ on the algebra 
$$A=\bigoplus_{\Lambda,\Lambda'} HW^*(\mathcal{L}_\Lambda,\mathcal{L}_{\Lambda'}),
$$
for any Anosov Liouville domain, and complete the proof of Theorem~\ref{thm:WFH}. We have a natural splitting of $A$ as a $\mathbb{Z}$-module
$$
A=I_-\oplus A_0\oplus I_+,
$$
where 
$$
I_\pm:=\bigoplus_{\Lambda,\Lambda'} HW_\pm^*(\mathcal{L}_\Lambda,\mathcal{L}_{\Lambda'}),\qquad
A_0:=\bigoplus_{\Lambda} HW_0^*(\mathcal{L}_\Lambda,\mathcal{L}_{\Lambda}),
$$
and we let
$$
A_{\pm}=A_0\oplus I_\pm.
$$
Note that $A_0=\bigoplus_{\Lambda} H^*\big(S^1;\Z\big)$. The following is an open-string analogue of the statement for symplectic cohomology of Theorem~\ref{thm:RFH-SH}.

\begin{proof}[Proof of Theorem~\ref{thm:WFH}(2): the ring structure]
The fact that the product on $A_0$ is as claimed follows from the standard fact that the isomorphism with Morse homology preserves the product structure.

\begin{figure}
    \centering    \includegraphics[width=0.8\linewidth]{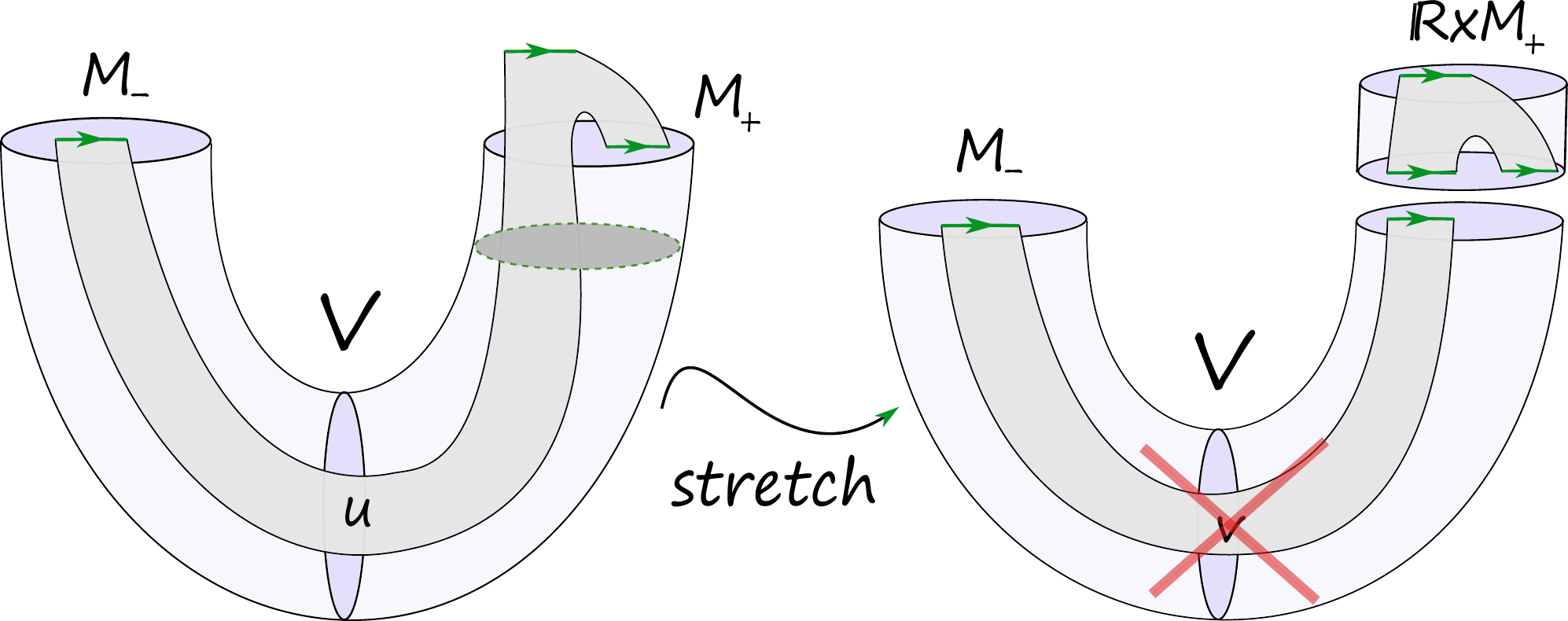}
    \caption{Stretching the neck gives a disk $v$ with only positive ends, contradicting the Topological Disk Lemma \ref{lemma:disk}.}
    \label{fig:stretch}
\end{figure}

To see the remaining claims, we argue as follows. Assume by contradiction that there exists a $3$-punctured disk $u:\mathcal{D}\rightarrow V$ contributing to $\mathfrak{m}$ which has mixed ends (i.e., at least one asymptotic in $M_+$, and at least one in $M_-$), two of which are positive, and one which is negative. 
After stretching the neck along contact-type slices slightly to the interior of $M_-$ and/or $M_+$, the resulting broken disk configuration contains a punctured disk $v$ with only positive ends asymptotic to Reeb chords in the contact-type slices; see Figure \ref{fig:stretch}. As in Step 1 of the proof of Theorem \ref{thm:unit}, the projection of $v$ to $M$ gives rise to a topological disk that contradicts Lemma \ref{lemma:disk}. A product of two chords in $M_+$ or $M_-$ with output in the constants is excluded similarly. This finishes the proof of Theorem~\ref{thm:WFH} of the Introduction.
\end{proof}

\smallskip

\textbf{McDuff domains: open-string products.} We now focus on the algebra $A$ corresponding to the McDuff domains. First, we compute the coefficients of $\mathfrak{m}:A\otimes A\rightarrow A$ on $I_+$, which is the free $\Z$-module with generators all the positive length binormal chords between $\gamma$ and $\beta$, for varying closed geodesics $\gamma$ and $\beta$. Consider $\pi_0\mathcal{L}(\gamma,\beta)$, the set of free homotopy classes of paths from $\gamma$ to $\beta$ (which is a  countable set). We identify elements in $\pi_0\mathcal{L}(\gamma,\beta)$ with their unique binormal chord representative, and view $CW_+^*(\mathcal{L}_\gamma,\mathcal{L}_\beta)$ as the free $\Z$-module over $\pi_0\mathcal{L}(\gamma,\beta)$. For $c$ a binormal chord, we denote by $\overline{c}$ the binormal chord with opposite orientation (so that if $c \in \pi_0\mathcal{L}(\gamma,\beta)$, then $\overline{c} \in \pi_0\mathcal{L}(\beta,\gamma)$). 


We first consider the product $\mathfrak{m}_+=\mathfrak{m}\vert_{I_+\otimes I_+}: I_+ \otimes I_+ \rightarrow I_+,$ defined by counting $3$-punctured Floer disks in the completion $\R\times M$ of $V=[-1,1]\times M$ with two inputs and one output asymptotic to Hamiltonian chords near $M_+=\{1\}\times M$. By the maximum principle, these Floer disks are 
contained in $\R_+\times M$. By a deformation argument as in~\cite{BO09}, their counts agree with the counts of index $1$ holomorphic disks in the symplectization $\mathbb{R}_+\times S^*\Sigma$ with two positive punctures and one negative puncture asymptotic to Reeb chords in the unit cotangent bundle $S^*\Sigma$. Similarly, we have a coproduct $\widecheck{\mathfrak{m}}_+: I_+ \rightarrow I_+ \otimes I_+$, counting index $1$ holomorphic disks in $\mathbb{R}_+\times S^*\Sigma$ with one positive puncture and two negative ones.  
We also have an open-string cobracket $\Delta: I_+\rightarrow I_+ \otimes I_+,$ defined as follows. Let $\gamma_0,\gamma_1,\gamma_2\subset \Sigma$ be geodesics, and consider a binormal chord $c \in \pi_0\mathcal{L}(\gamma_0,\gamma_2)$. Let $x_1,\dots, x_n$ be the intersection points of $c$ with $\gamma_1$. At each $x_i$, $c$ may be split into two (oriented) paths $p_1^i \in \mathcal{L}(\gamma_0,\gamma_1)$ and $p_2^i \in \mathcal{L}(\gamma_1,\gamma_2)$, and we let $c_1^i$ and $c_2^i$ be the corresponding binormal chord representatives. We define
$$
\Delta^{\gamma_1}(c):=\sum_{i=1}^n c_1^i \otimes c_2^i 
$$
and $\Delta:=\sum_\gamma \Delta^\gamma$. For $c_1,c_2,c_3$ binormal chords, we denote by $\langle \mathfrak{m}_+(c_1,c_2),c_3\rangle$ the coefficient in front of $c_3$ of $\mathfrak{m}_+(c_1,c_2)$. Similarly, we define $\langle \widecheck{\mathfrak{m}}_+(c_1), c_2\otimes c_3\rangle$, and $\langle \Delta(c_1),c_2\otimes c_3\rangle$, as the coefficients of $\widecheck{\mathfrak{m}}_+(c_1)$ (resp.\ $\Delta(c_1)$) in front of $c_2\otimes c_3$.

\begin{figure}
    \centering
    \includegraphics[width=1\linewidth]{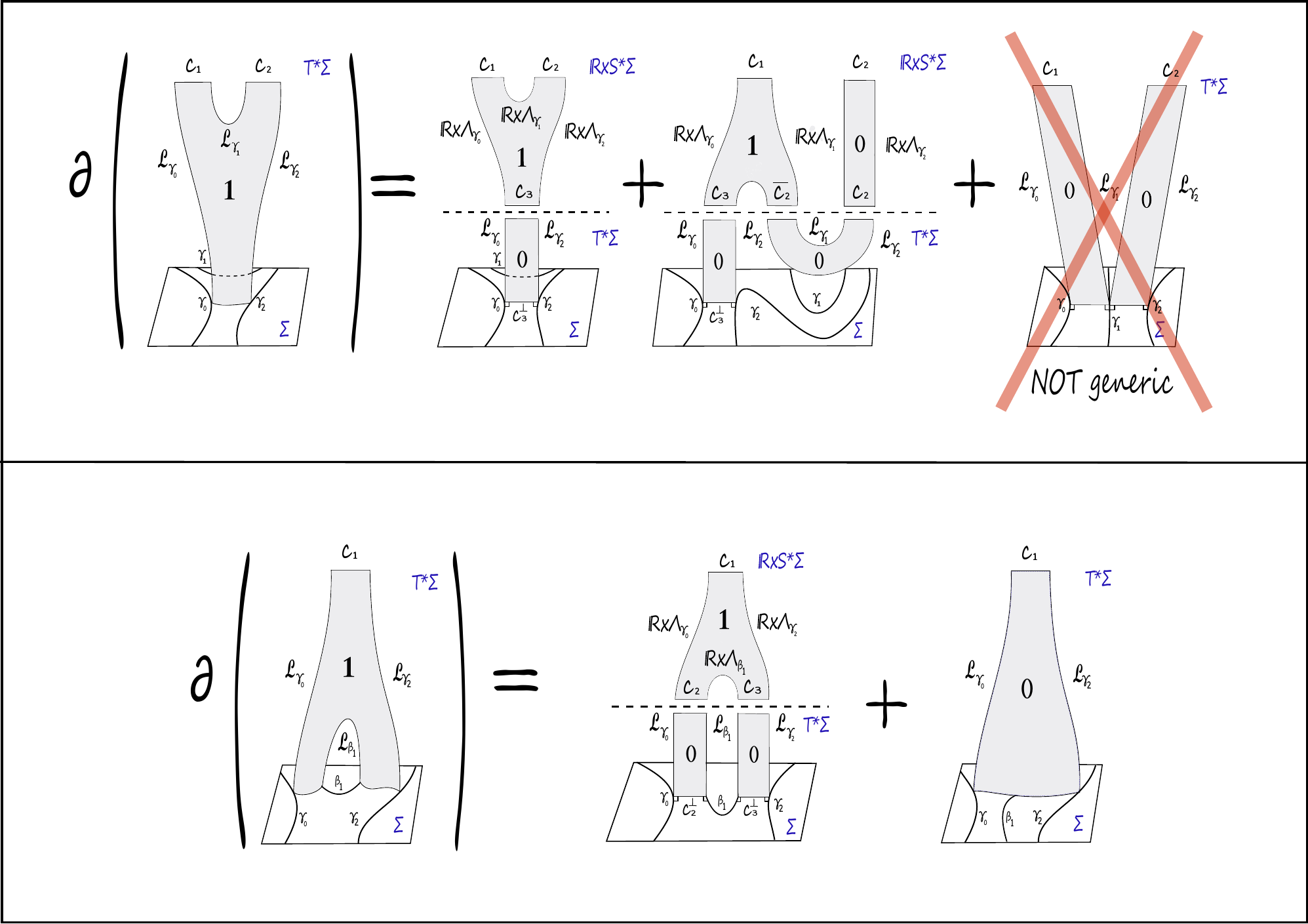}
    \caption{The degenerations of $1$-dimensional moduli spaces of curves with Lagrangian boundary on the zero section gives the relations between the structure constants of $\mathfrak{m}_+$ and $\Delta$.}
    \label{fig:moduli}
\end{figure}

The following reduces the computation of $\mathfrak{m}_+$ to that of the string operation $\Delta$:

\begin{proposition}
 For binormal chords $c_1,c_2,c_3$, the following formulas hold with suitable signs:
 \begin{align*}
 \langle \mathfrak{m}_+(c_1,c_2),c_3 \rangle 
 &= \pm \langle \widecheck{\mathfrak{m}}_+(c_1), c_3\otimes\overline{c_2}\rangle  \pm \langle \widecheck{\mathfrak{m}}_+(c_2), \overline{c_1}\otimes c_3\rangle,\cr
 \langle \widecheck{\mathfrak{m}}_+(c_1),c_2\otimes c_3\rangle
 &= \langle \Delta(c_1),c_2\otimes c_3\rangle\,.
 \end{align*}
\end{proposition}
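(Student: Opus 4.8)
The plan is to reduce both identities to the structure of the boundary of one–dimensional moduli spaces of holomorphic disks in the symplectization $\mathbb{R}_+\times S^*\Sigma$ with Lagrangian boundary on the cylinders $\mathcal{L}_{\gamma_i}$ together with an auxiliary Morse-theoretic condition along the zero section, in the spirit of the degeneration picture in Figure~\ref{fig:moduli}. Concretely, I would first set up, for a triple of geodesics $(\gamma_0,\gamma_1,\gamma_2)$ and binormal chords $c_1\in\pi_0\mathcal{L}(\gamma_0,\gamma_1)$, $c_2\in\pi_0\mathcal{L}(\gamma_1,\gamma_2)$, $c_3\in\pi_0\mathcal{L}(\gamma_0,\gamma_2)$, the moduli space of index-$2$ holomorphic disks in $\mathbb{R}_+\times S^*\Sigma$ with one positive puncture asymptotic to $c_3$ and two negative punctures asymptotic to $c_1$, $c_2$, with one marked point on the boundary arc mapping to $\mathcal{L}_{\gamma_1}$ constrained to the unstable manifold of a critical point of the length functional (equivalently, a marked point whose projection to $\Sigma$ lies on $\gamma_1$). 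Its signed count is, by construction, $\langle\widecheck{\mathfrak{m}}_+(c_1),c_2\otimes c_3\rangle$ up to reindexing — here I would use the same moduli-space analysis as in the computation of $HW^*_+$ above (adapting \cite{AS}), plus the deformation argument of \cite{BO09} that identifies the Floer disks near $M_+$ with holomorphic disks in the symplectization.

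For the second identity, $\langle\widecheck{\mathfrak{m}}_+(c_1),c_2\otimes c_3\rangle=\langle\Delta(c_1),c_2\otimes c_3\rangle$, the key is the SFT-type stretching/degeneration along the zero section $0_\Sigma\subset T^*\Sigma$: sending the Hamiltonian/almost complex data so that the disks degenerate to Morse flow trees on $\Sigma$, a holomorphic disk with the relevant punctures breaks into a configuration whose ``tree part'' on $\Sigma$ is exactly a binormal chord $c$ from $\gamma_0$ to $\gamma_2$ together with a splitting at an intersection point of $c$ with $\gamma_1$ into $c_1$ and $c_2$. This is precisely the definition of $\Delta^{\gamma_1}$, so the count of the degenerate configurations equals $\langle\Delta(c_1),c_2\otimes c_3\rangle$; a standard gluing/compactness argument shows the count is unchanged under the degeneration, giving the equality. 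I would phrase this as: both sides compute the count of the same moduli space, one before and one after sending the Morse data to zero, and the relevant moduli space is rigid (index $0$ after the constraint), compact, and transversally cut out for generic data.

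For the first identity, I would examine the compactified one–dimensional moduli space $\overline{\mathcal{M}}$ whose interior parametrizes index-$1$ pieces interpolating between the ``$\mathfrak{m}_+$ side'' and the ``$\widecheck{\mathfrak{m}}_+$ side''. The natural candidate is the moduli space of holomorphic disks with \emph{two} boundary marked points constrained to lie on copies of the zero section (or to project to the two geodesics), of total index $1$; its boundary strata are of two types: (a) the two marked points collide / a length-gradient trajectory breaks, producing a product of an $\mathfrak{m}_+$-type disk with a ``cap'', and (b) a holomorphic disk breaks off a strip carrying $\overline{c_2}$ or $\overline{c_1}$ at one end, producing $\langle\widecheck{\mathfrak{m}}_+(c_1),c_3\otimes\overline{c_2}\rangle$ and $\langle\widecheck{\mathfrak{m}}_+(c_2),\overline{c_1}\otimes c_3\rangle$ respectively. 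Setting the signed count of $\partial\overline{\mathcal{M}}$ to zero then yields $\langle\mathfrak{m}_+(c_1,c_2),c_3\rangle = \pm\langle\widecheck{\mathfrak{m}}_+(c_1),c_3\otimes\overline{c_2}\rangle \pm\langle\widecheck{\mathfrak{m}}_+(c_2),\overline{c_1}\otimes c_3\rangle$. The appearance of $\overline{c_i}$ is exactly the reflection of a binormal chord run backwards, which is forced by the orientation reversal when a chord end is ``turned around'' in the neck-stretching.

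The main obstacle I expect is purely bookkeeping-but-delicate: pinning down the \emph{signs} and the precise identification of orientation/spin data on the cylinders $\mathcal{L}_{\gamma_i}$ so that the boundary-count identity comes out with the stated $\pm$'s, and making sure the auxiliary Morse constraint along the zero section (used to rigidify $\widecheck{\mathfrak{m}}_+$ and to relate it to $\Delta$) is generic enough to achieve transversality while remaining compatible with the SFT compactness needed for the neck-stretching. Since the maximum principle (via the open analogue of \cite[Lemmas 2.2--2.3]{CO18}) confines all curves to $\mathbb{R}_+\times M$ near $M_+$, and the Topological Disk Lemma~\ref{lemma:disk} already guarantees no mixed $\pm$ curves contribute to $\mathfrak{m}$, the analytic input is standard; the only real work is the careful degeneration analysis and the sign verification, which I would carry out following the conventions of \cite{AS} and \cite{BO09}.
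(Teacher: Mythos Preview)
Your schematic is right---both identities come from counting the boundary of a one–dimensional moduli space---but the concrete moduli spaces you write down are not the ones the paper (following \cite{CL09}) uses, and as stated yours do not do what you need.

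The paper's curves have boundary on the \emph{zero section} $\Sigma\subset T^*\Sigma$, not only on the conormal cylinders. This is already the setup of Figure~\ref{fig:mixedmoduli}: half-strips with one boundary on $\mathcal{L}_\gamma$, the other on $\Sigma$, and the point on $\Sigma$ constrained to the unstable manifold of a binormal chord for the length functional. For the proposition one glues such half-strips onto the punctures of the $\mathfrak{m}_+$/$\widecheck{\mathfrak{m}}_+$ disks and studies the resulting one–dimensional family. The string operation $\Delta$ then appears geometrically: a boundary arc lying on $\Sigma$ crosses the geodesic $\gamma_1$, which is literally the splitting of a path at an intersection point that defines $\Delta^{\gamma_1}$. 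Likewise, the reversed chords $\overline{c_i}$ in the first identity arise because filling a puncture by a half-strip with boundary on $\Sigma$ converts a chord end into a boundary condition, flipping its orientation.

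Your setup keeps all boundaries on the conormals $\mathcal{L}_{\gamma_i}$ and tries to impose a marked-point constraint instead. But if a boundary arc already maps to $\mathcal{L}_{\gamma_1}$, its projection to $\Sigma$ lies on $\gamma_1$ automatically, so your constraint ``projection lies on $\gamma_1$'' is vacuous and cuts no dimension; and ``marked points constrained to the zero section'' is not meaningful for curves living entirely in $\mathbb{R}_+\times S^*\Sigma$, which is disjoint from $0_\Sigma$. Your neck-stretching explanation for the appearance of $\overline{c_i}$ is also not the mechanism at work here. If you replace your conormal-only curves by the mixed (conormal $+$ zero section) curves of \cite{CL09} and Figure~\ref{fig:mixedmoduli}, the degenerations in Figure~\ref{fig:moduli} give both identities directly.
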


\begin{proof}
The proof is a straightforward adaptation of that in \cite{CL09} (see also \cite{Sch}). One counts the boundary configurations in a $1$-dimensional moduli space of curves with boundary on the zero section to obtain the above relations; see Figure \ref{fig:moduli}. We omit further details. 
\end{proof}

\begin{figure}
    \centering
    \includegraphics[width=0.6\linewidth]{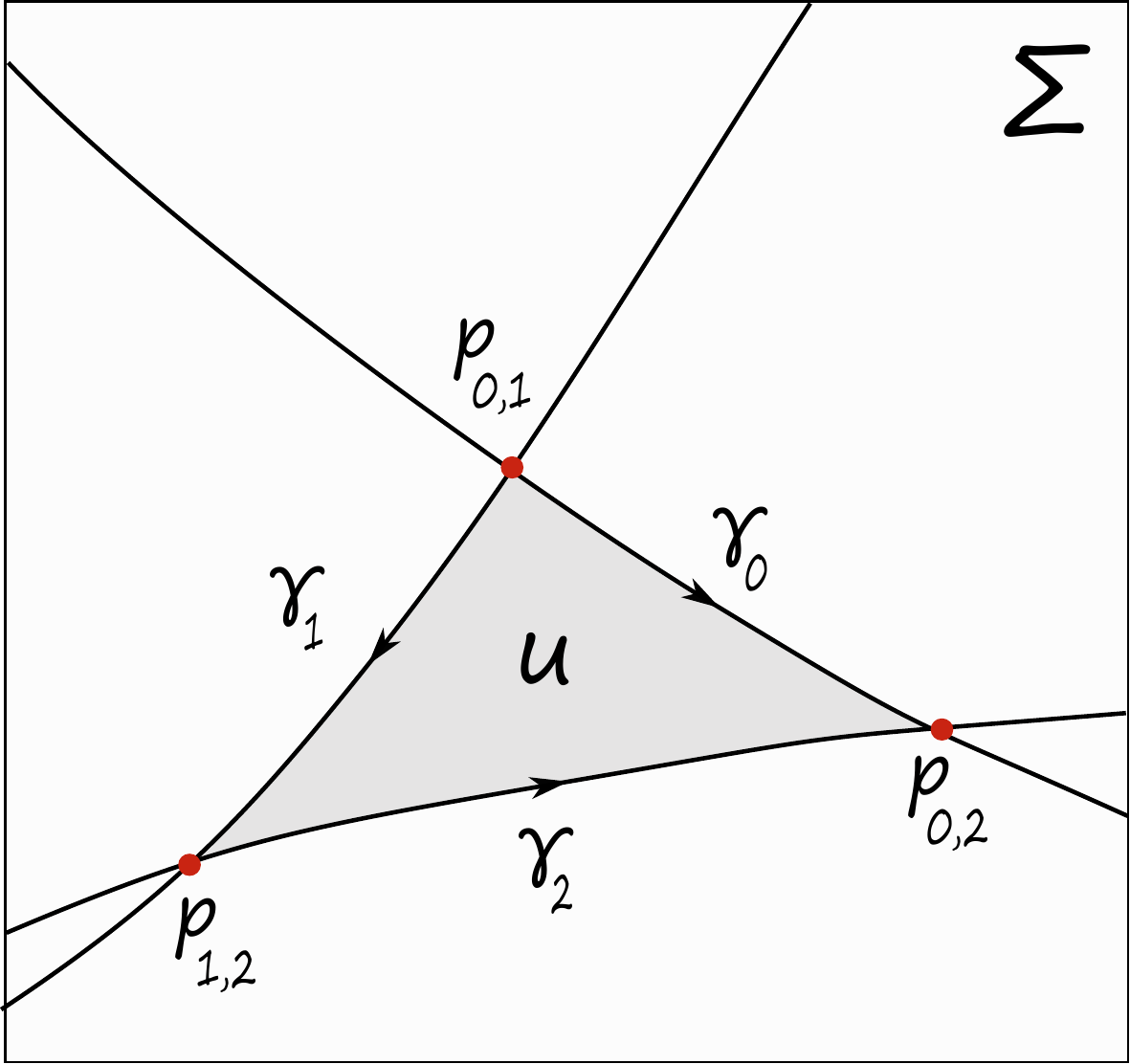}
    \caption{The product on $I_-$ is given by counts of immersed geodesic triangles on $\Sigma$.}
    \label{fig:geodesic_triangle}
\end{figure}

We now consider the product $\mathfrak{m}_-=\mathfrak{m}\vert_{I_-\otimes I_-}: I_-\otimes I_-\rightarrow I_-$, where $I_-$ is generated by $c_{p,k}$ for  intersection points $p$ of geodesics $\gamma, \beta$ for varying $\gamma,\beta$, and $k\in \mathbb{N}\cup \{0\}$.
Again, after a deformation we may assume that  $\mathfrak{m}_-$ counts index $1$ holomorphic curves in the symplectization $\R \times S^*\Sigma$ of the prequantization circle bundle $S^*\Sigma$ with two positive punctures and one negative puncture. We can also arrange that the almost complex structure on $J$ on $\R \times S^*\Sigma$ is such that the projection $\R \times S^*\Sigma \rightarrow \Sigma$ is $(J, j)$-holomorphic, where $j$ is the complex structure on the Riemann surface $\Sigma$. For simplicity, we will only consider the product between three Lagrangian cylinders corresponding to geometrically distinct geodesics.

\begin{figure}
    \centering
    \includegraphics[width=0.59\linewidth]{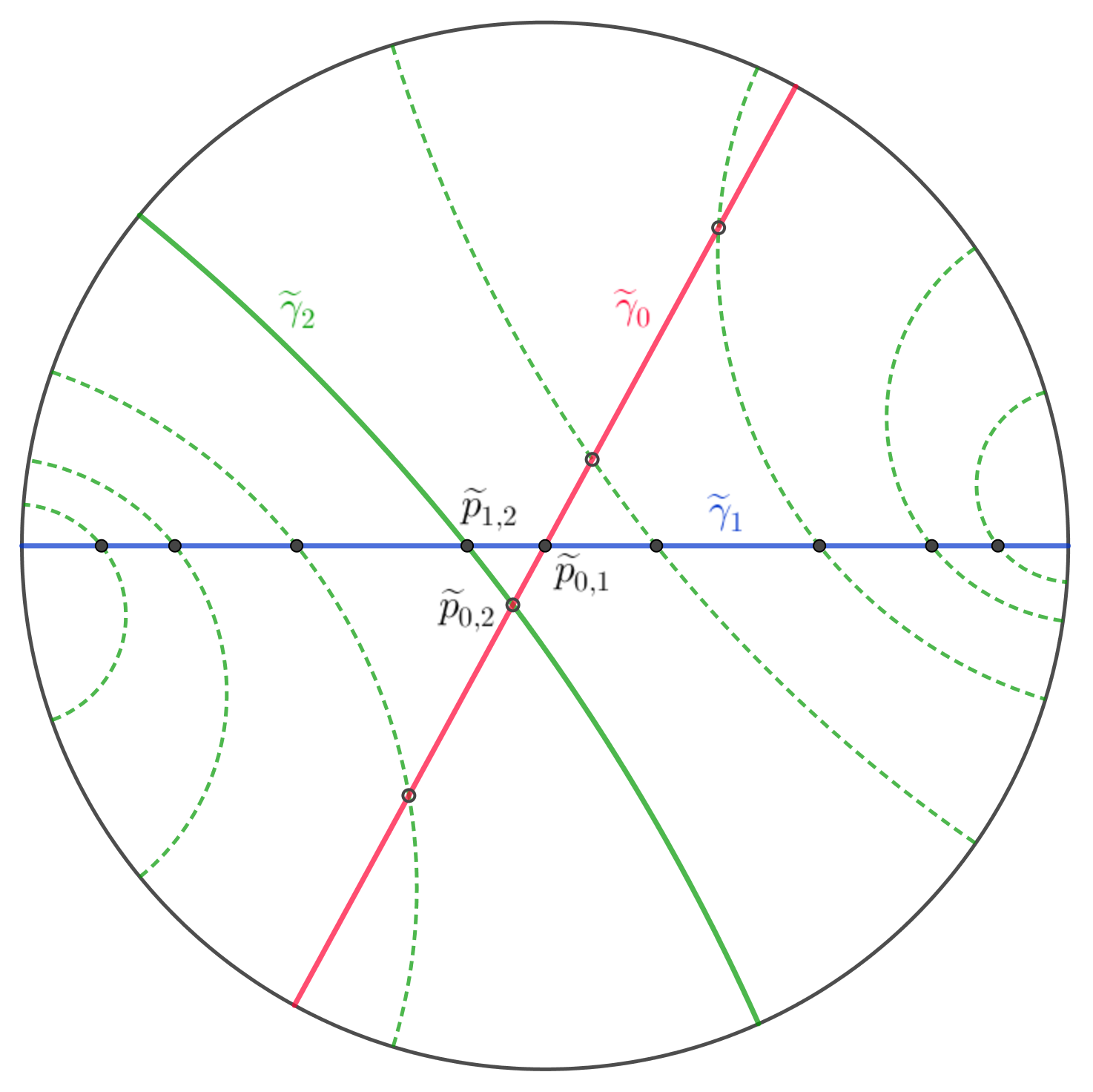}
    \caption{Geodesic triangles between lifts of $\gamma_0$, $\gamma_1$ and $\gamma_2$. The dashed geodesics are hyperbolic translates of $\widetilde{\gamma}_2$ along $\widetilde{\gamma}_1$. In this example, there are 4 geodesic triangles contributing to the product.}
    \label{fig:geodesics}
\end{figure}

\begin{proposition}
Let $\gamma_0,\gamma_1,\gamma_2$ be three geometrically distinct closed oriented geodesics on $\Sigma$ intersecting at $p_{0,1} \in \gamma_0 \cap \gamma_1$, $p_{1,2} \in \gamma_1 \cap \gamma_2$ and $p_{0,2} \in \gamma_0 \cap \gamma_2$, together with $k_{0,1}, k_{1,2}, k_{0,2} \in \N\cup\{0\}$. The coefficient $$\mathfrak{m}_-^{012} := \left\langle \mathfrak{m}_- \big( t^{k_{0,1}} c_{p_{0,1}}, t^{k_{1,2}} c_{p_{1,2}} \big), t^{k_{0,2}} c_{p_{0,2}} \right\rangle \in \Z$$ is zero unless $k_{0,2} = k_{0,1} + k_{1,2}$, in which case it is equal to the number of immersed geodesic triangles with cyclically ordered vertices $p_{0,1}, p_{1,2}, p_{0,2}$ and edges on the $\gamma_i$'s as in Figure~\ref{fig:geodesic_triangle}, counted with appropriate signs.
\end{proposition}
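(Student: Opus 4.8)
The plan is to compute $\mathfrak{m}_-^{012}$ by projecting holomorphic curves to the base $\Sigma$ and identifying the relevant ones with geodesic triangles. By the deformation argument of~\cite{BO09} recalled above, $\mathfrak{m}_-^{012}$ is a signed count of index-$1$ $J$-holomorphic disks $u:\mathcal{D}\to\mathbb{R}\times S^*\Sigma$ (modulo the $\mathbb{R}$-action) in the symplectization of the prequantization circle bundle, with two positive and one negative boundary puncture asymptotic to $c_{p_{0,1},k_{0,1}}$, $c_{p_{1,2},k_{1,2}}$, $c_{p_{0,2},k_{0,2}}$, and with the three boundary arcs of $\partial\mathcal{D}$ lying on the Lagrangian cylinders over $\Lambda_{\gamma_0},\Lambda_{\gamma_1},\Lambda_{\gamma_2}$. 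First I would fix $J$ as in the paragraph preceding the proposition, so that $\pi:\mathbb{R}\times S^*\Sigma\to\Sigma$ is $(J,j)$-holomorphic; then $v:=\pi\circ u:\mathcal{D}\to\Sigma$ is $j$-holomorphic, its three boundary arcs lie on $\gamma_0,\gamma_1,\gamma_2$ respectively (because $\Lambda_{\gamma_i}$ projects onto $\gamma_i$), and the three punctures of $u$ are sent by $v$ to $p_{0,1},p_{1,2},p_{0,2}$ (because each $\alpha_-$-chord lies over a single point of $\Sigma$).

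Next I would classify such $v$. Since $\mathcal{D}$ is simply connected, $v$ lifts to a $j$-holomorphic disk $\widetilde v:\mathcal{D}\to\mathbb{H}$ whose three boundary arcs lie on three complete geodesics $\widetilde\gamma_0,\widetilde\gamma_1,\widetilde\gamma_2\subset\mathbb{H}$ meeting pairwise and transversally at the three lifts of $p_{0,1},p_{1,2},p_{0,2}$ (which are distinct, or all equal in the degenerate case), and such a configuration bounds a unique compact geodesic triangle $T$ (or is the degenerate point). The claim is that $\widetilde v$ is, up to reparametrization of the domain, the canonical parametrization of $T$ — a biholomorphism onto $T$ away from the vertices, with wrapping at each vertex prescribed by the corresponding $k_{i,j}$ — so that the rigid disks $u$ contributing to $\mathfrak{m}_-^{012}$ are in bijection with the immersed geodesic triangles of the statement, a triangle in $\mathbb{H}$ being identified with its deck-group translates (this is exactly the bookkeeping of Figure~\ref{fig:geodesics}). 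I would prove this by Schwarz reflection across the real-analytic geodesic boundary together with the argument principle, using finiteness of area to exclude the unbounded components of $\mathbb{H}\setminus(\widetilde\gamma_0\cup\widetilde\gamma_1\cup\widetilde\gamma_2)$, and the Riemann--Hurwitz formula together with the index hypothesis to exclude interior branch points and higher multiple covers.

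The remaining ingredients are the degree relation $k_{0,2}=k_{0,1}+k_{1,2}$ and the transversality and signs. For the relation I would compute the $\sigma$-area of $v$ in two ways. On one hand, since $d\alpha_{\mathrm{pre}}=\pi^*\sigma$ and $\alpha_{\mathrm{pre}}=\alpha_-$ vanishes on every Legendrian $\Lambda_{\gamma_i}$, Stokes' theorem applied to $u^*\alpha_{\mathrm{pre}}$ over $\mathcal{D}$ gives
\[
\int_{\mathcal{D}}v^*\sigma \;=\; \mathcal{A}\big(c_{p_{0,1},k_{0,1}}\big)+\mathcal{A}\big(c_{p_{1,2},k_{1,2}}\big)-\mathcal{A}\big(c_{p_{0,2},k_{0,2}}\big),
\]
where, with the normalization of~\eqref{eq:preinH}, $\mathcal{A}(c_{p,k})=2\pi k+\varphi(p)$ and $\varphi(p)\in(0,2\pi)$ is the angle through which the prequantization fibre rotates between the two conormal lifts over $p$. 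On the other hand, by the classification above $\int_{\mathcal{D}}v^*\sigma$ equals the $\sigma$-area of $T$, which by Gauss--Bonnet is $\pi-(\vartheta_{0,1}+\vartheta_{1,2}+\vartheta_{0,2})\in(0,\pi)$ in terms of the interior angles of $T$. The key point of the bookkeeping is then that the fibre angle at each vertex is governed by whether the vertex sits at an input or at the output puncture: at the two input vertices $\varphi(p_{0,1})$ and $\varphi(p_{1,2})$ equal $\pi$ minus the interior angle of $T$, whereas at the output vertex $\varphi(p_{0,2})$ equals $\pi$ plus the interior angle. Hence the fractional parts of the two expressions already coincide, forcing $2\pi(k_{0,1}+k_{1,2}-k_{0,2})=0$; so $\mathfrak{m}_-^{012}=0$ unless $k_{0,2}=k_{0,1}+k_{1,2}$, and when this holds $\mathfrak{m}_-^{012}$ is the signed count of the triangles above. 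Finally, the disks $u$ project to immersed geodesic triangles and are in particular somewhere injective, so automatic transversality for rigid punctured holomorphic curves in a four-dimensional symplectization makes the moduli spaces regular and $0$-dimensional modulo $\mathbb{R}$, and counting with coherent orientations yields the stated signs.

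The hard part will be the angle bookkeeping in the third step together with the branched version of the classification in the second. One must determine precisely, for each $k$-iterated $\alpha_-$-chord, the relation between its action $\varphi(p)$, the wrapping order of $v$ at the associated vertex, and the interior angle of $T$ (including all orientation conventions and the distinction between the input and output corners), so that the fractional contributions cancel exactly and no spurious additive constant survives; an equivalent and perhaps cleaner route would be to replace the area computation by the vanishing of the total rotation number of the fibre component of $u$ around $\partial\mathcal{D}$, whose contributions from the three geodesic arcs are controlled by the curvature (equivalently by $\sigma$) and whose contributions from the three corners are controlled by the $k_{i,j}$'s. Establishing that a rigid $j$-holomorphic disk in $\mathbb{H}$ with three geodesic boundary arcs is the branched uniformization of the geodesic triangle they bound is classical in spirit but should be carried out carefully, and is the other place where the argument could become technical.
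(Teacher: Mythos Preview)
Your overall strategy---project to $\Sigma$, lift to $\mathbb{H}$, identify the image with a geodesic triangle---matches the paper's, but the technical implementation diverges substantially, and there is one genuine gap.

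\textbf{The paper's route.} Rather than classifying the projected disk $\widetilde v$ by Schwarz reflection and Riemann--Hurwitz, the paper observes that $(S^*\mathbb{H},\widetilde\alpha_{\mathrm{pre}})$ is the $S^1$-contactization of the Liouville manifold $(\mathbb{D},\lambda_\mathbb{D})$ with $\lambda_\mathbb{D}=\tfrac{2r^2}{1-r^2}d\theta$. Passing to the $\R$-contactization, the paper invokes \cite[Theorem~2.1]{DR16}, which gives a one-to-one correspondence between SFT-type holomorphic triangles in $\R\times\mathbb{D}\times\R$ with boundary on the Legendrian lifts $L_i$ and holomorphic triangles in $\mathbb{D}$ with boundary on the $\widetilde\gamma_i$. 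Since three pairwise-transverse geodesics in $\mathbb{D}$ bound a unique embedded (hence transversally cut out) holomorphic triangle, this yields the bijection in one stroke, together with transversality. The degree relation $k_{0,2}=k_{0,1}+k_{1,2}$ falls out immediately from the $\R$-contactization lift: the Legendrian lifts in $\mathbb{D}\times\R$ are determined by the $k_{i,j}$'s up to a single global $t$-translation, so closing up the triangle forces additivity. No Gauss--Bonnet or angle bookkeeping is needed.

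\textbf{Your gap.} You classify the \emph{projection} $v$ but never explain why, for a given geodesic triangle $T$, there exists a holomorphic lift $u$ in $\R\times S^*\Sigma$, nor why two lifts with the same projection must agree modulo the $\R$-action. The map $\pi:\mathfrak{T}\to\mathcal{T}$ needs both injectivity and surjectivity, and the latter is not addressed: your argument produces $v$ from $u$ but not conversely. This is precisely what the contactization correspondence of \cite{DR16} provides. Your Stokes/area computation constrains the asymptotic data but does not construct the lift.

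\textbf{Comparison.} Your approach is more elementary in spirit (Schwarz reflection, Riemann--Hurwitz, Gauss--Bonnet, automatic transversality in dimension four) and avoids citing \cite{DR16}, at the cost of the delicate angle bookkeeping you flag and the missing lifting step. The paper's route is shorter and structurally cleaner: the contactization viewpoint reduces everything to a single embedded triangle in $\mathbb{D}$, and the degree relation becomes a trivial consequence of how Legendrian lifts behave under the $\R$-cover. Your ``total rotation number'' alternative is essentially equivalent to the paper's $\R$-contactization argument, and would be the cleanest way to repair your proof.
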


\begin{proof}
Let $\mathfrak{T}$ be the moduli space of holomorphic triangles defining $\mathfrak{m}_-^{012}$, and $\mathcal{T}$ be the moduli space of immersed geodesic triangles on $\Sigma$ as above. If $u \in \mathfrak{T}$ is a holomorphic triangle, it projects to a geodesic triangle on $\Sigma$ with the required properties. This defines a natural map $\pi : \mathfrak{T} \rightarrow \mathcal{T}$. We will only show that $\pi$ is a bijection and we leave it to the reader to work out the relevant orientations and signs.

We can lift $u \in \mathfrak{T}$ to a holomorphic triangle in $\R \times S^*\mathbb{H}$ whose boundaries lie in the Lagrangian cylinders over the unit conormals of lifts $\widetilde{\gamma}_i$ in $\mathbb{H}$ of the geodesics $\gamma_i$. By~\eqref{eq:preinH}, the contact structure on $S^*\mathbb{H} \cong \mathbb{H} \times S^1$ becomes $$\widetilde{\alpha}_\mathrm{pre} = \frac{1}{y} dx + d\varphi,$$ where $\varphi$ denotes $S^1$ coordinate. Under the biholomorphism $\mathbb{H} \cong \mathbb{D}$, it further becomes $$\widetilde{\alpha}_\mathrm{pre} = \frac{2r^2}{1-r^2}d\theta + dt,$$ where $(r, \theta)$ are the usual polar coordinates on $\mathbb{D}$ and $t$ denotes the $S^1$ coordinate on $\mathbb{D} \times S^1 \cong S^*\mathbb{D}$. Note that this is simply the $S^1$-contactization of the Liouville manifold $\big(\mathbb{D}, \lambda_{\mathbb{D}}\big)$ where $\lambda_\mathbb{D} := \frac{2r^2}{1-r^2}d\theta$. We can further lift $u$ to the symplectization of the $\R$-contactization of $\big(\mathbb{D}, \lambda_{\mathbb{D}}\big)$, with boundary conditions on the Lagrangian cylinders $L_i$ over suitable lifts of the unit conormals of the geodesics $\widetilde{\gamma}_i$. These Legendrian lifts in $\mathbb{D} \times \R$ only depend on the $k_{i,j}$'s up to a global translation in the $t$-direction (which coincides with the Reeb direction). This readily implies that $k_{0,2}= k_{0,1} + k_{1,2}$. By~\cite[Theorem 2.1.]{DR16}, for a suitable choice of almost complex structure on $\R \times \mathbb{D} \times \R$, there is a one-to-one correspondence between
\begin{enumerate}
\item SFT-type holomorphic triangles in $\R \times \mathbb{D} \times \R$ between the $L_i$'s, and
\item Holomorphic triangles in $\mathbb{D}$ between the $\widetilde{\gamma}_i$'s.
\end{enumerate}
Since the $\widetilde{\gamma}_i$'s bound a unique holomorphic triangle in $\mathbb{D}$ (which is embedded and cut-out transversally), this implies that $\pi$ is injective. This also shows that $\pi$ is surjective, as any immersed geodesic triangle $\Delta \in \mathcal{T}$ lifts to an embedded geodesic triangle $\widetilde{\Delta} \subset \mathbb{D}$ between lifts $\widetilde{\gamma}_i$ of the $\gamma_i$'s. By the uniformization theorem, $\widetilde{\Delta}$ can be identified with a holomorphic map from a disk with three boundary punctures to $\mathbb{D}$, which is cut-out transversally. By the above correspondence, it lifts to a SFT-type holomorphic triangle between the Lagagrangians $L_i$ which is cut-out transverally and which projects to a holomorphic triangle in $\R \times S^*\Sigma$ contributing to the coefficient $\mathfrak{m}_-^{012}$.
\end{proof}

\begin{figure}
    \centering
    \includegraphics[width=0.65\linewidth]{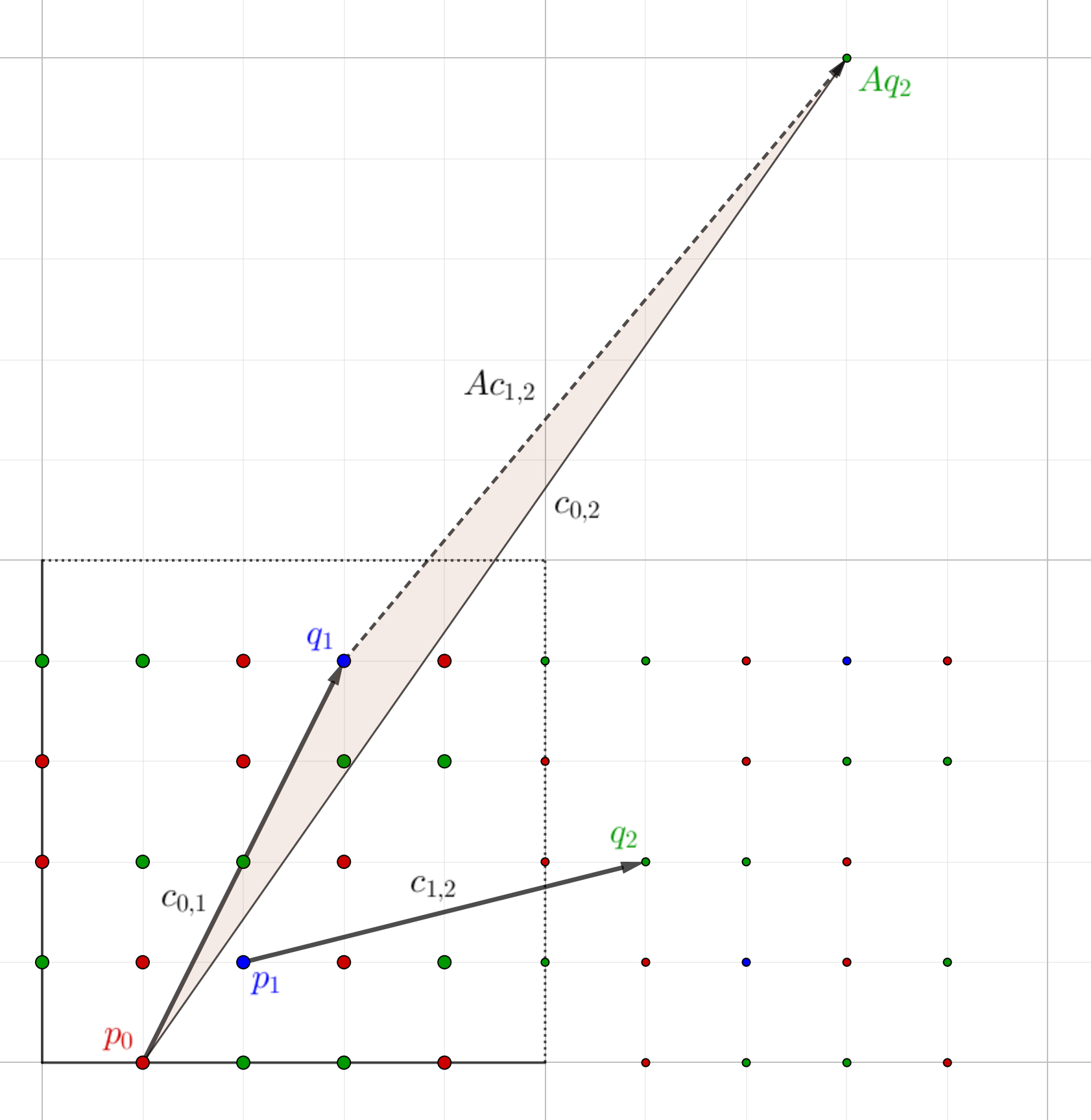}
    \caption{Torus bundle domain: three orbits $\textcolor{red}{\mathcal{O}_0}$, $\textcolor{blue}{\mathcal{O}_1}$ and $\textcolor{green}{\mathcal{O}_2}$, together with Reeb chords $c_{0,1}$ and $c_{1,2}$, and a potential candidate for a homolorphic triangle contributing to the product $\mathfrak{m}\big(c_{0,1},c_{1,2} \big)$.}
    \label{fig:torustriangle}
\end{figure}

\begin{remark}
It can further be shown that the number of immersed geodesic triangles in $\mathcal{T}$ only depends on the length of $\gamma_1$, the angle between the $\gamma_0$ and $\gamma_1$ at $p_{0,1}$, the angle between $\gamma_1$ and $\gamma_2$ at $p_{1,2}$, and the distance between $p_{0,1}$ and $p_{1,2}$. To see it, it suffices to lift $\gamma_0$ and $\gamma_1$ to $\mathbb{D}$ such that $p_{0,1}$ is lifted to $0 \in \mathbb{D}$. The possible lifts of $\gamma_2$ all differ by the action of the hyperbolic translation of length $\ell(\gamma_1)$ along $\widetilde{\gamma}_1$, see Figure~\ref{fig:geodesics}.
\end{remark}

\begin{remark}[Torus bundle domains: open-string products] In the case of a torus bundle domain, consider three periodic orbits $\mathcal{O}_0,\mathcal{O}_1,\mathcal{O}_2 \subset \mathbb{T}^2$ of the monodromy matrix corresponding to three closed orbits $\Lambda_0, \Lambda_1, \Lambda_2$ of the Anosov flow. Using the notations of Proposition \ref{prop:torus_bundles_wfh} and Remark~\ref{rem:torus}, consider a chord $c_{0,1} \in HW^*(\mathcal{L}_{\mathcal{O}_0},\mathcal{L}_{{\mathcal{O}_1}})$ from $p_0\in \mathcal{O}_0$ to $q_1\in \mathcal{O}_1$ of slope $z_{0,1}^\pm\in \Gamma_{p_0,q_1}^{\pm}$, and a chord $c_{1,2} \in HW^*(\mathcal{L}_{\mathcal{O}_1},\mathcal{L}_{\mathcal{O}_2})$ from $p_1 \in \mathcal{O}_1$ to $q_2\in \mathcal{O}_2$ of slope $z_{1,2}^\pm\in \Gamma_{p_1,q_2}^{\pm}$. Recall that the slope is measured with respect to the fundamental domain $\mathbb{T}^2 \times [0, \nu)$, and we can view $z^{\pm}_{i,j}$ as elements of $[0,\nu)$. We don't necessarily have $q_1 = p_1$, but there exists $k_1 \in \Z$ such that $A^{k_1} p_1 = q_1$, and the set of such exponents is of the form $\mathcal{K}_1 = k_1 + \vert \mathcal{O}_1\vert \cdot \Z$. Note that the direction of $A^k c_{1,2}$ viewed as a chord in $\mathbb{T} \times [k\nu, (k+1)\nu)$ becomes $R_{\pm}\big(z_{1,2}^\pm + k\nu \big)$. It easily follows that for each $k \in \mathcal{K}_1$, there exists a unique chord $c_{0,2} \in HW^*(\mathcal{L}_{\mathcal{O}_0},\mathcal{L}_{\mathcal{O}_2})$ from $p_0$ to $A^{k} q_2$ such that there exists a topological disk with three boundary punctures mapping to $M$ with punctures asymptotic to $c_{0,1}$, $c_{1,2}$ and $c_{0,2}$, and boundary components in $\Lambda_0$, $\Lambda_1$ and $\Lambda_2$, see Figure~\ref{fig:torustriangle}. Here, we are using that the slope of the lift $\widetilde{R}_\pm$ of $R_\pm$ to $\mathbb{T}^2 \times \R_z$ is a monotonous function of $z \in \R$. Moreover, this topological disk is unique up to homotopy. However, we do not know whether some of these disks can be realized as Floer solutions contributing to the product $\mathfrak{m}_\pm\big(c_{0,1},c_{1,2}\big)$.
\end{remark}


\section{Discussion and future directions}\label{sec:discussion}

We now list some questions that remain open or unexplored. Fix an Anosov Liouville domain $V=[-1,1]\times M$. Recall that we have a distinguished family of exact cylindrical Lagrangians $\{\mathcal{L}_\Lambda\}_\Lambda$, indexed by the simple closed orbits of the corresponding Anosov flow on $M$, and $\mathcal{W}_0(V)$ (the \emph{orbit category} of the flow) is the full subcategory of the wrapped Fukaya category on these objects.

\medskip

\textbf{Bridge to Anosov theory.} As noted in the Introduction, the symplectic invariants of $V$ are homotopy invariants of the Anosov flow, so it is natural to wonder whether they can be used to address questions in Anosov theory. As the orbit category is only defined up to quasi-equivalence,\footnote{An $A_\infty$-functor $F : \mathcal{A} \rightarrow \mathcal{B}$ between $A_\infty$-categories is a quasi-equivalence if the cohomological functor $H^*F : H^* \mathcal{A} \rightarrow H^*\mathcal{B}$ is an equivalence of (ordinary) categories. More generally, two $A_\infty$-categories are quasi-equivalent if there exists a zig-zag of quasi-equivalences between them.} its quasi-equivalence class is then a homotopy invariant of the flow. Conversely, while it seems too strong to expect that it recovers the homotopy class of the flow, we can ask whether it recovers at least its \emph{topological equivalence class}:\footnote{Two flows are topologically equivalent, or orbit equivalent, if there exists a \emph{homeomorphism} sending the oriented trajectories of one to the oriented trajectories of the other, without necessarily preserving the parametrization. It follows from Anosov's structural stability that if two Anosov flows are homotopic through Anosov flows, then they are topologically equivalent.}

\begin{question}
If two smooth Anosov flows are topologically equivalent, are their orbit categories quasi-equivalent? Reciprocally, if two smooth (transitive) Anosov flows have quasi-equivalent orbit categories, are they topologically equivalent?
\end{question}

By a recent result of Barthelm\'e and Mann \cite{BM21}, for the case of $\mathbb{R}$-covered Anosov flows,\footnote{An Anosov flow on a $3$-manifold $M$ is $\R$-covered if its stable \emph{leaf space} (or equivalently, its unstable leaf space) is homeomorphic to $\R$. Here, the (un)stable leaf space is the quotient of the universal cover $\widetilde{M}$ of $M$ by the relation ``being on the same leaf of the lift $\widetilde{\mathcal{F}}^{\star}$ of $\mathcal{F}^{\star}$'', for $\star \in \{u, s\}$.} spectral rigidity reduces the above question to whether the quasi-equivalence class of the orbit category recovers the set of free homotopy classes represented by the closed orbits. 

Along the same lines as discussed above, the following seems to be an open question in Anosov theory, communicated to us by Rafael Potrie. 

\begin{question}
If two Anosov flows are topologically homotopic (i.e., topologically equivalent via an equivalence which is homotopic to the identity), are they homotopic as Anosov flows? 
\end{question}

Perhaps one can detect a negative answer to this question via the symplectic invariants of the associated domain, e.g., via the orbit category. For instance, it seems to be an open question whether, on a unit tangent bundle over a surface, the space of Riemannian metrics with Anosov geodesic flow is connected. Similarly, there could potentially be examples of such metrics whose geodesic flows may not be connected, in the space of Anosov flows, to an Anosov flow coming from a metric with negative curvature (the space of which is well-known to be connected). Potential candidates are the surfaces embedded in $\mathbb{R}^3$ constructed by Donnay and Pugh \cite{DP03}, which necessarily have regions of positive and negative curvature. Moreover, the set of connected components of the space of Anosov flows on the unit tangent bundle of a surface $\Sigma$ has a subset that is in one-to-one correspondence with $H^1(\Sigma;\mathbb{Z})$, and is therefore infinite (see \cite{M13}). On the other hand, there is a unique topological equivalence class \cite{Gh84}. 

In this article, we have computed the symplectic invariants of the prototypical \emph{algebraic} Anosov flows. In order to understand the invariants of examples that go beyond these basic cases, a starting point is to understand their behavior under surgeries. Indeed, Handel and Thurston \cite{HT80}, and Frank and Williams \cite{FW79}, gave the first examples of non-algebraic Anosov flows, via surgery methods. This was generalized by Goodman \cite{Go81} to a surgery along a curve in the vicinity of a periodic orbit. Moreover, Fried \cite{F83} produced \emph{topological} Anosov flows via Dehn surgery also along a periodic orbit. Recently, Shannon \cite{Sh20} proved that Fried's surgery is topologically equivalent to Goodman's. On the symplectic side, this raises the following question, which arose from conversations with Surena Hoozori:

\begin{question}
What is the effect of Fried--Goodman surgery on the symplectic invariants (e.g., on the orbit category, or on the symplectic cohomology)?
\end{question}

The spectral decomposition theorem of Smale \cite{Sm67} applies in particular to Anosov flows, as they satisfy Axiom A, i.e., the non-wandering set of the flow can be decomposed as a finite union of compact invariant sets --the \emph{basic sets}--, on each of which the flow is transitive. Brunella \cite{B93} has further shown the existence of incompressible tori which are transverse to the flow and separate the basic sets in the spectral decomposition, by choosing a Lyapunov function for the flow. Transitive Anosov flows can also admit transverse tori, such as the famous Bonatti--Langevin example~\cite{BR94}. By analogy to the case of a suspension of a toral Anosov map, in which the fibers are incompressible transverse tori and also weakly exact Lagrangian after taking the product with $\R$, one can ask:

\begin{question}
Do the Brunella tori of a non-transitive Anosov flow, and more generally tori transverse to an Anosov flow, give rise to weakly exact Lagrangian tori?
\end{question}

\medskip

\textbf{The open-closed map.} In Theorem~\ref{thm:unit}, we showed that the contractible component $\mathcal{OC}_0^c$ of the open-closed map restricted to the orbit category is neither surjective nor injective (its kernel has in fact infinite rank). The non-contractible component $\mathcal{OC}_0^{nc}$ seems highly nontrivial and might contain interesting information about the underlying Anosov flow. It is natural to ask:

\begin{question}
What are the image and kernel of the non-contractible component of the open-closed map $\mathcal{OC}_0^{nc} : \HH_{*-2}^{nc} \rightarrow SH^*_{nc}(V)$? 
\end{question}

We do not expect $\mathcal{OC}_0^{nc}$ to be injective in general (see Remark~\ref{rem:non_contractible}). When the underlying Anosov flow is transitive, its simple closed orbits form a dense subset of $M$. Hence, for every closed Reeb orbit $\gamma_\pm$, there exist a finite collection of Lagrangian cylinders $\mathcal{L}_{\Lambda_1}, \dots, \mathcal{L}_{\Lambda_k}$ and Reeb chords $c^{\pm}_1 : \Lambda_1 \rightarrow \Lambda_2, \dots, c^{\pm}_k : \Lambda_k \rightarrow \Lambda_1$ such that $\gamma^\pm$ is freely homotopic to a loop obtained by concatenating the chords $c_i$'s and paths in the $\mathcal{L}_{\Lambda_i}$'s in an alternating way. In other words, there are no obvious topological obstructions to the existence of holomorphic annuli in the chain level definition of $\mathcal{OC}^{nc}_O$ hitting a generator of $SC^*_{nc}$ corresponding to $\gamma_\pm$. This raises the following

\begin{question}
If the underlying Anosov flow is transitive, is $\mathcal{OC}_0^{nc}$ surjective?
\end{question}

\medskip

\textbf{Split-generation.} The following are questions pertaining to the wrapped Fukaya category $\mathcal{W}(V)$.

\begin{question}
We have the following alternative. Either
\begin{itemize}
    \item $\mathcal{W}_0(V)$ split-generates $\mathcal{W}(V)$, and therefore $\mathcal{W}(V)$ is not homologically smooth; or
    \item $\mathcal{W}_0(V)$ does not split-generate $\mathcal{W}(V)$, i.e., there exists an exact Lagrangian (closed or with cylindrical ends) in $V$ that is not split-generated by the Lagrangian cylinders.
\end{itemize}
Which one is true?
\end{question}

While we have not established split-generation, it is still possible that the Lagrangians $\mathcal L_\Lambda$ determine $\mathcal{W}(V)$ in the following weaker sense: 
\begin{question}
Is the restricted Yoneda functor
$$
\mathcal{W}(V) \rightarrow \mathcal{W}_0(V)-\mathrm{Mod}
$$
full and faithful?
\end{question}
Split-generation would imply an affirmative answer to this question, 
but the converse does not hold. For example, the zero-section $M \subset T^*M$ does not split-generate $\mathcal{W}(T^*M)$  (since it is a proper object in a non-proper $A_\infty$-category), but if $M$ is simply-connected, then the restricted Yoneda embedding to $C^*(M)-\mathrm{Mod}$ is full and faithful by Koszul duality; see \cite{Ad56, LS20}.
\smallskip

Assume now that $V=[-1,1]\times M$ is a torus bundle domain. Let $\mathcal{W}_0'(V)$ be the wrapped Fukaya category of $V$ (with coefficients in the \emph{Novikov ring} $\Lambda_\Z$) containing $\mathcal{W}_0(V) \otimes \Lambda_\Z$ and the relatively exact Lagrangian tori $\mathbb{T}_{s,z}$ given by the fibers of the torus fibration on $V$. The open-closed map $\mathcal{OC}_0 : \HH_{*-2}(\mathcal{W}_0(V))\otimes \Lambda_\Z \rightarrow SH^*(V)\otimes \Lambda_\Z$ factors as

\begin{center}
\begin{tikzcd}
 &\HH_{*-2}(\mathcal{W}_0(V))\otimes \Lambda_\Z \arrow{d} \arrow{r}{\mathcal{OC}_0} & SH^*(V)\otimes \Lambda_\Z \\
 & \HH_{*-2}(\mathcal{W}_0'(V)) \arrow{ur}[swap]{\mathcal{OC}_0'}
\end{tikzcd}
\end{center}
Recalling that $\mathcal{OC}_0$ does not hit the unit, we conjecture the following stronger claim:

\begin{conjecture} $\mathcal{OC}'$ does not hit the unit. Moreover, $\mathcal{W}_0'(V)$ is not homologically smooth.
\end{conjecture}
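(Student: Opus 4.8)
The plan is to promote the proof of Theorem~\ref{thm:unit} to the enlarged category $\mathcal{W}_0'(V)$ by checking that the extra objects $\mathbb{T}_{s,z}$ do not disturb the mechanism behind the splitting of $\mathcal{OC}_0$, and then to run the argument of Theorem~\ref{thm:nonsplit}(3) one more time. First I would set up the relevant Floer data: isotope the torus fibers so that they are cylindrical near $\partial_\infty V$ and, crucially, so that wherever they meet the skeleton $\{0\}\times M$ their intersection loop is transverse to the Anosov flow --- this is possible because the fibers of $M\to S^1$ are transverse to $\partial_z$, and it is exactly the ``non-characteristic loop'' condition appearing in the third Remark after Theorem~\ref{thm:nonsplit}. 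With this in place, a punctured disk with boundary on the $\mathcal{L}_\Lambda$'s and the $\mathbb{T}_{s,z}$'s, projected to $M$, still has boundary arcs that are alternately tangent to $X$ (on the $\mathcal{L}_\Lambda$ pieces) or tangent to Reeb chords (at the asymptotics) or now transverse-to-$\mathcal{F}^{ws}$ but otherwise unconstrained (on the torus-fiber pieces). To apply Lemma~\ref{lemma:disk} one needs all the non-Reeb, non-Anosov boundary arcs to be positively transverse to $\mathcal{F}^{ws}$; since the torus fibers are weakly exact and live over fixed points of $M\to S^1$, their boundary arcs can be taken transverse to $\mathcal{F}^{ws}$ as well (the fiber direction $\langle v_x,v_y\rangle$ together with a push along the weak-unstable foliation can be arranged to be positively transverse, just as in the smoothing of Figure~\ref{fig:smoothing}). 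This is the step I expect to be the main obstacle: one must verify that the torus-fiber boundary contributions really can be made compatible with the tautness argument, including after the small Hamiltonian perturbations needed for transversality, and over the Novikov ring where disk bubbling off the weakly exact tori must be controlled.

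Granting that, the argument proceeds as follows. As in Steps 1--4 of the proof of Theorem~\ref{thm:unit}, one gets a splitting $CC_*(\mathcal{W}_0') = CC_*^c \oplus CC_*^{nc}$ into subcomplexes, where $CC_*^c$ is spanned by strings of interior intersection points among the various objects; the Topological Disk Lemma forbids any $A_\infty$ product with a nonconstant chord among its inputs from outputting an interior intersection point, and forbids $\mathcal{OC}_0'$ restricted to $CC_*^c$ from hitting a contractible Hamiltonian orbit. Combined with hypertightness of $M_\pm$, one concludes $\mathrm{Im}(\mathcal{OC}_0'^c)\subseteq H^2(M;\Z)\oplus H^3(M;\Z)\subset SH^*_c(V)\otimes\Lambda_\Z$, and since $\mathcal{OC}_0'^{nc}$ lands in $SH^*_{nc}(V)\otimes\Lambda_\Z$, the unit $1\in H^0(M)$ is not hit. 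This proves the first assertion. For the second, one adapts Theorem~\ref{thm:nonsplit}: pick any orbit $\Lambda$ whose cylinder $\mathcal{L}_\Lambda$ is disjoint from a given finite collection of objects of $\mathcal{W}_0'(V)$ (possible since the $\mathbb{T}_{s,z}$ can be pushed off any fixed compact set, and the $\mathcal{L}_{\Lambda'}$ are pairwise disjoint), and observe that the Hochschild homology of any finitely-generated full subcategory $\mathcal{A}$ still carries a summand $\bigoplus_{\Lambda'\in\mathcal{C}} W_*$ in its contractible part, with $\mathcal{L}_\Lambda$ contributing an extra copy of $W_*$ to $\HH_*(\mathcal{A}')$. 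Hence $\iota_*:\HH_*(\mathcal{A})\to\HH_*(\mathcal{A}')$ is never an isomorphism, so no finite subcategory split-generates $\mathcal{W}_0'(V)$; by the cofibrancy-plus-smoothness argument quoted in the proof of Theorem~\ref{thm:nonsplit}(3), $\mathcal{W}_0'(V)$ cannot be homologically smooth.

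One subtlety worth flagging is that $W_* = \HH_*(C^*(S^1))$ must remain of infinite rank after base change to $\Lambda_\Z$, which is immediate since $\Lambda_\Z$ is a (flat) $\Z$-algebra and $\HH_*$ commutes with flat base change; another is that Abouzaid's low-energy comparison $CW^*(\mathcal{L}_\Lambda,\mathcal{L}_\Lambda)\simeq C^*(S^1)$ still applies here because the $\mathcal{L}_\Lambda$ are unchanged and their interior self-Floer complex is unaffected by the presence of the $\mathbb{T}_{s,z}$. The disjointness of $\mathcal{L}_\Lambda$ from the chosen finite collection is what makes the relevant corner of the differential in~\eqref{eqdCC} vanish, exactly as in the original proof, so no new algebra is needed --- the entire content is the geometric input that the Topological Disk Lemma survives the enlargement, which is why that is the step to scrutinize.
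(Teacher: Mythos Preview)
The statement you are attempting to prove is a \emph{conjecture} in the paper, posed in the discussion section; the paper offers no proof and only remarks that it would follow from an affirmative answer to the subsequent question (whether the torus fibers are split-generated by the exact cylinders). So there is no ``paper's own proof'' to compare against.

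That said, your proposal has genuine gaps on its own terms. First, a basic geometric misunderstanding: the $\mathbb{T}_{s,z}$ are \emph{closed} Lagrangian tori in the interior of $V$ (they are the $\mathbb{T}^2$-fibers of the Lagrangian fibration $V\to[-1,1]\times S^1$), not open Lagrangians with Legendrian ends. There is nothing to ``isotope so that they are cylindrical near $\partial_\infty V$'', and they cannot be ``pushed off any fixed compact set''. Moreover, the paper computes $HF^*(\mathcal{L}_\mathcal{O},\mathbb{T}_{s,z})=\bigoplus_{p\in\mathcal{O}}\Lambda_\Z\cdot p$, so each torus fiber meets \emph{every} $\mathcal{L}_\mathcal{O}$ nontrivially; you cannot choose a cylinder disjoint from a finite collection containing a torus fiber, which breaks the disjointness input to the Theorem~\ref{thm:nonsplit} argument.

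Second, and more seriously, the extension of the Topological Disk Lemma fails as written. After projecting a Floer polygon to $M$, the boundary arcs lying on a torus fiber land in a single $\mathbb{T}^2$-slice of $M$, and an arbitrary arc in that slice has no reason to be \emph{positively} transverse to $\mathcal{F}^{ws}$: it can wind back and forth across the stable direction. The smoothing trick of Figure~\ref{fig:smoothing} only works for arcs that are tangent to $X$, not for arcs that are merely transverse to $X$; there is no analogous push making a generic curve in the $\mathbb{T}^2$-fiber positively transverse to the weak-stable foliation. You flag this as ``the step to scrutinize'', but without it the mechanism of Lemma~\ref{lemma:disk} simply does not apply to polygons with torus-fiber boundary, and the splitting $CC_*=CC_*^c\oplus CC_*^{nc}$ into subcomplexes is not established for $\mathcal{W}_0'(V)$. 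This is precisely why the paper leaves the statement as a conjecture.
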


This conjecture would be true if the following question admits an affirmative answer: 

\begin{question}
Are the fibers $\mathbb{T}_{s,z}$ split-generated by the exact cylinders $\mathcal{L}_\Lambda$?
\end{question}

\smallskip

We now discuss split-generation for the McDuff domain. As pointed out to us by Mohammed Abouzaid, the Lagrangian foliation by cotangent fibers on $T^*\Sigma$ restricts to a foliation on the McDuff domain by punctured cotangent fibers, which are still Lagrangian (note that the magnetic term in the twisted symplectic form is horizontal, and therefore vanishes along the fibers). Abouzaid \cite{A11a} proved that a cotangent fiber generates the wrapped Fukaya category of $T^*\Sigma$. However, the punctured cotangent fibers in the McDuff domain are \emph{not} exact Lagrangians, as their boundary on the prequantization end is a Reeb orbit and thus not Legendrian. It is conceivable that they form the objects of an enlarged category, by taking into account the fact that the geometry of a twisted cotangent bundle is geometrically bounded (cf.\ \cite{Groman}). Assuming one can enlarge the wrapped Fukaya category to include such Lagrangians as objects, the following question is natural. 

\begin{question}
Do the punctured cotangent fibers generate an ``enlarged'' wrapped Fukaya category of the McDuff domain? 
\end{question}

\medskip

\textbf{Structure of the orbit category.} For a general Anosov Liouville domain with orbit category $\mathcal{W}_0$, Theorem~\ref{thm:WFH} implies that the cohomology category $H^* \mathcal{W}_0$ can be interpreted as a suitable fiber product of categories. Indeed, we can define $A_\infty$-categories $\mathcal{W}_0^-$, $\mathcal{W}_0^+$ and $\mathcal{W}_0^c$ whose objects are the same as $\mathcal{W}_0$ and whose morphism chain complexes are generated by chords in $M_-$ and interior intersection points, chords in $M_+$ and interior intersection points, and only interior intersection points, respectively. We can also define \emph{non-unital} $A_\infty$-categories $\mathcal{I}^-$ and $\mathcal{I}^+$ whose objects are the same as $\mathcal{W}_0$ and whose morphism chain complexes are generated by chords in $M_-$ only and chords in $M_+$ only, respectively. There are faithful functors $\mathcal{I}^\pm \rightarrow \mathcal{W}_0^\pm$  making $H^* \mathcal{I}^\pm$ ideals in $H^*\mathcal{W}_0^\pm$, in the sense that composing a morphism in $H^* \mathcal{I}^\pm$ with any morphism in $H^*\mathcal{W}_0^\pm$ gives a morphism in $H^* \mathcal{I}^\pm$. Moreover, the quotients\footnote{Here, we are considering the naive quotient at the level of morphisms. It is not a localization.} $H^* \mathcal{W}_0^\pm \slash H^* \mathcal{I}^\pm$are isomorphic to $H^*\mathcal{W}^c_0$ and $H^*\mathcal{W}_0$ becomes the (strict) fiber product
$$\begin{tikzcd}
H^*\mathcal{W}_0 \arrow[r] \arrow[d]	\arrow[dr, phantom, "\lrcorner", very near start]	&		H^*\mathcal{W}_0^- \arrow[d]\\
H^*\mathcal{W}_0^+	\arrow[r]               &	    H^*\mathcal{W}^c_0
\end{tikzcd}$$
where the functors $H^*\mathcal{W}_0^\pm \rightarrow H^*\mathcal{W}^c_0$ are the quotients by $H^* \mathcal{I}^\pm$. Note that $H^*\mathcal{W}_0^{c}$ is isomorphic to a disjoint union $$H^*\mathcal{W}_0^{c} = \bigsqcup_{\Lambda} \Z[x] \slash x^2$$ indexed by the simple closed orbits of the underlying Anosov flow.

Similarly, $\mathcal{I}^\pm \subset \mathcal{W}^{\pm}_0$ are ``$A_\infty$-ideals'', in the sense that any $A_\infty$-product in $\mathcal{W}^{\pm}_0$ involving a morphism in $\mathcal{I}^\pm$ gives a morphism in $\mathcal{I}^\pm$, and the quotients $\mathcal{W}^{\pm}_0 \slash \mathcal{I}^\pm$ are quasi-isomorphic to $\mathcal{W}^c_0$.\footnote{Here, we are considering a quotient by a \emph{non-full} and \emph{non-unital} subcategory.}

\begin{question}
Is $\mathcal{W}_0$ the homotopy fiber product of 
$$\begin{tikzcd}
                            &		\mathcal{W}_0^- \arrow[d]\\
\mathcal{W}_0^+	\arrow[r]    &	    \mathcal{W}^c_0
\end{tikzcd}$$
where the $A_\infty$-functors $\mathcal{W}_0^\pm \rightarrow \mathcal{W}^c_0$ are induced by the quotients by $\mathcal{I}^\pm$?
\end{question}

The subtlety here is to understand products in $\mathcal{W}_0$ involving both chords in $M_+$ and in $M_-$ as inputs. Using a similar neck-stretching argument as in the proof of Theorem~\ref{thm:WFH} (2) and using Lemma~\ref{lemma:disk}, one can show that for given inputs, there exist suitable Floer data so that the corresponding product vanishes. We expect that a careful filtration argument should lead to a positive answer to the previous question. These (homotopy) fiber product descriptions of $\mathcal{W}_0$ and $H^*\mathcal{W}_0$ mirror the geometric construction of $V$ as ``gluing two symplectizations along the skeleton''.

We expect that the \emph{Rabinowitz wrapped Fukaya category} of Ganatra--Gao--Venkatesh (see~\cite{Ga20}) should also enjoy a natural splitting when restricted to the Lagrangian cylinders $\mathcal{L}_\Lambda$, that should mirror the splitting of Rabinowitz Floer homology in Theorem~\ref{thm:RFH-SH}.

\newpage


\printbibliography

\end{document}